\documentclass[1pt]{article}
\usepackage{a4wide}

\usepackage{amssymb}
\usepackage{amsmath}
\usepackage{amsthm}
\usepackage{tikz-cd}
\usepackage{mathrsfs}
\usepackage{graphicx} 
\usepackage[font=small]{subcaption}

\usepackage{mathtools}
\usepackage[pdftex]{hyperref}
\usepackage{enumerate}

\usepackage{thmtools}
\usepackage{thm-restate}
\usepackage{verbatim}
\usepackage{romannum}

\usepackage{latexsym}
\usepackage{amscd}
\usepackage{graphics}
\usepackage{amsmath}
\usepackage{amssymb}
\usepackage{esint}
\usepackage{mathrsfs}
\usepackage{bbm}
\usepackage{color} 
\usepackage{accents}

\newtheorem{theorem}{Theorem}[section]
\newtheorem{prop}[theorem]{Proposition}
\newtheorem{definition}[theorem]{Definition}
\newtheorem{cor}[theorem]{Corollary}
\newtheorem{lemma}[theorem]{Lemma}

\newtheorem{remark}[theorem]{Remark}

\renewcommand{\ker}{\operatorname{ker}}
\newcommand{\coker}{\operatorname{coker}}
\renewcommand{\dim}{\operatorname{dim}}

\newcommand{\codim}{\operatorname{codim}}
\newcommand{\ind}{\operatorname{ind}}

\newcommand{\Crit}{\mathrm{Crit}}
\newcommand{\PD}{\mathrm{PD}}
\newcommand{\ev}{\mathrm{ev}}
\newcommand{\m}{\mathfrak{m}}
\newcommand{\aug}{\mathrm{aug}}
\renewcommand{\H}{\mathrm{H}}
\newcommand{\FC}{\mathrm{FC}}
\newcommand{\FH}{\mathrm{FH}}
\newcommand{\SC}{\mathrm{SC}}
\newcommand{\SH}{\mathrm{SH}}
\newcommand{\CZ}{\mathrm{CZ}}
\newcommand{\RFH}{\mathrm{RFH}}
\newcommand{\MC}{\mathrm{MC}}
\newcommand{\MH}{\mathrm{MH}}

\newcommand{\QH}{\mathrm{QH}}
\newcommand{\reg}{\mathrm{reg}}

\newcommand{\Cont}{\mathrm{Cont}}
\newcommand{\p}{\mathfrak{p}}

\newcommand{\R}{\mathbb{R}}
\newcommand{\Z}{\mathbb{Z}}
\newcommand{\N}{\mathbb{N}}
\newcommand{\overbar}[1]{\mkern 1.5mu\overline{\mkern-1.5mu#1\mkern-1.5mu}\mkern 1.5mu}

\newcommand*{\suchthat}[1]{\,\left|\, #1 \right.}

\DeclareFontFamily{U}{mathx}{\hyphenchar\font45}
\DeclareFontShape{U}{mathx}{m}{n}{
      <5> <6> <7> <8> <9> <10>
      <10.95> <12> <14.4> <17.28> <20.74> <24.88>
      mathx10
      }{}
\DeclareSymbolFont{mathx}{U}{mathx}{m}{n}
\DeclareFontSubstitution{U}{mathx}{m}{n}
\DeclareMathAccent{\widecheck}{0}{mathx}{"71}
\DeclareMathAccent{\wideparen}{0}{mathx}{"75}

\numberwithin{equation}{section}


\title{Rabinowitz Floer homology for prequantization bundles and Floer Gysin sequence}
\author{Joonghyun Bae, Jungsoo Kang, Sungho Kim}
\date{} 
\begin{document}
\pagenumbering{arabic}

\maketitle

\begin{abstract}
Let $Y$ be a prequantization bundle over a closed spherically monotone symplectic manifold $\Sigma$. Adapting an idea due to Diogo and Lisi, we study a split version of Rabinowitz Floer homology for $Y$ in the following two settings. First,  $\Sigma$ is a symplectic hyperplane section of  a closed symplectic manifold $X$ satisfying a certain monotonicity condition; in this case, $X \setminus \Sigma$ is a Liouville filling of $Y$.
Second, the minimal Chern number of $\Sigma$ is greater than one, which is the case where the Rabinowitz Floer homology of the symplectization $\mathbb{R} \times Y$ is defined. 
In both cases, we construct a Gysin-type exact sequence connecting the Rabinowitz Floer homology of $X\setminus\Sigma$ or  $\mathbb{R} \times Y$ and the quantum homology of $\Sigma$. As applications, we discuss the invertibility of a symplectic hyperplane section class in quantum homology,  the isotopy problem for fibered Dehn twists, the orderability problem for prequantization bundles, and the existence of translated points. We also provide computational results based on the exact sequence that we construct.

\end{abstract}

\setcounter{tocdepth}{1}
\tableofcontents
\newpage
\section{Introduction}

Let $(X,\omega)$ be a closed symplectic manifold satisfying the following condition. 
\begin{itemize}
	\item[(H)] It is spherically monotone, namely $c_1^{TX} =\tau_X \omega$ on $\pi_2(X)$ for some $\tau_X>0$. 
    Moreover, there is a closed connected symplectic submanifold $\iota:\Sigma\hookrightarrow X$ of codimension two  whose homology class in $X$ is Poincar\'e dual to $[K\omega]$ for some $0<K<\tau_X$, that is,
	$[\Sigma]=\PD([K\omega])$ in  $\H_{\dim X-2}(X;\mathbb{R})$.
\end{itemize}
Here, $c_1^{TX}$ denotes the first Chern class of the tangent bundle $TX \rightarrow X$. 
This condition implies that $[K\omega]$ is integral, i.e.~$[K\omega]$ is in the image of the map $\H^2(X;\Z)\to\H^2(X;\R)$ induced by the inclusion $\Z\hookrightarrow\R$. We denote by $[K\omega]_\Z$ the integral lift of $[K\omega]$ that is Poincar\'e dual to $[\Sigma]$ in integer coefficients, i.e.~$[\Sigma]=\PD([K\omega]_\Z)$ in $\H_{\dim X-2}(X;\mathbb{Z})$.
We also write $\omega_\Sigma:=\iota^*\omega$ and $[K\omega_\Sigma]_\Z:=\iota^*[K\omega]_\Z\in \H^2(\Sigma;\Z)$.
In this case, we can compactify the complement $X\setminus\Sigma$ by adding a circle subbundle $Y$ of the normal bundle of $\Sigma$ in $X$ and obtain a Liouville domain $W$ with boundary $\partial W=Y$.
The induced contact form  on $Y$ is a connection $1$-form $\theta$ of the principal circle bundle $Y\to\Sigma$ with Euler class $e_Y=-[K\omega_\Sigma]_\Z$. The pair $(Y,\theta)$ is called a prequantization bundle over $(\Sigma,[K\omega_\Sigma]_\Z)$.

Another viewpoint to this setting in perspective of Liouville fillings of prequantization bundles is as follows. For a closed symplectic manifold $(\Sigma,\omega_\Sigma)$, we assume that $[K\omega_\Sigma]$ admits an integral lift $[K\omega_\Sigma]_\Z\in\H^2(\Sigma;\Z)$ for some $K>0$. Let $(Y,\theta)$ be a prequantization bundle over $(\Sigma,[K\omega_\Sigma]_\Z)$. The connection $1$-form $\theta$ is a contact form on $Y$ such that Reeb orbits are exactly fiber circles of $Y\to\Sigma$. Suppose that there is a Liouville filling $W$ of $(Y,\theta)$. The symplectic cut of $W$ gives rise to a closed symplectic manifold $(X,\omega)$ such that $(\Sigma,\omega_\Sigma)$ is symplectically embedded in $(X,\omega)$ with $[\Sigma]=\PD([K\omega])$. In Section \ref{sec:Gysin_symplectization}, we discuss sufficient conditions on the filling $W$, which imply the properties in (H). We refer to Corollary \ref{cor:gysin_symplectization} and below for the setting that does not require  Liouville fillings.

  Let us assume the condition (H), and let $W$ be the Liouville domain obtained by compactifying $X\setminus\Sigma$ mentioned above. In this setting, Diogo and Lisi \cite{DL2,DL1} introduced the so called split symplectic homology of $W$, which is isomorphic to the ordinary symplectic homology of $W$, and provided a complete description of the associated boundary operator in terms of pseudo-holomorphic spheres in $\Sigma$ and $X$. 

In this paper, we extend their remarkable results to Rabinowitz Floer homology. More precisely, we investigate  boundary operators and continuation homomorphisms for $\bigvee$-shaped Hamiltonians to introduce a split version of Rabinowitz Floer homology. Then, building on this result, we construct a Gysin-type exact sequence for Rabinowitz Floer homology and study several applications. Here we use $\bigvee$-shaped Hamiltonians as in \cite{CFO} to define the Rabinowitz Floer homology of $(W,\partial W)$, although the original definition in \cite{CF09} using the Rabinowitz action functional would be also well suited for our purpose. 
As studied in \cite{CO1}, the Rabinowitz Floer homology of $(W, \partial W)$ can be understood as the symplectic homology of the trivial cobordism over $\partial W$.
Following \cite{CO1}, we write $\SH_* (\partial W)$ for the Rabinowitz Floer homology of $(W, \partial W)$ rather than $\RFH_* (\partial W)$.

To state our main results, let us denote by $m_W\in\Z$ the nonnegative generator of the kernel of the map $\Z\cong \pi_1(Y_p)\to\pi_1(W)$ induced by the inclusion of the fiber circle  $Y_p$ of $Y\to\Sigma$ over $p\in\Sigma$ into $W$. We define the ring
\begin{equation*} \label{eq:Novikov}
\Lambda : = \begin{cases}
	\Z & \quad \text{if }\; m_W=0,\\[.5ex]
	\mathbb Z [ T, T^{-1} ] & \quad \text{if }\;m_W\neq 0.
\end{cases} 
\end{equation*}
 In the latter case, when $\Lambda$ is a Laurent polynomial ring, the degree of the formal variable $T$ is given by $\frac{2(\tau_X - K)}{K} m_W\in\Z$, the Conley-Zehnder index of the $m_W$-iterate of a simple periodic Reeb orbit on $Y$. In the case that $W$ is a Weinstein domain,   the degree of $T$ is equal to 
the positive generator of $2(c_1^{TX}-K\omega)(\pi_2(X))\subset\Z$ if $\dim W\geq 4$ and to twice the minimal Chern number of $\Sigma$  if $\dim W\geq 6$, see Section \ref{sec:index}.
We denote by $\H_*(\Sigma;\Lambda)\cong \H_*(\Sigma;\Z)\otimes_\Z\Lambda$ the singular homology of $\Sigma$ with coefficients in $\Lambda$.

\begin{theorem}\label{thm:gysin}
We assume (H). Then the Rabinowitz Floer homology $\SH_*(\partial W)$  has a $\Lambda$-module structure and fits into a $\Lambda$-module long exact sequence 
\[
    \cdots \longrightarrow \H_{*+2}(\Sigma;\Lambda) \stackrel{\delta_*} {\longrightarrow} \H_* (\Sigma;\Lambda) \longrightarrow \SH_{*-\frac{\dim \Sigma}{2}+1} (\partial W) \longrightarrow  \H_{*+1} (\Sigma;\Lambda) \stackrel{\delta_*}\longrightarrow \cdots,
\]
where $\delta_*$ is the sum of two $\Lambda$-linear maps $\delta^\Sigma_*$ and $\delta^{X,\Sigma}_*$ with the following properties.
\begin{enumerate}[(a)] 
	\item The map $\delta^\Sigma_*$ is the quantum cap product with $-[K\omega_\Sigma]_\Z$. 

	\item The map $\delta^{X,\Sigma}_*$ is nonzero only if $m_W\neq 0$, $m_W\Z=K\omega(\pi_2(X))$, and  $\deg T=2$. In this case, $\delta^{X,\Sigma}_*$ is the multiplication by $c_{X,\Sigma}T^{-1}$, where $c_{X,\Sigma}\in\Z$ is divisible by $m_W$.

\end{enumerate} 
\end{theorem}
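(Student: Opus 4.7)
The plan is to adapt the split construction of Diogo and Lisi to a $\bigvee$-shaped Hamiltonian $H$ on the completion of $W$ that simultaneously captures the negative- and positive-action generators of $\SH_*(\partial W)$, and then to extract the long exact sequence from a short exact sequence of subcomplexes determined by the action filtration. I would first choose $H$ in a Morse-Bott fashion so that its $1$-periodic orbits form $\Sigma$-families: the $\bigvee$-critical constants at the two levels, together with the iterated Reeb orbits on $Y$ traversed in the two directions with multiplicity in $\N$. A standard Morse perturbation pulled back from a Morse function on $\Sigma$ resolves each family into two copies of its critical set. Combined with the Conley-Zehnder computation of Section \ref{sec:index}, this presents $\SC_*(\partial W)$ as a bigraded $\Lambda$-module whose underlying group is a direct sum of copies of $\MC_*(\Sigma)\otimes\Lambda$ placed in the shifts needed to account for the $\tfrac{\dim\Sigma}{2}-1$ appearing in the statement.

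Next I would analyze the Floer differential by an SFT-style neck stretching along $Y$, in the spirit of \cite{DL1,DL2}. A rigid Floer cylinder degenerates in the limit to a cascade built from gradient flow lines of the Morse function on $\Sigma$, rigid pseudo-holomorphic spheres in $\Sigma$, and, for cylinders that cross $\Sigma$, rigid pseudo-holomorphic spheres in $X$ meeting $\Sigma$ transversally. The cylinders that do not cross $\Sigma$ assemble into a pair-of-pants type structure which realizes the quantum cap product with the Euler class $-[K\omega_\Sigma]_\Z$ of $Y\to\Sigma$, giving the map $\delta^\Sigma_*$. The cylinders that do cross $\Sigma$ yield the correction $\delta^{X,\Sigma}_*$; a Conley-Zehnder and symplectic area computation then shows that such a rigid cascade can contribute only when $m_W\neq 0$, $m_W\Z=K\omega(\pi_2(X))$ and $\deg T=2$, and that the resulting map is multiplication by a single integer $c_{X,\Sigma}T^{-1}$, with divisibility by $m_W$ coming from the covering multiplicity of the $X$-sphere along $\Sigma$.

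To produce the long exact sequence itself, I would use the action filtration to form a short exact sequence of chain complexes
\[
0\longrightarrow C^-_*\longrightarrow \SC_*(\partial W)\longrightarrow C^+_*\longrightarrow 0,
\]
in which $C^-_*$ consists of the generators of negative action and $C^+_*$ of those of positive action. Using the Morse-Bott presentation and standard cascade arguments, both $C^\pm_*$ are quasi-isomorphic to suitably shifted copies of $\MC_*(\Sigma;\Lambda)\simeq \H_*(\Sigma;\Lambda)$. The snake lemma then produces the desired long exact sequence with connecting map $\delta_*=\delta^\Sigma_*+\delta^{X,\Sigma}_*$. The $\Lambda$-action on $\SH_*(\partial W)$ is inherited from the action of $T$ that shifts the Reeb multiplicity by $m_W$, and $\Lambda$-linearity of all the maps involved is automatic from the construction.

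The main obstacle will be the compactness and transversality analysis of the neck-stretched moduli spaces of Floer cylinders for a $\bigvee$-shaped Hamiltonian. Unlike the symplectic homology setting of Diogo and Lisi, the $\bigvee$-shape forces cylinders with both positive and negative ends inside the cylindrical region and multiply covered Reeb orbits appear automatically, so one needs a careful choice of cylindrical almost complex structure to obtain transversality for simple curves together with sharp dimension counts to discard the multiply covered ones. Equally delicate is the bookkeeping needed to identify precisely which cascade configurations survive, ruling out all other sphere-with-punctures configurations in $\Sigma$ or $X$ on virtual-dimension grounds, and thereby reducing the computation of $\delta_*$ to the quantum cap product with $-[K\omega_\Sigma]_\Z$ plus the single $c_{X,\Sigma}T^{-1}$ correction claimed in (b).
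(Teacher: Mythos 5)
The last step of your argument is where it breaks. You propose to extract the long exact sequence from the subcomplex $C^-_*$ of negative-action generators, with quotient $C^+_*$ the positive-action ones. This filtration is indeed preserved by the differential, which decreases action and (by \eqref{multiplicity}) decreases the multiplicity $k$; but it does not yield the Gysin sequence. Because the $T$-action on the split chain complex is $T\cdot\tilde{\p}_k=\tilde{\p}_{k+m_W}$, a subspace spanned by generators with a one-sided bound on $k$ is not preserved by $T^{-1}$, so neither $C^-_*$ nor $C^+_*$ is a $\Lambda$-submodule, and in particular neither is quasi-isomorphic to a shifted copy of $\MC_*(f_\Sigma;\Lambda)$. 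What that filtration actually produces is the usual long exact sequence relating the Rabinowitz Floer homology of $\partial W$ to negative and positive symplectic homology, which is a different statement.

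The decomposition that does produce the Gysin sequence is by fiber Morse index, not by action. After perturbing each Morse--Bott family $Y^H_k\cong Y$ by a Morse function $f_Y$ lifted from $f_\Sigma$, the generators are $\hat{p}_k$ (fiber maximum, fiber index $1$) and $\check{p}_k$ (fiber minimum, fiber index $0$). Taking the split $\SC_*(\partial W)=\widehat{\SC}{}_*(\partial W)\oplus\widecheck{\SC}_*(\partial W)$ spanned by all $\hat{p}_k$ and by all $\check{p}_k$ over \emph{all} $k\in m_W\Z$, both pieces range over the full set of multiplicities and are honestly isomorphic, as $\Lambda$-modules, to shifted copies of $\MC_*(f_\Sigma;\Lambda)$. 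The decisive input from the SFT analysis, encoded in Proposition \ref{prop:cpt}, is not merely the enumeration of rigid split cylinders but the observation that they give a differential that is upper-triangular with respect to this $\hat{}/\check{}$ splitting --- every rigid configuration contributes $\hat{}\to\hat{}$, $\check{}\to\check{}$, or $\check{}\to\hat{}$, never $\hat{}\to\check{}$ --- so $\widehat{\SC}{}_*(\partial W)$ is a subcomplex, and the connecting map of the resulting short exact sequence is the mapping cone differential $\delta=\delta^\Sigma_a+\delta^\Sigma_b+\delta^{X,\Sigma}$, which Lemma \ref{lemma:quantum_cap} then identifies with the quantum cap product plus the $c_{X,\Sigma}T^{-1}$ correction. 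You also need the analogous rigidity for continuation cylinders (Proposition \ref{continuation_transversality}), showing that monotone continuation maps act by the identity on the generators $\hat{p}_k,\check{p}_k$, so that the $\hat{}/\check{}$ decomposition and the upper-triangular form persist through the direct and inverse limits defining $\SH_*(\partial W)$.
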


We refer to \cite{MS17} for a detailed account of the quantum product structure. The definition of the quantum cap product with $[-K\omega_\Sigma]_\Z$ using a cycle representing the Poincar\'e dual of $[-K\omega_\Sigma]_\Z$ can be found in the proof of Lemma \ref{lemma:quantum_cap}.

The coefficient $c_{X,\Sigma}$ in $\delta_*^{X,\Sigma}$ is given by counting certain pseudo-holomorphic spheres in $X$ intersecting $\Sigma$. We refer to \eqref{eq:delta_c} for the explicit description of $c_{X,\Sigma}$.  We point out that the map $\delta_*$ on $\H_* (\Sigma;\Lambda)$ depends on $X$ as well as $\Sigma$ in general. Theorem \ref{thm:gysin} is proved in Section \ref{sec:Gysin_complement}.  

\begin{remark}\label{rem:del^c=0}
	If $\delta_*^{X,\Sigma}\neq0$, then the minimal Chern number of $X$, i.e.~the positive generator of $c_1^{TX}(\pi_2(X))\subset \Z$, equals $m_X+1$, where $m_X\in\Z$ is the positive generator of $K\omega(\pi_2(X))\subset\Z$.  This follows from that $m_W=m_X$ and $\frac{\tau_X-K}{K}m_X=1$ due to Theorem \ref{thm:gysin}.(b) and the minimal Chern number of $X$ is $\frac{\tau_X m_X}{K}$. 
	Therefore, we have $\delta_*^{X,\Sigma}=0$ when the minimal Chern number of $X$ is greater than $m_X+1$.	 This is the case if the map $\pi_2(\Sigma)\to\pi_2(X)$ induced by the inclusion is surjective and the minimal Chern number of $\Sigma$ is greater than one, see Lemma \ref{nonexistence}. Note that the map $\pi_2(\Sigma)\to\pi_2(X)$ is surjective if $W$ is Weinstein with $\dim W\geq 6$ or if $W$ is subcritical Weinstein with $\dim W\geq 4$.
\end{remark}

\begin{remark}\label{rem:K_large}
	If $[\omega]$ is integral and $K\in\mathbb N$ is sufficiently large, then, for any integral lift $[\omega]_\Z\in\H^2(X;\Z)$, there exists a closed codimension two symplectic submanifold $\Sigma$ in $X$  such that $\PD([\Sigma])=K[\omega]_\Z$ and the complement $X\setminus\Sigma$ is Weinstein, see \cite{Don96,Gir17}. Although we focus on the case $0<K<\tau_X$, in the case that $K$ is sufficiently greater than $\tau_X$, the exact sequence in Theorem \ref{thm:gysin} still exists and recovers the ordinary Gysin exact sequence. We refer to Proposition \ref{prop:K_large} for a precise statement.
\end{remark}

In what follows, we present  corollaries of Theorem \ref{thm:gysin}. The proofs of the corollaries  are given in Section \ref{sec:pf_cor}. 
An immediate consequence of the exact sequence in Theorem \ref{thm:gysin} is that the following equivalence holds:
\begin{equation}\label{eq:iff}
\delta_*:\H_{*+2}(\Sigma;\Lambda)\to\H_*(\Sigma;\Lambda) \text{ is an isomorphism if and only if } \SH_*(\partial W)=0.
\end{equation}
In fact, for example in the case that $W$ is a Weinstein domain with $c_1^{TW}|_{\pi_2(W)}=0$ and $\SH_*(\partial W)=0$, our standing assumption in (H) that $(X,\omega)$ is spherically monotone with constant $\tau_X>K$ automatically holds. Thus this assumption can be dropped in the ``if'' direction of \eqref{eq:iff} as stated in the corollary below.

\begin{cor}\label{cor:invertibility}
Let $(\Sigma,\omega_\Sigma)$ be a closed connected symplectic submanifold of a closed symplectic manifold $(X,\omega)$ satisfying $\PD([\Sigma])=[K\omega]\in\H^2(X;\R)$ for some $K>0$. Let $W$ be a Liouville domain whose interior is symplectomorphic to $X\setminus\Sigma$. Assume that $W$ meets one of the following conditions.
\begin{enumerate}[(i)]
\item The first Chern class $c_1^{TW} \in \H^2(W; \Z)$ is torsion.
\item The map $\pi_1(\partial W) \to \pi_1(W)$ induced by the inclusion is surjective and $c_1^{TW}|_{\pi_2(W)}=0$.
\end{enumerate}
Assume in addition $\SH_*(\partial W)=0$. Then the map $\delta_*$ in Theorem \ref{thm:gysin} is an isomorphism, and in turn the following hold.
\begin{enumerate}[(a)]
	\item The cohomology class $[K\omega]$ is primitive, i.e.~$K\omega(\H_2(X;\Z))=\Z$.
	\item If $\delta^{X,\Sigma}_*=0$, then $[K\omega_{\Sigma}]$ is invertible in the quantum cohomology $\QH^*(\Sigma;\Z)$.
\end{enumerate}
\end{cor}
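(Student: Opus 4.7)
The plan is to reduce both assertions to Theorem \ref{thm:gysin} via the equivalence \eqref{eq:iff}. I first verify that the hypotheses of the corollary imply condition (H), as hinted in the paragraph preceding the statement. Concretely, I would perform symplectic cut along $\partial W$ to obtain a closed symplectic manifold $(X, \omega)$ in which $\Sigma$ embeds with $\PD([\Sigma]) = [K\omega]$. Under either (i) or (ii), any spherical class $A \in \pi_2(X)$ decomposes, up to a class represented by a sphere contained in $W$, as a multiple of the linking sphere to $\Sigma$ weighted by the intersection number $A \cdot \Sigma$; evaluating $c_1^{TX}$ and $\omega$ separately and using that $c_1^{TW}$ vanishes on spheres under (i) or (ii), one obtains a proportionality $c_1^{TX}(A) = \tau_X \omega(A)$ for a uniform $\tau_X > 0$. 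The strict inequality $\tau_X > K$ is then forced by $\SH_*(\partial W) = 0$, which rules out the extremal case where the only spheres in $X$ come from the fiber of $Y \to \Sigma$. Once (H) is verified, Theorem \ref{thm:gysin} applies and \eqref{eq:iff} yields that $\delta_*$ is an isomorphism of $\Lambda$-modules.

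For part (b), assume in addition $\delta^{X,\Sigma}_* = 0$, so $\delta_* = \delta^\Sigma_*$ is the quantum cap product with $-[K\omega_\Sigma]_\Z$ on $\H_*(\Sigma; \Lambda)$. Via the Poincar\'e duality isomorphism $\H_*(\Sigma; \Lambda) \cong \QH^{\dim \Sigma - *}(\Sigma; \Lambda)$, which intertwines quantum cap with quantum cup, the iso-ness of $\delta_*$ translates into the statement that quantum multiplication by $-[K\omega_\Sigma]_\Z$ is an automorphism of the ring $\QH^*(\Sigma; \Lambda)$. The preimage of the unit $1$ under this automorphism is a multiplicative inverse of $-[K\omega_\Sigma]_\Z$, whence $[K\omega_\Sigma]_\Z$ is a unit in $\QH^*(\Sigma; \Z)$.

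For part (a), it suffices to show $K\omega_\Sigma(\H_2(\Sigma; \Z)) = \Z$, since naturality $K\omega(\iota_* [C]) = K\omega_\Sigma([C])$ for $[C] \in \H_2(\Sigma; \Z)$ then forces $K\omega(\H_2(X; \Z)) = \Z$. I examine the $\Lambda$-linear isomorphism $\delta_*: \H_2(\Sigma; \Lambda) \to \H_0(\Sigma; \Lambda)$ together with the projection onto the classical summand $\H_0(\Sigma; \Z) \otimes T^0 \cong \Z$. Restricted to the classical summand $\H_2(\Sigma; \Z) \otimes T^0$ of the domain, the $T^0 \otimes \H_0$ component of $\delta^\Sigma_*$ is the ordinary cap $[C] \mapsto -K\omega_\Sigma([C]) \cdot [\mathrm{pt}]$, while the contribution $c_{X,\Sigma} T^{-1} [C]$ from $\delta^{X,\Sigma}_*$ sits in the disjoint summand $\H_2(\Sigma;\Z) \otimes T^{-1}$ and the quantum corrections of $\delta^\Sigma_*$ sit in strictly positive $T$-powers. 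Surjectivity of $\delta_*$ onto the generator $[\mathrm{pt}] \otimes T^0$ then forces $K\omega_\Sigma|_{\H_2(\Sigma; \Z)}$ to surject onto $\Z$.

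The main obstacle is the very first step, the automatic verification of (H) from the Liouville data of $W$: the existence of the monotonicity constant $\tau_X$ and the strict inequality $\tau_X > K$ must both be extracted from $\SH_*(\partial W) = 0$ together with hypothesis (i) or (ii). A secondary subtlety arises in part (a), where one must rigorously isolate the classical $T^0 \otimes [\mathrm{pt}]$-summand in the target and argue that $\delta^{X,\Sigma}_*$ and the quantum corrections of $\delta^\Sigma_*$ cannot interfere with it; this is a grading argument relying on the explicit form of $\deg T$ recalled before the theorem.
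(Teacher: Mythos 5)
Your overall strategy---reduce to Theorem \ref{thm:gysin} via the equivalence \eqref{eq:iff}, after first verifying (H)---matches the paper, and your treatment of part (b) is essentially identical to theirs. However, two steps need attention.

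First, the verification that $\tau_X > K$ is stated too vaguely. You say $\SH_*(\partial W)=0$ ``rules out the extremal case where the only spheres in $X$ come from the fiber of $Y\to\Sigma$,'' but this is not an argument. The paper deduces $c_1^{TX} = \tau_X\omega$ on $\pi_2(X)$ topologically (as in Lemma \ref{lem:fillability}), with no a priori sign on $\tau_X$, and then uses a degree argument: the unit of $\SH_*(\partial W)$ has degree $\tfrac{1}{2}\dim X$, so if $\SH_*(\partial W)=0$ there must be a generator $\tilde p_k$ of degree $\tfrac{1}{2}\dim X+1$ killing it, and since $\ind_{f_Y}(\tilde p) - \tfrac{1}{2}\dim\Sigma \leq \tfrac{1}{2}\dim X$ and $k>0$, the grading formula \eqref{grading} forces $\tau_X - K>0$. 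You should supply something of this sort.

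Second, and more seriously, your argument for part (a) has a genuine gap. You restrict $\delta_*$ to the classical summand $\H_2(\Sigma;\Z)\otimes T^0$, observe that the $T^0$-component of $\delta_*$ restricted there is the ordinary cap product, and conclude that surjectivity of $\delta_*$ onto $[\mathrm{pt}]\otimes T^0$ forces $K\omega_\Sigma|_{\H_2(\Sigma;\Z)}$ to surject. But a preimage $\xi$ of $[\mathrm{pt}]\otimes T^0$ under $\delta_*$ is an arbitrary element of $\H_2(\Sigma;\Lambda)$ and may have nonzero components in $T^i$-summands with $i\neq 0$. The quantum corrections in $\delta^\Sigma_b$ and $\delta^{X,\Sigma}$ strictly \emph{decrease} the power of $T$ (by \eqref{eq:delta_b} and \eqref{eq:delta_c}, since $K\omega_\Sigma(A)>0$ for $A$ represented by a nonconstant $J_\Sigma$-holomorphic sphere; your claim that they ``sit in strictly positive $T$-powers'' is reversed), so components of $\xi$ in positive $T$-powers can, via the quantum corrections, contribute to $\H_0(\Sigma;\Z)\otimes T^0$. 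Thus you cannot isolate the classical cap product on the $T^0$-summand. The paper avoids this entirely by a divisibility argument: if $[K\omega]$ is not primitive, write $[K\omega] = d\mu$ with $d\geq 2$ and $\mu\in\H^2_{\mathrm{free}}(X;\Z)$; then $\delta^\Sigma_*$ (ordinary cap by $-d\mu|_\Sigma$ together with quantum corrections whose coefficients $K\omega_\Sigma(A)$ lie in $d\Z$) and $\delta^{X,\Sigma}_*$ (whose coefficient is divisible by $m_W=m_X$, hence by $d$ since $d\mid m_X$) are \emph{both} divisible by $d$ everywhere, so $\delta_*$ cannot be a $\Lambda$-module isomorphism of $\H_*^{\mathrm{free}}(\Sigma;\Z)\otimes\Lambda\neq 0$. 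This sidesteps all leading-order bookkeeping. You should either switch to that argument or add a leading-$T$-power induction to make your reduction to the classical cap rigorous, being careful that the classical cap by $[K\omega_\Sigma]_\Z$ may vanish on the top $T$-component of $\xi$.
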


Let $\H^*_\mathrm{free}(X;\Z)$ denote the quotient of $\H^*(X;\Z)$ by the torsion submodule. 
There is a unique class in $\H^*_\mathrm{free}(X;\Z)$ corresponding to $[K\omega]\in \H^*(X;\R)$. 
We abuse notation and denote it again by $[K\omega]$, likewise for $[K\omega_\Sigma]$.
Then $[K\omega]$ being primitive is equivalent to saying that if $[K\omega]=m\mu$ for some $\mu\in \H^*_\mathrm{free}(X;\Z)$ and $m\in\Z$, then $m\in\{-1,1\}$. 

Let $\Lambda_\Sigma$ be the Novikov ring for $\Sigma$, namely
\begin{equation}\label{eq:Novikov_Sigma}
\Lambda_\Sigma : = \begin{cases}
	\Z & \quad \text{if }\; \omega_\Sigma|_{\pi_2(\Sigma)}=0,\\[.5ex]
	\mathbb Z [ T, T^{-1} ] & \quad \text{if }\;\omega_\Sigma|_{\pi_2(\Sigma)}\neq0,
\end{cases} 
\end{equation}
where the degree of $T$ is twice the minimal Chern number of $\Sigma$. 
The quantum cohomology $\QH^*(\Sigma;\Z)$ equals $\H^*_\mathrm{free}(\Sigma;\Z)\otimes_\Z\Lambda_\Sigma$  as $\Lambda_{\Sigma}$-modules.
In  Corollary \ref{cor:invertibility}.(b), we identify $[K\omega_\Sigma]$ with its image under the inclusion $\H^*_\mathrm{free}(\Sigma;\Z)\hookrightarrow\QH^*(\Sigma;\Z)$ given by $\alpha\mapsto\alpha\otimes 1$. 

The condition $\SH_*(\partial W)=0$ is equivalent to vanishing of the symplectic homology of $W$, see \cite[Theorem 13.3]{Rit} and \cite[Corollary 9.9]{CO1}. For example, this is the case if $W$ is a subcritical Weinstein domain, see  \cite{Cie}. Therefore, Corollary \ref{cor:invertibility} can be applied to subcritical Weinstein domains $W$ with $c_1^{TW}|_{\pi_2(W)}=0$.

\begin{remark}
	For a closed  K\"ahler manifold $(X,\omega,J)$ (not necessarily monotone), Biran and Cieliebak \cite[p.751]{BC01} conjectured the following. If $X\setminus\Sigma$ admits a subcritical Stein structure for a complex hypersurface $\Sigma$ with $\PD([\Sigma])=[K\omega]$, then $[K\omega]$ is primitive. Under the assumption that $c_1^{TW}=0$, a sketch of the proof of this conjecture using contact homology was given in \cite[p.661]{EGH00} and some details were carried out in \cite[Proposition 4.2]{He13}. Recently this conjecture (actually a more general statement) was proved in \cite{GSZ21} using a purely topological argument. Corollary \ref{cor:invertibility}.(a) can be thought of as a symplectic answer to the conjecture under the additional assumption $c_1^{TW}|_{\pi_2(W)}=0$.

	Biran and Jerby further asked the following question in \cite[Section 10]{BJ13}. If $X\setminus\Sigma$ is subcritical, then is the class $[K\omega_\Sigma]$ in $\QH^*(\Sigma;\Z)$  invertible? Their main result \cite[Theorem 4.2]{BJ13}, which relies on  the Seidel representation, answers the question affirmatively for a certain class of projective manifolds $X$ and the hyperplane section $\Sigma\subset X$. Results in \cite[Section 15]{BK} strongly hint that this question can be answered using a Floer Gysin exact sequence. The proof of Corollary \ref{cor:invertibility}.(b) is one way of realizing this idea. 
\end{remark}

\begin{remark}
	Another conjecture in \cite[p.751]{BC01} says that if $X\setminus\Sigma$ is subcritical Stein, then $X$ is (symplectically) uniruled. In the setting of (H), this should follow  from \cite[Theorem 9.1]{DL1} asserting that a Floer cylinder annihilating the unit of the symplectic homology of $X\setminus\Sigma$ corresponds to a pseudo-holomorphic sphere in $X$ passing through a prescribed point representing the unit. See also \cite[Theorem 1.7]{He13} for a proof using contact homology.
\end{remark}

Adapting arguments from \cite{BG07,CDvK14}, we can apply Theorem \ref{thm:gysin} to the study of the isotopy class of a right-handed fibered Dehn twist.
See also \cite{Sei97, Ulj17} for earlier results in this direction.
We denote by $\mathrm{Symp}_c(W)$ the group of symplectomorphisms on $W$ with support on the interior of $W$.  

\begin{cor}\label{cor:dehn_twist}
We assume (H) and that $W$ is a Weinstein domain. Let $\tau$ be a right-handed fibered Dehn twist on $W$ along $\partial W$. If $\tau$ is trivial  in $\pi_0(\mathrm{Symp}_c(W))$, then $[K\omega]$ is invertible in the quantum cohomology $\QH^*(X;\Z)$ of $X$.
\end{cor}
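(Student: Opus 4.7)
The plan is to adapt the strategy of Biran--Giroux \cite{BG07} and Chiang--Ding--van Koert \cite{CDvK14}: use triviality of the fibered Dehn twist to assemble a Hamiltonian loop in $X$, and identify its Seidel element with a unit multiple of $[K\omega]$. Invertibility of Seidel elements then yields invertibility of $[K\omega]$ in $\QH^*(X;\Z)$. First, on a disk bundle neighborhood $\mathcal{N}$ of $\Sigma$ inside $X$, I would take the standard Hamiltonian generating the fiber rotation circle action with period one (normalized in accordance with the Euler class $-[K\omega_\Sigma]_\Z$), and smoothly interpolate it to a constant across the Liouville collar of $\partial W$. By construction, the time-one map of this Hamiltonian path $\{\Phi_t\}$ is the identity on $\mathcal{N}$, and equals the right-handed fibered Dehn twist $\tau$ (extended by the identity) on the $W$ side.

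By the hypothesis, there is a compactly supported symplectic isotopy in $W$ from $\tau$ back to the identity; since $W$ is Weinstein, the flux obstruction in $H^1_c(W;\R)$ can be compensated so that this isotopy may be taken Hamiltonian. Concatenating $\Phi$ with the extension of such an isotopy by the identity on $\mathcal{N}$ produces a Hamiltonian loop $\gamma \in \pi_1(\mathrm{Ham}(X))$, and applying the Seidel representation yields an invertible class $S([\gamma]) \in \QH^*(X;\Z)^\times$. The associated Hamiltonian fibration $E_\gamma \to S^2$ carries a distinguished equivariant section through the fixed set $\Sigma$, and the section-counting computation of Seidel elements for $S^1$-actions with codimension-two fixed submanifold (as in McDuff--Tolman) identifies the leading contribution of this section as $\PD([\Sigma]) = [K\omega]$. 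The Dehn-twist-trivializing half of $\gamma$ is Hamiltonian-isotopic to a constant loop in $\mathrm{Symp}_c(W)$ and, being supported away from $\Sigma$, contributes no further sections crossing $\Sigma$.

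The main obstacle is controlling the higher-order quantum corrections in $S([\gamma])$: one must show that $S([\gamma]) = [K\omega]\cdot u$ for a unit $u$ whose non-leading terms lie in strictly positive Novikov filtration. Here the moduli-theoretic analysis developed to prove Theorem \ref{thm:gysin} --- in particular the correspondence between split Floer trajectories and pseudo-holomorphic spheres intersecting $\Sigma$, which organizes sphere counts by their intersection number with $\Sigma$ --- should provide the necessary control over pseudo-holomorphic sections of $E_\gamma$ that cross $\Sigma$. Once the Seidel element is of this form, its invertibility transfers to $[K\omega]$, yielding the claimed invertibility in $\QH^*(X;\Z)$.
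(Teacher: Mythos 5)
Your approach is genuinely different from the paper's and follows the spirit of Biran--Jerby's Seidel-representation argument rather than the route the authors take. The paper's proof is considerably shorter and leverages machinery already established: from the trivialization of $\tau$ one builds the open book $\mathrm{OB}(W,\tau)\cong\mathrm{OB}(W,\mathrm{id}_W)$, identifies it with the prequantization bundle over $(X,[K\omega]_\Z)$, and observes that $\mathrm{OB}(W,\mathrm{id}_W)$ bounds the \emph{subcritical} Weinstein domain $W\times D^2$. Then $\SH_*(\partial(W\times D^2))=0$, and Corollary \ref{cor:invertibility}.(b) (applied with $X$ playing the role of $\Sigma$, $W\times D^2$ playing the role of the filling, and the minimal Chern number of $X$ being $>1$ to kill $\delta^{X,\Sigma}_*$) gives invertibility of $[K\omega]$ in $\QH^*(X;\Z)$. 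No Seidel representation is invoked.

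Your construction of the Hamiltonian loop $\gamma\in\pi_1(\mathrm{Ham}(X))$ is a reasonable starting point, but the Seidel element computation is where the argument does not close. The McDuff--Tolman formula you cite applies to genuine Hamiltonian $S^1$-actions on $X$; here $X$ does not carry a circle action with $\Sigma$ in its fixed set (only a neighborhood of $\Sigma$ does), and the loop $\gamma$ is a concatenation of the local rotation with an untwisting isotopy, not an $S^1$-action. Your statement that ``the Dehn-twist-trivializing half of $\gamma$ is Hamiltonian-isotopic to a constant loop in $\mathrm{Symp}_c(W)$'' is unclear as written (that half is a path, not a loop) and does not by itself rule out contributions to $S([\gamma])$ from holomorphic sections supported over the $W$-side of $X$: the Seidel element counts \emph{all} sections of $E_\gamma\to S^2$, not only those meeting $\Sigma$. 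You would need to show both (i) that the constant sections through $\Sigma$ contribute $\pm[K\omega]\otimes T^{-\ell}$ with unit coefficient, and (ii) that all remaining contributions assemble into a unit factor --- and neither step follows from the analysis of split Floer cylinders in the proof of Theorem \ref{thm:gysin}, whose moduli spaces live in $\widehat W$ and the symplectization of $Y$, not in the fibration $E_\gamma$. Finally, your remark that ``the flux obstruction in $H^1_c(W;\R)$ can be compensated'' needs care: for Weinstein $W$ with $\dim W\geq 4$ one has $H^1_c(W;\R)\cong H^{\dim W-1}(W;\R)=0$ so there is nothing to compensate, but in general ``compensating'' a nonzero flux is not possible, so you should state the dimension hypothesis explicitly. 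In summary, the idea is a valid alternative route of the kind the authors attribute to \cite{BJ13}, but the core Seidel element identity $S([\gamma])=[K\omega]\cdot u$ with $u$ a unit is asserted rather than proved, so the proposal has a real gap at its central step.
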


In the following two corollaries, we only consider a closed  symplectic manifold $(\Sigma,\omega_\Sigma)$ with integral $[K\omega_\Sigma]$  for some $K>0$ and forget  an ambient symplectic manifold $(X,\omega)$. Let $[K\omega_\Sigma]_\Z\in\H^2(\Sigma;\Z)$ be an integral lift of $[K\omega_\Sigma]$, and let $Y$ be a prequantization bundle over $\Sigma$ with Euler class $e_Y=-[K\omega_\Sigma]_\Z$. Note that we do not assume that a Liouville filling $W$ of $Y$ exists. However, if $(\Sigma,\omega_\Sigma)$ satisfies a certain condition (a bit stronger than semipositivity), the Rabinowitz Floer homology of the symplectization $\R\times Y$ is well-defined, and we again have the following Floer Gysin exact sequence.

\begin{cor}\label{cor:gysin_symplectization}
Let $Y$ and $(\Sigma,\omega_\Sigma)$ be as above.
	We assume that for every $A\in\pi_2(\Sigma)$ with $\omega_\Sigma(A)>0$, it holds either $c_1^{T\Sigma}(A)\geq2$ or $c_1^{T\Sigma}(A)\leq 2-\frac{1}{2}\dim \Sigma$.  
	Then the Rabinowitz Floer homology $\SH_*(Y)$ of $\R\times Y$ is defined, carries a $\Lambda_{\Sigma}$-module structure, and fits into a $\Lambda_{\Sigma}$-module long exact sequence
\[
    \cdots \longrightarrow \H_{*+2}(\Sigma;\Lambda_{\Sigma}) \stackrel{\delta^\Sigma_*} {\longrightarrow} \H_* (\Sigma;\Lambda_{\Sigma}) \longrightarrow \SH_{*-\frac{\dim \Sigma}{2}+1} (Y) \longrightarrow  \H_{*+1} (\Sigma;\Lambda_{\Sigma}) \stackrel{\delta^\Sigma_*}\longrightarrow \cdots,
\]
where $\delta^\Sigma_*$ is the quantum cap product with $e_Y$.
In the case that $\omega_\Sigma|_{\pi_2(\Sigma)}=0$ or $c_1^{T\Sigma}(A)\leq 2-\frac{1}{2}\dim\Sigma$ for all $A\in\pi_2(\Sigma)$ with $\omega_\Sigma(A)>0$, the long exact sequence  recovers the ordinary Gysin exact sequence tensored with $\Lambda_{\Sigma}$.
\end{cor}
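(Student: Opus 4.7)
The strategy is to mimic the proof of Theorem \ref{thm:gysin} with the Liouville filling $W$ replaced by the symplectization $\R\times Y$, noting that the closed ambient manifold $X$ entered only through the map $\delta^{X,\Sigma}_*$. First I would verify that the hypothesis on $(\Sigma,\omega_\Sigma)$ is precisely what makes $\SH_*(Y)$ well defined by $\bigvee$-shaped Hamiltonians on $\R\times Y$: the dichotomy that every $A\in\pi_2(\Sigma)$ with $\omega_\Sigma(A)>0$ satisfies either $c_1^{T\Sigma}(A)\geq 2$ or $c_1^{T\Sigma}(A)\leq 2-\tfrac{1}{2}\dim\Sigma$ is a (strong) semipositivity assumption that excludes sphere bubbles of the Chern numbers that could lower the Fredholm index by one and destroy transversality or compactness in the zero- and one-dimensional Floer moduli spaces. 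This is the standard setting in which Floer-theoretic invariants survive in the non-exact case.

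Once $\SH_*(Y)$ is defined, I would import the split construction of Section \ref{sec:Gysin_complement} almost verbatim: the cascade decomposition of the Floer complex uses only the prequantization structure of $Y\to\Sigma$ and an admissible almost complex structure on $\R\times Y$, not an ambient closed manifold. Generators correspond to critical points of a Morse function on $\Sigma$ paired with iteration numbers of the simple fiber Reeb orbit; the $\Lambda_\Sigma$-module structure arises from shifting the iteration number by the minimal Chern number of $\Sigma$; and the differential splits into contributions from Morse flow lines on $\Sigma$ coupled with $J$-holomorphic spheres in $\Sigma$, plus contributions from holomorphic half-cylinders in $\R\times Y$. The map $\delta^{X,\Sigma}_*$ of Theorem \ref{thm:gysin}(b), which counted spheres in $X$ meeting $\Sigma$, is simply absent here because there is no ambient $X$. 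Running the same algebraic argument as in the proof of Theorem \ref{thm:gysin} then produces the long exact sequence with connecting map $\delta^\Sigma_*$ equal to the quantum cap product with $e_Y$.

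For the final assertion: if $\omega_\Sigma|_{\pi_2(\Sigma)}=0$ then $\Lambda_\Sigma=\Z$, quantum homology reduces to singular homology, and $\delta^\Sigma_*$ is the classical cap product with $e_Y$, which is the connecting map of the ordinary Gysin exact sequence of the circle bundle $Y\to\Sigma$. If instead $c_1^{T\Sigma}(A)\leq 2-\tfrac{1}{2}\dim\Sigma$ for every $A\in\pi_2(\Sigma)$ with $\omega_\Sigma(A)>0$, then the unparametrized moduli space $\mathcal{M}_{0,0}(\Sigma,A;J)$ has virtual dimension $\dim\Sigma+2c_1^{T\Sigma}(A)-6\leq -2$ and is therefore generically empty; consequently all quantum corrections vanish, the quantum cap product with $e_Y$ agrees with the classical one, and the long exact sequence reduces to the ordinary Gysin sequence tensored with $\Lambda_\Sigma$.

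The main obstacle is the first step: without a compact filling one loses several of the a priori energy bounds and bubble-exclusion arguments in the Diogo-Lisi framework used in the proof of Theorem \ref{thm:gysin}, so these have to be replaced by a careful case analysis of possible bubble configurations under the given dichotomy on $(c_1^{T\Sigma},\omega_\Sigma)$. Once compactness and transversality have been secured in the symplectization, the algebraic output is a direct transcription of the proof of Theorem \ref{thm:gysin} with all references to $X$ and $W$ removed.
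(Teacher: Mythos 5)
Your overall strategy matches the paper's: define $\SH_*(Y)$ via $\bigvee$-shaped Hamiltonians on $\R\times Y$, run the split construction of Section~\ref{sec:Gysin_complement} without any reference to an ambient $X$, note that $\delta^{X,\Sigma}$ is absent by definition, and treat the two branches of the final assertion by observing that in those cases no nonconstant $J_\Sigma$-holomorphic spheres contribute. Your dimension count for the second branch ($\dim\Sigma + 2c_1^{T\Sigma}(A)-6\leq -2$) is correct, and your reading of the first branch is also correct.

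However, the step you flag as ``the main obstacle'' — ruling out escape of Floer and Floer continuation cylinders to the negative end of the symplectization — is precisely the content of the proof, and you leave it as a placeholder (``a careful case analysis of possible bubble configurations''). The paper either cites \cite[Corollary~3.7]{Ueb19} under the index-positivity condition, or gives a self-contained argument: if a sequence in $\mathcal{M}^*_{N,\ell=0}(\tilde q_{k_-},\tilde p_{k_+};\mathbf{A};H;J_Y)$ with $\mu(\tilde p_{k_+})-\mu(\tilde q_{k_-})\leq 2$ were to drift to $-\infty$, holomorphic sphere bubbles would appear in the limiting chain of pearls; each such bubble removes at least $2c_1^{T\Sigma}(A)\geq 4$ from the total index (this is where the hypothesis $c_1^{T\Sigma}(A)\geq 2$ is used), so the residual configuration would have index at most $N-2$, which contradicts the free $\R\times S^1$-reparametrization action on each of the $N-1$ intermediate nonconstant spheres in a chain of pearls. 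Without supplying this argument (or an explicit appeal to \cite{Ueb19}) your proposal does not establish that $\SH_*(Y)$ is well defined, which is the assertion of the corollary. A second, smaller omission: you also need the analogous simpleness result, namely that all contributing moduli spaces consist of simple chains of pearls when the index difference is $\leq 2$, which the paper obtains by adapting Proposition~\ref{continuation_transversality}.(b); you implicitly assume this when importing the Section~\ref{sec:Gysin_complement} machinery.
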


\begin{remark}
If there is a Liouville filling $W$ of $Y$ such that the map $\pi_1(Y)\to\pi_1(W)$ induced by the inclusion is injective and $c_1^{TW}|_{\pi_2(W)}=0$, then $\SH_*(\partial W)\cong\SH_*(Y)$. In this case, the exact sequence in Corollary \ref{cor:gysin_symplectization} coincides with the one in Theorem \ref{thm:gysin} with $\delta^{X,\Sigma}_* = 0$, see Remark \ref{rem:indep_filling} for a further discussion.
\end{remark}

 We denote by $\xi$ the contact structure on $Y$ and by $\Cont_0(Y,\xi)$ the identity component of the group of contactomorphisms on $(Y,\xi)$.
\begin{cor}\label{cor:orderability}
	Let $Y$ and $(\Sigma,\omega_\Sigma)$ be as in Corollary \ref{cor:gysin_symplectization}. Suppose that the Euler class $e_Y\in\H^2(\Sigma;\Z)$ is not a primitive class. Then, the following hold.
	\begin{enumerate}[(a)]
		\item The universal cover $\widetilde{\Cont}_0(Y,\xi)$ of $\Cont_0(Y,\xi)$ is orderable.
		\item Every $\varphi\in\Cont_0(Y,\xi)$ has a translated point with respect to any contact form supporting $\xi$. 
	\end{enumerate}
\end{cor}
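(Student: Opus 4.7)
The plan is to derive both (a) and (b) from the non-vanishing of $\SH_*(Y)$, and to establish the latter using the Floer Gysin exact sequence of Corollary~\ref{cor:gysin_symplectization}. For the first reduction I would invoke known results of Albers--Merry and Albers--Fuchs--Merry, together with work of Sandon and Eliashberg--Polterovich: once $\SH_*(Y)\neq 0$, the associated spectral invariants force $\widetilde{\Cont}_0(Y,\xi)$ to be orderable and produce a translated point for every $\varphi\in\Cont_0(Y,\xi)$ with respect to any contact form supporting $\xi$. These results are usually phrased in the presence of a Liouville filling, but their constructions are formulated entirely in terms of $\SH_*(Y)$ and transfer to the symplectization setting, in which $\SH_*(Y)$ is defined by Corollary~\ref{cor:gysin_symplectization}.

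By exactness of the Floer Gysin sequence, $\SH_*(Y)=0$ in every degree if and only if the quantum cap map $\delta^\Sigma_*:\H_{*+2}(\Sigma;\Lambda_\Sigma)\to \H_*(\Sigma;\Lambda_\Sigma)$ is an isomorphism in every degree, so it suffices to exhibit one degree in which $\delta^\Sigma_*$ fails to be surjective. Since $e_Y$ is not primitive, its image in $\H^2(\Sigma;\Z)_{\mathrm{free}}$ is either zero (the torsion case) or equals $m\bar\mu$ for some integer $|m|\geq 2$. Lifting $\bar\mu$ to $\mu\in\H^2(\Sigma;\Z)$, we can write $e_Y=m\mu+\tau$ with $\tau$ torsion (setting $m=0$, $\tau=e_Y$ in the torsion case), and the $\Z$-bilinearity of the quantum cap product gives
\[
\delta^\Sigma_* \;=\; m\,(\mu\cap_q\cdot)\,+\,(\tau\cap_q\cdot).
\]
The second summand has image contained in the torsion submodule $\H_*(\Sigma;\Lambda_\Sigma)_{\mathrm{tors}}$, since a torsion class in $\H^2(\Sigma;\Z)$ is annihilated by some positive integer and quantum cap is $\Z$-linear. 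Hence $\mathrm{Im}\,\delta^\Sigma_*\subset m\,\H_*(\Sigma;\Lambda_\Sigma)+\H_*(\Sigma;\Lambda_\Sigma)_{\mathrm{tors}}$.

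Now I examine the component $\delta^\Sigma_*:\H_2(\Sigma;\Lambda_\Sigma)\to\H_0(\Sigma;\Lambda_\Sigma)$. The element $[\mathrm{pt}]\otimes 1$ generates a torsion-free $\Z$-direct summand of $\H_0(\Sigma;\Lambda_\Sigma)$, so its class in $\H_0(\Sigma;\Lambda_\Sigma)/\bigl(m\,\H_0(\Sigma;\Lambda_\Sigma)+\H_0(\Sigma;\Lambda_\Sigma)_{\mathrm{tors}}\bigr)$ is nonzero when $|m|\geq 2$; when $m=0$, the image of $\delta^\Sigma_*$ is entirely torsion and hence also misses $[\mathrm{pt}]$. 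In both cases $\delta^\Sigma_*$ is not surjective, which forces $\SH_{1-\dim\Sigma/2}(Y)\neq 0$ and, combined with the reduction in the first paragraph, yields (a) and (b). The hardest part of this plan is not the linear-algebraic step but the first paragraph: one needs to verify that the Albers--Merry type constructions for orderability and translated points, usually written for Rabinowitz Floer homology of a Liouville filling, apply verbatim to the symplectization version of $\SH_*(Y)$ supplied by Corollary~\ref{cor:gysin_symplectization}.
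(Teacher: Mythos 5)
Your proposal follows the paper's approach: deduce $\SH_*(Y)\neq0$ from non-primitivity of $e_Y$ via the Floer Gysin sequence, then invoke Rabinowitz-Floer-theoretic criteria for orderability and translated points. Your divisibility-plus-torsion argument for why $\delta^\Sigma_*$ fails to be an isomorphism is a correct (and slightly more explicit) rendition of the paper's argument in Corollary~\ref{cor:invertibility}.(a), where the map on $\H_*^{\mathrm{free}}(\Sigma;\mathbb{Z})\otimes\Lambda$ is shown to be divisible by $d\geq2$; note also that the case $m=0$ you include is vacuous, since $e_Y$ reduces to $-[K\omega_\Sigma]\neq0$ in $\H^2(\Sigma;\mathbb{R})$.

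The one step you flag but do not carry out — transferring the orderability and translated-point criteria from the Liouville-fillable setting to the symplectization — is precisely what the paper addresses. The paper cites \cite{AM18} for orderability and \cite{MN18} for translated points; both are formulated in terms of Rabinowitz Floer homology defined via the Rabinowitz action functional. The paper then observes two things: (i) for compactness in $\mathbb{R}\times Y$ one uses the hypothesis on $(\Sigma,\omega_\Sigma)$ from Corollary~\ref{cor:gysin_symplectization}, exactly as in its proof; and (ii) the isomorphism between the Rabinowitz-action-functional definition and the $\bigvee$-shaped-Hamiltonian definition $\SH_*(Y)$, established in \cite{CFO} for Liouville fillings, carries over to this setting for the same compactness reasons. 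Without some version of this bridge your first paragraph is incomplete; with it, your argument reproduces the paper's proof.
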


We should mention that \cite{ASZ16} first studied the contact topology of $Y$ using a Gysin-type exact sequence.
They also proved results in Corollary \ref{cor:orderability}.

\begin{remark}
The notion of orderability was introduced by Eliashberg and Polterovich \cite{EP00} and further studied in   \cite{Bh01,EKP06,Mil08, CN10,San11,AF12, AFM15,Wei15,BZ15,KvK16, CP16,CFP17,CCDR19,Li20,Z20,GKPS21,AK23,AA23}. 

 The notion of translated points was introduced by Sandon \cite{San12, San13}. Translated points can be understood as leafwise intersection points of a Hamiltonian lift of a contactomorphism. We also refer to \cite{AH16,She17,AM18,MN18,MU19,Ter21, Al22a, Al22b, Al22b,Al22b,UZ22,Can22a,DUZ23,Can23,AK23,Oh23a,Oh23b}  for earlier results on translated points. 
\end{remark}

\begin{remark}
 Gysin-type exact sequences  were constructed in \cite{Per08,BK} for different types of Floer homologies, and recently in \cite{AK23} for the Rabinowitz Floer homology of negative line bundles. Moreover, it was pointed out in \cite[Remark 9.8]{DL1} that their results would lead to a Gysin-type exact sequence for positive symplectic homology. 

Although settings are slightly different, the exact sequence in Corollary \ref{cor:gysin_symplectization} is isomorphic to the one in \cite[Theorem 1.1]{AK23} and presumably also to the one in \cite[Section 14]{BK}. Corollary \ref{cor:orderability} generalizes \cite[Theorem 1.1.(e)]{AK23}.
\end{remark}

Finally in Section \ref{sec:computations}, we compute Rabinowitz Floer homology for some examples using the exact sequence in Theorem \ref{thm:gysin}. 
Our examples include the complements of projective hypersurfaces of degree $1 \leq d \leq n + 1$ in $\mathbb{CP}^{n+1}$. In particular, we see that the Rabinowitz Floer homology vanishes for degree $n + 1$ hypersurfaces in $\mathbb{Q}$-coefficients.


\section{Setup and preliminaries}

\subsection{Liouville structure on $X\setminus\Sigma$}\label{sec:setting}
As mentioned in the introduction, we assume that a closed symplectic manifold $(X,\omega)$ and a codimension two closed connected symplectic submanifold $\iota:\Sigma\hookrightarrow X$ with $\omega_\Sigma=\iota^*\omega$ satisfy the following. There exist $0<K<\tau_X$ such that
\[
    c_1^{TX}(B)=\tau_X \omega(B)\qquad \forall B \in \pi_2(X),\qquad [\Sigma]=\PD([K\omega]_\Z)\in \H_{\dim X-2}(X;\mathbb{Z}),
\]
where $[K\omega]_\Z$ is an integral lift of $[K\omega]\in\H_{\dim X-2}(X;\mathbb{R})$. Note that $K\omega(B)$ agrees with the intersection number between $\Sigma$ and a representative of $B$, and we simply write 
\[
K\omega(B)=\Sigma\cdot B,\qquad B \in \pi_2(X).  
\]
 Another immediate consequence of our hypothesis is that $(\Sigma,\omega_\Sigma)$ is also spherically monotone,
\begin{equation}\label{monoto}
    c_1^{T\Sigma}(A) = (\tau_X - K) \, \omega_\Sigma(A) \qquad \forall A\in\pi_2 (\Sigma).
\end{equation} 
Indeed, the symplectic normal bundle of $\Sigma$,  
\[
\pi_E:E\longrightarrow\Sigma,
\] 
with respect to $\iota:\Sigma\hookrightarrow X$ has the first Chern class $c_1^{E}=[K\omega_\Sigma]_\Z:=\iota^*[K\omega]_\Z\in\H^2(\Sigma;\Z)$. Now \eqref{monoto} follows from $\iota^*c_1^{TX}=c_1^{T\Sigma}+c_1^{E}$.

We endow $\pi_{E}:E\to\Sigma$ with a Hermitian structure. We denote by $\rho=|\cdot|$ the norm induced by the Hermitian structure.
There is a connection 1-form $\Theta$ on $E \setminus \Sigma$, where $\Sigma$ is identified with the zero-section of $E$, such that
\begin{equation*}\label{connection}
    \Theta(\partial_t) = \frac{1}{2\pi}, \qquad d\Theta = (\pi_E)^*(-K\omega_\Sigma) \;\textrm{ on }\; E\setminus\Sigma.
\end{equation*}
Here $\partial_t$ denotes the fundamental vector field generated by the $U(1)$-action on $E$.
We equip $E$ with a non-exact symplectic form $\omega_E$ defined by 
\begin{equation*}
    \omega_E := \frac{2\rho e^{-\rho^2}}{K} d\rho \wedge \Theta +e^{-\rho^2}\pi^*_E\,\omega_\Sigma,
\end{equation*}
which equals $d(-\frac{e^{-\rho^2}}{K}\Theta)$ on $E\setminus\Sigma$.
The Liouville vector field $Z_E=-\frac{1}{2\rho}\frac{\partial}{\partial \rho}$ on $E \setminus \Sigma$ satisfying $\iota_{Z_E} \omega_E = -\frac{e^{-\rho^2}}{K}\Theta$ points toward the zero-section.
According to \cite[Lemma 2.1]{Ops13} or \cite[Lemma 2.2]{DL1}, there exist an open disk subbundle $\mathcal U\subset E$ of radius $\rho_0>0$, a $1$-form $\lambda$ on $X\setminus\Sigma$ satisfying $d\lambda=\omega|_{X\setminus\Sigma}$, and an embedding 
\begin{equation}\label{tubular}
    \varphi:\mathcal U \longrightarrow  X,
\end{equation}
which is the identity map on $\Sigma$ and satisfies $\varphi^*\lambda= -\frac{e^{-\rho^2}}{K}\Theta$ on $\mathcal U\setminus\Sigma$.

We also consider the circle bundle
\[
\pi_Y  :Y := \left\{ v \in E \, \middle | \, \rho(v) = 1 \right \} \longrightarrow \Sigma,
\]
with $\pi_Y := \pi_E |_Y$.
Then $\alpha := -\frac{1}{K}\Theta|_{Y}$ is a contact form and the associated Reeb vector field $R$ has $\frac{1}{K}$-periodic flow such that each orbit is contained in a fiber of $Y\to\Sigma$.
In fact, $R$ extends to $E$, denoted again by $R$, in such a way that $-\frac{1}{2\pi K} R$ coincides with the fundamental vector field generated by the $U(1)$-action on $E$.
As a circle bundle, we orient fibers of $Y$ by $R$.
This implies $e_Y = -[K\omega_\Sigma]_{\mathbb{Z}}$, where $e_Y$ denotes the Euler class of $Y$.

 Now, we consider the exact symplectomorphism
\begin{equation*}\label{psi_1}
	\psi_1:\big(E\setminus \Sigma,d(-\tfrac{e^{-\rho^2}}{K}\Theta)\big) \rightarrow \big((-\infty,0)\times Y,d(e^r\alpha)\big),\qquad     v \mapsto (-|v|^2, v/{|v|}),
\end{equation*}
 where $r$ denotes the coordinate of $(-\infty, 0) \subset \mathbb{R}$. 
We have $\psi_1(\mathcal U \setminus \Sigma)=(-\rho_0^2,0) \times Y$  and $d\psi_1 Z_E = \partial_r$.
Therefore, using $\psi_1$ and $\varphi$ defined in \eqref{tubular}, we attach  $([0,\infty) \times Y, d(e^r\alpha))$ to $X\setminus\Sigma$ and obtain a Liouville manifold
\begin{equation*}\label{completion}
    \widehat{W} := \big(X\setminus \Sigma\big)  \cup \big([0,\infty) \times Y\big),
\end{equation*}
where we also extend the 1-form $\lambda$ to $\widehat{W}$ by setting $\lambda=e^r\alpha$ on $[0, \infty) \times Y$. 
The Liouville flow of $\lambda$ provides an embedding of the whole symplectization $\mathbb R\times Y$  into $\widehat{W}$, which we identify with its image and write 
\[
\mathbb{R}\times Y\subset \widehat{W}.
\] 
We also denote by $W$ the closure of $X\setminus\Sigma$ in $\widehat{W}$. That is, $(W,\lambda)$ is a Liouville domain whose boundary $\partial W$ is diffeomorphic to $Y$ satisfying $\lambda|_{\partial W} = \alpha$. Conversely, it is also possible to recover $(X,\omega)$ from $(W,\lambda)$, see Section \ref{sec:Gysin_symplectization} below.

\subsection{Morse functions on $\Sigma$ and $Y$}\label{sec:Morse}

For a smooth Morse function $f_\Sigma:\Sigma \rightarrow \mathbb{R}$, 
we denote by $\Crit f_\Sigma$ the set of critical points of $f_\Sigma$ and by $\ind_{f_\Sigma} (p)$ the Morse index of $f_\Sigma$ at a critical point $p$.
Let $Z_\Sigma$ be a smooth gradient-like vector field for $f_\Sigma$ on $\Sigma$, i.e.~$\frac{1}{c}|df_\Sigma|^2 \leq Z_\Sigma(f_\Sigma) \leq c|df_\Sigma|^2$ for some Riemannian metric on $\Sigma$ and $c>0$.
Writing $\varphi^t_{Z_\Sigma}$ for the flow of $Z_\Sigma$, we define the unstable and stable manifolds at $p \in \Crit f_\Sigma$ with respect to $Z_\Sigma$ by
\begin{equation*}
\begin{split}
	    W^u_{Z_\Sigma}(p) &:= \big\{ q \in \Sigma \suchthat \displaystyle \lim_{t\to -\infty} \varphi^t_{Z_\Sigma}(q) = p \big\},\\
	    W^s_{Z_\Sigma}(p) &:= \big\{ q \in \Sigma \suchthat \displaystyle \lim_{t\to +\infty} \varphi^t_{Z_\Sigma}(q) = p \big\},
\end{split}
\end{equation*}
respectively. 
Then, we have 
$\dim W^u_{Z_\Sigma}(p) = \dim \Sigma - \ind_{f_\Sigma}(p)$ and $\dim W^s_{Z_\Sigma}(p) = \ind_{f_\Sigma}(p)$.
We assume that $(f_\Sigma,Z_\Sigma)$ is a Morse-Smale pair, namely all unstable and stable manifolds intersect transversely.

We also choose a Morse-Smale pair $(f_Y,Z_Y)$ of a smooth Morse function $f_Y:Y \rightarrow \mathbb{R}$ and a smooth gradient-like vector field $Z_Y$ for $f_Y$ such that $f_Y$ has exactly two critical points on $\pi_Y^{-1}(p)$ for each $p\in\Crit f_\Sigma$ and $Z_Y -  Z_\Sigma^\mathrm{h}$, where $ Z_\Sigma^\mathrm{h}$ denotes the horizontal lift of $Z_\Sigma$ to $Y$ with respect to a certain connection 1-form,  is tangent to the fibers of $\pi_Y:Y \rightarrow \Sigma$. 
For the construction of such $(f_Y, Z_Y)$, we refer to \cite{Oan} or Section \ref{sec:Morse_Gysin}.
We denote by $\hat{p}$ and $\check{p}$ the maximum and minimum point of $f_Y|_{\pi_Y^{-1}(p)}$ respectively. 
Hence, we have
\begin{equation}\label{crit_lift}
    \Crit f_Y = \bigcup_{ p \in \Crit f_\Sigma} \left \{ \, \hat{p}, \check{p} \, \right \},\qquad \text{ind}_{f_Y}(\tilde{p})=\text{ind}_{f_\Sigma}(p)+i(\tilde{p}) \,\textrm{ with }\, i(\hat p)=1,\;i(\check p)=0.
\end{equation}

Now, we explain our orientation convention for unstable and stable manifolds.
For each $p \in \Crit {f_\Sigma}$, we choose an arbitrary orientation for $W^s_{Z_\Sigma}(p)$. 
This induces an orientation on $W^u_{Z_\Sigma}(p)$ by requiring the isomorphism
\begin{equation}\label{eq:stable_splitting}
 T_p\Sigma \cong   T_p W^u_{Z_\Sigma}(p) \oplus T_p W^s_{Z_\Sigma}(p) 	
\end{equation}
to be orientation-preserving, where the orientation on $\Sigma$ is induced by $\omega_\Sigma$.

We orient $Y$ such that 
\begin{equation}\label{eq:ori_Y}
T_{y}Y\cong \mathbb{R} R_y\oplus  T_{{\pi_Y}(y)}\Sigma	
\end{equation}
preserves orientations where $R$ denotes the Reeb vector field on $Y$ and the isomorphism is given by mapping the horizontal subspace of $T_yY$ to $T_{\pi_Y(y)}\Sigma$ by $d \pi_Y$. 
We denote the unstable and stable manifolds with respect to $Z_Y$ by $W^u_{Z_Y}(\tilde{p})$ and $W^s_{Z_Y}(\tilde{p})$ respectively for $\tilde{p} \in \Crit f_Y$.
Since $Z_Y-Z_\Sigma^{\mathrm h}$ is tangent to the fibers, the projection $\pi_Y$ maps (un)stable manifolds in $Y$ to (un)stable manifolds in $\Sigma$:
\begin{align}
&W^u_{Z_Y}(\hat{p})\to W^u_{Z_\Sigma}(p), \quad W^s_{Z_Y}(\hat{p})\to W^s_{Z_\Sigma}(p), \label{eq:Morse_proj_hat}\\[.5ex]
&W^u_{Z_Y}(\check{p}) \to W^u_{Z_\Sigma}(p), \quad W^s_{Z_Y}(\check{p})\to W^s_{Z_\Sigma}(p). \label{eq:Morse_proj_check}
\end{align}

We orient $W^s_{Z_Y}(\hat{p})$ for every $\hat p \in \Crit f_Y$ so that the second map in \eqref{eq:Morse_proj_hat} induces an orientation-preserving isomorphism $T_{y} W^s_{Z_Y}(\hat{p}) \cong \mathbb{R} R_{y} \oplus T_{\pi_Y(y)}W^s_{Z_\Sigma}(p)$ for  $y\in W^s_{Z_Y}(\hat{p})$.
This together with the orientation on $Y$ endows $W^u_{Z_Y}(\hat{p})$ with an orientation determined by requiring the splitting  
\begin{equation}\label{eq:stable_splitting_Y}
	 T_{\hat p}Y \cong   T_{\hat p} W^u_{Z_Y}(\hat p) \oplus T_{\hat p} W^s_{Z_Y}(\hat p) 
\end{equation}
to be orientation-preserving. 
One can readily see that then the first map in \eqref{eq:Morse_proj_hat} is orientation-preserving if and only if $\dim\Sigma-\ind_{f_\Sigma}(p)$ is even. 
Next, we orient $W^s_{Z_Y}(\check{p})$ for every $\check p \in \Crit f_Y$ so that the second map in \eqref{eq:Morse_proj_check} is orientation-preserving.
This induces an orientation on $W^u_{Z_Y} (\check{p})$ by requiring \eqref{eq:stable_splitting_Y} with $\hat p$ replaced by $\check p$ to be orientation-preserving.
Then our convention \eqref{eq:stable_splitting} yields that the isomorphism $T_yW^u_{Z_Y}(\check{p})\cong \mathbb{R} R_y\oplus T_{\pi_Y(y)}W^u_{Z_\Sigma}(p)$  given by the first map in \eqref{eq:Morse_proj_check} is orientation-preserving as well.

\subsection{Reeb orbits and indices}\label{sec:index}
Let $Y_p$ denote the fiber of $Y\to\Sigma$ over $p\in\Sigma$ oriented by $R$.
We define the \textit{multiplicity} $\m (x)\in\mathbb{Z}$ of a continuous loop $x:\mathbb R/T\mathbb Z \rightarrow Y_p$ for $T>0$ by the degree of $x$ viewed as a map between oriented circles: 
\[
\m (x) :=\mathrm{deg}\big(x:\mathbb R/T\mathbb Z \rightarrow Y_p\big).
\]
Since the flow of $R$ has period $1/K$, it follows that a Reeb orbit $\gamma:\mathbb R/T\mathbb Z\to Y$ of period $T$ has $\m(\gamma) = TK > 0$ and is the $TK$-th iterate of the underlying simple Reeb orbit, which winds a fiber of $Y$ once.

We consider the map $\jmath:\Z\cong\pi_1 (Y_p) \to \pi_1 (W)$ induced by the inclusion $Y_p  \subset W$.
Let $m_W$, $m_X$, and $m_\Sigma$ be the nonnegative integers characterized by
\begin{equation*}\label{m_X}
 \ker \jmath = m_W \mathbb Z,\qquad   (K\omega)(\pi_2(X))=m_X\mathbb{Z},\qquad  (K\omega_\Sigma)(\pi_2(\Sigma))=m_\Sigma\mathbb{Z}.
\end{equation*}
One can readily see that $m_X$ divides $m_\Sigma$. Our convention is that $0$ is divisible by any integer, and $0$ divides only $0$. 
We also note that if the map $\pi_2(\Sigma)\to\pi_2(X)$ induced by the inclusion is surjective, we have $m_\Sigma=m_X$. By the definition of $m_W$, a continuous loop $x:\mathbb R/T\mathbb Z \rightarrow Y_p$ is contractible in $W$ if and only if $m_W$ divides $\m (x)$.

\begin{prop}\label{contractible}
Let $x:\mathbb{R}/T \mathbb Z \rightarrow Y_p$ be a continuous loop for some $p\in\Sigma$ and $T>0$.
\begin{enumerate}
    \item[(a)] $x$ is contractible in $Y$ if and only if $m_\Sigma$ divides $\m(x)$. In particular, $m_W$ divides $m_\Sigma$.
    \item[(b)] If $x$ is contractible in ${W}$, then $m_X$ divides $\m(x)$. In particular, $m_X$ divides $m_W$.
\end{enumerate}
\end{prop}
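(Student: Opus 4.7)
The plan is to prove (a) via the homotopy long exact sequence of the prequantization bundle $S^1\hookrightarrow Y\to\Sigma$, and to prove (b) via a capping construction inside $X$ that upgrades a bounding disk in $W$ to a $2$-sphere whose intersection with $\Sigma$ recovers $\m(x)$.

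For part (a), consider the tail of the homotopy long exact sequence
\[
\pi_2(Y)\longrightarrow \pi_2(\Sigma)\stackrel{\partial}{\longrightarrow}\pi_1(S^1)\longrightarrow \pi_1(Y)\longrightarrow \pi_1(\Sigma).
\]
The connecting map $\partial:\pi_2(\Sigma)\to\pi_1(S^1)\cong\Z$ is, up to sign, evaluation of the Euler class $e_Y=-[K\omega_\Sigma]_\Z$, so its image equals $K\omega_\Sigma(\pi_2(\Sigma))=m_\Sigma\Z$. Identifying $\pi_1(Y_p)\cong\Z$ via $\m$ (since $Y_p$ is oriented by $R$), exactness gives that $x$ is nullhomotopic in $Y$ iff $\m(x)\in m_\Sigma\Z$, i.e.\ iff $m_\Sigma\mid\m(x)$. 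Since the inclusion $Y\hookrightarrow W$ factors $\jmath$ through $\pi_1(Y)$, we have $m_\Sigma\Z=\ker\bigl(\pi_1(Y_p)\to\pi_1(Y)\bigr)\subset\ker\jmath=m_W\Z$, so $m_W\mid m_\Sigma$.

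For part (b), suppose $x$ bounds a disk $D:D^2\to W$, which via $W\subset X$ lies entirely in $X\setminus\Sigma$. Using the tubular embedding $\varphi:\mathcal U\to X$ of Section \ref{sec:setting}, take a small radius $\epsilon>0$ and set $D_p:=\varphi\bigl(\{v\in E_p : |v|\leq\epsilon\}\bigr)\subset X$. Then $D_p$ is a smoothly embedded disk meeting $\Sigma$ transversely at the single point $p$; the symplectic compatibility between the orientation of $E_p$ (as a complex line in the symplectic normal bundle) and the orientation of $\Sigma$ gives $D_p\cdot[\Sigma]=+1$, while $\partial D_p$ is, up to homotopy in $X\setminus\Sigma$, the simple fiber loop of $Y_p$. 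An $\m(x)$-fold branched cover of $D_p$ yields a disk $D':D^2\to X$ with $\partial D'$ homotopic to $x$ in $X\setminus\Sigma$ and $D'\cdot[\Sigma]=\m(x)$. Gluing $D$, a cylinder in $X\setminus\Sigma$ realizing the homotopy, and a reversed copy of $D'$ along $x$ produces a $2$-sphere $S:S^2\to X$; since only $D'$ contributes to the intersection with $\Sigma$, we get $[S]\cdot[\Sigma]=\pm\m(x)$. But $[S]\cdot[\Sigma]=K\omega([S])\in K\omega(\pi_2(X))=m_X\Z$, so $m_X\mid\m(x)$. Applying this to any loop of multiplicity $m_W$ (which is contractible in $W$ by the definition of $m_W$) gives $m_X\mid m_W$.

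The main subtlety lies in part (b): identifying the oriented loop $\partial D_p$ in the normal disk bundle with the simple fiber loop in $Y_p$ used to define $\m$, and carefully assembling $D$, the homotopy cylinder, and $D'$ into a genuine element of $\pi_2(X)$. However, because $m_X\Z$ is closed under negation, sign ambiguities cannot affect the divisibility conclusion, so only the absolute value of the intersection number matters for the statement of the proposition.
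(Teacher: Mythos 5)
Your proof takes essentially the same route as the paper: part (a) via the homotopy long exact sequence of the fiber bundle with the connecting map identified as evaluation of the Euler class, and part (b) by closing up a bounding disk in $X\setminus\Sigma$ against a capping disk lying in a normal fiber of $E\to\Sigma$, then reading off the intersection number with $\Sigma$ as $\pm\m(x)$. The paper takes the fiber capping disk directly (so its boundary is literally $x$, avoiding the auxiliary homotopy cylinder), whereas you build it as an $\m(x)$-fold branched cover of a small embedded normal disk; these describe the same object, so the two arguments are the same in substance.
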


\begin{proof}
 We consider the homotopy long exact sequence
    \begin{equation}\label{homotopy_les}
        \cdots \longrightarrow \pi_2(Y) \longrightarrow  \pi_2(\Sigma) \stackrel{\delta}{\longrightarrow}   \pi_1(S^1) \stackrel{\mathfrak{i}}{\longrightarrow}  \pi_1(Y) \longrightarrow \cdots 
    \end{equation}
    where $\delta(A)=e_Y(A)=-K\omega_\Sigma(A)\in \mathbb{Z}\cong \pi_1(S^1)$ and $\mathfrak i$ is induced by $S^1\cong Y_p\subset Y$. 
    The homotopy class of $x$ in Y is $\mathfrak{i}(\m(x))$ where $\m(x)\in\mathbb Z\cong\pi_1(S^1)$, and thus $[x]=[\mathfrak{i}(\m(x))]=0$ in $\pi_1(Y)$ exactly when $m_\Sigma$ divides $\m(x)$. This proves (a). 
    
    To show (b), suppose that $x$ is contractible in ${W}$. We view $x$ as a contractible loop in $X\setminus\Sigma$. Then there is a  capping disk $u : D^2:=\{z\in\mathbb{C}\mid |z|\leq 1\} \rightarrow X\setminus \Sigma$ of $x$, i.e.~$u$ is continuous and $u(e^{2\pi i t}) = x(Tt)$. We choose another capping disk $v:D^2 \rightarrow \varphi(\mathcal{U})\subset X$ of $x$ in $X$ such that $\varphi^{-1}\circ v(D^2)$ is contained in a  fiber of $E\to \Sigma$. Gluing $u$ and $v$ with orientation reversed, denoted by $v^\mathrm{rev}$, along $x$, we obtain $u\# v^\mathrm{rev}:S^2\to X$ with $(u\# v^\mathrm{rev})\cdot\Sigma=\m(x)$. For the sign, recall that $\alpha=-\frac{1}{K}\Theta$. This proves that $\m(x)$ is divisible by $m_X$. 
\end{proof}

\begin{prop}\label{prop:m_W}
Consider the map $\pi_1(Y) \rightarrow \pi_1({W})$ induced by the inclusion.
\begin{enumerate}
    \item[(a)] If this map is injective, then $m_W=m_\Sigma$.
    \item[(b)] If this map is surjective, then $m_W = m_X$.
\end{enumerate}    
\end{prop}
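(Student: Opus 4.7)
Proposition \ref{contractible} already supplies $m_W \mid m_\Sigma$ in (a) and $m_X \mid m_W$ in (b), and by the convention that $0$ divides only $0$, this also resolves the borderline cases $m_W = 0$ in (a) and $m_X = 0$ in (b). It therefore remains to prove $m_\Sigma \mid m_W$ when $m_W > 0$ in (a), and $m_W \mid m_X$ when $m_X > 0$ in (b). Part (a) is the short one: fixing $* \in Y_p$ and denoting by $\alpha \in \pi_1(Y_p,*)$ the based fiber loop of multiplicity one, $\alpha^{m_W}$ is null-homotopic in $W$ by definition, hence null-homotopic in $Y$ by the injectivity hypothesis, and Proposition \ref{contractible}(a) yields $m_\Sigma \mid m_W$.

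For (b), following the recipe of the proof of Proposition \ref{contractible}(b), pick $B \in \pi_2(X)$ with $K\omega(B) = m_X$ and a smooth representative $f: S^2 \to X$ transverse to $\Sigma$ at points $z_1, \dots, z_k$ with local intersection signs $\epsilon_i$ and $\sum_i \epsilon_i = m_X$. Excising small open disks $D_i$ around the $z_i$ and pushing the resulting boundary circles to $Y$ via the tubular model of Section~\ref{sec:setting}, one obtains a genus-zero surface $\tilde S \to W$ whose $i$-th boundary is a fiber loop $\tilde\gamma_i \subset Y$ of multiplicity $\epsilon_i$. Arrange a basepoint $z_0 \in \tilde S$ with $f(z_0) = * \in Y$ and pick disjoint interior arcs $\xi_i$ in $\tilde S$ from $z_0$ to $\partial D_i$; cutting $\tilde S$ along the arcs exhibits the based loop
\begin{equation*}
    \beta \;:=\; \prod_{i=1}^{k} (f\circ\xi_i)\,\tilde\gamma_i\,(f\circ\xi_i)^{-1}
\end{equation*}
as the boundary of a disk in $W$, so $\beta = 1$ in $\pi_1(W,*)$.

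The remaining step is to identify $\beta$ with $\alpha^{m_X}$ in $\pi_1(Y,*)$. For each $i$, pick a path $\eta_i \subset Y$ from $*$ to $\tilde\gamma_i(0)$; the loop $(f\circ\xi_i)\eta_i^{-1}$ sits in $W$ at $*$, and the surjectivity hypothesis produces a loop $\mu_i$ in $Y$ with the same class in $\pi_1(W,*)$. Hence $f\circ\xi_i$ is homotopic, rel endpoints and inside $W$, to $\mu_i\eta_i$, and substituting rewrites $\beta$, up to homotopy in $W$, as the loop $\prod_i \mu_i(\eta_i\tilde\gamma_i\eta_i^{-1})\mu_i^{-1}$ lying entirely in $Y$. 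The principal $S^1$-bundle structure of $Y \to \Sigma$ now contributes two facts: the free homotopy $(s,t) \mapsto \eta_i(s) \cdot e^{2\pi i \epsilon_i t}$ identifies $\eta_i\tilde\gamma_i\eta_i^{-1}$ with $\alpha^{\epsilon_i}$ in $\pi_1(Y,*)$, and the analogous homotopy $(s,t)\mapsto \mu(s) \cdot e^{2\pi i t}$ for any loop $\mu$ at $*$ shows that $\alpha$ is central in $\pi_1(Y,*)$. Centrality collapses the product to $\alpha^{\sum_i \epsilon_i} = \alpha^{m_X}$, and comparing with $\beta = 1$ in $\pi_1(W,*)$ shows $\alpha^{m_X} \in \ker \jmath = m_W \mathbb{Z}$, i.e.\ $m_W \mid m_X$. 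Combined with Proposition \ref{contractible}(b), this gives $m_W = m_X$.

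\textbf{Main obstacle.} The crux is the algebraic collapse in the last step: the surjectivity hypothesis alone only rewrites $\beta$ as a product of conjugated fiber loops inside $Y$, and it is the centrality of fiber loops in a principal $S^1$-bundle that forces the non-commutative word $\prod_i \mu_i(\eta_i\tilde\gamma_i\eta_i^{-1})\mu_i^{-1}$ to collapse to the pure meridian power $\alpha^{m_X}$. Without centrality the argument would yield only the homological relation $m_X[\mathrm{fiber}] = 0$ in $H_1(W)$, strictly weaker than $m_W \mid m_X$.
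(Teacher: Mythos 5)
Your proof is correct and takes essentially the same route as the paper: (a) is read off Proposition \ref{contractible}(a), and for (b) you excise disks around $w^{-1}(\Sigma)$, use the planar-surface relation in $\pi_1(W)$, and invoke surjectivity of $\pi_1(Y)\to\pi_1(W)$ to push the conjugating paths into $Y$ — precisely the paper's $\rho'_i$-trick. One small remark: your appeal to centrality of $\alpha$ in $\pi_1(Y)$ is valid but avoidable, since the composed conjugating path $\mu_i\eta_i$ already lies in $Y$, so the single torus homotopy $(s,t)\mapsto(\mu_i\eta_i)(s)\cdot e^{2\pi i\epsilon_i t}$ directly gives $(\mu_i\eta_i)\tilde\gamma_i(\mu_i\eta_i)^{-1}=\alpha^{\epsilon_i}$ in $\pi_1(Y,*)$, which is what the paper's terse ``Since $m_X=\sum_i\m(w\circ l_i)$, we have $\dots$'' is encoding.
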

\begin{proof}
   Statement (a) readily follows from Proposition \ref{contractible}.(a).

Now we prove (b). Due to Proposition \ref{contractible} (b), it suffices to show that $m_W$ divides $m_X$. 
   Suppose that there is a continuous map $w:S^2\to X$ with $w\cdot\Sigma=K\omega([w])=m_X$. We may assume that $w$ is smooth and intersects $\Sigma$ at finitely many points $z_1,\dots,z_n\in S^2$. Let $U$ be a sufficiently small open tubular neighborhood of $\Sigma$ in $X$. To ease the notation, up to a homeomorphism, we identify $X\setminus U$ with $W$ and $\partial(X\setminus U)$ with $Y$. We may assume that there are mutually disjoint open disk neighborhoods $D_i$ of $z_i$ in $S^2$ such that $w^{-1}(U)=\bigcup \limits_{i=1}^n D_i$ and $w(\partial D_i)\subset Y_{w(z_i)}$. We take parametrizations $l_i : \mathbb{R} / \mathbb{Z} \to S^2$ of $\partial D_i$.
     We also take paths $c_{i} : [0, 1] \to S^2\setminus\bigcup \limits_{i=1}^n D_i$ for $i = 2, \dots, n$ such that 
     \[
     c_i ((0, 1)) \cap c_j((0,1))=\emptyset \;\;\;\forall i\neq j,\qquad c_i (0) = l_1 (0)\;\;\;\forall i,\qquad c_i (1) = l_i (0) \;\;\;\forall i.
     \]
   Let $p_i := w \circ l_i (0)$. Let  $\rho_i : \pi_1 (W, p_i) \to \pi_1 (W, p_1)$ be the isomorphism given by conjugation with $w \circ c_i$.
   Then, the homotopy class 
   \[
   [w \circ l_1] \cdot \rho_2 ([w \circ l_2] ) \cdot\; \cdots\; \cdot \rho_n ([w \circ l_n])\in \pi_1 (W, p_1)
   \]
   is trivial since a contracting disk is given by $w$ restricted to $S^2\setminus(\bigcup \limits_{i=1}^n D_i\cup \bigcup \limits_{j=2}^n c_j([0,1]))$.

   Since $\pi_1(Y)\to \pi_1(W)$ is surjective, there is a homotopy in $W$ between $w \circ c_i$ and a path $q_i:[0, 1] \to Y$ relative to the endpoints for every  $i = 2, \dots, n$.
   Let $\rho'_i : \pi_1 (W, p_i) \to \pi_1 (W, p_1)$ be the isomorphism defined by conjugation with $q_i$. 
   Due to the homotopies between $w\circ c_i$ and $q_i$, we have 
   \[
   [w \circ l_1] \cdot \rho'_2 ([w \circ l_2]) \cdot \; \cdots \; \cdot \rho'_n ([w \circ l_n]) = [w \circ l_1] \cdot \rho_2 ([w \circ l_2]) \cdot \; \cdots \; \cdot \rho_n ([w \circ l_n])  =0\in \pi_1(W,p_1).
   \]
   Let $x : \mathbb R / \mathbb Z \to Y_{\pi_Y (p_1)}$ be a continuous loop with $\m (x) = m_X$.
   Since $m_X = \sum \limits_{i=1}^n\m (w\circ l_i)$, we have 
   \[
   [x] =   [w \circ l_1] \cdot \rho'_2 ([w \circ l_2]) \cdot \; \cdots \; \cdot \rho'_n ([w \circ l_n]) =0\in \pi_1 (W, p_1).
   \]
   This proves that $m_W$ divides $m_X$. 
\end{proof}

Now we define the index $\mu_\CZ(\gamma)\in\mathbb{Z}$ of a Reeb orbit $\gamma:\mathbb R/T\mathbb Z\to Y$ contractible in ${W}$.
Let $\varphi^t_R$ denote the flow of $R$. 
We define the index $\mu_\CZ (\gamma)$ by the Conley-Zehnder index of the path of linearized maps $\{t\mapsto d\varphi_{ R}^t(\gamma(0)) \}_{t\in[0,T]}$ with respect to the trivialization of $\gamma^*TW$ induced by a capping disk $\bar\gamma:D^2\to  W$ of $\gamma$.
Note that the path $\{t\mapsto d\varphi_{ R}^t(\gamma(0)) \}_{t\in[0,T]}$ is degenerate in the sense that the map $(d\varphi_R^{T}(\gamma(0)) - \mathrm{Id})$ is not invertible, and refer to \cite{RS93} for the definition and properties of the Conley-Zehnder index for a path of symplectic matrices with possibly degenerate endpoints.

As the notation indicates, $\mu_\CZ(\gamma)$ does not depend on the choice of a capping disk in $W$, see \eqref{RS-index} below. 
 To compute $\mu_\CZ (\gamma)$, we view $\gamma$ as a loop in $X$ and take another capping disk $\bar\gamma_\mathrm{fib}:D^2\to \varphi(\mathcal U)\subset X$ of $\gamma$ such that $ \varphi^{-1}\circ \bar\gamma_\mathrm{fib}(D^2)$ is contained in a  fiber of $E\to \Sigma$. 
 Gluing $\bar\gamma$ and $\bar\gamma_\mathrm{fib}$ with orientation reversed, denoted by $\bar\gamma_\mathrm{fib}^\mathrm{rev}$, along $\gamma$, we obtain $B=[\bar\gamma\#\bar\gamma_\mathrm{fib}^\mathrm{rev}]\in\pi_2(X)$ with $K\omega(B)=B\cdot\Sigma=\m(\gamma)$.  
 Then, by properties of the Conley-Zehnder index, we have
\begin{equation}\label{RS-index}
\mu_\CZ(\gamma)=\mu_\CZ(\gamma,\bar\gamma_\mathrm{fib})+2c_1^{TX}(B)=2(\tau_X-K)\omega(B)=\frac{2(\tau_X-K)}{K}\m(\gamma),
\end{equation}
where $\mu_\CZ(\gamma,\bar\gamma_\mathrm{fib})$ is the Conley-Zehnder index defined using the trivialization induced by the capping disk $\bar\gamma_\mathrm{fib}$, and a simple computation shows $\mu_\CZ(\gamma,\bar\gamma_\mathrm{fib})=-2\m(\gamma)$. The same computation was made in \cite[Section 3.1]{DL2}.

For a Reeb orbit $\gamma:\mathbb R/T\mathbb Z\to Y$ contractible inside $Y$, the index $\mu_\CZ(\gamma)$ equals the Conley-Zehnder index of $\{t\mapsto d\varphi_R^t(\gamma(0))|_{\xi}\}_{t\in[0,T]}$ with respect to the trivialization of $\gamma^*\xi$, where $\xi=\ker\alpha$, induced by a capping disk $\bar\gamma$ of $\gamma$ contained in $Y$. In this case, we have $B=[\bar\gamma\#\bar\gamma_\mathrm{fib}^\mathrm{rev}]\in \pi_2(\varphi(\mathcal U))\cong\pi_2(E)\cong\pi_2(\Sigma)$. Thus, from  
 $c_1^{TE}(B)=c_1^E(B)+c_1^{T\Sigma}(B)$ and $c_1^E(B)=K\omega_\Sigma(B)$, we  deduce
\begin{equation}\label{RS-index2}
\mu_\CZ(\gamma)=\mu_\CZ(\gamma, \bar\gamma_\mathrm{fib}) + 2c_1^{TE}(B) =  2(\tau_X-K)\omega_{\Sigma}(B) =2c_1^{T\Sigma}(B).
\end{equation}

\subsection{Almost complex structures}\label{sec:acs}
Following \cite{DL2,DL1}, we will work with specific classes of almost complex structures on various spaces, namely $\Sigma$, $E$, $\mathbb{R} \times Y$, $X$ and $\widehat{W}$, which are correlated to each other. Let $\mathcal{J}_\Sigma$ denote the space of $\omega_\Sigma$-compatible almost complex structures  on $\Sigma$. We split the tangent bundle of $E$ into
\begin{equation}\label{splitting}
T_{\tilde p}E = T^\mathrm{v}_{\tilde p}E\oplus T^\mathrm{h}_{\tilde p}E  \cong E_p\oplus T_{p}\Sigma \qquad \textrm{for}\;\;\tilde p\in E,\; p :=\pi_E(\tilde p)
\end{equation}
where the vertical subspace $T^\mathrm{v}_{\tilde p}E$ is canonically isomorphic to $E_p$ and the horizontal subspace $T^\mathrm{h}_{\tilde p}E$ defined by the connection $\Theta$ is isomorphic to $T_p\Sigma$ via $d\pi_E$. 
Given $J_\Sigma \in \mathcal{J}_\Sigma$, we define an almost complex structure on $E$ by 
\begin{equation}\label{diag_J}
J_E=\begin{pmatrix}
 i & 0\\
 0 & J_\Sigma 
\end{pmatrix}  
\end{equation}
where the decomposition is with respect to \eqref{splitting}, and $i$ denotes the complex structure of $\pi_E:E\to\Sigma$. Then  $J_E$ is $\omega_E$-compatible and $d\pi_E\circ J_E = J_\Sigma \circ d\pi_E$ holds.

Next we consider the diffeomorphism 
\[
\psi_2: E\setminus\Sigma \rightarrow \mathbb{R}\times Y,\qquad v \mapsto \Big(-\frac{1}{2\pi K}\log|v|, \, \frac{v}{|v|}\Big),
\]  
which is not a symplectomorphism, and the space 
\begin{equation*}
    \mathcal{J}_Y := \left \{ \, J_Y : = (\psi_2)_*  J_E \, \middle | \, J_E \text{ is as in \eqref{diag_J} for some } J_\Sigma \in \mathcal{J}_\Sigma
 \, \right \}.
\end{equation*}
Then an almost complex structure $J_Y \in \mathcal{J}_Y$ on $\mathbb{R}\times Y$ is $d(e^r\alpha)$-compatible and satisfies $J_Y \partial_r = R$. Furthermore, $J_Y$ is invariant under the $\mathbb{R}\times S^1$-action on $\mathbb{R}\times Y$ induced by the translation on $\mathbb R$ and the $S^1$-action on $Y$ by the Reeb flow.
We denote the induced quotient map by
\[
\pi_{\mathbb{R}\times Y}:\mathbb{R}\times Y\to \Sigma,\qquad  \pi_{\mathbb{R}\times Y}(r,y):=\pi_Y(y).
\]
Then it holds that $d\pi_{\mathbb{R}\times Y}\circ J_Y = J_\Sigma \circ d\pi_{\mathbb{R}\times Y}$.
We point out that each $J_\Sigma\in\mathcal{J}_\Sigma$ uniquely defines $J_E$ and $J_Y$, and call $J_\Sigma$ the horizontal part of $J_E$ or of $J_Y$.

Using the embedding $\varphi:\mathcal U\to X$ given in \eqref{tubular}, we define the space
\begin{equation*}
    \mathcal{J}_X := \left\{ \, J_X \; \middle | \;
    \begin{aligned}
    &J_X \text{ is an $\omega$-compatible almost complex structure on $X$, } \\ 
    &  J_X|_{\varphi(\mathcal U)} = \varphi_*(J_E|_{\mathcal U}) \text{ for some } J_E \text{ of the form } \eqref{diag_J}
    \end{aligned}  \, \right \}.
\end{equation*}
We also call $J_\Sigma \in \mathcal{J}_\Sigma$ uniquely determined by $J_X \in \mathcal{J}_X$ the horizontal part.

To define a class of almost complex structures on $\widehat{W}$ we will use, we
fix $\epsilon\in(0,\rho_0^2)$ where $\rho_0$ is the radius of $\mathcal U$.
We then choose a diffeomorphism $g:(-\infty,0)\to\mathbb{R}$ such that 
\begin{align*}
    &g(r)=r \textrm{ on }(-\infty,-\epsilon/2), \quad g(-\epsilon/4)=-\epsilon/4, \\ 
    &g'(r) > 0 \textrm{ on } [-\epsilon/2, -\epsilon/4], \quad g'(r)=-\frac{1}{4\pi Kr} \textrm{ on } [-\epsilon/4,0)
\end{align*}
and set $G:(-\infty,0)\times Y\to \mathbb{R}\times Y$ by $G(r,y):=(g(r),y)$. 
We denote by $\mathcal{J}_{\widehat{W}}$ the space of almost complex structures on $\widehat{W}$ consisting of $d\lambda$-compatible almost complex structures $J_{\widehat{W}}$ of the form 
\begin{align}\label{acs_W}
    J_{\widehat{W}} = \left\{
        \begin{array}{ll}
        J_X & \text{on } \widehat{W} \setminus ([-\epsilon/2,\infty)\times Y), \\[1ex] 
        (G\circ\psi_1)_*J_E & \text{on } [-\epsilon/2,-\epsilon/4)\times Y, \\[1ex]
        J_Y & \text{on } [-\epsilon/4,\infty)\times Y,
        \end{array}
        \right.
\end{align}
for $J_X \in \mathcal{J}_X$, $J_Y \in \mathcal{J}_Y$, and $J_E$ in \eqref{diag_J} sharing the same horizontal part $J_\Sigma \in \mathcal{J}_\Sigma$. 
We call $J_Y$ in \eqref{acs_W} the cylindrical part of $J_{\widehat W}$. The spaces $\mathcal{J}_\Sigma$, $\mathcal{J}_Y$, $\mathcal{J}_X$, and $\mathcal{J}_{\widehat W} $ are nonempty and contractible. One can readily see this by adapting standard arguments in \cite[Section 2.5]{MS17}. 
We point out that $J_X \in \mathcal{J}_X$ uniquely determines $J_\Sigma$, $J_E$, $J_Y$, and thus also $J_{\widehat{W}}$. 
To be explicit, recalling that $\mathcal{U}\setminus\Sigma \cong (-\rho_0^2,0)\times Y$ via $\psi_1$, we note that the map 
\begin{equation}\label{G_hat}
\widehat G:X\setminus\Sigma\to \widehat{W},\qquad 
\widehat G(x)=\left\{
\begin{array}{ll}
	  x, & \;\;  x\in X\setminus\varphi(\mathcal U), \\[1ex] 
	  G \circ \psi_1 \circ \varphi^{-1}(x),  & \;\;  x\in \varphi(\mathcal U\setminus\Sigma),
\end{array}
\right.
\end{equation}
is a diffeomorphism satisfying $\widehat G_*(J_X|_{X\setminus\Sigma})=J_{\widehat{W}}$. As the following proposition shows, certain $J_X$-holomorphic spheres can be viewed as $J_{\widehat{W}}$-holomorphic planes via $\widehat G$, and vice versa.

\begin{prop}(\cite[Lemma 2.6]{DL1})\label{bijection}
The map $\widehat G$ defined in \eqref{G_hat} induces a bijection between $J_{\widehat{W}}$-holomorphic planes $u_{\widehat{W}}:\mathbb{C}\to \widehat{W}$ asymptotic to periodic Reeb orbits with multiplicity $k\in\N$ and $J_X$-holomorphic spheres $u_X:\mathbb{CP}^1\to X$ with $u_X^{-1}(\Sigma)=\{\infty\}$ and $u_X \cdot \Sigma=k$, where the correspondence is given to satisfy $\widehat G\circ u_X = {u_{\widehat{W}}}$ on ${\mathbb{CP}^1\setminus\{\infty\}}=\mathbb{C}$. 
\end{prop}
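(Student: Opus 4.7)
The key structural fact is that the diffeomorphism $\widehat G:X\setminus\Sigma\to\widehat W$ defined in \eqref{G_hat} is $(J_X|_{X\setminus\Sigma},J_{\widehat W})$-holomorphic. On $X\setminus\varphi(\mathcal U)$ this is tautological from \eqref{acs_W}. On $\varphi(\mathcal U\setminus\Sigma)$, where $\widehat G=G\circ\psi_1\circ\varphi^{-1}$, a direct computation shows that the defining ODE $g'(r)=-\tfrac{1}{4\pi K r}$ on $[-\epsilon/4,0)$ is precisely the condition for $G\circ\psi_1=\psi_2$ to hold on a neighborhood of $\Sigma$. Consequently $\widehat G_*J_X=(G\circ\psi_1)_*J_E=(\psi_2)_*J_E=J_Y$ there, matching \eqref{acs_W}; the interpolation region $[-\epsilon/2,-\epsilon/4]\times Y$ is absorbed into the definition of $J_{\widehat W}$. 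Thus pre- or post-composition with $\widehat G^{\pm1}$ gives a bijection between $J_X$-holomorphic maps into $X\setminus\Sigma$ and $J_{\widehat W}$-holomorphic maps into $\widehat W$.

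\textbf{From $u_X$ to $u_{\widehat W}$.} Given $u_X:\mathbb{CP}^1\to X$ with $u_X^{-1}(\Sigma)=\{\infty\}$ and $u_X\cdot\Sigma=k$, the composition $u_{\widehat W}:=\widehat G\circ u_X|_{\mathbb C}:\mathbb C\to\widehat W$ is $J_{\widehat W}$-holomorphic by the above. To identify the asymptotic behavior at the puncture, I work in the local model $\mathcal U\to\Sigma$ around $p_0:=u_X(\infty)$. Since $J_E$ splits as in \eqref{diag_J} with the standard structure $i$ on the fibers of $\pi_E$, the fiber component of $\varphi^{-1}\circ u_X$ in a local holomorphic coordinate $w$ near $\infty\in\mathbb{CP}^1$ is a holomorphic function vanishing to order exactly $k$ at $w=0$. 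Applying $\psi_2$ and reading off cylindrical coordinates on $\mathbb R\times Y$, the identity $|v(w)|=|w|^k|h(w)|$ with $h(0)\neq0$ combined with the logarithmic stretching in $\psi_2$ shows that $u_{\widehat W}(z)$ is asymptotic, as $|z|\to\infty$, to a parametrized Reeb orbit winding exactly $k$ times around the fiber $Y_{p_0}$, i.e.\ of multiplicity $k$.

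\textbf{From $u_{\widehat W}$ to $u_X$.} Conversely, given $u_{\widehat W}:\mathbb C\to\widehat W$ asymptotic to a periodic Reeb orbit $\gamma$ of multiplicity $k$, for $|z|$ large enough $u_{\widehat W}(z)$ lies in $[R,\infty)\times Y$ for any prescribed $R$, so $\tilde u_X(z):=\widehat G^{-1}\circ u_{\widehat W}(z)$ lies in $\varphi(\mathcal U\setminus\Sigma)$ and satisfies $\rho(\varphi^{-1}(\tilde u_X(z)))\to 0$. Setting $\tilde u_X(\infty):=\pi_Y(\gamma(0))\in\Sigma$ gives a continuous extension to $\mathbb{CP}^1$ with relatively compact image in $X$; smooth $J_X$-holomorphicity across $\infty$ then follows from the standard removal of singularities theorem. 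The intersection multiplicity of $\tilde u_X$ with $\Sigma$ at $\infty$ equals $k$ by reversing the local fiber-coordinate computation of the previous paragraph: the fiber component of $\varphi^{-1}\circ\tilde u_X$ is a holomorphic map vanishing at $\infty$ to an order determined by the winding number of the asymptotic Reeb loop, which is $k$.

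\textbf{Bijectivity and main obstacle.} The two constructions are manifestly inverse on $\mathbb C$ because $\widehat G$ is a diffeomorphism, and they agree at the puncture / point at infinity by uniqueness of the continuous extension. The routine $J$-holomorphic correspondence is formal once the intertwining property of $\widehat G$ is established; the substantive point is the precise matching between the intersection number $u_X\cdot\Sigma$ at $\infty$ and the multiplicity $\m(\gamma)$ of the asymptotic Reeb orbit. This matching hinges on the diagonal form \eqref{diag_J} of $J_E$, which forces $\varphi^{-1}\circ u_X$ in fiber coordinates to be a genuine holomorphic map from a punctured disk near $\infty$ into $\mathbb C\cong E_{p_0}$, whose vanishing order coincides simultaneously with the local intersection number with the zero section and, after the logarithmic change $\psi_2$, with the winding number of the limit Reeb orbit.
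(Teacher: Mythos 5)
Your overall structure matches the paper's — transport holomorphic curves under the intertwining diffeomorphism $\widehat G$ and identify the asymptotics — but the step identifying the intersection multiplicity $u_X\cdot\Sigma$ with the Reeb multiplicity $\m(\gamma)$ departs from the paper's proof and is not rigorous as written. You attempt a local fiber-coordinate analysis, asserting that ``the fiber component of $\varphi^{-1}\circ u_X$ in a local holomorphic coordinate $w$ near $\infty$ is a holomorphic function vanishing to order exactly $k$.'' Because the connection $\Theta$ has curvature $-K\omega_\Sigma\neq 0$, the vertical/horizontal splitting of $TE$ is not tangent to any local product structure, so the fiber component of a $J_E$-holomorphic curve is not literally a holomorphic function even though $J_E$ is block-diagonal in that splitting. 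Extracting the vanishing order and matching it with the asymptotic winding number of the Reeb orbit is an asymptotic analysis of holomorphic curves near punctures in the style of Hofer--Wysocki--Zehnder; you state the conclusion but do not carry out the argument, and the same gap reappears unproved in the converse direction (``by reversing the local fiber-coordinate computation'').

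The paper avoids local analysis entirely with an energy/Stokes argument. It computes
\[
\int_{\mathbb{C}}(u_{\widehat{W}})^* d\big((\widehat G^{-1})^*\lambda\big)=\int_{\mathbb{CP}^1\setminus\{\infty\}}(u_X)^*\omega=\omega([u_X])=\frac{u_X\cdot\Sigma}{K}=\frac{k}{K},
\]
observes that $(\widehat G^{-1})^*\lambda|_{\{r\}\times Y}\to\alpha$ as $r\to+\infty$, so that the finiteness of the integral above gives finite Hofer energy by \cite[Lemma 3.8]{Bou}, and then applies Stokes' theorem to conclude $\int_\gamma\alpha=\frac{k}{K}$ for the asymptotic orbit $\gamma$. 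Since the Reeb flow of $\alpha$ has period $\frac{1}{K}$, this yields $\m(\gamma)=k$ at once. The same computation also serves the converse: finite energy plus removal of singularities produce the extension $u_X$ and simultaneously identify $u_X\cdot\Sigma=k$. This is the substantive content of the proposition, and your proof replaces it with an unproven local claim.

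A smaller point: $G\circ\psi_1$ and $\psi_2$ are not equal on $[-\epsilon/4,0)\times Y$; the ODE $g'(r)=-\frac{1}{4\pi Kr}$ together with the initial condition $g(-\epsilon/4)=-\epsilon/4$ gives $g(r)=-\frac{1}{4\pi K}\log(-r)+C$ for a nonzero constant $C$, so the two maps differ by a translation in the $\mathbb R$-coordinate. Your conclusion $\widehat G_*J_X=J_{\widehat W}$ still holds because $J_Y$ is invariant under this translation, but the claimed equality of the maps themselves is false.
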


\begin{proof}
    Let $u_X:\mathbb{CP}^1\to X$ be as in the statement. Since $\widehat G_*(J_X|_{X\setminus\Sigma})=J_{\widehat{W}}$, the proper map $u_{\widehat{W}}:=\widehat G\circ u_X|_{\mathbb{CP}^1\setminus\{\infty\}}:\mathbb{CP}^1\setminus\{\infty\}=\mathbb{C}\to \widehat{W}$ is $J_{\widehat{W}}$-holomorphic.  Let $\lambda$ be the primitive 1-form of $\omega$ on $X\setminus\Sigma$ mentioned before \eqref{tubular}. We compute
    \[
    \int_{\mathbb{C}}(u_{\widehat{W}})^* d((\widehat G^{-1})^*\lambda)=\int_{\mathbb{CP}^1\setminus\{\infty\}}(u_X)^*\omega= \omega([u_X])= \frac{u_X\cdot\Sigma}{K} =\frac{k}{K}.
    \]
    A straightforward computation shows that $(\widehat G^{-1})^*\lambda$ restricted to $\{r\}\times Y\subset \widehat{W}$ converges to $\alpha$ as $r\to+\infty$. 
    Therefore $u_{\widehat W}$ has finite Hofer-energy, see \cite[Lemma 3.8]{Bou}, and if $\gamma$ denotes the asymptotic Reeb orbit of $u_{\widehat W}$, then $\int_\gamma\alpha=\frac{k}{K}$ by the above computation and Stokes' theorem. Since the Reeb flow of $\alpha$ is $\frac{1}{K}$-periodic, the multiplicity $\m(\gamma)$ of $\gamma$ is $k$. 
    
    Conversely, if $u_{\widehat{W}}$ is a $J_{\widehat{W}}$-holomorphic plane in $\widehat{W}$ as in the statement, then $u_X := \widehat{G}^{-1} \circ u_{\widehat{W}}$ is a $J_X$-holomorphic map on $\mathbb{CP}^1\setminus\{\infty\}$. 
    The above computation together with the removable singularity theorem implies that $u_X$ extends to a $J_X$-holomorphic map on $\mathbb{CP}^1$ with $(u_X)^{-1}(\Sigma) =  \{ \infty  \}$ and $u_X \cdot \Sigma = k$.
\end{proof}

\begin{remark}
The moduli space of unparametrized $J_{\widehat{W}}$-holomorphic planes in $\widehat{W}$ asymptotic to a Reeb orbit $\gamma$ has dimension
\begin{equation}\label{eq:index_moduli}
|\gamma| :=\frac{1}{2}\dim W -3 + \mu_\CZ(\gamma)-\frac{1}{2}\dim\Sigma=\mu_\CZ(\gamma)-2\geq 0
\end{equation}
if it is cut out transversely, see \cite[Cor. 5.4]{Bou}. It is nonnegative due to \eqref{RS-index}. 
\end{remark}

\section{Split Floer complex for $\bigvee$-shaped Hamiltonians}

We will work with Rabinowitz Floer homology defined using $\bigvee$-shaped Hamiltonians as in \cite{CFO} rather than the original definition using the Rabinowitz action functional in \cite{CF09}. 
In this section, following \cite{DL2,DL1}, we construct a split version of the Floer complex for $\bigvee$-shaped Hamiltonians.

\subsection{$\bigvee$-shaped Hamiltonians} \label{sec: Hamiltonians}

Let $\mathcal{H}$ be the set of smooth functions $H:\widehat{W} \rightarrow \mathbb{R}$ satisfying the following properties:
\begin{enumerate}
    \item[(i)] $H(r,y)=h(e^r)$ for $(r,y) \in \mathbb R \times Y \subset \widehat{W}$  where $h:\R\to\mathbb R$ is a smooth function such that  \begin{equation*}
   {h}(1) < 0, \qquad 
       h(\rho) = \begin{cases}
         \textrm{constant}, & \rho \in(-\infty,e^{-\epsilon/8}), \\[0.5ex]
         -a \rho + b^-, & \rho \in (e^{-\epsilon/8 + \eta} , e^{-\eta}), \\[0.5ex]
         a \rho + b^+, & \rho \in (e^\eta,+\infty),
    \end{cases}
    \end{equation*}
    for some $a \in \mathbb R_{> 0}$, $b^\pm \in \mathbb R$, and sufficiently small $\eta \in (0, \epsilon/16)$.  
    Here $\epsilon>0$ is the constant fixed in \eqref{acs_W}. 
    Moreover, we require
    \[
    h''<0 \; \textrm{ on } \; (e^{-\epsilon/8},e^{-\epsilon/8+\eta}), \qquad h''>0  \; \textrm{ on } \; (e^{-\eta}, e^\eta).
    \]
    \item[(ii)] $H$ is constant on $\widehat{W} \setminus (\mathbb R \times Y)$.
    \item[(iii)] $a \in \mathbb{R}_{>0} \setminus \frac{1}{K} \mathbb{Z}$.
\end{enumerate}
The Hamiltonian vector field $X_H$ of $H \in \mathcal{H}$, defined by $\iota_{X_H}\omega=-dH$, coincides with $h'(e^r)R$ on the symplectization part $\mathbb{R}\times Y$ of $\widehat{W}$. Therefore a 1-periodic orbit of $X_H$ on this part is contained in $\{b\}\times Y$ for some $b\in\mathbb R$ and corresponds to a $h'(e^b)$-periodic orbit of $R$ if $h'(e^b)>0$, to a constant orbit if $h'(e^b)=0$, and to a $-h'(e^b)$-periodic orbit of $-R$ if $h'(e^b)<0$. Since $a \notin \frac{1}{K} \mathbb Z$ and the flow of $R$ is $\frac{1}{K}$-periodic, a $1$-periodic orbit of $X_H$ is one of the following types:
\begin{enumerate}[(I)]
    \item constant orbits on $\widehat{W} \setminus ((-\epsilon/8,+\infty) \times Y)$,
    \item nonconstant orbits on $(-\epsilon/8,-\epsilon/8+\eta) \times Y$,
    \item constant and nonconstant orbits on $(-\eta,\eta) \times Y$.
\end{enumerate}

\begin{figure}[h]
     \centering
     \includegraphics[height = 7cm]{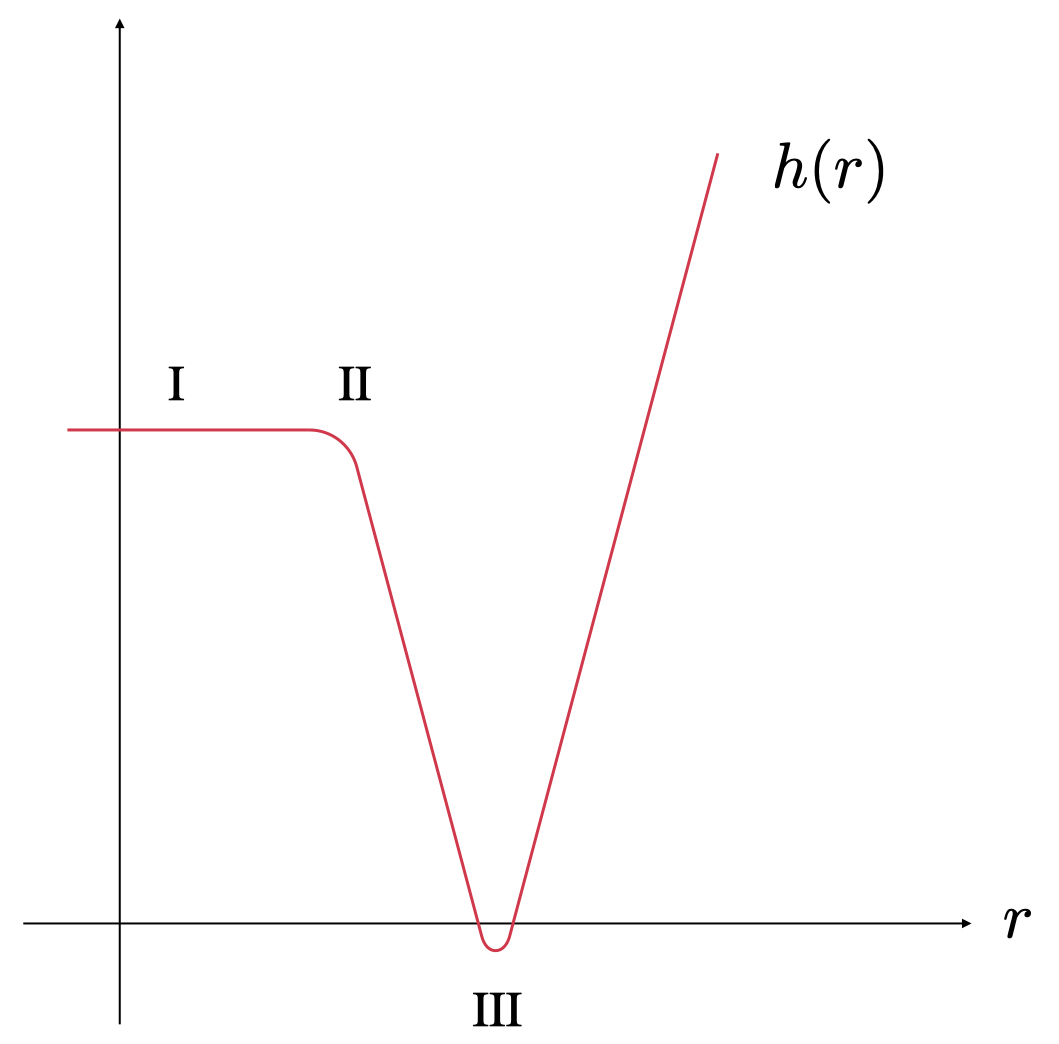}
     \caption{A Hamiltonian and types of orbits}
     \label{fig:Hamilt}
\end{figure}
In the construction of Rabinowitz Floer complex, we will discard orbits of type I and type II, and take only orbits of type III into account.  
Let $Y^H_k$ be the space of 1-periodic orbits $x$ of $X_H$ of type III with multiplicity $\m (x) = k$. It is diffeomorphic to $Y$ through 
\begin{equation}\label{identification}
Y^H_k \stackrel{\cong}{\longrightarrow} \{\beta^H_k\}\times Y ,\qquad x\longmapsto x(0),	
\end{equation}
where $\beta^H_k\in(-\eta,\eta)$ is uniquely determined by $h'(e^{\beta^H_k})=k/K$.  The index $\mu_\CZ(x)$ of $x\in Y^H_k$ contractible in $\widehat{W}$ is defined in the same way as in Section \ref{sec:index} using the linearized flow $\{t\mapsto d\varphi_{X_H}^t(x(0)) \}_{t\in[0,1]}$.

The same computation gives
\begin{equation}\label{eq:RS_XH}
\mu_\CZ(x) = \frac{2(\tau_X-K)}{K}\m(x).
\end{equation}
We point out that the multiplicity $\m(x)$ of $1$-periodic orbits $x$ of $X_H$ can be positive, zero, or negative, as opposed to periodic orbits of $R$.

Throughout this paper, we equate $S^1=\mathbb{R}/\mathbb{Z}$. Given $H \in \mathcal{H}$, the action functional $\mathcal{A}_H:C^\infty(S^1,\widehat{W})\rightarrow \mathbb{R}$ is defined by
\begin{equation}\label{action}
\mathcal{A}_H(x) = \int_0^1 x^*\lambda - \int_0^1 H(x(t))\,dt.
\end{equation}
Critical points of $\mathcal{A}_H$ are exactly 1-periodic orbits of $X_H$, and  
critical values of $\mathcal{A}_H$ are easily computed. 
If $x\in\Crit\mathcal{A}_H$ is of type I, then $\mathcal{A}_H(x)=-h(0)$. 
If $x\in\Crit\mathcal{A}_H$ is a type II or type III orbit lying on $\{b\}\times Y$, then $\mathcal{A}_H(x)=e^b h' (e^b) -h (e^b)$. We also note that type II orbits $x$ have $\mathcal{A}_H(x)<-h(0)$ since $h''<0$ on the region where $x$ are located.

\medskip
The spaces $Y^H_k$ are Morse-Bott critical manifolds of $\mathcal{A}_H$. We choose auxiliary Morse functions on $Y^H_k$ as follows. Let $(f_Y,Z_Y)$ be a Morse-Smale pair as in Section \ref{sec:Morse}. 
Through the diffeomorphism in \eqref{identification}, this gives rise to a Morse-Smale pair $(f_{Y^H_k}, Z_{Y^H_k})$ for each $k$. We denote by $\tilde p_k \in \Crit f_{Y^H_k}$ the $1$-periodic orbit corresponding to $\tilde p \in \Crit f_Y$.
We define the index of $\tilde{p}_k \in \Crit f_{Y^H_k}$ by 
\begin{equation}\label{grading}
    \mu(\tilde{p}_k):= \ind_{f_Y}(\tilde{p}) -\frac{1}{2}\dim\Sigma + \mu_\CZ(\tilde{p}_k) = \ind_{f_Y}(\tilde{p}) -\frac{1}{2}\dim\Sigma + 2\frac{(\tau_X-K)}{K}k.
\end{equation}
We refer to \cite[Proposition 2.2]{CFHW96} and \cite[Lemma 3.4]{BO1} for discussion on index in the case that a critical manifold is a disjoint union of circles, see also \eqref{eq:mu_perturb} below.

\subsection{Chains of pearls}\label{sec:chain_of_pearls}
Before studying moduli spaces of Floer cylinders for $\bigvee$-shaped Hamiltonians in $\widehat W$, in this section, we recall from \cite{DL2,DL1} the definition of moduli spaces of chains of pearls in $\Sigma$, which is reminiscent of the pearl complex for Lagrangian submanifolds studied in \cite{Oh96,BC4,BC5}.

\begin{definition}\label{moduli1}
For $N \in\mathbb{N}=\{1,2,\dots\}$, let $\mathbf{A}=(A_1,\dots,A_N) \in (\pi_2(\Sigma))^N$.
For $J_\Sigma \in \mathcal{J}_\Sigma$ and $\ell \in \mathbb{N} \cup \{ 0 \}$,
we define the moduli space 
\begin{equation*}
    \mathcal{N}_{N, \ell}(\mathbf{A};J_\Sigma)
\end{equation*}
consisting of tuples $( \mathbf{w}, \mathbf{z}  )= \big( ( w_1, \dots, w_N ), ( \Gamma_1, \dots, \Gamma_N )\big)$ where for every $i=1,\dots,N$
\begin{enumerate}[(1)]
    \item $w_i:\mathbb{CP}^1 \rightarrow \Sigma$ is a $J_\Sigma$-holomorphic map which represents $A_i \in \pi_2 (\Sigma)$, 
    \item $\Gamma_i : = ( z_{i,1}, \dots, z_{i, \ell_i} )$ is a finite sequence of pairwise distinct elements of $\mathbb{CP}^1 \setminus \left \{ 0, \infty \right \}$ for some $\ell_i \in \mathbb N \cup \left \{ 0 \right \}$ with $\ell = \sum\limits_{i=1}^{N} \ell_i$.
    If $\ell_i = 0$, we set $\Gamma_i = \emptyset$. 
\end{enumerate}
The subspace 
\begin{equation*}
    \mathcal{N}^*_{N, \ell}(\mathbf{A};J_\Sigma) \subset \mathcal{N}_{N, \ell}(\mathbf{A};J_\Sigma)
\end{equation*}
consists of elements satisfying the following additional properties:
\begin{enumerate}[(i)]
    \item each $w_i$ is either somewhere injective in the sense of \cite{MS} or constant,

    \item the image of $w_i$ is not entirely contained in that of $w_j$ for any $i \neq j$.
\end{enumerate}
\end{definition}

For a Morse-Smale pair $(f_\Sigma, Z_\Sigma)$ as in Section \ref{sec:Morse}, we consider the map
\begin{align}
    &\ev_{Z_\Sigma} : \mathcal{N}_{N, \ell} (\mathbf{A}; J_\Sigma) \times \mathbb{R}^{N - 1}_{>0} \to \Sigma^{2N} \label{eq:ev_Sigma}\\[.5ex]
    &\ev_{Z_\Sigma}((\mathbf{w}, \mathbf{z}), \mathbf t) := \big ( w_1(\infty), w_1 (0), \varphi^{t_{1}}_{Z_\Sigma}(w_2(\infty)), \dots, w_{N-1} (0), \varphi^{t_{N-1}}_{Z_\Sigma}( w_N (\infty)), w_N (0) \big ), \nonumber
\end{align}
where $\R_{>0}:=(0,\infty)$ and $\mathbf t = (t_1, \dots, t_{N-1})$. For $N = 1$, this means $\ev_{Z_\Sigma} : \mathcal{N}_{N = 1, \ell}(\mathbf{A}; J_\Sigma) \to \Sigma^2$ with $\ev_{Z_\Sigma}(w, \mathbf{z}) = (w(\infty), w(0))$.

\begin{definition}\label{def:pearl_Sigma}
 For $p, q \in \Crit f_\Sigma$, $N\in\mathbb{N}$, $\ell\in\mathbb{N}\cup\{0\}$, $\mathbf{A}\in(\pi_2(\Sigma))^N$, and  $J_\Sigma \in \mathcal{J}_\Sigma$, 
the moduli space of chains of pearls from $q$ to $p$ is defined by
\begin{equation*}
    \mathcal{N}_{N, \ell}(q, p ; \mathbf{A} ; J_\Sigma) : = \ev^{-1}_{Z_\Sigma} \big(  W^s_{Z_\Sigma}(p) \times  \Delta_\Sigma ^{N-1} \times W^u_{Z_\Sigma}(q) \big) ,
\end{equation*}
where $\Delta_\Sigma := \left \{ (x, y) \in \Sigma \times \Sigma \mid x = y \right \}$\footnote{ For $N=1$, this means $\ev_{Z_\Sigma}^{-1}( W^s_{Z_\Sigma}(p) \times W^u_{Z_\Sigma}(q))$.}, see Figure \ref{fig:pearl}.
The subspace of simple chain of pearls is defined by
\begin{equation*}\label{chain_of_pearls} 
    \mathcal{N}^*_{N, \ell}(q, p ; \mathbf{A} ; J_\Sigma) : = \mathcal{N}_{N, \ell}(q, p ; \mathbf{A} ; J_\Sigma) \cap \, \Big ( \mathcal{N}^*_{N, \ell}(\mathbf{A};J_\Sigma) \times {\mathbb R}^{N-1}_{> 0} \Big ).
\end{equation*}
We also define
\begin{equation*}
    \mathcal{N}_{N=0}(q,p) = \mathcal{N}_{N=0}^*(q,p):= W^s_{Z_\Sigma}(p) \cap W^u_{Z_\Sigma}(q).
\end{equation*}

\end{definition}

\begin{figure}[h]
     \centering
     \includegraphics[width = 12cm]{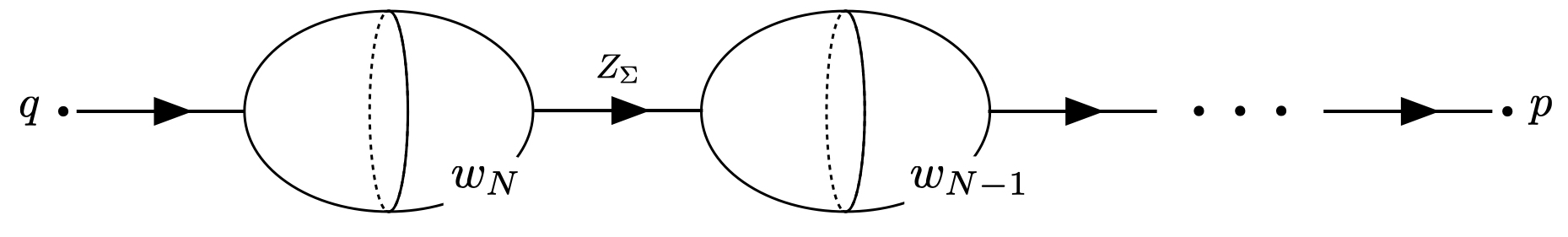}
     \caption{A chain of pearls in $\Sigma$}
     \label{fig:pearl}
\end{figure}

\begin{remark}\label{rmk:simple_pearl}
    For every  $((\mathbf{w}, \mathbf{z}),\mathbf{t})\in  \mathcal{N}_{N, \ell}(q, p ; \mathbf{A} ; J_\Sigma)$, one can obtain an underlying simple chain of pearls by replacing $w_i$ with a somewhere injective underlying map, omitting (sequences of) $w_i$ and $\Gamma_i$, and reparametrizing $w_i$ if necessary.   
    See \cite[Section 3.3]{BC4}.
\end{remark}

Next, we define chains of pearls that also contain $J_X$-holomorphic spheres in $X$.

\begin{definition}\label{def_moduli_B}
For $\ell\in\mathbb{N}$, let $\mathbf{B}=(B_1,\dots,B_\ell) \in (\pi_2(X))^\ell$  with $B_j\cdot \Sigma \neq 0$ for all $j = 1, \dots, \ell$.
For $J_X \in \mathcal{J}_X$, we define the moduli space \begin{equation*}
    \mathcal{P}_{\ell}(\mathbf{B};J_X)
\end{equation*}
consisting of $\ell$-tuples of equivalence classes $\mathbf{u}=\big( [u_1], \dots, [u_\ell] \big)$ where $u_1,\dots,u_\ell:\mathbb{CP}^1 \rightarrow X$ are $J_X$-holomorphic maps  satisfying the following conditions for every $j = 1, \dots, \ell$ :
\begin{enumerate}[(1)]
    \item $u_j$ represents $B_j \in \pi_2 (X)$ and $u^{-1}_j(\Sigma)=\left \{ \infty \right \}$,
    
    \item the image of $u_j$ is not entirely contained in the neighborhood $\varphi(\mathcal U)$ of $\Sigma$,
    
    \item the equivalence relation on $u_j$ is given by reparametrization by maps in $\mathrm{Aut}(\mathbb{CP}^1;\infty)$, where $\mathrm{Aut}(\mathbb{CP}^1;\infty)$ is the group  of biholomorphic maps on $\mathbb{CP}^1$ fixing $\infty$.
\end{enumerate}
We also consider the subspace
\begin{equation*}
    \mathcal{P}^*_{\ell}(\mathbf{B};J_X) \subset \mathcal{P}_\ell(\mathbf{B};J_X)
\end{equation*}
which consists of elements satisfying the following additional properties:
\begin{enumerate}[(i)]
    \item $u_j$ is somewhere injective in the sense of \cite{MS} for every $j = 1, \dots, \ell$,
    
    \item the image of $u_i$ is not entirely contained in that of $u_j$ for every $i \neq j$. 
\end{enumerate}
\end{definition}

We point out that if $\mathcal{P}_\ell(\mathbf{B};J_X)\neq\emptyset$, then $B_j\cdot\Sigma>0$ for every $j=1,\dots,\ell$ thanks to the positivity of intersection property between $J_X$-holomorphic maps.

For $\ell\in\N$, we define
\[
\begin{aligned}
 &{\aug}_\Sigma:\mathcal{N}_{N, \ell}(\mathbf{A};J_\Sigma) \to \Sigma^\ell    \\[.5ex]   
&    {\aug}_X: \mathcal{P}_{\ell}(\mathbf{B};J_X) \to \Sigma^\ell 
\end{aligned}
\qquad 
\begin{aligned}
	 (\mathbf{w}, \mathbf{z}) &\mapsto \big(w_1 (z_{1,1}), \dots, w_N (z_{N, \ell_N})\big), \\[.5ex]
	\mathbf{u} &\mapsto \big(u_1(\infty),\dots, u_\ell(\infty)\big)\;.
\end{aligned}
\]
Putting these and  the evaluation map in \eqref{eq:ev_Sigma} together, we define 
\begin{equation*}
	\begin{split}
    &\mathrm{EV}: \big( \mathcal{N}_{N, \ell}(\mathbf{A} ; J_\Sigma) \times \mathbb{R}^{N - 1}_{> 0} \big ) \times \mathcal{P}_{\ell}(\mathbf{B};J_X) \rightarrow \Sigma^{2N} \times \Sigma^\ell \times \Sigma^\ell \\[.5ex] 
    &\mathrm{EV} \big(((\mathbf{w},\mathbf{z}), \mathbf{t}),\mathbf{u}\big) := \big(\ev_{Z_\Sigma}((\mathbf{w},\mathbf{z}), \mathbf{t}), \aug_\Sigma(\mathbf{w},\mathbf{z}), \aug_X(\mathbf{u})\big).
\end{split}
\end{equation*}
 When considering this map, we always take  $J_\Sigma$ to be the horizontal part of $J_X$. 

\begin{definition}
For $p,q\in\Crit f_\Sigma$, $N,\ell\in\mathbb{N}$, $\mathbf{A}\in(\pi_2(\Sigma))^N$, $\mathbf{B}\in(\pi_2(X))^\ell$, and $J_X\in\mathcal{J}_X$, we define the moduli space of augmented chains of pearls from $q$ to $p$ by
\[
    \mathcal{N}_{N, \ell}(q, p; \mathbf{A}, \mathbf{B}; J_X) : =  \mathrm{EV}^{-1} \big(  W^s_{Z_\Sigma}(p) \times \Delta_\Sigma^{N-1} \times W^u_{Z_\Sigma}(q) \times \Delta_{\Sigma^\ell} \big ),
\]
where $\Delta_{\Sigma^\ell} := \left \{  (\mathbf{x}, \mathbf{y}) \in \Sigma^\ell \times \Sigma^\ell \mid \mathbf{x} = \mathbf{y} \right \}$\footnote{ For $N=1$, this means $\mathrm{EV}^{-1} \allowbreak \big(  W^s_{Z_\Sigma}(p) \times W^u_{Z_\Sigma}(q) \times \Delta_{\Sigma^\ell} \big )$.}, see Figure \ref{fig:aug_pearl}.
The subspace of simple elements is defined by
\[
    \mathcal{N}^*_{N, \ell}(q, p; \mathbf{A}, \mathbf{B}; J_X) : =  \mathcal{N}_{N, \ell}(q, p; \mathbf{A}, \mathbf{B}; J_X) \cap \big ( (\mathcal{N}^*_{N, \ell}(\mathbf{A} ; J_\Sigma) \times \mathbb{R}^{N - 1}_{>0} ) \times \mathcal{P}^*_{\ell}(\mathbf{B};J_X) \big ),   
\]
\end{definition}

\begin{figure}[h]
     \centering
     \includegraphics[width = 12cm]{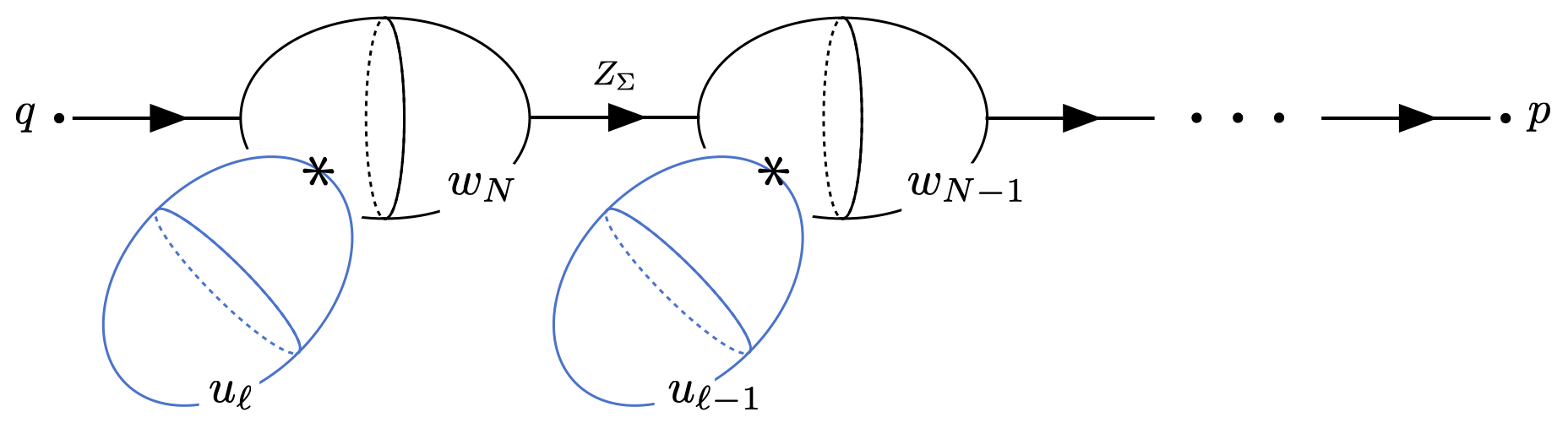}
     \caption{A chain of pearls in $\Sigma$ with augmentations in $X$}
     \label{fig:aug_pearl}
\end{figure}

The following result shows that  moduli spaces consisting of simple elements  are cut out transversely for a suitable choice of almost complex structures. 

\begin{prop}\label{transversality_chain}(\cite[Proposition 5.26]{DL2})
\begin{enumerate}[(a)]
    \item There is a residual set $\mathcal{J}^{\reg}_\Sigma \subset \mathcal{J}_\Sigma$ such that, for any $J_\Sigma \in \mathcal{J}^{\reg}_\Sigma$, $p,q \in \Crit f_\Sigma$, $N\in\mathbb{N}$, $\ell\in\mathbb{N}\cup\{0\}$, and $\mathbf{A}=(A_1,\dots,A_N) \in (\pi_2(\Sigma))^N$, the moduli space $\mathcal{N}^*_{N, \ell}(q,p; \mathbf{A}; J_\Sigma)$ is a smooth manifold of dimension 
    \begin{equation*}
        \dim \mathcal{N}^*_{N, \ell}(q,p;\mathbf{A};J_\Sigma) = \ind_{f_\Sigma} (p) - \ind_{f_\Sigma} (q) + N-1 + \sum_{i=1}^N 2c_1^{T\Sigma}(A_i) + 2\ell.
    \end{equation*}
    
    \item There is a residual set $\mathcal{J}^{\reg}_X \subset \mathcal{J}_X$ such that, for any $J_X \in \mathcal{J}^{\reg}_X$,  $p,q \in \Crit f_\Sigma$, $N,\ell\in\mathbb{N}$, $\mathbf{A} = (A_1,\dots,A_N) \in (\pi_2(\Sigma))^N $, and $\mathbf{B} = (B_1,\ldots,B_\ell) \in (\pi_2(X))^\ell$, the moduli spaces $\mathcal{P}^*_\ell(\mathbf{B};J_X)$ and $\mathcal{N}^*_{N, \ell}(q,p;\mathbf{A},\mathbf{B};J_X)$ are smooth manifolds of dimensions
    \begin{align*}
    \dim \mathcal{P}^*_\ell (\mathbf{B};J_X) & = \sum_{j=1}^\ell \big(\dim X +2 c_1^{TX}(B_j) - 2K\omega(B_j) -4 \big),\\
        \dim \mathcal{N}^{*}_{N, \ell}(q,p;\mathbf{A}, \mathbf{B};J_X) & = \ind_{f_\Sigma} (p) - \ind_{f_\Sigma} (q) + N-1 + \sum_{i=1}^N 2c_1^{T\Sigma}(A_i) \\ 
        &\quad  + \sum_{j=1}^{\ell} \big (2c_1^{TX}(B_j)-2 K\omega(B_j)\big).
    \end{align*}
    Moreover, the map $\mathcal{J}^{\reg}_X \to \mathcal{J}^{\reg}_\Sigma$ sending $J_X$ to its horizontal part $J_\Sigma$ is well-defined and surjective.
\end{enumerate}
\end{prop}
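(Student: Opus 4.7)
The proof will be a standard Sard--Smale transversality argument in the style of McDuff--Salamon, adapted to the pearl/augmented setting as in \cite{DL2}. The plan is to first establish that the universal moduli spaces of parametrized somewhere-injective $J_\Sigma$-holomorphic (resp.\ $J_X$-holomorphic) spheres are smooth Banach manifolds, and that the associated evaluation maps are submersions; then to cut down with the stable/unstable manifolds and diagonals and apply a standard parametric transversality argument.

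For part (a), I would first consider the universal moduli space $\mathcal N^{*,\mathrm{univ}}_{N,\ell}(\mathbf A)$ of tuples $((\mathbf w,\mathbf z),J_\Sigma)$, with $J_\Sigma$ ranging over a Banach manifold completion of $\mathcal J_\Sigma$. Since each $w_i$ is somewhere injective or constant, and the images do not lie inside one another, a componentwise argument shows that the vertical differential of $\bar\partial_{J_\Sigma}$ is surjective; hence the universal moduli space is a smooth Banach manifold. The projection to the space of almost complex structures is Fredholm, with index equal to $\sum_i(\dim\Sigma+2c_1^{T\Sigma}(A_i))$; by Sard--Smale there is a residual set $\mathcal J^{\reg}_\Sigma$ of regular values for which $\mathcal N^*_{N,\ell}(\mathbf A;J_\Sigma)\times\mathbb R^{N-1}_{>0}$ is smooth of the expected dimension. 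To cut down to $\mathcal N^*_{N,\ell}(q,p;\mathbf A;J_\Sigma)$, one shows that the evaluation map $\ev_{Z_\Sigma}$ is a submersion on the universal level (using that perturbations of $J_\Sigma$ near generic points of the image of $w_i$ produce arbitrary variations of $w_i(0),w_i(\infty)$, and $w_i(z_{i,k})$), then invokes parametric transversality with respect to the submanifold $W^s_{Z_\Sigma}(p)\times\Delta_\Sigma^{N-1}\times W^u_{Z_\Sigma}(q)$. Subtracting its codimension $N\dim\Sigma-\ind_{f_\Sigma}(p)+\ind_{f_\Sigma}(q)$ from $N\dim\Sigma+2\sum c_1^{T\Sigma}(A_i)+2\ell+(N-1)$ yields the stated dimension.

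For part (b), the same scheme is carried out for the combined universal moduli space $\mathcal P^{*,\mathrm{univ}}_\ell(\mathbf B)\times\mathcal N^{*,\mathrm{univ}}_{N,\ell}(\mathbf A)$ over $\mathcal J_X$, with the crucial observation (already built into the definition of $\mathcal J_X$) that $J_X$ can be perturbed freely on $X\setminus\varphi(\mathcal U)$, while its restriction to $\varphi(\mathcal U)$ is rigidly determined by $J_\Sigma$ via \eqref{diag_J}. Since every $u_j$ must have image not entirely contained in $\varphi(\mathcal U)$, the somewhere-injective part of $u_j$ lies in the region where $J_X$ is unconstrained, so the standard surjectivity argument for the linearized operator applies there. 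The tangency condition $u_j^{-1}(\Sigma)=\{\infty\}$ with $u_j\cdot\Sigma=K\omega(B_j)$ forces contact of order $K\omega(B_j)$ with $\Sigma$ at $\infty$; since $\varphi_*J_E$ near $\Sigma$ splits as in \eqref{diag_J}, the linearization in the normal direction is a standard Cauchy--Riemann-type operator on a line bundle, and the expected codimension $2K\omega(B_j)$ comes out correctly (cf.\ the jet-evaluation argument in Cieliebak--Mohnke). Quotienting each $u_j$ by $\mathrm{Aut}(\mathbb{CP}^1;\infty)\cong\mathbb C\rtimes\mathbb C^*$ removes $4\ell$ dimensions, giving the dimension of $\mathcal P^*_\ell(\mathbf B;J_X)$. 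The full evaluation $\mathrm{EV}$ is then made transverse to $W^s\times\Delta_\Sigma^{N-1}\times W^u\times\Delta_{\Sigma^\ell}$ by further residual perturbations of $J_X$, yielding the stated dimension of $\mathcal N^*_{N,\ell}(q,p;\mathbf A,\mathbf B;J_X)$ after the arithmetic simplification $2\ell+\ell(\dim X-4-\dim\Sigma)=0$. Surjectivity of the horizontal-part map is proven directly: given $J_\Sigma\in\mathcal J^{\reg}_\Sigma$, one forms $J_E$ by \eqref{diag_J}, pushes it forward by $\varphi$ to define $J_X$ on $\varphi(\mathcal U)$, and extends to an $\omega$-compatible almost complex structure on $X\setminus\varphi(\mathcal U)$; since the transversality conditions defining $\mathcal J^{\reg}_X$ only constrain $J_X$ on $X\setminus\varphi(\mathcal U)$ (where perturbations are free), a generic such extension lies in $\mathcal J^{\reg}_X$ and projects to $J_\Sigma$.

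The main technical obstacle is handling the prescribed intersection multiplicity $u_j^{-1}(\Sigma)=\{\infty\}$ cleanly within the Sard--Smale framework: one must show that the relevant jet-evaluation map into the $(K\omega(B_j)-1)$-jets of sections of the normal bundle $E$ along $\Sigma$ is a submersion on the universal moduli space, and that this can be arranged by perturbations of $J_X$ compatible with the splitting \eqref{diag_J}. A secondary but nontrivial issue is ensuring that the simpleness conditions (each $u_j$ and each nonconstant $w_i$ somewhere injective, images not contained in one another) give enough ``independent'' linearized perturbations to make the universal linearization surjective for the tuples rather than individual curves; this is handled by the now-standard argument of exploiting that distinct somewhere-injective images contain points where one curve misses all others, so that perturbations of $J_\Sigma$ or $J_X$ near such points can be decoupled across components.
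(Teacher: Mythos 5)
The paper does not actually prove Proposition~\ref{transversality_chain}; it imports it directly from \cite[Proposition 5.26]{DL2}, and the surrounding text only records the statement. Your Sard--Smale/universal-moduli-space scheme, combined with the Cieliebak--Mohnke-style jet-evaluation argument for the order-$K\omega(B_j)$ tangency at $\infty$ and the observation that each $u_j$ has a somewhere-injective point outside $\varphi(\mathcal U)$ where $J_X$ is unconstrained, is the same approach Diogo--Lisi take. The decoupling of perturbations across distinct components via the non-containment condition, the bookkeeping of the codimension cut by $W^s\times\Delta_\Sigma^{N-1}\times W^u\times\Delta_{\Sigma^\ell}$, and the arithmetic simplification $2\ell+\ell(\dim X-4-\dim\Sigma)=0$ all check out.

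Two small imprecisions worth flagging. First, your claim that $\ev_{Z_\Sigma}$ is a submersion at the universal level is false at constant components $w_i$ (with $A_i=0$), where the linearized evaluation lands only in the diagonal $\Delta_{T_r\Sigma}$; the fiber product is nonetheless transverse there, but the reason is the Morse--Smale condition on $(f_\Sigma,Z_\Sigma)$, which gives $T_rW^s_{Z_\Sigma}(p)+T_rW^u_{Z_\Sigma}(q)=T_r\Sigma$, rather than submersivity of $\ev_{Z_\Sigma}$. Second, for the tangency transversality near $\Sigma$ one can avoid a delicate jet-evaluation submersion argument by exploiting that, inside $\varphi(\mathcal U)$, the normal component of the linearized operator is a \emph{complex-linear} Cauchy--Riemann operator on a line bundle (the model \eqref{diag_J} is integrable in the fiber direction), so regularity of the tangency constraint is automatic; equivalently, one can pass through the bijection of Proposition~\ref{bijection} and use the standard Fredholm/transversality package for punctured holomorphic planes as in Remark~\ref{rem:hol_plane}. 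Neither point affects the conclusion, but both are places where your sketch as written would need a patch to be airtight.
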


\begin{remark}\label{rem:hol_plane} 
The dimension of $\mathcal{P}^*_\ell (\mathbf{B};J_X)$ can also be computed in the following way. Due to Proposition \ref{bijection}, we can view $\mathcal{P}_\ell(\mathbf{B}; J_X)$ as the moduli space of $\ell$-tuples of unparametrized $J_{\widehat{W}}$-holomorphic planes $[u_j]$ in $\widehat{W}$, where $j=1,\dots,\ell$, asymptotic to Reeb orbits $\gamma_j$ with multiplicity $\m(\gamma_j)=B_j\cdot \Sigma=K\omega(B_j)\in m_W\Z$. The dimension of the latter moduli space can be computed as 
$
\sum\limits_{j=1}^\ell(\mu_\CZ(\gamma_i)+\dim\Sigma-2)
$
if it is cut out transversly, see \eqref{eq:index_moduli} or \cite{Bou}, and by \eqref{RS-index} this indeed agrees with the dimension computation for $\mathcal{P}^*_\ell (\mathbf{B};J_X)$ above, which follows from \cite[Lemma 6.7]{CM}.
\end{remark}

We endow  $\mathcal{N}^*_{N, \ell}(q,p;\mathbf{A};J_\Sigma)$ and $\mathcal{N}^{*}_{N, \ell}(q,p;\mathbf{A}, \mathbf{B};J_X)$ with orientations according to the following fibered sum rule, the convention used  in \cite{BO1,DL2,AK23}. 
\begin{definition}\label{def:fiber_sum}
	Let $W$ be an oriented vector space. Let $V\subset W$ be an oriented subspace. The quotient space $W/V$ is oriented so that the isomorphism $V\oplus (W/V)\to W$ given by a splitting of the  exact sequence $0\to V\to W\to W/V\to 0$ is orientation-preserving. 
    Let $f_1 : V_1 \to W$ and $f_2 : V_2 \to W$ be linear maps between oriented vector spaces such that $f_1 - f_2 : V_1 \oplus V_2 \to W$ defined by $(f_1 - f_2)(v_1, v_2) = f_1(v_1)-f_2(v_2)$ is surjective.
    We orient $V_1 \prescript{}{f_1}\oplus^{}_{f_2} V_2 := \ker (f_1 - f_2)$   to satisfy  the  isomorphism $(V_1 \oplus V_2) / (V_1 \prescript{}{f_1}\oplus^{}_{f_2} V_2)  \to W$ given by $f_1 - f_2$ being orientation-preserving if and only if $(-1)^{\dim V_2 \cdot \dim W}=1$.
\end{definition}
We explain orientations for two particular cases, namely 
\begin{equation}\label{eq:orientation_moduli_0}
\mathcal{N}^*_{N = 1, \ell = 0}(q,p;A;J_\Sigma)\quad \text{and}\quad \mathcal{N}^{*}_{N = 1, \ell = 1}(p,p;A=0, B;J_X),	
\end{equation}
which are relevant in our situation, see Proposition \ref{prop:cpt} below.
We first observe that the space $\mathcal{N}^*_{N=1 ,\ell = 0} (A ; J_\Sigma)$ carries a canonical orientation due to the fact that the associated linearized operator is a compact perturbation of a complex linear Cauchy-Riemann operator, see \cite[Theorem 3.1.6.(i)]{MS}. To orient $\mathcal{N}^*_{N=1 ,\ell = 0} (q,p;A; J_\Sigma)$, we view this space as a fiber product space as follows: 
\begin{equation}\label{eq:orientation_moduli_1}
\begin{aligned}
    \mathcal{N}^*_{N=1, \ell = 0} (q,p; A; J_\Sigma) &= W^s_{Z_\Sigma}(p) \prescript{}{\iota^s} \times^{}_{\ev_\infty} \mathcal{N}^*_{N=1, \ell = 0}(A; J_\Sigma) \prescript{}{\ev_0} \times^{}_{\iota^u} W^u_{Z_\Sigma}(q) \\
    w &\mapsto (w(\infty), w, w(0)).
\end{aligned}
\end{equation}
where $\iota^s$ and $\iota^u$ are natural inclusion maps, and $\ev_0$ and $\ev_\infty$ refer to the evaluation maps at $0$ and $\infty$, respectively. The orientations on $W^u_{Z_\Sigma}(q)$ and $W^s_{Z_\Sigma}(p)$ chosen in Section \ref{sec:Morse} and the orientation on $\mathcal{N}^*_{N=1 ,\ell = 0} (A ; J_\Sigma)$ induce an orientation on the fiber product space in \eqref{eq:orientation_moduli_1} according to the fibered sum rule in Definition \ref{def:fiber_sum}.
Note that the fibered sum rule is associative, namely the resulting orientation is independent of the order of taking fiber products.
This determines an orientation on $\mathcal{N}^*_{N=1 ,\ell = 0} (q,p;A; J_\Sigma)$ via \eqref{eq:orientation_moduli_1}.

For the latter space in \eqref{eq:orientation_moduli_0}, we use the natural diffeomorphism  
\begin{equation}\label{eq:orient_A=0}
	\mathcal{N}^*_{N = 1, \ell = 1}(A=0;J_\Sigma)  \cong \Sigma \times (\mathbb{CP}^1 \setminus \left \{ 0, \infty \right \}),
\end{equation}
which follows from the fact that $J_\Sigma$-holomorphic spheres representing $A=0$ are constant maps. The space on the right-hand side is oriented by $\omega_\Sigma$ and the complex structure on $\mathbb{CP}^1$, and thus $\mathcal{N}^*_{N = 1, \ell = 1}(A=0;J_\Sigma) $ inherits an orientation via \eqref{eq:orient_A=0}. Let $\widetilde{\mathcal{P}}^*_{\ell = 1} (B; J_X)$ be the space consisting of $J_X$-holomorphic parametrizations of elements in $\mathcal{P}^*_{\ell = 1} (B; J_X)$. The space $\widetilde{\mathcal{P}}^*_{\ell = 1} (B; J_X)$  carries a canonical orientation induced by the associated linearized operator as explained above. This induces an orientation on $\mathcal{P}^*_{\ell = 1} (B; J_X)=\widetilde{\mathcal{P}}^*_{\ell = 1} (B; J_X)/\mathrm{Aut}(\mathbb{CP}^1;\infty)$. 
        Like \eqref{eq:orientation_moduli_1}, we identify $\mathcal{N}^*_{N = 1, \ell = 1} (p, p; A = 0, B; J_X)$ with the fiber product space 
\[
    W^s_{Z_\Sigma}(p) \prescript{}{\iota^s} \times^{}_{\ev_\infty} \big ( \, \mathcal{P}^*_{\ell = 1} (B; J_X) \prescript{}{\aug_X} \times^{}_{\aug_\Sigma\,} \mathcal{N}^*_{N = 1, \ell = 1} (A = 0; J_\Sigma) \, \big ) \prescript{}{\ev_0} \times^{}_{\iota^u} W^u_{Z_\Sigma}(p)
\]
which is oriented by the fibered sum rule. This endows $\mathcal{N}^*_{N = 1, \ell = 1} (p, p; A = 0, B; J_X)$ with an orientation. 

\subsection{Split Floer cylinders}\label{sec:Floer}
In this section, we study  moduli spaces of split Floer cylinders which lead to a split version of the Floer chain complex. Although Hamiltonians used in \cite{DL2,DL1} are different from our $\bigvee$-shaped Hamiltonians in $\mathcal{H}$, analysis in those articles carries over  to our setting. 

Recalling that $\mathbb R\times Y\subset \widehat{W}$, we view $H\in\mathcal{H}$ as a Hamiltonian on $\mathbb R\times Y$, which is constant on $(-\infty,-\epsilon/8)\times Y$. 
Let $\Gamma\subset\mathbb R\times S^1$ be a finite subset, which we also allow to be empty.
For $J_Y\in\mathcal{J}_Y$, we refer to smooth solutions $\tilde{v} = (b, v):\mathbb{R}\times S^1 \setminus\Gamma \rightarrow \mathbb R\times Y$ of the Floer equation
\begin{equation}\label{Floer}
\partial_s \tilde{v} + J_Y(\tilde{v})\big(\partial_t \tilde{v} - X_H(\tilde{v})\big)=0
\end{equation}
asymptotic to critical points of $\mathcal{A}_H$ at negative and positive ends, i.e.
\begin{align*}
{\ev}_\pm(\tilde{v})=\big({\ev}_\pm(b),{\ev}_\pm({v})\big) := \lim_{s \to \pm \infty} \tilde{v}(s,\cdot) = \Big(\lim_{s \to \pm \infty} b(s,\cdot),\lim_{s \to \pm \infty} {v}(s,\cdot)\Big) \in\Crit\mathcal{A}_H,
\end{align*}
as {\it Floer cylinders}. A Floer cylinder is said to be nontrivial if $\partial_s\tilde v\neq 0$. 
Without loss of generality, we assume that $\Gamma$ consists of non-removable punctures throughout the paper. 
On the region $(-\infty,-\epsilon/8)\times Y$ where $X_H$ vanishes, equation \eqref{Floer} reduces to the Cauchy-Riemann equation, and we may apply the neck stretching technique.

A straightforward computation using $J_Y\partial_r=R$ and $d\pi_{\mathbb{R}\times Y}\circ J_Y = J_\Sigma \circ d\pi_{\mathbb{R}\times Y}$ shows that equation \eqref{Floer} for $\tilde{v}=(b,v)$ is equivalent to
\begin{equation}\label{eq:Floer_eq2}
\begin{cases}
     \partial_s b-\alpha(\partial_t v) + h'(e^b) = 0, \\[0.5ex]
     \partial_t b+\alpha(\partial_sv)=0,\\[0.5ex]
     d\pi_Y \partial_s v  + J_\Sigma(\pi_Y(v)) d\pi_Y \partial_tv= 0.
\end{cases}	
\end{equation}
The last line yields that 
$\pi_{\mathbb{R}\times Y}\circ\tilde v=\pi_Y\circ v:\mathbb{R}\times S^1\setminus\Gamma\to \Sigma$
is $J_\Sigma$-holomorphic. 

The energy of a  Floer cylinder $\tilde{v}$ is defined by 
\begin{equation}\label{energy}
    E(\tilde{v}) := \sup_{\psi} E_\psi(\tilde v),\qquad E_\psi(\tilde v):= \int_{\mathbb{R}\times S^1
    \setminus\Gamma} \tilde{v}^*(d(\psi\alpha))-\tilde{v}^*(dH)\wedge dt 
\end{equation}
 where the supremum is taken over the set
\[
    \left \{ \psi \in C^\infty(\mathbb{R},[0,+\infty)) \, \middle | \, \text{ } \psi' \geq 0 \text{  and } \psi(r)=e^r \text{ for } r \geq -\epsilon \right \},
\]
for $\epsilon>0$ chosen in Section \ref{sec:acs}.
The condition $\psi(r)=e^r$ for $r\geq-\epsilon$ implies that $E(\tilde{v})$ is equal to the sum of the energy for Floer cylinders on the region $\tilde{v}^{-1}([-\epsilon,+\infty)\times Y)$ and the Hofer energy on the complement. 

Therefore a Floer cylinder $\tilde{v}=(b,v):\mathbb{R} \times S^1 \setminus \Gamma \rightarrow \mathbb{R}\times Y$ has finite energy if and only if the following asymptotic behavior holds for every $z\in\Gamma$:
 for holomorphic polar coordinates $\epsilon_{z}:(-\infty,0] \times S^1 \rightarrow \mathbb{R}\times S^1$ near $z \in \Gamma$, the map $\tilde v\circ\epsilon_z(s,t)$ converges to the orbit cylinder $(-\infty,0)\times\gamma_z(Tt)$ for some periodic Reeb orbit $\gamma_z:\mathbb R/T\mathbb Z\to Y$ in the sense of \cite{Hof93,HWZ96} as $s\to -\infty$. In particular,
	\begin{equation}\label{asymptotic}
	    \lim_{s \rightarrow -\infty} (b\circ\epsilon_{z},v\circ\epsilon_{z})(s,t) = (-\infty, \gamma_z(Tt)).
	\end{equation}
If $\tilde v$ has finite energy, then so does $\pi_Y\circ v$, namely $\int_{\mathbb R \times S^1 \setminus \Gamma}(\pi_Y \circ v)^* \omega_\Sigma<\infty$. Hence, due to the removable singularity theorem,  $
\pi_Y\circ v $ extends to a $J_\Sigma$-holomorphic map defined on $\mathbb{CP}^1$, where $\mathbb{CP}^1\setminus\{0,\infty\}$ is equated with $\mathbb{R}\times S^1$. Since we will only deal with finite energy solutions $\tilde v$ of \eqref{Floer}, we abuse notation and write   
\[
\pi_{Y}\circ  v:\mathbb{CP}^1\longrightarrow\Sigma,
\]
where
\[
(\pi_Y\circ v)(0)=\pi_Y\circ(\ev_-( v)),\quad  (\pi_Y\circ v)(\infty)=\pi_Y\circ(\ev_+( v)),\quad(\pi_Y\circ v)(z)=\pi_Y\circ\gamma_z
\]
for $z\in\Gamma$. 
Here we equated the constant curves $\pi_Y\circ(\ev_\pm(v))$ and $\pi_Y\circ\gamma_z$ with their images. 

Let $\tilde v=(b,v):\mathbb{R}\times S^1\setminus\Gamma\to\mathbb{R}\times Y$ be a  Floer cylinder with finite energy such that $v$ converges to periodic Reeb orbits $\gamma_{z_j}$ with period $\m(\gamma_{z_j})/K > 0$ at $z_j\in\Gamma$. Then Stokes' theorem yields
\begin{equation}\label{eq:action_increase}
    0\leq E_{\psi=e^r}(\tilde v)= \mathcal{A}_H(\ev_+(\tilde v))-\mathcal{A}_H(\ev_-(\tilde v)).
\end{equation}
The first inequality follows from $E_{\psi=e^r}(\tilde v) = \int_{\mathbb R \times S^1 \setminus \Gamma} |\partial_s \tilde{v}|^2dsdt$, where the norm is induced by $d(e^r \alpha)$ and $J_Y$.
In particular, $\mathcal{A}_H(\ev_+(\tilde v))\geq\mathcal{A}_H(\ev_-(\tilde v))$. Moreover, by a property of $c_1^E$, \eqref{RS-index}, and \eqref{eq:RS_XH}, we have 
\begin{equation}\label{multiplicity}
\begin{split}
0\leq 2c_1^{T\Sigma}([\pi_Y\circ v])&=\frac{2(\tau_X-K)}{K}c_1^E([\pi_Y\circ v]) \\
&=\frac{2(\tau_X-K)}{K}\Big(\m (\ev_+({v})) - \m(\ev_-({v})) -\sum_{z_j\in\Gamma} \m(\gamma_{z_j})\Big)\\ &=\mu_\CZ(\ev_+(\tilde{v}))-\mu_\CZ(\ev_-(\tilde{v}))-\sum_{z_j\in\Gamma}\mu_\CZ(\gamma_{z_j}).	
\end{split}
\end{equation}
Here the inequality follows from $c_1^{T\Sigma}([\pi_Y\circ v])=(\tau_X-K)\omega([\pi_Y\circ v])\geq 0$. The last equality holds for contractible $\ev_+(\tilde{v})$, $\ev_-(\tilde{v})$, and $\gamma_{z_j}$. Furthermore, $\omega([\pi_Y\circ v])= 0$, i.e.~$\pi_Y\circ v$ is constant, if and only if $\m(\ev_+(v))=\m(\ev_-(v)) + \sum\limits_{z_j\in\Gamma} \m(\gamma_{z_j})$. 
In the case of $\m(\ev_+(v))=\m(\ev_-(v))$, we have $\Gamma=\emptyset$ and $\pi_Y\circ v$ being constant. If in addition both $\ev_+(\tilde v)$ and $\ev_-(\tilde v)$ are of type III, then $\mathcal{A}_H(\ev_+(\tilde v))=\mathcal{A}_H(\ev_-(\tilde v))$ and thus $\tilde v$ maps to the orbit $\ev_+(\tilde v)=\ev_-(\tilde v)$ lying over the point $\pi_Y\circ v$  by \eqref{eq:action_increase}.

\medskip

Next, we define moduli spaces which contribute to the split Floer chain complex. From now on, 
$H \in \mathcal{H}$ and $J_X \in \mathcal{J}_X$. As before, we write $J_Y$ and $J_\Sigma$ for the cylindrical and horizontal parts of $J_X$, respectively. We also fix a Morse-Smale pair $(f_Y, Z_Y)$ as in Section \ref{sec:Morse}.  Let $N\in\N$ and $\ell\in\N\cup\{0\}$. Let $\mathbf{A} := (A_1, \dots, A_N) \in (\pi_2(\Sigma))^N$ as in Section \ref{sec:chain_of_pearls}.

\begin{definition}
The moduli space
\begin{equation} \label{moduli_cylinder}
     \mathcal{M}_{N, \ell; \, k_-, k_+}(\mathbf{A};H;J_Y)
\end{equation}
consists of pairs $(\mathbf{v},\mathbf{z})=\big((\tilde{v}_1, \dots, \tilde{v}_N), ( \Gamma_1, \dots, \Gamma_N )\big)$ such that for every $i=1,\dots,N$ 
\begin{enumerate}[(1)]
    \item $\Gamma_i  = ( z_{i,1}, \dots, z_{i, \ell_i} )$ is a finite sequence of pairwise distinct elements of $\mathbb R \times S^1$ for some $\ell_i \in \mathbb N \cup \left \{ 0 \right \}$  with $\ell = \sum\limits_{i=1}^{N} \ell_i$.
    For $\ell_i = 0$, we set $\Gamma_i = \emptyset$,
    
    \item $\tilde{v}_i = (b_i, v_i) : \mathbb{R} \times S^1 \setminus \Gamma_i \rightarrow \mathbb{R} \times Y$ is a nontrivial Floer cylinder with finite energy such that $\pi_Y \circ v_i$ represents $A_i$, 
    
\end{enumerate}
    and in addition for every $i=1,\dots, N-1$  
	\[
k_+=\m(\ev_+(v_1)),\;\; k_-=\m(\ev_-(v_N)),\;\;  \m(\ev_-(v_{i}))=\m(\ev_+(v_{i+1})).
\]
We also define the subspace
\[
    \mathcal{M}^*_{N,\ell;k_-,k_+}(\mathbf{A};H;J_Y)\subset \mathcal{M}_{N,\ell;k_-,k_+}(\mathbf{A};H;J_Y)
\] 
consisting of simple elements $(\mathbf{v},\mathbf{z})$, namely
\[
\big((\pi_{ Y}\circ {v}_1,\dots,\pi_{Y}\circ {v}_N),\mathbf{z}\big)\in \mathcal{N}^*_{N, \ell}(\mathbf{A};J_\Sigma).
\]
\end{definition}

\begin{remark}
	Punctures $\mathbf{z}$ are free to move on the domain $\R\times S^1$. In order to study the moduli space in \eqref{moduli_cylinder} using a linearized operator, we may fix the position of punctures at the cost of working with a family of complex structures on $\R\times S^1$, see \cite[p.21]{DL2}.
\end{remark}

We consider the map
\begin{align}\label{ev_Y}
    &\widetilde{\ev}_{Z_Y} : \mathcal{M}_{N, \ell; k_-, k_+} (\mathbf{A}; H; J_Y) \times \R_{>0}^{N-1} \to Y^{2N} \\[1ex]
    &\widetilde{\ev}_{Z_Y}( (\mathbf{v}, \mathbf{z}), \mathbf{t} ) \nonumber \\
    &:= \big ( \ev_{+,0}(\tilde{v}_1), \ev_{-,0}(\tilde{v}_1), \varphi^{t_1}_{Z_Y}(\ev_{+,0}(\tilde{v}_2)), \ev_{-, 0}(\tilde{v}_{2}),\dots, \varphi^{t_{N-1}}_{Z_Y}(\ev_{+,0}(\tilde{v}_N)), \ev_{-,0}(\tilde{v}_N) \big ), \nonumber
\end{align}
where $\mathbf{t} : = (t_1, \dots, t_{N-1})$ and $\ev_{\pm,0}(\tilde{v}):= \lim \limits_{s \to \pm \infty} v(s,0)$ for $\tilde{v}=(b,v)$. In the case of $N = 1$, this means $\widetilde{\ev}_{Z_Y} : \mathcal{M}_{N, \ell; k_-, k_+} (\mathbf{A}; H; J_Y) \to Y^2$ with $(\tilde{v}, \mathbf{z}) \mapsto \big ( \ev_{+,0}(\tilde{v}), \ev_{-,0}(\tilde{v}) \big )$.

Let $f_{Y_{k_\pm}^H}$ be the Morse functions on $Y_{k_\pm}^H$ corresponding to $f_Y$ as in Section \ref{sec: Hamiltonians}. Let  
\[
\tilde{p}_{k_+}\in\Crit f_{Y^H_{k_+}},\qquad \tilde{q}_{k_-} \in \Crit f_{Y^H_{k_-}}\quad\textrm{for}\;\; k_\pm \in m_W \mathbb{Z}
\]
 be 1-periodic orbits of type III corresponding to $\tilde p,\tilde q\in\Crit f_Y$, respectively.

\begin{definition}
For $\ell \in \mathbb{N} \cup \left \{ 0 \right \}$, we define the moduli space 
\begin{equation*}
    \mathcal{M}_{N, \ell} (\tilde{q}_{k_-}, \tilde{p}_{k_+}; \mathbf{A}; H; J_Y) := (\widetilde{\ev}_{Z_Y})^{-1} \big ( W^s_{Z_Y}(\tilde{p}) \times \Delta_Y^{N-1} \times W^u_{Z_Y}(\tilde{q}) \big ),
\end{equation*}
where $\widetilde{\ev}_{Z_Y}$ is given in \eqref{ev_Y} and $\Delta_Y := \left \{ (x,y) \in Y \times Y \, \mid \, x = y \right \}$.\footnote{ For $N = 1$, this means $(\widetilde{\ev}_{Z_Y})^{-1} \big ( W^s_{Z_Y}(\tilde{p}) \times W^u_{Z_Y}(\tilde{q}) \big )$.}    
The subspace of simple elements is defined by
\begin{align*}
&\mathcal{M}^*_{N, \ell} (\tilde{q}_{k_-}, \tilde{p}_{k_+}; \mathbf{A}; H; J_Y) \\
&\hspace{1cm}:= \mathcal{M}_{N, \ell} (\tilde{q}_{k_-}, \tilde{p}_{k_+}; \mathbf{A}; H; J_Y)\, \cap \, \big ( \mathcal{M}^*_{N, \ell; k_-, k_+} (\mathbf{A}; H; J_Y) \times \mathbb{R}^{N-1}_{>0} \big ) .    
\end{align*}
For $\ell = 0$, we write $\mathcal{M}_{N, \ell = 0} (\tilde{q}_{k_-}, \tilde{p}_{k_+}; \mathbf{A}; H; J_Y)$ and $\mathcal{M}^*_{N, \ell = 0} (\tilde{q}_{k_-}, \tilde{p}_{k_+}; \mathbf{A}; H; J_Y)$.
\end{definition}

\begin{figure}[h]
     \centering
     \includegraphics[height = 6cm]{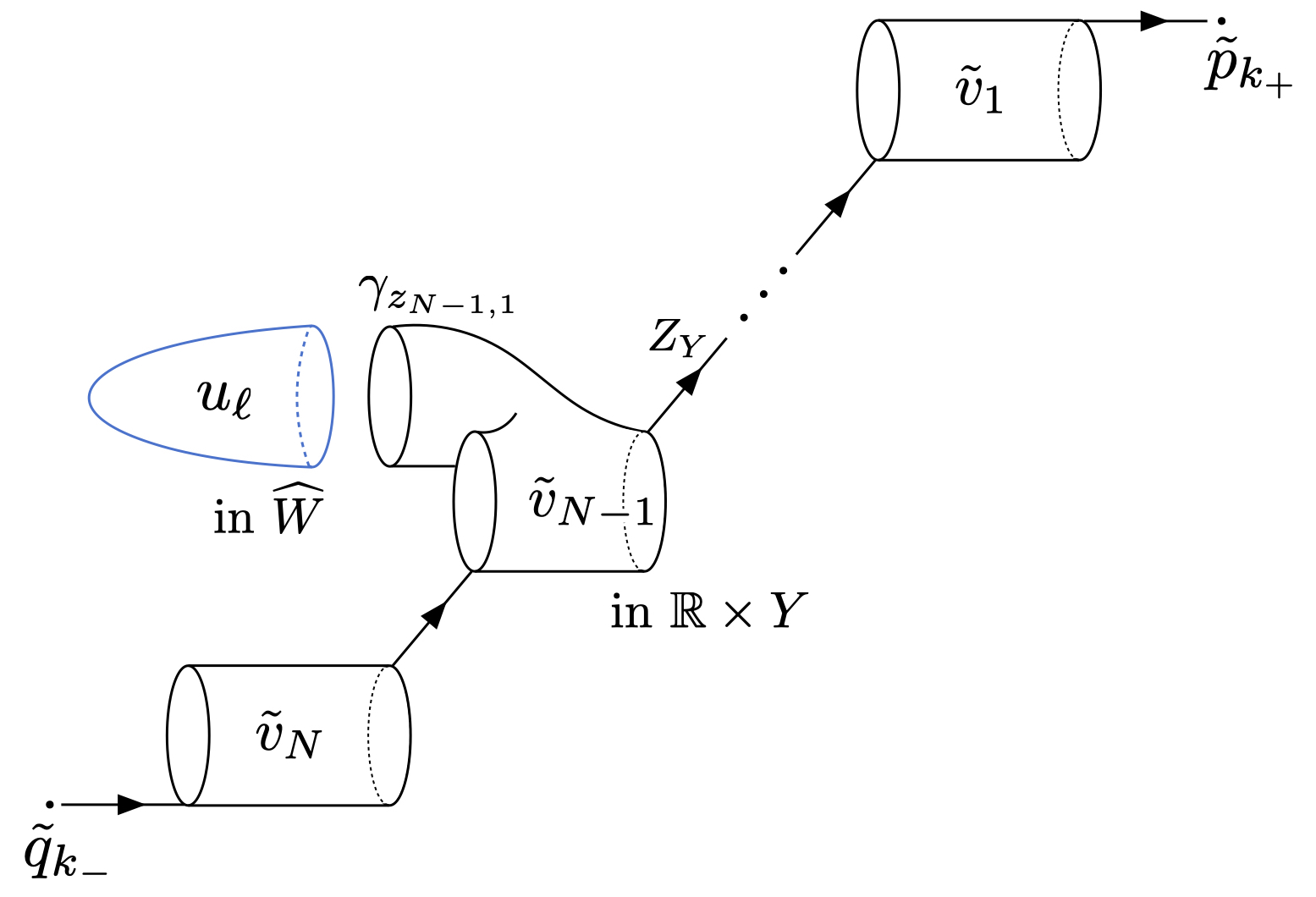}
     \caption{An element of $\mathcal{M}_{N, \ell} (\tilde{q}_{k_-}, \tilde{p}_{k_+}; \mathbf{A}, \mathbf{B}; H; J_X)$}
     \label{fig:Floer}
\end{figure}

\begin{remark}
If one wants to investigate the case $\mu({\tilde{p}_{k_+}}) - \mu({\tilde{q}_{k_-}}) \geq 2$, then it is more reasonable to consider $\R_{\geq 0}^{N-1}$ in \eqref{ev_Y}, as opposed to $\R_{>0}^{N-1}$. 
 However, we only need to study the case  $\mu({\tilde{p}_{k_+}}) - \mu({\tilde{q}_{k_-}}) = 1$ for our purpose. To this end, it suffices to deal only with the case $\R_{>0}^{N-1}$. Note that, for $((\mathbf{v},\mathbf{z}),\mathbf{t})$ with $t_i=0$ for some $i$, $\tilde{v}_i$ and $\tilde{v}_{i+1}$ can be glued, and therefore $((\mathbf{v},\mathbf{z}),\mathbf{t})$ is an interior point after Floer-Gromov compactification.
\end{remark}

Let $\mathbf{B} := ( B_1, \dots, B_\ell ) \in (\pi_2(X) )^\ell$, for $\ell \in \mathbb N$, with $B_j \cdot \Sigma \neq0$ for every $j = 1, \dots, \ell$ as in Definition \ref{def_moduli_B}.
We consider two maps
\begin{align*}
    &\widetilde{\aug}_Y : \mathcal{M}_{N, \ell; k_-, k_+ }( \mathbf{A}; H; J_Y ) \rightarrow (\Sigma\times\mathbb{N})^\ell \\
    &\widetilde{\aug}_Y ( \mathbf{v}, \mathbf{z} ) : =\big((\pi_{Y} \circ {v}_1 (z_{1,1}),\m(\gamma_{z_{1,1}})), \dots, (\pi_{Y} \circ{v} _N (z_{N, \ell_N}),\m(\gamma_{z_{N, \ell_N}})) \big ), \nonumber  \\[1.5ex]
    &\widetilde{\aug}_X:\mathcal{P}_\ell(\mathbf{B};J_X) \to (\Sigma\times\mathbb{N})^\ell	\\
    &\widetilde{\aug}_X ( \mathbf{u} ) : = \big((u_1(\infty),u_1\cdot\Sigma),\dots, (u_\ell(\infty),u_\ell\cdot\Sigma)\big), \nonumber
\end{align*}
where $\gamma_{z_{i,j}}$ are periodic Reeb orbits that $v_i$ converges to at $z_{i,j}\in\Gamma_i$, and $\mathbf{u} = ([u_1], \dots, [u_\ell])$.
Putting these maps and the one in \eqref{ev_Y} together, we define 
\begin{align*}
  &\widetilde{\mathrm{EV}}: \mathcal{M}_{N,\ell;k_-,k_+}(\mathbf{A}; H; J_Y) \times \mathbb{R}^{N - 1}_{>0}   \times \mathcal{P}_\ell(\mathbf{B};J_X) \rightarrow Y^{2N} \times (\Sigma\times\mathbb{N})^\ell \times (\Sigma\times\mathbb{N})^\ell \\[1ex]
    &\widetilde{\mathrm{EV}}\big((\mathbf{v},\mathbf{z}), \mathbf{t}, \mathbf{u}\big):= \big(\widetilde{\ev}_{Z_Y}((\mathbf{v},\mathbf{z}), \mathbf{t}), \widetilde{\aug}_Y(\mathbf{v},\mathbf{z}), \widetilde{\aug}_X(\mathbf{u}) \big ).
\end{align*}

\begin{definition}
    For $\ell \in \mathbb N$, we define the moduli space
    \[
    \mathcal{M}_{N, \ell}(\tilde{q}_{k_-},\tilde{p}_{k_+};\mathbf{A}, \mathbf{B}; H; J_X) 
       := \widetilde{\mathrm{EV}}^{-1} \big (  W^s_{Z_Y} (\tilde{p}) \times  
        \Delta_{Y}  ^{N-1} \times W^u_{Z_Y}(\tilde{q}) \times {\Delta}_{(\Sigma\times \mathbb{N})^\ell}  \big ),
    \]
    where ${\Delta}_{(\Sigma\times \mathbb{N})^\ell} : = \left \{ (\mathbf{x}, \mathbf{y}) \in (\Sigma \times \mathbb{N})^{\ell} \times (\Sigma \times \mathbb N)^{\ell} \, \middle | \, \mathbf{x} = \mathbf{y} \right \}$,\footnote{ In the case of $N=1$, this means $\widetilde{\mathrm{EV}}^{-1} \big (  W^s_{Z_Y} (\tilde{p}) \times W^u_{Z_Y}(\tilde{q}) \times {\Delta}_{(\Sigma\times \mathbb{N})^\ell}  \big )$.}
    see Figure \ref{fig:Floer}.
    The subspace of simple elements is defined by
    \begin{align*}
       & \mathcal{M}^*_{N, \ell}(\tilde{q}_{k_-},\tilde{p}_{k_+};\mathbf{A},\mathbf{B}; H; J_X) \\ 
       & \hspace{1cm} : = \mathcal{M}_{N, \ell}(\tilde{q}_{k_-},\tilde{p}_{k_+};\mathbf{A}, \mathbf{B}; H; J_X) \, \cap \, \big ( ( \mathcal{M}^*_{N, \ell; k_-, k_+}(\mathbf{A};H;J_Y) \times \mathbb{R}^{N-1}_{>0} ) \times \mathcal{P}^*_\ell(\mathbf{B};J_X) \big ).
    \end{align*}        
\end{definition}

\begin{remark}
In view of Proposition \ref{bijection} and Remark \ref{rem:hol_plane}, we think of $[u_1],\dots,[u_\ell]$ in $\ ( ((\mathbf{v},\mathbf{z}), \mathbf{t}), \mathbf{u}  ) \in \mathcal{M}_{N, \ell}(\tilde{q}_{k_-},\tilde{p}_{k_+};\mathbf{A}, \mathbf{B}; H; J_X)$ as unparametrized $J_{\widehat{W}}$-holomorphic planes in $\widehat{W}$ whose asymptotic  orbits coincide with the periodic Reeb orbits that $v_1,\dots,v_N$ converge to at $\mathbf{z}$.	
\end{remark}

There are canonical projection maps 
\begin{equation}\label{projection0}
\begin{split}
	&\Pi:\mathcal{M}_{N, \ell}(\tilde{q}_{k_-},\tilde{p}_{k_+};\mathbf{A}; H; J_Y) \longrightarrow \mathcal{N}_{N, \ell}(q,p;\mathbf{A};J_\Sigma),  \\[1 ex]
    &\Pi:\mathcal{M}_{N, \ell}(\tilde{q}_{k_-},\tilde{p}_{k_+};\mathbf{A},\mathbf{B}; H; J_X) \longrightarrow \mathcal{N}_{N, \ell}(q,p;\mathbf{A},\mathbf{B};J_X), 
\end{split}
\end{equation}
defined by $\Pi ((\mathbf{v},\mathbf{z}), \mathbf{t}) : = ((\mathbf{w},\mathbf{z}), \mathbf{t})$, and $\Pi (((\mathbf{v},\mathbf{z}), \mathbf{t}), \mathbf{u}) := (((\mathbf{w},\mathbf{z}), \mathbf{t}), \mathbf{u})$, where $\mathbf{w} := (\pi_{ Y}\circ {v}_1,\dots,\pi_{Y}\circ {v}_{N})$. 
We have the same projections with $\mathcal{M}_{N, \ell}$ and $\mathcal{N}_{N, \ell}$ replaced by $\mathcal{M}_{N, \ell}^*$ and $\mathcal{N}_{N, \ell}^*$. The following transversality result is, roughly speaking, a consequence of an automatic transversality property in the fiber direction of $\Pi$. A corresponding statement for Floer continuation cylinders will be  discussed in Proposition \ref{continuation_transversality} and below.

\begin{prop}\label{prop:5.9_DL2}(\cite[Proposition 5.9]{DL2})
	Let $\mathcal{J}_X^\reg$ be the residual subset of $\mathcal{J}_X$ given in Proposition \ref{transversality_chain}. For $J_X\in\mathcal{J}_X^\reg$ and its cylindrical part $J_Y$, the moduli spaces 
\[
	 \mathcal{M}^*_{N, \ell=0}(\tilde{q}_{k_-},\tilde{p}_{k_+};\mathbf{A}; H; J_Y) \qquad \textrm{and}\qquad \mathcal{M}^*_{N, \ell}(\tilde{q}_{k_-},\tilde{p}_{k_+};\mathbf{A},\mathbf{B}; H; J_X)
\]
 are smooth manifolds of dimension 
\begin{equation*}\label{eq:index_computation1}
 \mu (\tilde{p}_{k_+}) - \mu (\tilde{q}_{k_-})+N-1.	
\end{equation*}
\end{prop}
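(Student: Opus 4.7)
The strategy follows \cite[Proposition 5.9]{DL2}: the cylindrical almost complex structure $J_Y$ (and, correspondingly, $J_X$ on the tubular neighbourhood $\varphi(\mathcal U)$) is engineered so that the Floer equation \eqref{Floer} splits into a horizontal piece on $\Sigma$ and a vertical piece along the $S^1$-fibers of $\pi_Y$, and the vertical piece enjoys automatic transversality. Transversality on the base is already delivered by Proposition \ref{transversality_chain}; transversality in the fibers is then essentially free.

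First, by the third line of \eqref{eq:Floer_eq2}, the projection $\Pi$ in \eqref{projection0} sends a Floer cylinder $\tilde v_i = (b_i, v_i)$ to the $J_\Sigma$-holomorphic curve $w_i := \pi_Y \circ v_i$, and simplicity of the lift descends to simplicity of the projected pearl by construction. The asymptotic and matching constraints are compatible thanks to \eqref{eq:Morse_proj_hat}--\eqref{eq:Morse_proj_check}, so $\Pi$ restricts to
\[
\Pi : \mathcal{M}^*_{N,\ell}(\tilde q_{k_-}, \tilde p_{k_+}; \mathbf{A}; H; J_Y) \longrightarrow \mathcal{N}^*_{N,\ell}(q,p;\mathbf{A};J_\Sigma)
\]
and analogously for the augmented moduli spaces, preserving the sphere tuple $\mathbf{u} \in \mathcal{P}^*_\ell(\mathbf{B};J_X)$ intact. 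By Proposition \ref{transversality_chain}, the target pearl moduli spaces are cut out transversely for any $J_X \in \mathcal{J}_X^{\reg}$, whose horizontal part is an element of $\mathcal{J}_\Sigma^{\reg}$.

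Next, I would analyse the fibers of $\Pi$. Given a fixed base pearl chain $((\mathbf{w}, \mathbf{z}), \mathbf{t})$, a lift consists of solutions $(b_i, v_i)$ to the first two equations of \eqref{eq:Floer_eq2} with $\pi_Y \circ v_i = w_i$, asymptotic to $\tilde p_{k_+}, \tilde q_{k_-}$ at $\pm \infty$ and to Reeb orbits $\gamma_{z_{i,j}}$ at the punctures. Using the $S^1$-invariance of $J_Y$ and the pullback of the connection $\Theta$ to $w_i^* E$, these two equations reduce to an inhomogeneous complex-linear Cauchy--Riemann equation on a trivial complex line bundle over the punctured cylinder. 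Dragnev-type automatic transversality then yields surjectivity of the fiber linearisation, so base transversality upgrades to transversality of $\mathcal{M}^*_{N, \ell=0}$. For $\mathcal{M}^*_{N,\ell}$ with $\ell \geq 1$, one additionally needs transversality of $\mathcal{P}^*_\ell(\mathbf{B}; J_X)$ together with the augmentation diagonal constraint, both controlled by $J_X \in \mathcal{J}_X^{\reg}$; Proposition \ref{bijection} identifies each $[u_j]$ with a $J_{\widehat W}$-holomorphic plane asymptotic to a Reeb orbit of multiplicity $B_j \cdot \Sigma = \m(\gamma_{z_{i,j}})$, ensuring compatibility.

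For the dimension, one combines Proposition \ref{transversality_chain} with the fiber count. For $\ell = 0$, \eqref{multiplicity} gives $\sum_i 2 c_1^{T\Sigma}(A_i) = \tfrac{2(\tau_X - K)}{K}(k_+ - k_-)$, while passing from the base endpoint constraints $W^{s/u}_{Z_\Sigma}$ to their $Y$-lifts $W^{s/u}_{Z_Y}$ shifts $\ind_{f_\Sigma}$ to $\ind_{f_Y}$ via \eqref{crit_lift}. Inserting \eqref{grading} yields the total $\mu(\tilde p_{k_+}) - \mu(\tilde q_{k_-}) + N - 1$. In the augmented case, \eqref{multiplicity} contributes the extra puncture term $-\tfrac{2(\tau_X-K)}{K}\sum_{i,j}\m(\gamma_{z_{i,j}})$, which cancels precisely against $\sum_j 2\bigl(c_1^{TX}(B_j) - K\omega(B_j)\bigr)$ once the punctures are matched to the spheres through the diagonal $\Delta_{(\Sigma \times \mathbb{N})^\ell}$, giving the same final dimension.

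The main obstacle is the automatic-transversality step near the asymptotic Reeb orbits: one must verify that in the chosen trivialisation the fiber linearisation truly reduces to a complex-linear Cauchy--Riemann operator to which Dragnev's result applies, and that Fredholm indices add correctly across the boundary between $J_Y$ on the cylindrical end and $J_X$ on $\varphi(\mathcal U)$ (where $\ell \geq 1$ forces planes to escape the tubular neighbourhood by Definition \ref{def_moduli_B}.(2)). Since our $\bigvee$-shaped Hamiltonian is purely radial on $\R \times Y$ and vanishes outside $\widehat W \setminus (\R \times Y)$, the Hamiltonian term does not interact with the fiber direction, so the relevant analytic setup is identical to that of \cite[Sections 5.2--5.3]{DL2} and the argument there transfers directly.
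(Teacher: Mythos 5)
Your proposal follows essentially the same route that the paper itself takes (it cites this result from Diogo--Lisi and later, in the proof of Proposition~\ref{continuation_transversality}.(a), spells out that argument for the continuation analogue): decompose the linearized Floer operator as the upper-triangular operator in \eqref{decomposition_operator}, with the base block $\dot D^\Sigma_{\pi_Y\circ v}$ surjective by the pearl transversality of Proposition~\ref{transversality_chain}, the fiber block $D^{\mathbb C}_{\tilde v}$ of the form \eqref{linearized_operator} surjective by automatic transversality, and then run the index arithmetic through \eqref{multiplicity}, \eqref{crit_lift}, and \eqref{grading}. Two small imprecisions worth flagging, neither fatal: the paper invokes Wendl's automatic-transversality criterion \cite[Prop.~2.2]{W} rather than a Dragnev-type statement (both live in the same circle of ideas, but the cited operator $D$ in \eqref{linearized_operator} carries a \emph{real} zeroth-order term $h''(e^b)e^b$, so it is a Cauchy--Riemann-type operator with non-complex-linear perturbation, not literally a complex-linear one as you write); and the Floer equation does not \emph{split} into decoupled horizontal and vertical pieces — rather, as \eqref{eq:Floer_eq2} and \eqref{decomposition_operator} show, the horizontal equation is autonomous while the vertical one is coupled to it, producing an upper-triangular (not block-diagonal) linearization, which is exactly what makes the surjectivity argument go through without needing any control on the off-diagonal compact term $M$.
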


\begin{definition} 
	We define the moduli space of split Floer cylinders
\begin{align*}
    & \mathcal{M}_{N}(\tilde{q}_{k_-},\tilde{p}_{k_+}; H; J_X) \\
    & \hspace{1cm}  : =\bigcup_{\mathbf{A}}\Big(\mathcal{M}_{N, \ell = 0 }(\tilde{q}_{k_-},\tilde{p}_{k_+};\mathbf{A}; H; J_Y)   
    \,\cup\, 
   \bigcup_{ \ell \in \mathbb{N}  } \bigcup_{\mathbf{B}}\mathcal{M}_{N, \ell}(\tilde{q}_{k_-},\tilde{p}_{k_+};\mathbf{A},\mathbf{B}; H; J_X)\Big)
\end{align*}
and the subspace of simple elements by 
\begin{align*}
    &\mathcal{M}^*_{N}(\tilde{q}_{k_-},\tilde{p}_{k_+}; H; J_X) \\
    &\hspace{1cm}: = \bigcup_{\mathbf{A}} \Big(\mathcal{M}^*_{N, \ell = 0 }(\tilde{q}_{k_-},\tilde{p}_{k_+};\mathbf{A}; H; J_Y) \,\cup\, 
    \bigcup_{ \ell \in \mathbb{N}  } \bigcup_{\mathbf{B}}\mathcal{M}^*_{N, \ell}(\tilde{q}_{k_-},\tilde{p}_{k_+};\mathbf{A},\mathbf{B}; H; J_X)\Big)
\end{align*}
where the unions $\bigcup\limits_\mathbf{A}$ and $\bigcup\limits_\mathbf{B}$ range over $\mathbf{A}\in (\pi_2(\Sigma))^N$ and $\mathbf{B}:=(B_1,\dots,B_\ell) \in (\pi_2(X))^\ell$ with $B_j\cdot\Sigma\neq 0$ for all $j$, respectively in both cases.
\end{definition}

There is a free $\mathbb{R}^N$-action on $\mathcal{M}_{N}(\tilde{q}_{k_-},\tilde{p}_{k_+}; H; J_X)$ given by translating the domain $\mathbb{R}\times S^1$ (together with punctures $\mathbf{z}$) of each $\tilde{v}_i$ in the $\mathbb{R}$-direction. 
For $N=0$, let
\begin{equation*}
    \mathcal{M}_{N=0}(\tilde{q}_{k_-},\tilde{p}_{k_+}) := 
\begin{cases}
W^s_{Z_Y}(\tilde{p}) \cap W^u_{Z_Y}(\tilde{q}) & \text{ if }	\, k_-=k_+,\\[0.5ex]
\emptyset & \text{ if }\, k_-\neq k_+.
\end{cases}
\end{equation*}
It inherits an orientation from oriented stable and unstable manifolds, see Section \ref{sec:Morse_Gysin}. 
It also carries a natural $\mathbb{R}$-action. Putting all these together, we define 
\begin{align*}
    \overbar{\mathcal{M}}(\tilde{q}_{k_-},\tilde{p}_{k_+}; H; J_X) &:= \mathcal{M}_{N=0}(\tilde{q}_{k_-},\tilde{p}_{k_+})/\mathbb{R} \;\cup\; \bigcup_{N \in\mathbb N} \mathcal{M}_{N}(\tilde{q}_{k_-},\tilde{p}_{k_+}; H; J_X)/\mathbb{R}^N, \\
    \overbar{\mathcal{M}}^*(\tilde{q}_{k_-},\tilde{p}_{k_+}; H; J_X) &:= \mathcal{M}_{N=0}(\tilde{q}_{k_-},\tilde{p}_{k_+})/\mathbb{R} \;\cup\; \bigcup_{N \in\mathbb N} \mathcal{M}_{N}^*(\tilde{q}_{k_-},\tilde{p}_{k_+}; H; J_X)/\mathbb{R}^N .
\end{align*}
The latter space is  a smooth manifold of dimension $\mu(\tilde{p}_{k_+})-\mu(\tilde{q}_{k_-})-1$ by the Morse-Smale property of $(f_Y,Z_Y)$ and Proposition \ref{prop:5.9_DL2}.

\medskip

Next we endow $\mathcal{M}^*_{N, \ell=0}(\tilde{q}_{k_-},\tilde{p}_{k_+};\mathbf{A}; H; J_Y)$ and $\mathcal{M}^*_{N, \ell}(\tilde{q}_{k_-},\tilde{p}_{k_+};\mathbf{A},\mathbf{B}; H; J_X)$ with orientations according to the fibered sum rule introduced in Definition \ref{def:fiber_sum}.
We offer an explanation only for those contributing to the split Floer chain complex, namely 
\begin{equation}\label{eq:orientation_moduli_2}
    \mathcal{M}^*_{N = 1, \ell = 0} (\hat{q}_{k_-},\check{p}_{k_+}; A; H; J_Y) \quad \text{and}  \quad \mathcal{M}^*_{N = 1, \ell = 1} (\hat{p}_{k_-},\check{p}_{k_+};A = 0, B;H; J_X),
\end{equation}
see Proposition \ref{prop:cpt} below. 
We first orient $\mathcal{M}^*_{N=1, \ell = 0; k_-, k_+}(A; H; J_Y)$ as follows. 
For $\tilde{v} \in \mathcal{M}^*_{N=1, \ell = 0; k_-, k_+}(A; H; J_Y)$, the operator $D_{\tilde{v}}$ obtained by linearizing the Floer equation \eqref{Floer} at $\tilde{v}$ is Fredholm on suitable Sobolev spaces,
\[
D_{\tilde v}: W^{1,p,\delta}_{\mathbf{V}}(\tilde v^*T(\R\times Y))\to L^{p,\delta}(\mathrm{Hom}^{0,1}(T(\R\times S^1\setminus\Gamma),\tilde v^*T(\R\times Y))).
\]
We refer to \cite[Section 5.2]{DL2} for the precise definition of such Sobolev spaces and for properties of $D_{\tilde v}$ we recall  below. The operator $D_{\tilde{v}}$ can be expressed as 
\begin{equation} \label{decomposition_operator}
    D_{\tilde{v}} = \begin{pmatrix}
            D^\mathbb{C}_{\tilde{v}} & M \\
            0 &\dot{D}^\Sigma_{\pi_Y  \circ v}
        \end{pmatrix}
\end{equation}
with respect to the splitting $\tilde{v}^* T (\mathbb R \times Y) \cong (\mathbb R{\partial_r} \oplus \mathbb R R) \oplus (\pi_Y \circ v)^* T \Sigma$. The operator $\dot{D}^{\Sigma}_{\pi_Y \circ v}$ is the linearized operator associated with the $J_\Sigma$-holomorphic curve $\pi_Y \circ v$ and has Fredholm index $\dim \Sigma + 2 c_1^{T\Sigma}([\pi_Y \circ v])$. Moreover, $D^{\mathbb C}_{\tilde{v}}$ is a surjective Fredholm operator with Fredholm index equal to 1. The kernel of $D^{\mathbb C}_{\tilde{v}}$ is generated by the Reeb vector field $R$, which amounts to rotating $\tilde{v}$ using the flow of $R$. We orient $\ker  D^\mathbb{C}_{\tilde{v}}$ by $R$. The off-diagonal entry $M$ is a compact operator.
Hence, the Fredholm index of $D_{\tilde{v}}$ can be computed as
\begin{equation} \label{Fredholm_index}
    \ind D_{\tilde{v}} = \ind \dot{D}^\Sigma_{\pi_Y  \circ v} + \ind D^{\mathbb C}_{\tilde{v}} = \dim Y + 2 c_1^{T\Sigma}([\pi_Y \circ v]). 
\end{equation}
We have the exact sequence $0\to \ker  D^\mathbb{C}_{\tilde{v}} \to \ker D_{\tilde{v}} \to \ker \dot{D}^\Sigma_{\pi_Y  \circ v}\to 0$ given by inclusion and projection. 
This leads to a canonical isomorphism
\begin{equation}\label{eq:tangent_splitting}
    T_{\tilde{v}} \big ( \mathcal{M}^*_{N=1, \ell = 0; k_-, k_+}(A; H; J_Y) \big ) \cong \mathbb{R} R_{\tilde v} \oplus T_{\pi_Y \circ v} \big ( \mathcal{N}^*_{N=1, \ell = 0}(A; J_\Sigma) \big )
\end{equation}
where $R_{\tilde v}$ is the Reeb vector field along $\tilde v$.

As mentioned after \eqref{eq:orientation_moduli_0}, $\mathcal{N}^*_{N=1, \ell = 0}(A; J_\Sigma)$ carries a natural orientation. This defines an orientation on $\mathcal{M}^*_{N=1, \ell = 0; k_-, k_+}(A; H; J_Y)$ by declaring the isomorphism in \eqref{eq:tangent_splitting} to be orientation-preserving.

To orient $\mathcal{M}^*_{N = 1, \ell = 0} (\hat{q}_{k_-},\check{p}_{k_+}; A; H; J_Y)$, we use the identification 
\[
\begin{aligned}
    \mathcal{M}^*_{N=1, \ell = 0} (\hat{q}_{k_-},\check{p}_{k_+}; A;H; J_Y) &= W^s_{Z_Y} (\check{p}) \prescript{}{\iota^s_Y} \times^{}_{\ev_{+,0}} \mathcal{M}^*_{N=1, \ell = 0; k_-, k_+}(A;H; J_Y) \prescript{}{\ev_{-,0}} \times^{}_{\iota^u_Y} W^u_{Z_Y} (\hat{q}) \\[1ex]
    \tilde{v} &\mapsto (\ev_{+,0} (\tilde{v}), \tilde{v}, \ev_{-,0} (\tilde{v})),
\end{aligned}
\]
where $\iota^s_Y$ and $\iota^u_Y$ are natural inclusions. The fiber product space is oriented by the fibered sum rule, where the orientations on $W^s_{Z_Y} (\check{p})$ are $W^u_{Z_Y} (\hat{q})$ are chosen in Section \ref{sec:Morse}. The space $\mathcal{M}^*_{N = 1, \ell = 0} (\hat{q}_{k_-},\check{p}_{k_+}; A; H; J_Y)$ inherits an orientation from the above identification.

Next we orient the latter space in \eqref{eq:orientation_moduli_2}. Like \eqref{eq:tangent_splitting}, we have an isomorphism
\begin{equation}\label{eq:orientation_puctured}
    T_{(\tilde{v},z)} \big ( \mathcal{M}^*_{N=1, \ell = 1; k_- , k_+}(A=0; H; J_Y) \big ) \cong \mathbb{R} R_{\tilde v} \oplus T_{(\pi_Y \circ v, z)} \big ( \mathcal{N}^*_{N = 1, \ell = 1}(A=0; J_\Sigma) \big ).
\end{equation}
Using the orientation on $\mathcal{N}^*_{N = 1, \ell = 1}(A=0; J_\Sigma)$ chosen after Definition \ref{def:fiber_sum} and the above isomorphism, we endow $\mathcal{M}^*_{N=1, \ell = 1; k_- , k_+}(A=0; H; J_Y)$ with an orientation.

Then we identify $\mathcal{M}^*_{N = 1, \ell = 1} (\hat{p}_{k_-},\check{p}_{k_+};A = 0, B;H; J_X)$ with the fiber product space
\[
    W^s_{Z_Y} (\check{p}) \prescript{}{\iota^s_Y} \times^{}_{\ev_{+,0}} \big (  \mathcal{P}^*_{\ell = 1} (B; J_X) \prescript{}{\widetilde{\aug}_X} \times^{}_{\,\widetilde{\aug}_Y\,} \mathcal{M}^*_{N = 1, \ell = 1;k_-, k_+} (A = 0;H;J_Y)  \big ) \prescript{}{\ev_{-,0}} \times^{}_{\iota^u_Y} W^u_{Z_Y} (\hat{p}),
\]
which is oriented by the fibered sum rule. Note that $\mathcal{P}^*_{\ell = 1}(B;J_X)$ is oriented as discussed after Definition \ref{def:fiber_sum}.
This induces an orientation on $\mathcal{M}^*_{N = 1, \ell = 1} (\hat{p}_{k_-},\check{p}_{k_+};A = 0, B;H; J_X)$.

\medskip

We are now in a position to introduce the following remarkable result due to Diogo and Lisi. As the following proposition is one crucial ingredient of the main result of this paper, we provide an outline of their proof. To state the result, we denote by $\# \mathcal{M}$ the cardinality of $\mathcal{M}$ counted with sign for a compact oriented zero-dimensional manifold $\mathcal{M}$.

\begin{prop}\label{prop:cpt} (\cite[Theorem 9.1]{DL1}) 
Let $H\in\mathcal{H}$ and $J_X \in \mathcal{J}^{\reg}_X$. Let $\tilde{p}_{k_+}\in \Crit f_{Y^H_{k_+}}$ and $\tilde{q}_{k_-} \in \Crit f_{Y^H_{k_-}}$ be 1-periodic orbits of type III with $\mu({\tilde{p}_{k_+}}) - \mu({\tilde{q}_{k_-}}) =1$. Then all elements of $\overbar{\mathcal{M}}(\tilde{q}_{k_-},\tilde{p}_{k_+};H;J_X)$ are simple, i.e.
\[
\overbar{\mathcal{M}}(\tilde{q}_{k_-},\tilde{p}_{k_+};H;J_X) = \overbar{\mathcal{M}}^*(\tilde{q}_{k_-},\tilde{p}_{k_+};H;J_X).
\] 
Moreover if $\overbar{\mathcal{M}}(\tilde{q}_{k_-},\tilde{p}_{k_+};H;J_X)$ is not empty, then it is one of the following forms: 
\begin{enumerate}[(a)]
        \item It holds $k_-=k_+$ and therefore
        \[
        \overbar{\mathcal{M}}(\tilde{q}_{k_-},\tilde{p}_{k_+};H;J_X) = \mathcal{M}_{N=0}(\tilde{q}_{k_-},\tilde{p}_{k_+})/\mathbb{R}.
        \]
        
        \item We have 
        \[
        \overbar{\mathcal{M}}(\tilde{q}_{k_-},\tilde{p}_{k_+};H;J_X) = \bigcup_{A\in\pi_2(\Sigma)}\mathcal{M}_{N=1, \ell = 0}(\hat{q}_{k_-},\check{p}_{k_+};A;H;J_Y)/\mathbb{R}
        \] 
        where the union is taken over all nonzero $ A \in \pi_2(\Sigma)$ satisfying $\ind_{f_\Sigma}(p)-\ind_{f_\Sigma}(q)+2c_1^{T\Sigma}(A)=2$ and $K\omega_\Sigma(A)=k_+- k_-$. Moreover, for all such $A$, we have   
		\[
		\# \big(\mathcal{M}^*_{N = 1, \ell = 0} (\hat{q}_{k_-}, \check{p}_{k_+};A;H;J_Y)/\mathbb{R}\big)=-K\omega_\Sigma(A) \cdot  \#\big(  \mathcal{N}^*_{N = 1, \ell = 0} (q,p;A;J_\Sigma) / \mathbb{C}^*\big)
		\]
		where $\mathbb{C}^*$ acts on $\mathbb{CP}^1$ fixing $0$ and $\infty$.
        See the left-hand side of Figure \ref{fig:M_moduli}.
        
        \item We have $q=p$ and 
        \[
        \overbar{\mathcal{M}}(\tilde{q}_{k_-},\tilde{p}_{k_+};H;J_X) = \bigcup_{B\in\pi_2(X)} \mathcal{M}_{N=1, \ell = 1}(\hat{p}_{k_-},\check{p}_{k_+};A=0 , B;H;J_X)/\mathbb{R}
        \]
        where the union is taken over all $B \in \pi_2(X)$ satisfying $c_1^{TX}(B)-K\omega(B)=1$ and $K\omega(B)=k_+ - k_-$. Moreover, for all such $B$, we have
		\begin{align*}
            & \#\big(\mathcal{M}^*_{N = 1, \ell = 1} (\hat{p}_{k_-}, \check{p}_{k_+}; A=0,B;H;J_X) / \mathbb R \big) \\[.5ex]
            &\hspace{2cm} = K\omega(B)\cdot \#\big( \mathcal{N}^*_{N=1, \ell = 1} (p,p;A=0,B;J_X) / \mathbb{C}^*\big).
        \end{align*}
 		See the right-hand side of Figure \ref{fig:M_moduli}.
\end{enumerate}
\end{prop}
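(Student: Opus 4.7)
The plan is to combine the transversality result of Proposition \ref{prop:5.9_DL2}, positivity of symplectic area, and the explicit correspondence of Proposition \ref{bijection} to classify the zero-dimensional moduli space, and then to exploit the fiber bundle structure $Y \to \Sigma$ to extract the multiplicity factors $K\omega_\Sigma(A)$ and $K\omega(B)$.

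\textbf{Step 1: Simplicity.} First, I would show that for generic $J_X \in \mathcal{J}_X^{\reg}$ every element of $\overbar{\mathcal{M}}(\tilde{q}_{k_-},\tilde{p}_{k_+};H;J_X)$ lies in $\overbar{\mathcal{M}}^{*}(\tilde{q}_{k_-},\tilde{p}_{k_+};H;J_X)$. Given a non-simple configuration, the procedure of Remark \ref{rmk:simple_pearl} together with its Floer-theoretic counterpart (replacing multiply-covered cylinders or augmentation planes by their underlying simple maps, and deleting repeated components via the projection $\Pi$ of \eqref{projection0}) produces a strictly smaller underlying simple configuration in some $\mathcal{M}^{*}_{N',\ell'}$ with strictly smaller total $c_1^{T\Sigma}$-plus-$(c_1^{TX} - K\omega)$ contribution. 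By Proposition \ref{prop:5.9_DL2} this underlying space has virtual dimension strictly less than zero after the $\mathbb R^{N'}$-quotient and hence is empty for generic $J_X$.

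\textbf{Step 2: Index/area truncation.} Summing \eqref{multiplicity} over $i = 1, \dots, N$ and using Definition \ref{def_moduli_B}(1) for each augmentation plane, I would obtain
\[
\sum_{i=1}^N 2c_1^{T\Sigma}([\pi_Y \circ v_i]) + \sum_{i,j} \mu_{\CZ}(\gamma_{z_{i,j}}) = \mu_{\CZ}(\tilde{p}_{k_+}) - \mu_{\CZ}(\tilde{q}_{k_-}).
\]
By spherical monotonicity of $\Sigma$ and by \eqref{RS-index}, each summand on the left is nonnegative and each $\mu_{\CZ}(\gamma_{z_{i,j}}) \geq 2$; coupled with the Morse-part of the index hypothesis $\mu(\tilde{p}_{k_+}) - \mu(\tilde{q}_{k_-}) = 1$ and the dimension identity of Proposition \ref{prop:5.9_DL2}, this forces the configuration into exactly one of three types: $N=0$; or $N=1$ with $\ell=0$ and $A \neq 0$; or $N=1$ with $\ell=1$ and $A_1 = 0$. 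In the last case, $A_1 = 0$ combined with the Morse constraint forces $p = q$ and $\tilde{q} = \hat{p}$, $\tilde{p} = \check{p}$; in the middle case, the fact that only a single $\R$-translation survives after fixing a nontrivial $J_\Sigma$-sphere forces $\tilde{q} = \hat{q}$, $\tilde{p} = \check{p}$. This yields cases (a), (b), (c) respectively, with their constraints on $k_+ - k_-$ following from $K\omega_\Sigma(A) = \m(\ev_+(v)) - \m(\ev_-(v))$ (case b) or from Proposition \ref{bijection} applied to the augmentation plane (case c).

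\textbf{Step 3: Multiplicity counts.} To establish the counting formulas, I would analyze the fibers of the projection $\Pi$ in \eqref{projection0}. For a fixed simple $J_\Sigma$-holomorphic sphere $w$ in class $A$ representing an element of $\mathcal{N}^*_{N=1, \ell=0}(q, p; A; J_\Sigma)/\mathbb{C}^*$, automatic transversality in the vertical direction (the decomposition \eqref{decomposition_operator} with $D^{\mathbb C}_{\tilde v}$ surjective of index one) shows that lifts $\tilde v$ of $w$ to a Floer cylinder in $\R \times Y$ form a torsor over the $S^1$-family of Reeb rotations. Tracking how these parametrized lifts descend to the $\R$-quotient space of cylinders and matching the residual parameters against the $\mathbb{C}^*$-action on the base gives the multiplicative factor $K\omega_\Sigma(A)$. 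For case (c), Proposition \ref{bijection} identifies $[u] \in \mathcal{P}^*_{\ell=1}(B;J_X)$ with an unparametrized $J_{\widehat W}$-holomorphic plane asymptotic to a Reeb orbit of multiplicity $B \cdot \Sigma = K\omega(B)$; the same $S^1$-rotation-of-the-plane argument, glued to the Floer cylinder at the puncture, gives the factor $K\omega(B)$.

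\textbf{Main obstacle.} The hardest part is the sign analysis, specifically the opposite signs $-K\omega_\Sigma(A)$ in case (b) versus $+K\omega(B)$ in case (c). This requires careful comparison of the orientations constructed after Definition \ref{def:fiber_sum} with the canonical orientations on $\mathcal{N}^*_{N=1, \ell=0}(A;J_\Sigma)$ and $\widetilde{\mathcal{P}}^*_{\ell=1}(B;J_X)$ from complex linear Cauchy-Riemann operators, via the isomorphisms \eqref{eq:tangent_splitting} and \eqref{eq:orientation_puctured}. The asymmetry is traceable to the parity conventions after \eqref{eq:Morse_proj_hat}--\eqref{eq:Morse_proj_check}, namely the fact that the projection $W^u_{Z_Y}(\hat p) \to W^u_{Z_\Sigma}(p)$ is orientation-preserving iff $\dim\Sigma - \ind_{f_\Sigma}(p)$ is even, while the corresponding statement for $\check p$ is always orientation-preserving; combined with the role played by constant versus non-constant spheres in the chain-of-pearls evaluation, this flips the sign. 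A secondary difficulty is ensuring Step 1 goes through in the presence of augmentation planes, where the underlying simple configuration of a non-simple element may mix components living in $\Sigma$ and $X$; here one uses the surjectivity of $\mathcal{J}_X^{\reg} \to \mathcal{J}_\Sigma^{\reg}$ from Proposition \ref{transversality_chain}(b).
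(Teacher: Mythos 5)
Your three-step plan mirrors the paper's argument (simplicity, index classification, fiber count for multiplicity), and the paper itself handles the case analysis by referring to the detailed proof of Proposition \ref{continuation_transversality}.(b). However, I see two genuine gaps in your write-up.

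In Step~1, the emptiness argument is not correct as stated. You claim that passing to the underlying simple configuration lands in some $\mathcal{M}^*_{N',\ell'}$ of virtual dimension strictly less than zero by Proposition~\ref{prop:5.9_DL2}. But that proposition gives $\dim \mathcal{M}^*_{N',\ell'}=\mu(\tilde p_{k_+})-\mu(\tilde q_{k_-})+N'-1$, which does not see the classes $\mathbf{A},\mathbf{B}$ directly --- and in fact the underlying simple pearl violates the area constraint $k_+-k_-=\sum K\omega_\Sigma(A_i')+\sum K\omega(B_j')$, so it does not lie in any $\mathcal{M}^*_{N',\ell'}(\tilde q_{k_-},\tilde p_{k_+};\cdot)$ at all. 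The correct argument, used in the paper's proof of Proposition~\ref{continuation_transversality}.(b), is to pass to the chain-of-pearls space $\mathcal{N}^*_{N^*,\ell^*}(q,p;\mathbf{A}^*;J_\Sigma)$, invoke the lower bound $\dim\mathcal{N}^*_{N^*,\ell^*}\geq 2(N^*-N^*_0)+2\ell^*$ coming from the free $\mathbb{C}^*$-actions on nonconstant spheres, and feed this into the index balance \eqref{multiplicity} together with $\ell^*\geq N^*_0$ (each constant-projected nontrivial Floer cylinder needs a puncture). Rearranging then shows that all nonnegative correction terms vanish, forcing simplicity and the case split. Without the $\mathbb{C}^*$-bound, "strictly smaller $c_1$'' alone is not enough to rule anything out.

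In Step~3, deducing from the decomposition \eqref{decomposition_operator} that lifts of a sphere $w$ form a torsor over the $S^1$ of Reeb rotations is too quick. Automatic transversality of $D^{\mathbb C}_{\tilde v}$ (surjective, Fredholm index $1$, kernel spanned by $R$) shows each \emph{component} of the fiber of $\Pi$ over $w$ is a one-dimensional manifold on which Reeb rotation acts freely, but it does not by itself show that the fiber is a single circle. Uniqueness of the lift up to the $S^1\times S^1$-action (Reeb rotation and domain rotation) is the substantial analytic input --- proved via the correspondence between Floer cylinders for $\bigvee$-shaped Hamiltonians and $J_Y$-holomorphic cylinders in \cite[Sections~6 and~7]{DL1} --- and it is precisely this uniqueness, combined with the condition that both endpoints land on the prescribed critical points $\hat q$ and $\check p$ (which constrains the allowed domain rotation $\vartheta$ to $\{1/(k_+-k_-),\dots,1\}$), that produces the multiplicity $K\omega_\Sigma(A)=k_+-k_-$. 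Your plan treats this as a consequence of linear Fredholm theory, which it is not.
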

\begin{figure}[h]
     \centering
     \includegraphics[height = 5cm]{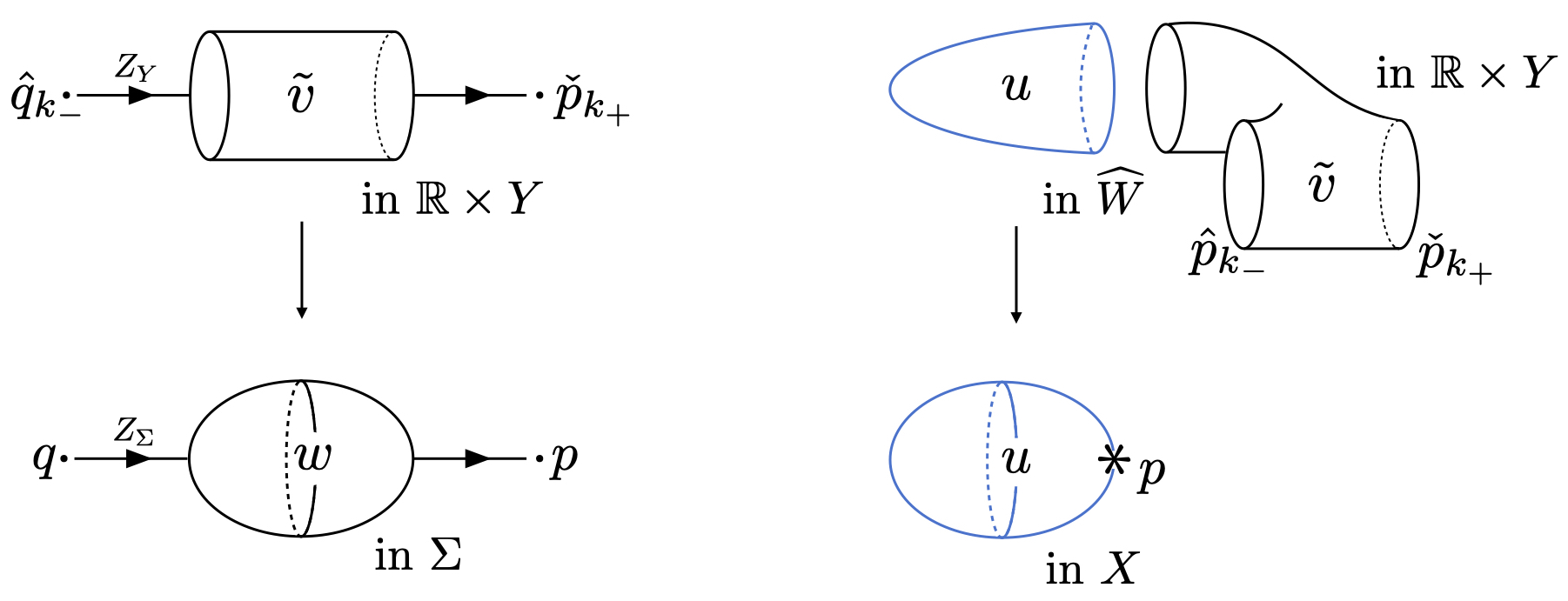}
     \caption{Split Floer cylinders and its shadows}
     \label{fig:M_moduli}
\end{figure}
\begin{proof}[Sketch of the proof]
 Each connected component of the space ${\mathcal{M}}(\tilde{q}_{k_-},\tilde{p}_{k_+};H;J_X)$ is fibered over some $\mathcal{N}_{N, \ell}(q,p;\mathbf{A},\mathbf{B};J_X)$ as pointed out in \eqref{projection0}. 
 From the hypothesis $\mu({\tilde{p}_{k_+}}) - \mu({\tilde{q}_{k_-}})=1$ and the transversality result in Proposition \ref{transversality_chain}, we deduce that it is actually fibered over $\mathcal{N}_{N, \ell}^*(q,p;\mathbf{A},\mathbf{B};J_X)$. Then we can conclude that only three cases (a), (b), and (c) can occur for dimension reasons. We refer to Proposition \ref{continuation_transversality}.(b) below for an analogous statement and its proof for Floer continuation cylinders.

Statement (a) follows from the fact that there is no nontrivial Floer cylinder with both asymptotic ends in $Y^H_{k_-=k_+}$ for energy reasons. 

Statement (b) is proved by showing that the projection 
		\[
	\widehat{\Pi}:\mathcal{M}^*_{N = 1, \ell = 0} (\hat{q}_{k_-}, \check{p}_{k_+};A;H;J_Y) / \mathbb R \rightarrow \mathcal{N}^*_{N = 1, \ell = 0} (q,p;A;J_\Sigma) / \mathbb{C}^*.
		\]
		induced by \eqref{projection0} is a surjective orientation-reversing $K\omega_\Sigma(A)$-to-$1$ map. Each $(\mathbf{v},\mathbf{z})=(\tilde v=(b,v),\emptyset)\in\mathcal{M}^*_{N = 1, \ell = 0} (\hat{q}_{k_-}, \check{p}_{k_+};A;H;J_Y)$ yields a $S^1\times S^1$-family of solutions
\[
\tilde v_{(\theta,\vartheta)}(s,t):=(b(s,t+\vartheta), \varphi_R^{\frac{\theta}{K}}\circ v(s,t+\vartheta)), \qquad (\theta,\vartheta)\in S^1\times S^1
\]
in $\mathcal{M}^*_{N=1, \ell=0; k_-, k_+ }(A; H; J_Y )$ since $J_Y$ is $S^1$-invariant and $H$ does not depend on the  $S^1$-coordinate.
Then, $\tilde v_{(\theta,\vartheta)}$ belongs to $\mathcal{M}^*_{N = 1, \ell = 0} (\hat{q}_{k_-}, \check{p}_{k_+};A;H;J_Y)$ if and only if 
\[
\ev_{-}(\tilde v)=\ev_{-}(\tilde v_{(\theta,\vartheta)})= \varphi_R^{\frac{\theta}{K}}\circ\varphi_R^{\frac{\vartheta k_-}{K}} \ev_{-}(\tilde v),\qquad \ev_{+}(\tilde v)=\ev_{+}(\tilde v_{(\theta,\vartheta)})= \varphi_R^{\frac{\theta}{K}}\circ\varphi_R^{\frac{\vartheta k_+}{K}} \ev_{+}(\tilde v).
\]
This is equivalent to $\theta+\vartheta k_-\in\mathbb{Z}$ and $\theta+\vartheta k_+\in\mathbb{Z}$, and in turn also to $(k_+-k_-)\vartheta\in\mathbb{Z}$ with $\theta$ uniquely determined by $\vartheta$. 
This holds for 
\begin{equation}\label{eq:vartheta}
\vartheta\in\big\{\tfrac{1}{k_+-k_-},\dots,\tfrac{k_+-k_-}{k_+-k_-}\big\}.	
\end{equation}

Furthermore, for any $(\mathbf{w},\mathbf{z})=(w,\emptyset)\in \mathcal{N}^*_{N = 1, \ell = 0} (q,p;A;J_\Sigma)$, there exists a unique $(\tilde v,\emptyset)\in \mathcal{M}^*_{N=1, \ell=0; k_-, k_+ }(A; H; J_Y )$ fibered over $(w,\emptyset)$ up to the $S^1\times S^1$-action. This is proved by establishing a correspondence between Floer cylinders and $J_Y$-holomorphic cylinders, see \cite[Sections 6 and 7]{DL1} for details and see also Remark \ref{rmk:hol_Floer} below.
This uniqueness property and \eqref{eq:vartheta} yield that if $\widehat{\Pi}([(\tilde{v},\emptyset)])=[(w,\emptyset)]$, then $\widehat{\Pi}^{-1}([(w,\emptyset)])$ has cardinality $k_+-k_-$, which equals $K\omega_\Sigma(A)$. From our orientation conventions, one can check that $\widehat{\Pi}$ is orientation-reversing. This completes the proof of statement (b).

Statement (c) is proved analogously. 
\end{proof}

    \begin{remark}\label{rmk:hol_Floer}
        The correspondence between Floer cylinders and $J_Y$-holomorphic cylinders established in \cite[Sections 6 and 7]{DL1} continues to hold for $\bigvee$-shaped Hamiltonians. We remark that, in contrast to the setting in \cite{DL1}, our Floer cylinder could have $\m(\ev_\pm(v))<0$, and the corresponding $J_Y$-holomorphic cylinder $(a,u)$ satisfies 
        \[
        \m(\ev_+(v))\Big(\lim_{s\to+\infty}a\Big)=+\infty,\qquad \m(\ev_-(v))\Big(\lim_{s\to-\infty}a\Big)=-\infty.
        \]
    \end{remark}

\begin{remark}
    Our orientation conventions in Section \ref{sec:Morse} and $\eqref{eq:orientation_moduli_2}$ differ from those in \cite{DL1}, and this is why   Proposition \ref{prop:cpt}.(b) and the corresponding result in \cite[Theorem 9.1]{DL1} have different signs. 
\end{remark}

\begin{remark} \label{rmk: indep_of_point}
The signed count $\#\big( \mathcal{N}^*_{N=1, \ell = 1} (p,p;A=0,B;J_X) / \mathbb{C}^* \big)$ in Proposition \ref{prop:cpt}.(c) and \eqref{eq:delta_c} is independent of $p \in \Crit f_\Sigma$, which can be shown by a cobordism argument, see \cite[Section 3.2]{CM18}.
\end{remark}

\subsection{Split Floer chain complex}

Let $a,b \in \mathbb{R}$ with $a<b$. We take $H \in \mathcal{H}$, which is of the form  $H(r, y)=h(e^r)$ on $\mathbb{R} \times Y\subset \widehat{W}$ by definition, such that $h(0) > -a$.
We define a finite set 
\begin{equation*}\label{contractible_slope}
    \m(a,b\, ;H) : = \left\{  k \in m_W \mathbb{Z} \, \middle | \, \exists\, \beta^H_k\in\mathbb{R} \,:\,h'(e^{\beta^H_k}) =\tfrac{k}{K} \textrm{ and } a < e^{\beta^H_k}h'(e^{\beta^H_k}) - h(e^{\beta^H_k}) < b  \right \}.
\end{equation*}
A $1$-periodic orbit $x\in\Crit\mathcal{A}_H$ of $X_H$ with $\m(x) \in \m(a,b\, ; H)$ is contractible in $\widehat{W}$ by the definition of $m_W$ and of type III by the action computation after \eqref{action}. Thus, for $k\in\m(a,b\, ;H)$, the constant $\beta^H_k$ is uniquely determined and appears in $(-\eta,\eta)$.

Now, we define the split Floer complex of $H$ and $J_X \in \mathcal{J}^{\reg}_X$.
The chain module is generated by critical points of $f_{Y^H_k}$ with $k\in\m(a,b\,;H)$, i.e.
\begin{equation*}\label{chain_group}
    \FC_*^{(a,b)}(H) := \bigoplus_{k \in \m(a,b\, ; H)} \, \bigoplus_{p \in \Crit f_\Sigma} \mathbb{Z} \langle \, \hat{p}_k, \check{p}_k \, \rangle,
\end{equation*}
with grading given by $\mu(\tilde{p}_k)$ defined in \eqref{grading}.
The boundary operator is defined to be the $\mathbb Z$-linear extension of
\begin{equation}\label{differential}
\partial: \FC_*^{(a,b)}(H)\to \FC_{*-1}^{(a,b)}(H)  ,\qquad    \tilde{p}_{k_+} \mapsto \sum_{\tilde{q}_{k_-} }\, \sum_{[\tilde{v}]} \,\epsilon([\tilde{v}])\tilde{q}_{k_-}
\end{equation}
where the first and second sums run over all generators $\tilde{q}_{k_-}$ satisfying $\mu(\tilde{p}_{k_+})-\mu(\tilde{q}_{k_-})=1$ and $[\tilde{v}] \in \overbar{\mathcal{M}}(\tilde{q}_{k_-},\tilde{p}_{k_+}; H; J_X)$, respectively. The orientation sign $\epsilon([\tilde{v}]) \in \{1, -1\}$ of $[\tilde{v}]$ is given by $\epsilon([\tilde{v}]) = 1$ if the orientation on the moduli space at $\tilde{v}$ coincides with the orientation induced by the $\mathbb{R}$-action, and $\epsilon([\tilde{v}]) = -1$ otherwise. 
This split chain complex can be identified with the corresponding ordinary Floer chain complex with a sufficiently stretched almost complex structure thanks to the neck-streching argument, see \cite{BO2,DL1} and also Remark \ref{rem:conti_two_levels}. It follows that the homology 
\begin{equation*}
    \FH_i^{(a,b)}(H):=\H_i \big(\FC_*^{(a,b)}(H), \partial \big)
\end{equation*}
is well-defined, isomorphic to the ordinary Floer homology of $H$, and independent of the choice of $J_X\in \mathcal{J}^{\reg}_X$ up to canonical isomorphisms.

\section{Split Rabinowitz Floer homology} \label{sec:split_Rabinowitz}

\subsection{Split Floer continuation cylinders}
We endow $\mathcal{H}$ with the standard strict partial order $\prec$, namely $H\prec L$ if $H < L$ pointwise. A smooth family of smooth maps $H_s:\widehat{W} \rightarrow \mathbb{R}$  parametrized by $s \in \mathbb R$ is said to be a {\it monotone homotopy for $H \prec L$} if the following conditions are fulfilled:
\begin{enumerate}[(i)]
    \item $H_s$ satisfies the conditions (i) and (ii) in the definition of $\mathcal{H}$ (see Section \ref{sec: Hamiltonians}).
    \item $H_s \equiv L$ for $s \leq -1$ and $H_s \equiv H$ for $s \geq 1$.
    \item $\frac{\partial}{\partial s} H_s \leq 0$.
\end{enumerate} 
As we will study split Floer cylinders for $H_s$, we again view $H,L\in\mathcal{H}$ and a monotone homotopy $H_s$ for $H \prec L$ as functions defined on $\mathbb{R}\times Y$ using the embedding $\mathbb{R}\times Y \subset \widehat{W}$. In particular, $H_s(r,y)=h_s(e^r)$ for some smooth family of smooth maps $h_s:\mathbb{R}\to\mathbb{R}$, and $X_{H_s}$ vanishes on $(-\infty,-\epsilon/8)\times Y$ for $\epsilon$ as in Section \ref{sec:acs}.

Let $H_s$ be a monotone homotopy for $H \prec L$, and let $\Gamma = \left \{ z_1, \dots, z_\ell \right \}$ be a possibly empty finite subset of $\mathbb{R} \times S^1$. 
For $J_Y\in\mathcal{J}_Y$, we refer to smooth solutions $\tilde{v}=(b,v):\mathbb{R}\times S^1 \setminus\Gamma \rightarrow \mathbb R\times Y$ of 
\begin{equation}\label{Floer_conti}
\partial_s \tilde{v} + J_Y(\tilde{v})(\partial_t \tilde{v} - X_{H_s}(\tilde{v}))=0
\end{equation}
which converge to a critical point of $\mathcal{A}_L$ at the negative end and to a critical point of $\mathcal{A}_H$ at the positive end, i.e. 
\begin{align*}
    &{\ev}_-(\tilde{v})=({\ev}_-(b),{\ev}_-(v)) := \lim_{s \to - \infty} \tilde{v}(s,\cdot) = \big(\lim_{s \to - \infty} b(s,\cdot), \lim_{s \to - \infty} v(s,\cdot) \big) \in \Crit \mathcal{A}_{L}, \\
    &{\ev}_+(\tilde{v})=({\ev}_+(b),{\ev}_+(v)) := \lim_{s \to + \infty} \tilde{v}(s,\cdot) = \big(\lim_{s \to + \infty} b(s,\cdot), \lim_{s \to + \infty} v(s,\cdot) \big) \in \Crit \mathcal{A}_H,  
\end{align*}
as {\it Floer continuation cylinders}. Note that a usual maximum principle applies to Floer continuation cylinders.
There is no loss of generality in assuming that $\Gamma$ consists of non-removable punctures.
We define the energy $E(\tilde{v})$ of a Floer continuation cylinder $\tilde{v}$ by \eqref{energy} with $H$ replaced by $H_s$.
A direct computation gives 
\begin{equation*}\label{eq:action_increase_cont}
    0\leq E_{\psi=e^r}(\tilde v)= \mathcal{A}_H(\ev_+(\tilde v))-\mathcal{A}_L(\ev_-(\tilde v)) + \int_{\mathbb{R}\times S^1 \setminus\Gamma } \frac{\partial H_s}{\partial s} (\tilde{v})ds \wedge dt ,
\end{equation*}
which implies $\mathcal{A}_H(\ev_+(\tilde v)) \geq \mathcal{A}_L(\ev_-(\tilde v))$.
A Floer continuation cylinder $\tilde{v}$ has finite energy if and only if $\tilde{v}$ converges at every $z\in\Gamma$ to the orbit cylinder of some periodic Reeb orbit as in \eqref{asymptotic}. 
Moreover, \eqref{multiplicity} holds also for $\tilde v$.
Thus every  nontrivial  Floer continuation cylinder $\tilde{v}=(b,v)$ has $\m (\ev_+(v))\geq \m (\ev_-(v))$. However, in contrast to nontrivial Floer cylinders, nontrivial Floer continuation cylinders $\tilde v=(b,v)$ can have $\m (\ev_+(v))= \m (\ev_-(v))$.

We observe that system of equations \eqref{eq:Floer_eq2} holds again with $h$ replaced by $h_s$. 
In particular, $\tilde v=(b,v)$ with finite energy projects to a $J_\Sigma$-holomorphic map $\pi_Y \circ v :\mathbb{R}\times S^1\setminus \Gamma \rightarrow \Sigma$ with finite energy. 
As before, due to the removable singularity theorem, this extends to a $J_\Sigma$-holomorphic map defined on $\mathbb{CP}^1$, denoted again by $\pi_Y \circ v :\mathbb{CP}^1\rightarrow \Sigma$. 

\medskip

Now we define moduli spaces which will be used in the construction of a split version of continuation homomorphisms.  
Let $H_s$ be a monotone homotopy for $H \prec L$. Let $J_X \in \mathcal{J}_X$, and let $J_Y$ and $J_\Sigma$ be the cylindrical and horizontal parts of $J_X$, respectively.
Let 
\[
\mathbf{A} := (A^H_1, \dots, A^H_{N_H}, A, A^L_1, \dots, A^L_{N_L}) \in (\pi_2(\Sigma))^N,\;\; N = N_H + N_L + 1,\;\; N_H, N_L \in \mathbb N \cup  \{ 0  \},
\]
and let 
\[
\ell = \ell_H + \ell_c + \ell_L, \quad \ell_H,\, \ell_c,\, \ell_L \in \mathbb{N} \cup \left \{ 0 \right \}.
\]
\begin{definition} 
    We define the moduli space 
    \begin{equation*} \label{continuation}
        \mathcal{M}_{N,\ell;k_-,k_+}(\mathbf{A}; H_s; J_Y)
    \end{equation*}
    consisting of pairs 
    \begin{align*}
        (\mathbf{v},\mathbf{z}) = \big ( (\tilde{v}^{H}_1, \dots, \tilde{v}^{H}_{N_{H}},\, \tilde{v}, \, \tilde{v}^L_1, \dots, \tilde{v}^L_{N_L}), (\Gamma^{H}_1, \dots, \Gamma^{H}_{N_{H}}, \,  \Gamma, \, \Gamma^L_1, \dots, \Gamma^L_{N_L} ) \big )
    \end{align*}
    such that for every $i=1,\dots,N_H$ and $j=1,\dots,N_L$
    \begin{enumerate}[(1)]
         \item $\Gamma^H_i : = ( z^{H}_{i,1}, \dots, z^{H}_{i, \ell_{H,i}} )$ is a finite sequence of pairwise distinct elements of $\mathbb R \times S^1$ for some $\ell_{H,i} \in \mathbb N \cup \left \{ 0 \right \}$  with $\ell_H = \sum\limits_{i=1}^{N_H} \ell_{H, i}$.
         For $\ell_{H,i} = 0$, we set $\Gamma^H_i = \emptyset$,
        \item $\tilde{v}^H_i=(b^H_i,v^H_i): \mathbb{R}\times S^1 \setminus \Gamma^H_i \rightarrow \mathbb{R} \times Y$ is a  nontrivial Floer cylinder with finite energy with respect to H such that $\pi_Y \circ v^H_i$ represents $A^H_i$,
        \item $\Gamma : = (z_{1}, \dots, z_{\ell_\mathrm{c}} )$ is a finite sequence of pairwise distinct elements of $\mathbb R \times S^1 $.
            For $\ell_{\mathrm{c}} = 0$, we set $\Gamma = \emptyset$.
        \item  $\tilde{v}=(b,v): \mathbb{R}\times S^1 \setminus \Gamma \rightarrow \mathbb{R}\times Y$ is a  Floer continuation cylinder with finite energy with respect to $H_s$ such that $\pi_Y \circ v$ represents $A$,
        \item $\Gamma^{L}_j : = ( z^{L}_{j,1}, \dots, z^{L}_{j, \ell_{L,j}}  )$ is a finite sequence of pairwise distinct elements of $\mathbb R \times S^1$ for some $\ell_{L,j} \in \mathbb N \cup \left \{ 0 \right \}$  with $\ell_{L} = \sum\limits_{j=1}^{N_L} \ell_{L, j}$.
        For $\ell_{L,j} = 0$, we set $\Gamma^{L}_j = \emptyset$,
        \item  $\tilde{v}^{L}_j=(b^{L}_j,v^{L}_j): \mathbb{R}\times S^1 \setminus \Gamma^{L}_j \rightarrow \mathbb{R} \times Y$ is a nontrivial  Floer cylinder with finite energy with respect to $L$ satisfying $\pi_Y \circ v^{L}_j$ represents $A^{L}_j$, 
    \end{enumerate}
    and in addition for every $i=1,\dots,N_{H}-1$ and $j=1,\dots,N_L-1$
        \[
        \begin{split}
            &\m(\ev_+(\tilde{v}^{H}_1)) = k_+,\quad \m(\ev_-(\tilde{v}^{H}_i)) = \m(\ev_+(\tilde{v}^{H}_{i+1})),\quad \m(\ev_-(\tilde{v}^{H}_{N_{H}}))=\m({\ev}_+(\tilde{v})), 	\\[0.5ex]
            & \m({\ev}_-(\tilde{v})) = \m({\ev}_+(\tilde{v}^L_1)), \quad \m(\ev_-(\tilde{v}^L_j)) =\m(\ev_+(\tilde{v}^L_{j+1})), \quad \m(\ev_-(\tilde{v}^L_{N_L})) = k_-.
        \end{split}
        \]
        
    We also define the subspace of simple elements by
    \[
        \mathcal{M}^*_{N,\ell;k_-,k_+}(\mathbf{A}; H_s; J_Y) \subset \mathcal{M}_{N,\ell;k_-,k_+}(\mathbf{A}; H_s; J_Y)
    \] 
    consisting of elements $(\mathbf{v},\mathbf{z})$ such that 
    \[
    \big((\pi_{ Y}\circ {v}^{H}_1,\dots,\pi_{Y}\circ {v}^{H}_{N_{H}},\pi_{Y}\circ {v},\pi_{ Y}\circ {v}^L_1,\dots,\pi_{Y}\circ {v}^L_{N_L}),\mathbf{z}\big) \in \mathcal{N}^*_{N, \ell}(\mathbf{A};J_\Sigma).
    \]      
\end{definition}

\medskip

Next, we consider the map
\begin{align}
    &\widetilde{\ev}^c_{Z_Y} : \mathcal{M}_{N, \ell; k_-, k_+} (\mathbf{A}; H_s; J_Y) \times \mathbb{R}^{N-1}_{>0} \longrightarrow Y^{2N_H} \times Y^2 \times Y^{2N_L} \label{eq:ev^c}\\[0.5ex]
    &\widetilde{\ev}^c_{Z_Y} \big ( (\mathbf{v}, \mathbf{z}), \mathbf{t}) := \big ( \ev_{+,0}(\tilde{v}^H_1), \ev_{-,0}(\tilde{v}^H_1), \varphi^{t^H_1}_{Z_Y}(\ev_{+,0}(\tilde{v}^H_2)), \ev_{-,0}(\tilde{v}^H_2), \dots, \nonumber \\ 
    &\hspace{4cm} \varphi^{t_c}_{Z_Y}(\ev_{+,0}(\tilde{v})), \ev_{-,0}(\tilde{v}),  \dots, \varphi^{t^L_{N_L}}_{Z_Y} (\ev_{+,0}(\tilde{v}^L_{N_L})), \ev_{-,0}(\tilde{v}^L_{N_L}) \big ), \nonumber 
\end{align}
where $\mathbf{t} := (t^H_1, \dots,t^H_{N_H-1},  t_c, t^L_1, \dots, t^L_{N_{L}})$. For $N = 1$, this means $\widetilde{\ev}^c_{Z_Y} : \mathcal{M}_{N, \ell; k_-, k_+} (\mathbf{A}; H_s; \allowbreak J_Y) \to Y^2$ with $(\tilde{v}, \mathbf{z}) \mapsto (\ev_{+,0}(\tilde{v}), \ev_{-,0}(\tilde{v}))$.

 Let $\tilde{p}^H_{k_+} \in \Crit f_{Y^H_{k_+}}$ and $\tilde{q}^L_{k_-} \in \Crit f_{Y^L_{k_-}}$ be 1-periodic orbits of type III for $k_\pm\in m_W\Z$.

\begin{definition}\label{def:conti_moduli1}
    For $\ell \in \mathbb{N} \cup \left \{ 0 \right \}$, we define 
    \begin{equation*}
        \mathcal{M}_{N, \ell} (\tilde{q}^L_{k_-}, \tilde{p}^H_{k_+}; \mathbf{A}; H_s; J_Y) := (\widetilde{\ev}^c_{Z_Y})^{-1} \big ( W^s_{Z_Y}(\tilde{p}) \times \Delta_Y^{N-1} \times W^u_{Z_Y}(\tilde{q}) \big ),
    \end{equation*}
    where $\widetilde{\ev}^c_{Z_Y}$ is given in \eqref{eq:ev^c}.\footnote{ For $N = 1$, or equivalently $N_H = N_L = 0$, this means $(\widetilde{\ev}^c_{Z_Y})^{-1} \big ( W^s_{Z_Y}(\tilde{p}) \times W^u_{Z_Y}(\tilde{q}) \big )$.}
    The subspace of simple elements is defined by
    \begin{align*}
        &\mathcal{M}^*_{N, \ell} (\tilde{q}^L_{k_-}, \tilde{p}^H_{k_+}; \mathbf{A}; H_s; J_Y) \\
        &\hspace{1cm}:=\mathcal{M}_{N, \ell} (\tilde{q}^L_{k_-}, \tilde{p}^H_{k_+}; \mathbf{A}; H_s; J_Y) \, \cap \, \big ( \mathcal{M}^*_{N, \ell; k_-, k_+} (\mathbf{A}; H_s; J_Y) \times \mathbb{R}^{N-1}_{>0} \big ).
    \end{align*}
    For $\ell = 0$, we write $\mathcal{M}_{N, \ell = 0} (\tilde{q}^L_{k_-}, \tilde{p}^H_{k_+}; \mathbf{A}; H_s; J_Y)$ and $\mathcal{M}^*_{N, \ell = 0} (\tilde{q}^L_{k_-}, \tilde{p}^H_{k_+}; \mathbf{A}; H_s; J_Y)$.
\end{definition}

\begin{figure}[h]
     \centering
     \includegraphics[height = 6cm]{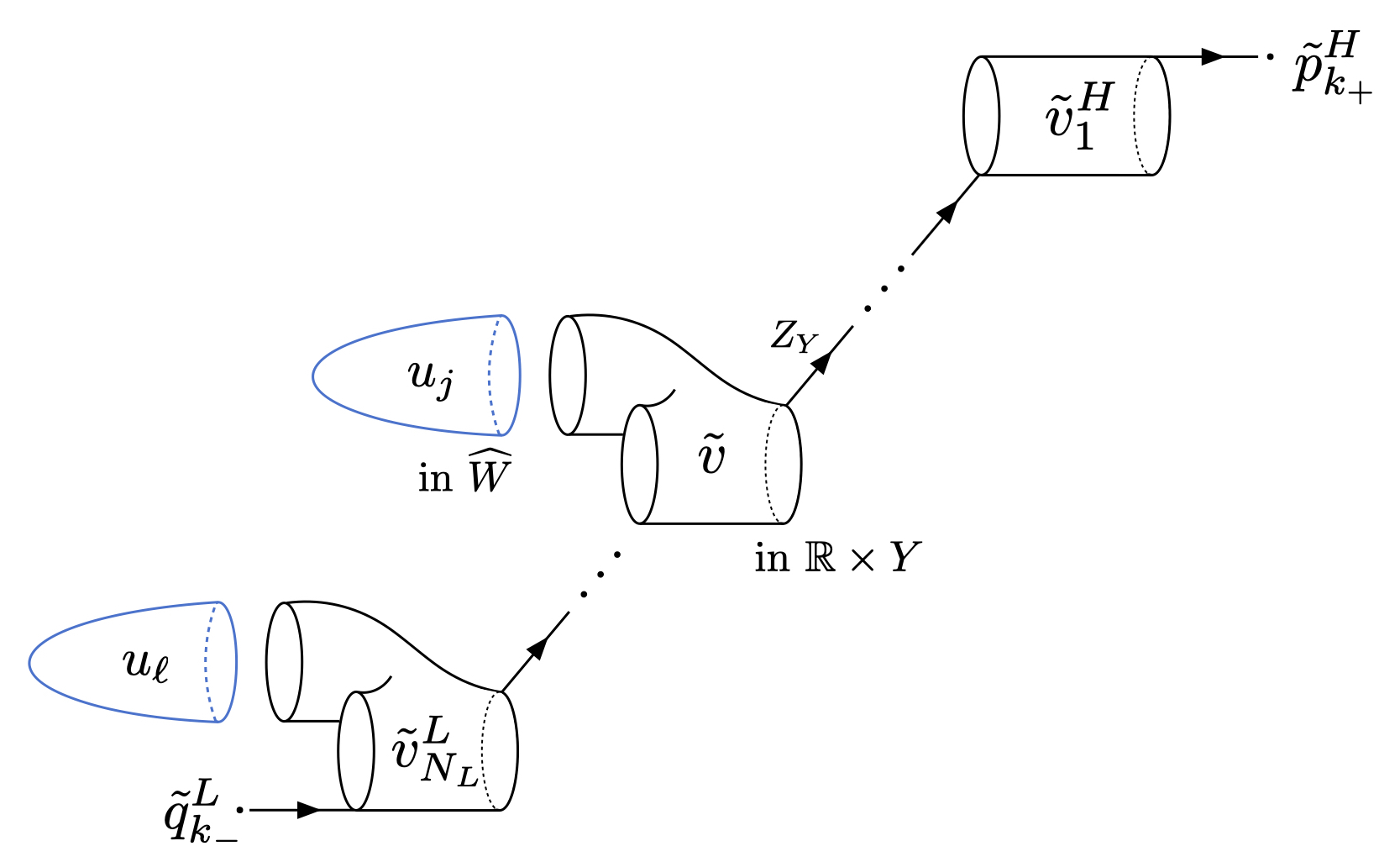}
     \caption{An element of $\mathcal{M}_{N, \ell} (\tilde{q}^L_{k_-}, \tilde{p}^H_{k_+}; \mathbf{A}, \mathbf{B}; H_s; J_X)$}
     \label{fig:cont}
\end{figure}

If $\ell=\ell_H+\ell_c+\ell_L \geq 1$, we consider
\[
\begin{split}
    &\widetilde{\aug}^c_Y : \mathcal{M}_{N,\ell;k_-,k_+}(\mathbf{A}; H_s; J_Y) \rightarrow (\Sigma\times\mathbb{N})^{\ell_{H}} \times (\Sigma\times\mathbb{N})^{\ell_\mathrm{c}} \times (\Sigma\times\mathbb{N})^{\ell_L} \\[0.5ex]
       & \widetilde{\aug}^c_Y (\mathbf{v},\mathbf{z}) := 
        \big( (\pi_{Y}\circ{v}^{H}_{1}(z^{H}_{1,1}),\m(\gamma_{z^{H}_{1,1}})), \dots, (\pi_Y\circ v(z_1),\m(\gamma_{z_{1}})),\dots \\ & \hspace{6cm} \dots, (\pi_{Y}\circ{v}^L_{N_L}(z^L_{N_L,\ell_{L,N_L}}),\m(\gamma_{z^L_{N_L,\ell_{L,N_L}}})) \big) ,	
\end{split}
\]
where $\gamma_{z_i}$ are Reeb orbits that $v$ converges to at $z_{i}$ and similarly for $\gamma_{z^H_{i,j}}$ and $\gamma_{z^L_{k,l}}$. For $\mathbf{B} := ( B_1, \dots, B_\ell ) \in ( \pi_2(X) )^\ell$ satisfying $B_j \cdot \Sigma >0$ for every $j = 1, \dots, \ell$, we define
\begin{align*}
    &\widetilde{\mathrm{EV}}^c :  \big ( \mathcal{M}_{N, \ell; k_-,k_+}(\mathbf{A}; H_s; J_Y) \times \mathbb{R}^{N - 1}_{>0} \big ) \times \mathcal{P}_\ell(\mathbf{B};J_X) \rightarrow Y^{2N} \times (\Sigma \times \mathbb N)^{\ell} \times (\Sigma \times \mathbb N)^{\ell} \\[.5ex]
    &\widetilde{\mathrm{EV}}^c \big(((\mathbf{v},\mathbf{z}), \mathbf{t}),\mathbf{u}\big) : = \big ( \widetilde{\ev}^c_{Z_Y} ((\mathbf{v},\mathbf{z}), \mathbf{t}), \, \widetilde{\aug}^c_Y (\mathbf{v},\mathbf{z}), \, \widetilde{\aug}_X(\mathbf{u}) \big ).
\end{align*}

\begin{definition}\label{def:conti_moduli2}
    For $\ell \geq 1$, we define 
    \[
       \mathcal{M}_{N, \ell}(\tilde{q}^{L}_{k_-},\tilde{p}^H_{k_+};\mathbf{A},\mathbf{B}; H_s; J_X) 
       := (\widetilde{\mathrm{EV}}^c)^{-1} \big( \, W^s_{Z_Y}(\tilde{p}) \times \Delta_{Y}^{N - 1} \times W^u_{Z_Y}(\tilde{q}) \times \Delta_{(\Sigma\times\N)^\ell} \, \big ),
   \]
    see Figure \ref{fig:cont}.\footnote{ If $N=1$, we ignore $\Delta_Y^{N-1}$ as usual.}
      The subspace of simple elements is defined by
    \begin{align*}
        & \mathcal{M}^*_{N, \ell}(\tilde{q}^{L}_{k_-},\tilde{p}^H_{k_+};\mathbf{A},\mathbf{B}; H_s; J_X) \\
        &\qquad :=\mathcal{M}_{N, \ell}(\tilde{q}^{L}_{k_-},\tilde{p}^H_{k_+};\mathbf{A},\mathbf{B}; H_s; J_X)   \cap   \big ( (\mathcal{M}^*_{N, \ell; k_-,k_+}(\mathbf{A}; H_s; J_Y) \times \mathbb{R}^{N - 1}_{>0} ) \times \mathcal{P}^*_\ell(\mathbf{B};J_X) \big ).
\end{align*}
\end{definition}

Analogous to \eqref{projection0}, we have the projection maps 
\begin{equation}\label{projection}
\begin{split}
	&\Pi:\mathcal{M}_{N, \ell}(\tilde{q}^{L}_{k_-},\tilde{p}^H_{k_+}; \mathbf{A}; H_s; J_Y)\longrightarrow \mathcal{N}_{N, \ell}(q,p;\mathbf{A};J_\Sigma),  \\[1 ex]
    &\Pi:\mathcal{M}_{N, \ell}(\tilde{q}^{L}_{k_-},\tilde{p}^H_{k_+};\mathbf{A},\mathbf{B}; H_s; J_X) \longrightarrow \mathcal{N}_{N, \ell}(q,p;\mathbf{A},\mathbf{B};J_X), 
\end{split}
\end{equation}
defined by $\Pi( (\mathbf{v},\mathbf{z}), \mathbf{t}  ) := ( (\mathbf{w},\mathbf{z}), \mathbf{t}  )$ and $\Pi (((\mathbf{v},\mathbf{z}), \mathbf{t}), \mathbf{u}) := (((\mathbf{w},\mathbf{z}), \mathbf{t}), \mathbf{u})$ where 
\[
\mathbf{w} := (\pi_{ Y}\circ {v}^{H}_1,\dots,\pi_{Y}\circ {v}^{H}_{N_{H}},\pi_{Y}\circ {v},\pi_{ Y}\circ {v}^L_1,\dots,\pi_{Y}\circ {v}^L_{N_L}).
\]
These maps remain valid with $\mathcal{M}_{N, \ell}$ and $\mathcal{N}_{N, \ell}$ replaced by $\mathcal{M}_{N, \ell}^*$ and $\mathcal{N}_{N, \ell}^*$.

\begin{definition}
	Taking unions over all $\mathbf{A}\in (\pi_2(\Sigma))^N$ and $\mathbf{B}:=(B_1,\dots,B_\ell) \in (\pi_2(X))^\ell$ with $B_j\cdot\Sigma\neq 0$ for all $j$, we define 
\begin{align*}
    &\mathcal{M}_{N}(\tilde{q}^{L}_{k_-},\tilde{p}^H_{k_+}; H_s; J_X) \\
    &\hspace{1cm}:= \bigcup_{\mathbf{A}}\Big(\mathcal{M}_{N, \ell = 0}(\tilde{q}^{L}_{k_-},\tilde{p}^H_{k_+};\mathbf{A}; H_s; J_Y) \cup  \bigcup_{\ell\in\mathbb{N}} \bigcup_{\mathbf{B}}\,\mathcal{M}_{N, \ell}(\tilde{q}^{L}_{k_-},\tilde{p}^H_{k_+};\mathbf{A},\mathbf{B}; H_s; J_X)\Big),
\end{align*}
and the subspace of simple elements by
\begin{align*}
    &\mathcal{M}_{N}^*(\tilde{q}^{L}_{k_-},\tilde{p}^H_{k_+}; H_s; J_X) \\
    &\hspace{1cm}:= \bigcup_{\mathbf{A}}\Big(\mathcal{M}_{N, \ell = 0}^*(\tilde{q}^{L}_{k_-},\tilde{p}^H_{k_+};\mathbf{A}; H_s; J_Y) \cup  \bigcup_{\ell\in\mathbb{N}} \bigcup_{\mathbf{B}}\,\mathcal{M}_{N, \ell}^*(\tilde{q}^{L}_{k_-},\tilde{p}^H_{k_+};\mathbf{A},\mathbf{B}; H_s; J_X)\Big).
\end{align*}
\end{definition}
We orient the space $\mathcal{M}_{N}^*(\tilde{q}^{L}_{k_-},\tilde{p}^H_{k_+}; H_s; J_X)$ in the same way as moduli spaces of split Floer cylinders, see above Proposition \ref{prop:cpt}. 
There is a natural free $\mathbb{R}^{N-1}$-action on $\mathcal{M}_{N}(\tilde{q}^L_{k_-},\tilde{p}^H_{k_+}; H_s; J_X)$  given by translating the domain $\R\times S^1$ of $\tilde{v}^{H}_i$ and $\tilde{v}^L_j$ (together with punctures $\mathbf{z}$) in the $\mathbb{R}$-direction.
We write
\begin{align*}
    \overbar{\mathcal{M}}(\tilde{q}^{L}_{k_-},\tilde{p}^H_{k_+}; H_s; J_X) &: = \bigcup_{N \in\mathbb N} \mathcal{M}_{N}(\tilde{q}^{L}_{k_-},\tilde{p}^H_{k_+}; H_s; J_X)/\mathbb{R}^{N-1}, \\[1.5ex]
    \overbar{\mathcal{M}}^*(\tilde{q}^{L}_{k_-},\tilde{p}^H_{k_+}; H_s; J_X) &: = \bigcup_{N \in\mathbb N} \mathcal{M}^*_{N}(\tilde{q}^{L}_{k_-},\tilde{p}^H_{k_+}; H_s; J_X)/\mathbb{R}^{N-1}.
\end{align*}

We are now in a position to state the main result of Section \ref{sec:split_Rabinowitz}, which implies that the continuation homomorphism induced by $H_s$ between the split Floer chain complexes $\FC_*(H)$ and $\FC_*(L)$ is essentially the identity map, see \eqref{continuation_identity} below.  

\begin{prop}\label{continuation_transversality}
Let $H_s$ be a monotone homotopy for $H \prec L$ as above.
Then the following assertions hold for every $J_X \in \mathcal{J}^{\reg}_X$, $\tilde{p}^H_{k_+} \in \Crit f_{Y^H_{k_+}}$, and $\tilde{q}^L_{k_-} \in \Crit f_{Y^L_{k_-}}$, where $k_\pm\in m_W \Z$. 
\begin{enumerate}[(a)]
	\item The moduli space $\mathcal{M}^{*}_{N}(\tilde{q}^{L}_{k_-},\tilde{p}^H_{k_+}; H_s; J_X)$ is cut out transversely and has dimension 
\begin{equation*}
    \dim \mathcal{M}^{*}_{N}(\tilde{q}^{L}_{k_-},\tilde{p}^H_{k_+}; H_s; J_X) = \mu( \tilde{p}^H_{k_+} ) - \mu ( \tilde{q}^{L}_{k_-} ) + N-1.
\end{equation*}

	\item Suppose that $\mu ( \tilde{q}^{L}_{k_-} )=\mu( \tilde{p}^H_{k_+} )$.
	Then $\mathcal{M}_{N}(\tilde{q}^{L}_{k_-},\tilde{p}^H_{k_+}; H_s; J_X)$ is not empty if and only if
		\[
		N=1,\;\; q=p,\;\; k_-=k_+,\;\; (\tilde{q}^{L}_{k_-},\tilde{p}^H_{k_+})\in\big\{(\hat{p}^L_{k_+},\hat{p}^{H}_{k_+}),(\check{p}^L_{k_+},\check{p}^{H}_{k_+})\big\}.
		\]
	Moreover, $\overbar{\mathcal{M}}(\hat{p}^{L}_{k_+},\hat{p}^H_{k_+}; H_s;J_X)$ consists of a single element $[\mathbf{v}]=[\tilde v]$ with $\ell=0$, and this projects to a constant curve, i.e.~$\pi_{\mathbb{R}\times Y}\circ \tilde{v}=p$. In particular,
	\[
	\overbar{\mathcal{M}}(\hat{p}^L_{k_+},\hat{p}^H_{k_+}; H_s;J_X)=\overbar{\mathcal{M}}^*(\hat{p}^L_{k_+},\hat{p}^H_{k_+}; H_s;J_X).
	\]
	Furthermore, every element of $\overbar{\mathcal{M}}(\hat{p}^L_{k_+},\hat{p}^H_{k_+}; H_s;J_X)$ is positively oriented. 
The corresponding statement holds for the case $(\tilde{q}^{L}_{k_-},\tilde{p}^H_{k_+})=(\check{p}^L_{k_+},\check{p}^{H}_{k_+})$, see Figure \ref{fig:C_Moduli}.
	
\end{enumerate} 
\end{prop}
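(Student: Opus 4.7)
The strategy parallels the proof of Proposition \ref{prop:5.9_DL2}, the only difference being the $s$-dependence of the middle cylinder $\tilde v$ governed by \eqref{Floer_conti}. For a simple element, I would linearize \eqref{Floer} at each $\tilde v^H_i,\tilde v^L_j$ and \eqref{Floer_conti} at $\tilde v$ on weighted Sobolev spaces, exploiting the splitting $\tilde v^*T(\R\times Y)\cong(\R\partial_r\oplus\R R)\oplus(\pi_Y\circ v)^*T\Sigma$ used in \eqref{decomposition_operator}. The combined linearization assembles into a horizontal chain-of-pearls operator, incorporating the augmentation spheres $[u_k]$ from $\mathcal{P}^*_\ell(\mathbf B;J_X)$ and surjective by $J_X\in\mathcal{J}^{\reg}_X$ via Proposition \ref{transversality_chain}(b), and a collection of vertical complex linear operators $D^{\mathbb C}$ as in \eqref{decomposition_operator}, surjective by automatic transversality. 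Each vertical operator for $\tilde v^H_i,\tilde v^L_j$ contributes a one-dimensional kernel generated by the Reeb vector field, while the continuation cylinder $\tilde v$ has trivial vertical kernel (no $\R$-translation symmetry). Summing Fredholm indices via \eqref{Fredholm_index} and imposing the evaluation/matching constraints against $W^s_{Z_Y}(\tilde p)\times\Delta_Y^{N-1}\times W^u_{Z_Y}(\tilde q)\times\Delta_{(\Sigma\times\N)^\ell}$ then yields the dimension $\mu(\tilde p^H_{k_+})-\mu(\tilde q^L_{k_-})+N-1$.

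\textbf{Part (b), classifying the configuration.} Assume $\mu(\tilde p^H_{k_+})-\mu(\tilde q^L_{k_-})=0$ and the moduli space is non-empty. Combining \eqref{grading}, \eqref{eq:RS_XH}, \eqref{crit_lift} with the component-wise multiplicity balance \eqref{multiplicity} gives the identity
\[
0 = \ind_{f_\Sigma}(p) - \ind_{f_\Sigma}(q) + i(\tilde p) - i(\tilde q) + \sum 2 c_1^{T\Sigma}(A_i) + \sum\bigl( 2 c_1^{TX}(B_j) - 2K\omega(B_j) \bigr),
\]
together with $k_+ - k_- = \sum K\omega_\Sigma(A_i) + \sum K\omega(B_j)$. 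Using $c_1^{T\Sigma}(A_i)=(\tau_X-K)\omega_\Sigma(A_i)\geq 0$, the strict positivity $c_1^{TX}(B_j)-K\omega(B_j)=(\tau_X-K)\omega(B_j)>0$, and the fact that any non-trivial Floer cylinder at $H$ or $L$ forces $K\omega_\Sigma(A_i)\geq 1$ (Floer cylinders strictly increase multiplicity, unlike continuation cylinders, so $c_1^{T\Sigma}(A_i)\geq 1$ in that case), I would conclude that $\ell=0$, $N_H=N_L=0$, and hence $N=1$ with $A=0$. Constancy of $\pi_Y\circ v$ then yields $p=q$ and $k_-=k_+$, and the displayed identity reduces to $i(\tilde p)=i(\tilde q)$, producing the claimed dichotomy $(\tilde q^L_{k_-},\tilde p^H_{k_+})\in\{(\hat p^L_{k_+},\hat p^H_{k_+}),(\check p^L_{k_+},\check p^H_{k_+})\}$.

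\textbf{Part (b), uniqueness, positivity, and main obstacle.} Under these constraints, $\tilde v=(b,v)$ takes values in $\R\times Y_p$. Writing $v(s,t)=\varphi_R^{\theta(s,t)/K}(v_0)$ for a fixed $v_0\in Y_p$ and setting $\phi:=\theta-k_+ t$, the system \eqref{eq:Floer_eq2} (with $h$ replaced by $h_s$) reduces to a Cauchy-Riemann type system for $(b,\phi)$ with zeroth-order term $h'_s(e^b)-k_+/K$ and asymptotics $(b,\phi)(\pm\infty,\cdot)=(\beta^{L/H}_{k_+},\phi_\pm)$. The $t$-independent solution $b(s)$ to the ODE $b'(s)=k_+/K-h'_s(e^b)$ with $\phi$ the constant chosen so that $\varphi_R^{\phi/K}(v_0)=\hat p$ (resp.~$\check p$) produces an element of $\overbar{\mathcal M}(\hat p^L_{k_+},\hat p^H_{k_+};H_s;J_X)$ (resp.~$\overbar{\mathcal M}(\check p^L_{k_+},\check p^H_{k_+};H_s;J_X)$), and the transversality established in part (a) together with the implicit function theorem shows this element is unique. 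The positive orientation then follows by tracing our conventions from Section \ref{sec:Morse}, Definition \ref{def:fiber_sum}, and the Reeb-oriented kernel in \eqref{eq:tangent_splitting}: the canonical complex orientation on $\mathcal{N}^*_{N=1,\ell=0}(A=0;J_\Sigma)\cong\Sigma$ combined with the hat/check conventions at both ends yields the positive sign. The hardest part is the classification step above, ruling out $\ell>0$, non-constant bubbles, and mixed hat/check endpoints without the benefit of an action estimate (which is not immediately available across a monotone homotopy $h_s$); this requires careful use of integrality of $K\omega_\Sigma(A_i),K\omega(B_j)\in m_W\N$ together with the strict positivity of each $c_1^{TX}(B_j)-K\omega(B_j)$.
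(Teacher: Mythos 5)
Your overall structure parallels the paper's, but there are a couple of issues, one of which is a genuine gap in part (b).

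\textbf{On part (a).} You write that "the continuation cylinder $\tilde v$ has trivial vertical kernel (no $\R$-translation symmetry)." This conflates two different symmetries. The vertical operator $D^{\mathbb C}_{\tilde v}$ associated to the continuation cylinder \emph{does} have a one-dimensional kernel, generated by the Reeb vector field $R$, exactly as for the Floer cylinders: the Reeb rotation $\varphi^t_R \circ \tilde v$ is still a symmetry of \eqref{Floer_conti}. What the continuation cylinder lacks is the \emph{domain} $\R$-translation symmetry $\tilde v(\cdot + s_0, \cdot)$, which is why the quotient in $\overbar{\mathcal M}$ is by $\R^{N-1}$ rather than $\R^N$. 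The distinction matters: the $R$-generated kernel is precisely what makes the evaluation map $\widetilde{\ev}^c_{Z_Y}$ hit the diagonal, and in the borderline case $\m(\ev_-(\tilde v))=\m(\ev_+(\tilde v))$ of a lone continuation cylinder, establishing that the image of $\widetilde{\ev}^c_{Z_Y}$ is the full diagonal $\Delta_Y$ (so that Morse-Smale transversality suffices) requires an explicit description of all solutions --- the content of Proposition \ref{useful_prop} --- which your sketch does not supply.

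\textbf{On part (b).} The classification step has a real gap. Your index identity
\[
0 = \ind_{f_\Sigma}(p) - \ind_{f_\Sigma}(q) + i(\tilde p) - i(\tilde q) + \sum 2 c_1^{T\Sigma}(A_i) + \sum\bigl( 2 c_1^{TX}(B_j) - 2K\omega(B_j) \bigr)
\]
is correct, but it does not force the conclusion. The term $\ind_{f_\Sigma}(p)-\ind_{f_\Sigma}(q)$ is a priori unbounded below (down to $-\dim\Sigma$) and can compensate the positive Chern contributions, so positivity plus integrality of $K\omega_\Sigma(A_i)$ and $K\omega(B_j)$ alone cannot rule out $\ell>0$, $N>1$, or nonzero $A_i$. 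The paper's argument supplements the index identity with a \emph{lower bound on the dimension} of the moduli space $\mathcal{N}^*_{N^*,\ell^*}(q,p;\mathbf A^*;J_\Sigma)$ of the underlying simple chain of pearls: nonemptiness and the free $\mathbb C^*$-actions on nonconstant components (together with the punctures) force $\dim\mathcal N^*_{N^*,\ell^*}\geq 2(N^*-N_0^*)+2\ell^*$, and only by combining this with the index identity (cf. \eqref{eq:index_0}--\eqref{eq:proving_simple}) does one get the sharp inequality \eqref{index inequality} that eliminates all but the claimed configuration. Moreover, you must first establish that $\mathcal{M}_{N,\ell}=\mathcal{M}^*_{N,\ell}$ (simpleness), since the statement concerns the full moduli space; this is a prior step carried out in the paper before the case analysis. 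Finally, the assertion that "Floer cylinders strictly increase multiplicity, unlike continuation cylinders, so $c_1^{T\Sigma}(A_i)\geq 1$" is false in general: a nontrivial Floer cylinder \emph{with punctures} can project to a point in $\Sigma$ (so $A_i=0$), since the jump in multiplicity is then carried by $\sum\m(\gamma_{z_j})$ rather than by $K\omega_\Sigma(A_i)$; only the punctureless case forces $c_1^{T\Sigma}(A_i)\geq 1$. This is exactly the delicate bookkeeping captured by the inequality $\ell^*\geq N_0^*-1$ in the paper's proof, which your argument does not reproduce.

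Your reduction of the continuation cylinder to a $t$-independent ODE in the remaining case is in the right spirit and essentially mirrors Proposition \ref{useful_prop}, but to conclude that \emph{every} element of $\overbar{\mathcal M}(\hat p^L_{k_+},\hat p^H_{k_+};H_s;J_X)$ is of the stated form (not merely that one exists and is locally rigid), you need to show $\ker D^{\mathbb C}_{\tilde v}=\R R$ forces $\partial_t\tilde v\parallel R$ and hence $\partial_s v=0$; the implicit function theorem gives only local uniqueness.
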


\begin{figure}[h]
     \centering
     \includegraphics[height = 4cm]{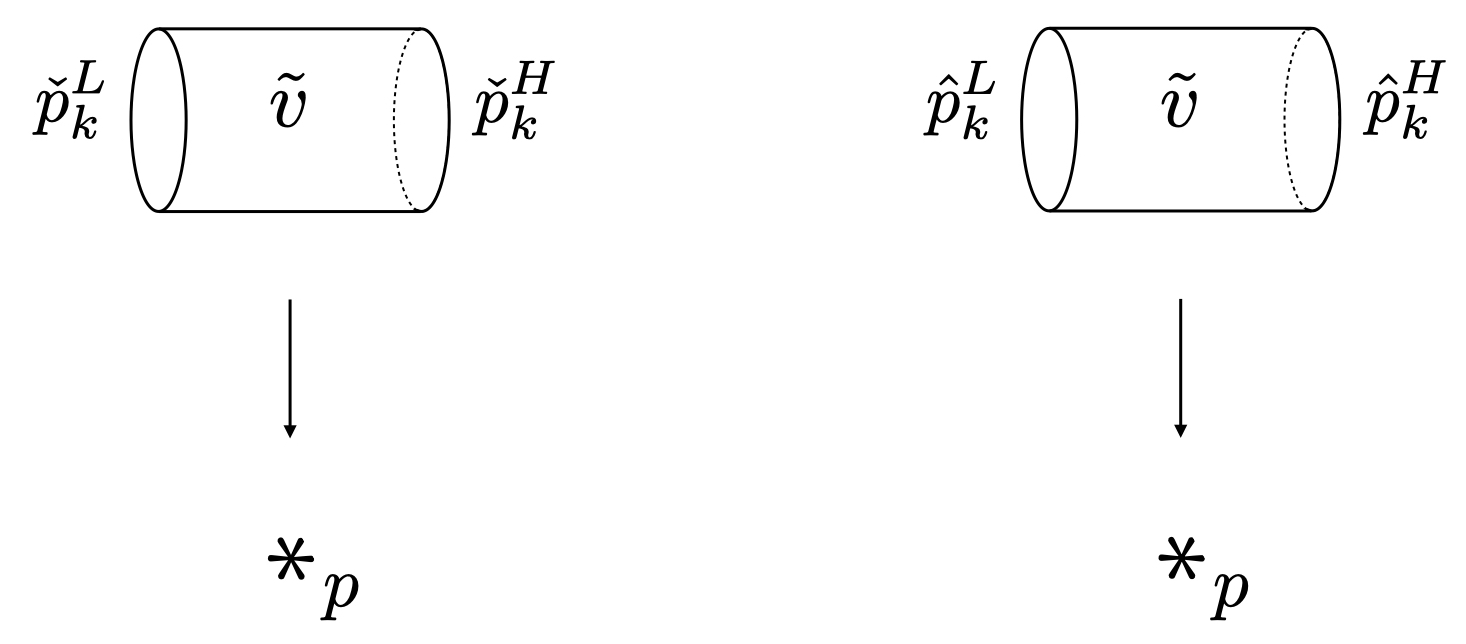}
     \caption{Split Floer continuation cylinders and its shadows}
     \label{fig:C_Moduli}
\end{figure}

\begin{remark}\label{rem:conti_two_levels}
	In the moduli spaces considered in Proposition \ref{prop:cpt} or Proposition \ref{continuation_transversality}, we do not take Floer-holomorphic buildings with more than two levels into account since ordinary (continuation) Floer cylinders of index one (zero) do not converge to such limit buildings through the neck-stretching procedure under our standing hypothesis. This can be seen by similar computations made in the proof of Proposition \ref{continuation_transversality}.(b).
\end{remark}

\subsection{Proof of Proposition \ref{continuation_transversality}}\label{sec:continuation_unique}

Recalling that $Y_p$ is the fiber of $Y\to\Sigma$ over $p\in\Sigma$,
we denote by $J_p$ the complex structure on $\mathbb{R} \times Y_p\cong \R\times S^1$ for $p \in \Sigma$ defined by $J_p\partial_r=R$. This coincides with the restriction of any $J_Y\in\mathcal{J}_Y$ to $\mathbb{R} \times Y_p$. 
Before proving Proposition \ref{continuation_transversality}, we need the following proposition. We refer to \cite[Theorem 2.2]{AGH} for a related result.

\begin{prop}\label{useful_prop}
Let $H_s$ be a monotone homotopy for $H \prec L$. Let $x^H$ and $x^L$ be $1$-periodic orbits of type III of $X_H$ and $X_L$ respectively such that $\m (x^H) = \m (x^L)$ and $\pi_{\mathbb{R}\times Y} \circ x^H =\pi_{\mathbb{R}\times Y} \circ x^L  = p \in \Sigma$.
Then the space
\[
    \mathcal{M} : = \left \{ \, \tilde{v}\in C^\infty(\mathbb{R}\times S^1 , \mathbb{R}\times Y_p) \, \middle | \, \begin{array}{l}
         \partial_s \tilde{v} + J_p(\tilde{v})(\partial_t \tilde{v} - X_{H_s}(\tilde{v})) = 0,  \\[0.5ex]
         \lim\limits_{s \to - \infty} \tilde{v} (s, \cdot) = x^L(\cdot+\tau^L) \text{ for some } \tau^L \in S^1, \\[0.5ex]
         \lim\limits_{s \to +\infty} \tilde{v} (s, \cdot) = x^H(\cdot+\tau^H) \text{ for some } \tau^H \in S^1
    \end{array} \, \right \}
\]
is a nonempty connected smooth $1$-dimensional manifold. More precisely, there exists $b \in C^\infty(\mathbb R)$ such that, for $b_0(s,t)=b(s)$ and $v(s,t)=\varphi_R^{\m(x^H)t/K}(y)$ with any $y\in Y_p$, we have $\tilde{v}=(b_0,v) \in \mathcal{M}$. Conversely, any $\tilde{v}=(b_0,v) \in \mathcal{M}$ is of such form. 
\end{prop}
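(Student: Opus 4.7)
The strategy exploits the two-dimensionality and $S^1$-equivariance of the problem on $\mathbb R\times Y_p$: the complex structure $J_p$ is standard, and $H_s|_{\mathbb R\times Y_p}=h_s(e^r)$ is invariant under the Reeb rotation, so the Floer continuation equation collapses to an explicit system. I would introduce coordinates $(r,\sigma)\in\mathbb R\times(\mathbb R/(1/K)\mathbb Z)$ on $\mathbb R\times Y_p$, where $\sigma$ is the Reeb-time coordinate on $Y_p\cong\mathbb R/(1/K)\mathbb Z$, so that $\partial_\sigma=R$ and $J_p\partial_r=\partial_\sigma$. Writing $\tilde v=(b,\sigma)$ and using $X_{H_s}=h'_s(e^r)\partial_\sigma$, equation \eqref{Floer_conti} becomes
\[
    \partial_s b-\partial_t\sigma+h'_s(e^b)=0,\qquad \partial_s\sigma+\partial_t b=0.
\]
Setting $k:=\m(x^H)=\m(x^L)$, the asymptotic winding forces $\tilde\sigma:=\sigma-kt/K$ to descend to a $1$-periodic function on $\mathbb R\times S^1$.

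Next I would construct the claimed solutions via the $S^1$-symmetric ansatz $b=b(s)$ and $\tilde\sigma\equiv\sigma_0$, equivalently $v(s,t)=\varphi_R^{kt/K}(y)$ for $y=\varphi_R^{\sigma_0}(y_*)\in Y_p$. Under this ansatz the second equation is automatic and the first reduces to the scalar ODE
\[
    b'(s)=\tfrac{k}{K}-h'_s(e^{b(s)}),\qquad b(-\infty)=\beta^L_k,\quad b(+\infty)=\beta^H_k.
\]
Setting $\psi_s(r):=h_s(e^r)-(k/K)e^r$, one has $\partial_r\psi_s=-e^r(k/K-h'_s(e^r))$, so this is a reparametrized anti-gradient flow of $\psi_s$, and
\[
    \tfrac{d}{ds}\psi_s(b(s))=(\partial_s h_s)(e^{b(s)})-e^{b(s)}(b'(s))^2\leq 0.
\]
The condition $h''_s>0$ on the type-III window makes $\beta^s_k$ a non-degenerate local minimum of $\psi_s$, and $h_H<h_L$ gives $\min\psi_H<\min\psi_L$, equivalently $\mathcal A_H(x^H)>\mathcal A_L(x^L)$. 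In the autonomous window $s\leq -1$ the only solution asymptotic to $\beta^L_k$ is $b\equiv\beta^L_k$ (the fixed point is forward-stable, hence backward-unstable), making the Cauchy problem at $s=-1$ well-posed; the monotonicity above bounds $b$ uniformly, and convergence to $\beta^H_k$ as $s\to+\infty$ follows from the local Morse behaviour of $\psi_H$ near its minimum. Varying $\sigma_0$ gives the $S^1\cong Y_p$-family of Floer continuation cylinders, with $b$ independent of $y$ as claimed.

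The main obstacle is the converse: every $\tilde v\in\mathcal M$ must be of this symmetric form. I would decompose $b=\bar b(s)+\eta(s,t)$ and $\tilde\sigma=\bar{\tilde\sigma}(s)+\xi(s,t)$ into $t$-averages and zero-average fluctuations, and set $w:=\eta+i\xi$. Averaging the Floer system over $t\in S^1$ forces $\bar{\tilde\sigma}$ to be constant in $s$, while $\bar b$ solves the ODE above up to corrections quadratic in $w$. The fluctuation $w$ satisfies a perturbed $\bar\partial$-equation whose linearization at $w=0$ is Fredholm on suitable exponentially weighted Sobolev spaces and has asymptotic operators $\bar\partial+\tfrac12 c^{H/L}$ at $s=\pm\infty$, with $c^{H/L}=h''_{H/L}(e^{\beta^{H/L}_k})e^{\beta^{H/L}_k}>0$. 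A Fourier decomposition in $t$ shows that for every nonzero Fourier mode the characteristic roots of the asymptotic ODE system have nonzero real parts of opposite signs, so that no nonzero zero-average element can decay to zero at both ends. Combined with the exponential convergence of $\tilde v$ to $x^{H/L}$ from \cite[Section 5.2]{DL2} and elliptic regularity, this forces $w\equiv 0$, yielding $b=b(s)$ and $\sigma=kt/K+\sigma_0$. The same Fredholm computation shows that the symmetric moduli space is cut out transversely with kernel the $S^1$-direction, so $\mathcal M$ is a smooth connected $1$-manifold parametrized by $y\in Y_p$.
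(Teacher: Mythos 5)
Your existence argument via the Lyapunov function $\psi_s(r)=h_s(e^r)-\tfrac{k}{K}e^r$ is a genuinely different and clean route to the scalar ODE: the paper instead proves \eqref{eq:claim1}--\eqref{eq:claim2} directly by comparison of $b'$ against the slopes $\pm a_s$ and the convexity $h_s''>0$, arguing separately on $s\leq -1$ and $s\geq 1$. Both work; your Lyapunov function packages the monotone-homotopy hypothesis $\partial_s H_s\leq 0$ more transparently.

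The uniqueness half has a genuine gap. After decomposing $b=\bar b(s)+\eta$, $\tilde\sigma=\bar{\tilde\sigma}+\xi$, the zero-average pair $(\eta,\xi)$ satisfies $\partial_s\eta-\partial_t\xi+\big(h'_s(e^{\bar b+\eta})-\overline{h'_s(e^{\bar b+\eta})}\big)=0$ and $\partial_s\xi+\partial_t\eta=0$. The zeroth-order term equals $c(s,t)\,\eta$ with $c(s,t)=\int_0^1 h''_s(e^{\bar b+\tau\eta})e^{\bar b+\tau\eta}\,d\tau>0$, so the coefficient depends on $t$ through $\eta$ itself. Consequently the Fourier modes do \emph{not} decouple, and the statement ``the characteristic roots of the asymptotic ODE system have opposite sign, hence no nonzero zero-average element decays at both ends'' is a statement about the \emph{linearization at $w=0$}, not about the actual fluctuation $w$. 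Knowing that this linear operator has trivial kernel on the zero-average subspace does not by itself force a solution of the nonlinear (or the $t$-dependent-coefficient linear) equation to vanish. The argument can be repaired without Fourier modes: setting $f(s):=\int_0^1(\eta^2-\xi^2)\,dt$, one computes $f'(s)=-2\int_0^1 \eta\big(h'_s(e^{\bar b+\eta})-\overline{h'_s(e^{\bar b+\eta})}\big)\,dt\leq 0$ since $\eta\mapsto h'_s(e^{\bar b+\eta})$ is increasing and $\eta$ has zero average; together with $f(\pm\infty)=0$ this forces $f\equiv 0$, hence $\eta\equiv 0$, then $\xi\equiv 0$.

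The paper avoids the issue entirely by a more structural argument. After establishing via \cite[Prop.~2.2]{W} that the linearized operator $D_{\tilde v}$ at an \emph{arbitrary} $\tilde v\in\mathcal M$ is surjective of index one (so $\ker D_{\tilde v}$ is $1$-dimensional), one observes two tangent vectors in $\ker D_{\tilde v}$: the Reeb direction $R(\tilde v)$ (rotation symmetry) and $\partial_t\tilde v$ (time-translation symmetry of the $t$-independent Floer data). Hence $\partial_t\tilde v=aR(\tilde v)$ for a constant $a$, which is determined by the asymptotics to be $\m(x^H)/K$; $\partial_t b=0$ follows at once, and the second line of \eqref{eq:Floer_eq2} then gives $\partial_s v=0$. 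This bypasses any decoupling or Lyapunov-function analysis of the fluctuations, and applies directly to nonlinear $\tilde v$ rather than to the linearization at the symmetric solution.
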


\begin{proof}
The first part of the proof is essentially the same as the proof of \cite[Lemma 5.24]{DL2}. To explain this, we briefly describe a necessary functional setting. We choose a smooth function 
\[
\beta:\mathbb{R}\to[0,1],\qquad \beta'\geq0,\;\;\beta(0)=0,\;\;\beta(1)=1.
\]
We denote by $\mathbf{e}_1=(1,0)$ and $\mathbf{e}_2=(0,1)$ the standard basis of $\mathbb{R}^2$ and by $J_0$ the standard complex structure given by  $J_0\mathbf{e}_1=\mathbf{e}_2$. 
For $p>2$ and $\delta>0$, we consider the Banach space $W^{k,p,\delta}_{\mathbf{e}_2}(\mathbb{R}\times S^1, \mathbb{R}^2)$   
which consists of maps $\sigma : \mathbb{R} \times S^1 \to \mathbb{R}^2$ such that 
\[
(s,t)\mapsto e^{  \delta\beta(s) s -  \delta (1-\beta(s))  s  } \big(\sigma(s,t) - c\beta(s) \mathbf{e}_2 - d(1-\beta(s))\mathbf{e}_2 \big)  
\] 
belongs $W^{k,p}(\mathbb{R}\times S^1, \mathbb{R}^2)$ for some $
c,d\in\mathbb R$. Similarly, a map $\sigma : \mathbb{R} \times S^1 \to \mathbb{R}^2$ is in 
 $L^{p,\delta}(\mathbb{R}\times S^1, \mathbb{R}^2)$ if and only if $(s,t)\mapsto e^{  \delta\beta(s) s -  \delta (1-\beta(s))  s  } \sigma(s,t)$ is of class $L^p$. 
For $\tilde{v} = (b,v) \in \mathcal{M}$, we choose a unitary trivialization 
\[
{\tilde{v}}^*T(\mathbb R \times Y_p) \cong (\mathbb{R}\times S^1)\times \mathbb{R}^2
\] 
sending ${\partial_r}$ and  $R$ to $\mathbf{e}_1$ and $\mathbf{e}_2$, respectively. Linearizing the Floer equation at $\tilde{v}$, we obtain an operator $D_{\tilde v}$ such that $\ker D_{\tilde v}=T_{\tilde v}\mathcal{M}$. We do not give a precise formula of $D_{\tilde v}$. Instead, in the above trivialization, $D_{\tilde v}$ is of the form
\begin{align}\label{linearized_operator}
    D: W^{1,p,\delta}_{\mathbf{e}_2}(\mathbb{R}\times S^1, \mathbb{R}^2) \rightarrow L^{p,\delta}(\mathbb{R}\times S^1, \mathbb{R}^2),  \nonumber \\
    D\sigma = \partial_s \sigma + J_0 \partial_t \sigma + \bigg (\begin{matrix} h_s^{''}(e^{b})e^{b} & 0 \\ 0 & 0 \end{matrix} \bigg ) \sigma .
\end{align}
The same operator with $h_s$ replaced by $s$-independent function $h$ is studied in \cite[Lemma 5.24]{DL2}. 
Since $h_s$ is independent of $s$ outside $[-1,1]$, the asymptotic operators of $D$ have the form considered in \cite[Lemma 5.24]{DL2}, and the proof therein holds verbatim. 
For sufficiently small $\delta$, the operator $D$ is Fredholm of index 1 and surjective due to an automatic transversality result in \cite[Prop.2.2]{W}.  Since $\ker D$ is 1-dimensional and contains $\mathbf{e}_2$ by \eqref{linearized_operator}, it is generated by $\mathbf{e}_2$. This reflects the fact that $\mathbf{e}_2$ corresponds to $R$ in the above trivialization and $R(\tilde v)\in\ker D_{\tilde v}=T_{\tilde v}\mathcal{M}$ since $\varphi^{t}_R\circ\tilde v\in\mathcal{M}$ for all $t \in \mathbb{R}$. This proves that each connected component of $\mathcal{M}$ is a smooth 1-dimensional manifold diffeomorphic to $S^1$. 

\medskip

Next we observe that if $\tilde v\in\mathcal{M}$, then $\tilde{v}(s,t+t_0)\in \mathcal{M}$ for any $t_0 \in  S^1$.
Thus $\partial_t \tilde{v}\in \ker D_{\tilde{v}}$. As we have shown above, $\ker D_{\tilde v}$ is generated by $R(\tilde v)$, and hence 
\begin{equation}\label{eq:tilde_v//R}
\partial_t \tilde{v} = (\partial_t b, \partial_t v) = aR(\tilde v)	
\end{equation}
for some  $a \in \mathbb{R}$.
This implies that $b$ is independent of $t$. Moreover 
since we have
\[
 \lim_{s \rightarrow  \infty}  \partial_t \tilde{v} =X_H(x^H(\cdot+\tau^H))  = \frac{\m(x^H)}{K} R(x^H(\cdot+\tau^H))
\]
for some $\tau^H\in S^1$, we obtain $a=\m(x^H)/K$.
Recall from \eqref{eq:Floer_eq2} that $\tilde v=(b, v)$ solves
\begin{equation}\label{eq:Floer_fiber}
\begin{cases}
     \partial_s b-\alpha|_{Y_p}(\partial_t v) + h'_s(e^b) = 0, \\[0.5ex]
     \partial_t b+\alpha|_{Y_p}(\partial_sv)=0.
\end{cases}	
\end{equation}
The second line together with $\partial_t b =0$ implies $\partial_s v =0$. Therefore $v$ is independent of $s$, and by \eqref{eq:tilde_v//R}
\[
v(s,t)=v(t) = \varphi_R^{\m(x^H)t/K}(v(0)).
\]
We have shown the converse part of the proposition. 

\medskip

Now the first line of \eqref{eq:Floer_fiber} simplifies to an ODE
\begin{equation}\label{eq:ode1}
    \frac{d}{ds} b - \frac{\m(x^H)}{K} + h'_s(e^b)=0,
\end{equation}
and the asymptotic conditions on $\tilde v$ implies 
\begin{equation}\label{eq:ode2}
    \lim_{s \to - \infty} b(s) = \beta^L_{k}, \qquad \lim_{s \to \infty} b(s) = \beta^H_{k}
\end{equation}
where $\beta^L_{k} \in (-\eta,\eta)$ is the $\mathbb{R}$-coordinate of the type III orbit $x^L$ in $\mathbb{R}\times Y_p$, and likewise for $\beta^H_{k}\in(-\eta,\eta)$. 
Once we prove that there exists a unique solution $b\in C^\infty(\R)$ of \eqref{eq:ode1} and \eqref{eq:ode2}, then the above computations imply that the unique solution $b$ together with $v(s,t)=\varphi_R^{\m(x^H)t/K}(y)$ for any $y\in Y_p$ belongs to $\mathcal M$. 

To prove the existence and the uniqueness of such solution $b$, we recall that $H_s$ satisfies the conditions (i) and (ii) of $\mathcal{H}$ in Section \ref{sec: Hamiltonians}, namely
\[
H_s(r,y)=h_s(e^r),\qquad h_s(r)= \begin{cases}
         -a_s r + b^-_s, & r \in (e^{-\epsilon/8 + \eta} , e^{-\eta}) ,\\[0.5ex]
         a_s r + b^+_s, & r \in (e^\eta,+\infty),
    \end{cases}
    \qquad  h''_s|_{(e^{-\eta}, e^\eta)}>0
\]
for some $a_s, b^-_s, b^+_s \in \mathbb R$. 
Moreover, it holds  
\begin{equation}\label{eq:homotopy_m}
-a_s < \frac{\m(x^L)}{K}=\frac{\m(x^H)}{K} <a_s \qquad \forall s\in\R.
\end{equation}
We first claim 
\begin{equation}\label{eq:claim1}
-\eta<b(s)<\eta \qquad \forall s\in\R.	
\end{equation}
Assume on the contrary that there is $s_0 \in \mathbb R$ with $b(s_0) \geq \eta$. Let $s_1 := \inf \left \{ s \in \mathbb R  \mid  b(s) = \eta  \right \}$. 
We note that $s_1$ is finite due to the first condition in \eqref{eq:ode2} and $\frac{d}{ds}b|_{s=s_1}\geq0$ holds.
On the other hand, \eqref{eq:homotopy_m} yields
\[
\frac{d}{ds}b\,\bigg|_{s=s_1}=\frac{\m(x^L)}{K} - h_{s_1}'(e^{b(s_1)}) = \frac{\m(x^L)}{K}-a_s < 0.
\]
This contradiction shows $b(s)<\eta$ for every $s\in\R$. Analogously, $b(s)>-\eta$ for every $s\in\R$. 

Our next claim is 
\begin{equation}\label{eq:claim2}
b(s)=\beta^L_{k} \qquad \forall s\leq-1.
\end{equation}
By the first condition in \eqref{eq:ode2}, $\mathbf{b}:=\inf\{ b(s)\mid s\leq -1\}$ is finite and $\mathbf{b}\leq \beta_k^L$. We argue by contradiction and assume  $\mathbf{b} < \beta^L_{k}$. Then there is $s_2\in(-\infty,-1]$ such that $b(s_2)=\mathbf{b}$. 
Since $b$ takes its values in $(-\eta,\eta)$ by \eqref{eq:claim1} on which $h_s''>0$, 
\[
    \frac{d}{ds}b\,\bigg|_{s=s_2}=\frac{\m(x^L)}{K} - h_{s_2}'(e^{b(s_2)}) > \frac{\m(x^L)}{K} - h_{s_2}'(e^{\beta^L_{k}})=0.
\]
The last equality is due to $H_s=L$ for $s\leq-1$. However, this inequality implies $b(s_2-\epsilon)<b(s_2)=\mathbf{b}$ for small $\epsilon>0$. This contradicts the definition of $\mathbf{b}$. Thus $\mathbf{b}=\beta_k^L$ for $s\leq -1$. Analogously, we can show that $\sup\{ b(s)\mid s\leq -1\}=\beta_k^L$ for $s\leq -1$, and \eqref{eq:claim2} follows.

Let $b\in C^\infty(\R)$ be a solution of \eqref{eq:ode1} satisfying \eqref{eq:claim2}, which uniquely exists due to the existence and uniqueness theorem of ODE. It remains to verify that $b$ meets the second condition in  \eqref{eq:ode2}. Let $s_3\geq1$ be arbitrary. In the case of $b(s_3)<\beta^H_{k}$, we have
\[
\begin{split}
\frac{d}{ds}b\,\bigg|_{s=s_3}&=\frac{\m(x^L)}{K} - h_{s_3}'(e^{b(s_3)}) = \frac{\m(x^H)}{K} - h_{s_3}'(e^{\beta^H_{k}}) +\int^{e^{\beta^H_{k}}}_{e^{b(s_3)}}h''_{s_3}(\rho)d\rho \\[.5ex]
&=\int^{e^{\beta^H_{k}}}_{e^{b(s_3)}}h''_{s_3}(\rho)d\rho\geq (e^{\beta^H_{k}}-e^{b(s_3)})\min_{[e^{b(s_3)},e^{\beta^H_{k}}]} h_{s_3}''	>0
\end{split}
\]
The last equality is due to $H_s=H$ for $s\geq 1$.
A similar estimate shows that 
\[
\begin{split}
\frac{d}{ds}b\,\bigg|_{s=s_3}&\leq-(e^{b(s_3)}-e^{\beta^H_{k}})\min_{[e^{\beta^H_{k}},e^{b(s_3)}]} h_{s_3}''<0 \;\;\textrm{ if } \;\; b(s_3)>\beta^H_{k},\\[.5ex]
 \frac{d}{ds}b\,\bigg|_{s=s_3}&=0 \;\;\textrm{ if }\;\; b(s_3)=\beta^H_{k}.
\end{split}
\]
These imply that $b$ converges to $\beta^H_{k}$ as $s$ goes to $\infty$.
This completes the proof.
\end{proof}

\subsubsection*{Proof of Proposition \ref{continuation_transversality}.(a)}
The proof goes along the same lines as the proof of \cite[Proposition 5.9]{DL2}. For completeness, we give an outline of the proof and add some details when additional care is needed.

For $J_X \in \mathcal{J}^{\reg}_X$, as usual $J_Y$ and $J_\Sigma$ denote the cylindrical and horizontal parts, respectively. 
Let $\tilde{v}=(b, v)$ be a  Floer cylinder or a  Floer continuation cylinder, which appears in $\mathbf{v}$ of an  element of $\mathcal{M}^*_{N, \ell;k_-,k_+}(\mathbf{A};H_s;J_Y)$.
The operator obtained by linearizing 
\eqref{Floer} or \eqref{Floer_conti} defined on suitable Sobolev spaces is of the form \eqref{decomposition_operator}.
Following the discussion after \eqref{decomposition_operator}, we also obtain the index computation \eqref{Fredholm_index}.
Since the $J_\Sigma$-holomorphic sphere $\pi_Y\circ v$ is either constant or somewhere injective, the associated Cauchy-Riemann operator $\dot{D}^\Sigma_{\pi_Y  \circ v}$ is surjective. Note that $J_\Sigma\in\mathcal{J}^{\reg}_\Sigma$ by  Proposition \ref{transversality_chain}.
The operator $D^{\mathbb{C}}_{\tilde{v}}$ is the same as the operator $D$ in \eqref{linearized_operator} studied in the proof of Proposition \ref{useful_prop}, which is also surjective.
Thus, $D_{\tilde{v}}$ is surjective by \eqref{decomposition_operator}.
We note that in the case $\tilde v$ is a  Floer continuation cylinder, the exact form of the operator $D^\mathbb{C}_{\tilde{v}}=D$ in \eqref{linearized_operator} is slightly different from that in \cite[Proposition 5.9]{DL2} due to the dependence on $s$ in the homotopy $H_s$. This does not cause any problem since the proof of \cite[Proposition 5.9]{DL2} relies on the asymptotic formula of the operator and $H_s$ depends on $s$ only in the finite interval $[-1,1]$. 
This proves that $\mathcal{M}^*_{N, \ell;k_-,k_+}(\mathbf{A};H_s;J_Y)$  is  a smooth manifold of dimension
\[
\begin{split}
\dim \mathcal{M}^*_{N, \ell;k_-,k_+}(\mathbf{A};H_s;J_Y) &= \sum_{i=1}^{N_H} \ind D_{\tilde{v}^H_i} + \ind D_{\tilde{v}} + \sum_{j=1}^{N_L}\ind D_{\tilde{v}^L_j} +2\ell\\
&= N \dim Y + \sum_{i=1}^{N_H} 2c_1^{T\Sigma}(A^H_i) + 2c_1^{T\Sigma}(A) + \sum_{j=1}^{N_L} 2c_1^{T\Sigma}(A^L_j) + 2\ell.
\end{split}
\]

To prove that $\mathcal{M}^*_{N, \ell=0}(\tilde{q}^L_{k_-},\tilde{p}^H_{k_+};\mathbf{A};H_s;J_Y)$ given in Definition \ref{def:conti_moduli1} is a smooth manifold, we want to show that the following transversality holds:
\begin{equation}\label{eq:trasv_ev_c}
\widetilde{\ev}^c_{Z_Y} \pitchfork \big(W^s_{Z_Y}(\tilde{p}) \times \Delta_{Y}^{N - 1} \times W^u_{Z_Y} (\tilde{q}) \big).
\end{equation}
We consider the following commutative diagram
    \begin{equation*}
        \begin{tikzcd}
             \mathcal{M}^*_{N, \ell=0;k_-,k_+}(\mathbf{A};H_s;J_Y) \times {\mathbb R}^{N - 1}_{> 0} \arrow{r}{\widetilde{\ev}^c_{Z_Y}} \arrow[swap]{d}{\Pi\times \mathrm{id}} &Y^{2N} \arrow{d}{\pi_Y\times \cdots \times \pi_Y} \\
\mathcal{N}^*_{N,\ell=0}(\mathbf{A};J_\Sigma) \times {\mathbb R}^{N-1}_{> 0} \arrow{r}{\ev_{Z_\Sigma}}& \Sigma^{2N}
        \end{tikzcd}
    \end{equation*}
where $\Pi$ is defined in \eqref{projection} and $\mathrm{id}$ is the identity map on $\mathbb R^{N - 1}_{> 0}$. In the proof of Proposition \ref{transversality_chain}, the horizontal part of \eqref{eq:trasv_ev_c}, i.e.~$\ev_{Z_\Sigma}\pitchfork (W^s_{Z_\Sigma}(p)\times \Delta^{N-1}_\Sigma\times W^u_{Z_\Sigma}(q))$, is proved. 
Hence, it suffices to establish the vertical part of \eqref{eq:trasv_ev_c}, namely  
\[
   (R,0,\dots,0), \dots, (0,\dots,0,R)  \in \mathrm{im\,}(d_{((\mathbf{v},\emptyset),\mathbf{t})}\widetilde{\ev}^c_{Z_Y}) + T_{\widetilde{\ev}^c_{Z_Y}{((\mathbf{v},\emptyset),\mathbf{t})}}\big( W^s_{Z_Y}(\tilde{p}) \times \Delta_{Y}^{N - 1} \times W^u_{Z_Y}(\tilde{q})\big)
\]
for any $((\mathbf{v},\emptyset), \mathbf{t}) \in   \mathcal{M}^*_{N, \ell=0}(\tilde{q}^L_{k_-},\tilde{p}^H_{k_+};\mathbf{A};H_s;J_Y)$. As shown in the proof of \cite[Proposition 5.41]{DL2}, this holds provided that $\mathbf{v}$ contains a nontrivial Floer cylinder $\tilde{v}_0$. 
A key input here is $\m(\ev_-(\tilde{v}_0))\neq \m(\ev_+(\tilde{v}_0))$. The same proof carries over to the case when $\mathbf{v}$ contains a Floer continuation cylinder $\tilde v$ with $\m(\ev_-(\tilde v))\neq\m(\ev_+(\tilde v))$.

The remaining case is that $\mathbf{v}$ consists of a single Floer continuation cylinder, i.e.~$\mathbf{v}=\tilde v$, with 
 $\m(\ev_-(\tilde v))=\m(\ev_+(\tilde v))$.
In this case $[ \pi_{\mathbb{R}\times Y}\circ\tilde v ] = 0$ by \eqref{multiplicity}, and  \eqref{eq:trasv_ev_c} reads
\[
\big(\widetilde{\ev}^c_{Z_Y}: \mathcal{M}^*_{N = 1,\ell = 0; k, k}(A=0;H_s;J_Y) \rightarrow Y^2\big) \pitchfork \big(W^s_{Z_Y}(\tilde{p}) \times W^u_{Z_Y}(\tilde{q})\big),
\] 
where $k=\m(\ev_-(\tilde v))=\m(\ev_+(\tilde v))$. 
By Proposition \ref{useful_prop}, the image of $\widetilde{\ev}^c_{Z_Y}$ is the diagonal $\Delta_Y$, and it is indeed transverse to $W^s_{Z_Y}(\tilde{p}) \times W^u_{Z_Y}(\tilde{q})$ since $(f_Y, Z_Y)$ is a Morse-Smale pair.
This completes the proof of \eqref{eq:trasv_ev_c}. 

\medskip

Next, we deal with the case $\ell\geq1$. According to Definition \ref{def:conti_moduli2}, we want to prove the following transversality result: 
\[
\widetilde{\mathrm{EV}}^c\pitchfork \big(W^s_{Z_Y}(\tilde{p}) \times \Delta_{Y}^{N - 1} \times W^u_{Z_Y}(\tilde{q}) \times {\Delta}_{(\Sigma\times \mathbb{N})^{\ell}}\big).
\]
Due to \eqref{eq:trasv_ev_c}, which also holds for $\ell \neq 0$, this is equivalent to showing 
\[
\big(\widetilde{\aug}_Y^c \times \widetilde{\aug}_X:  \mathcal{M}^*_{N,\ell;k_-,k_+}(\mathbf{A};H_s;J_Y) \times \mathcal{P}^*_\ell(\mathbf{B};J_X) \rightarrow (\Sigma\times \mathbb{N})^{\ell} \times (\Sigma\times \mathbb{N})^{\ell}\big) \pitchfork {\Delta}_{(\Sigma\times \mathbb{N})^{\ell}}
\]
which follows from the fact that $\widetilde{\aug}_X$ is a submersion, see \cite[Proposition 5.29]{DL2}.
This proves that the moduli space $\mathcal{M}^*_{N,\ell}(\tilde{q}^L_{k_-}, \tilde{p}^H_{k_+};\mathbf{A}, \mathbf{B};H_s;J_X)$ is a smooth manifold, and its dimension is computed as follows:
\[
	\begin{split}
    &\hspace{-1cm}\dim  \mathcal{M}^{*}_{N, \ell}(\tilde{q}^L_{k_-},\tilde{p}^H_{k_+};\mathbf{A}, \mathbf{B}; H_s;J_X) \\[.5ex]
    &= \dim  \mathcal{M}^*_{N,\ell;k_-,k_+}(\mathbf{A}; H_s; J_Y) + N - 1 + \dim\mathcal{P}^*_{\ell}(\mathbf{B};J_X)\\[.5ex]
    &\quad - \codim \big ( W^s_{Z_Y}(\tilde{p}) \times \Delta_{Y}^{N-1} \times W^u_{Z_Y}(\tilde{q}) \times {\Delta}_{(\Sigma\times \N)^\ell} \big ) \\
    & = \ N \dim Y + \sum_{i=1}^{N_H} 2c_1^{T\Sigma}(A^H_i) + 2c_1^{T\Sigma}(A) + \sum_{j=1}^{N_L} 2c_1^{T\Sigma}(A^L_j)  
    + 2\ell + N - 1 \\
    &\quad +  \sum_{j=1}^\ell \big(\dim X +  2c_1^{TX}(B_j)-2 (B_j \cdot \Sigma)  - 4 \big ) \\ 
    &\quad - \big (\dim Y  -\ind_{f_Y}(\tilde{p}) + (N-1) \dim Y +  \ind_{f_Y}(\tilde{q}) + \ell\dim\Sigma \big ) \\[1.5 ex]
    &= \sum_{i=1}^{N_H} 2c_1^{T\Sigma}(A^H_i) + 2c_1^{T\Sigma}(A) + \sum_{j=1}^{N_L} 2c_1^{T\Sigma}(A^L_j) +\sum_{j=1}^\ell \big(2c_1^{TX}(B_j)-2(B_j \cdot \Sigma)\big )\\
    &\quad +  N-1+\ind_{f_Y}(\tilde{p})-\ind_{f_Y}(\tilde{q}) \\[.5ex]
    &= \mu( \tilde{p}^H_{k_+} ) - \mu( \tilde{q}^L_{k_-} ) + N-1 .
\end{split}
\]
The last line follows from \eqref{RS-index}, \eqref{grading}, and \eqref{multiplicity}. A similar computation also yields
\[
\dim \mathcal{M}^*_{N, \ell=0}(\tilde{q}^L_{k_-},\tilde{p}^H_{k_+};\mathbf{A};H_s;J_Y)=N-1+ \mu( \tilde{p}^H_{k_+} ) - \mu( \tilde{q}^L_{k_-}).
\]
This completes the proof of Proposition \ref{continuation_transversality}.(a).
\qed

\subsubsection*{Proof of Proposition \ref{continuation_transversality}.(b)}

We first prove that, under the assumption $\mu( \tilde{p}^H_{k_+} ) =\mu( \tilde{q}^L_{k_-})$,
\begin{equation}\label{proving_simple}
	\mathcal{M}_{N, \ell}(\tilde{q}^L_{k_-},\tilde{p}^H_{k_+}; \mathbf{A}, \mathbf{B}; H_s;J_X)=\mathcal{M}_{N, \ell}^*(\tilde{q}^L_{k_-},\tilde{p}^H_{k_+}; \mathbf{A}, \mathbf{B}; H_s;J_X) 
\end{equation}
for any $N\in\mathbb{N}$ and $\ell\in\mathbb{N}\cup\{0\}$.{\footnote{For $\ell=0$, this means $\mathcal{M}_{N, \ell=0}(\tilde{q}^L_{k_-},\tilde{p}^H_{k_+}; \mathbf{A}; H_s;J_X)=\mathcal{M}_{N, \ell=0}^*(\tilde{q}^L_{k_-},\tilde{p}^H_{k_+}; \mathbf{A}; H_s;J_X)$}
For any $\big ( ((\mathbf{v},\mathbf{z}), \mathbf{t}), \mathbf{u} \big ) \in \mathcal{M}_{N, \ell}(\tilde{q}^L_{k_-},\tilde{p}^H_{k_+}; \mathbf{A}, \mathbf{B};\allowbreak H_s;J_X)$, we consider its shadow $\Pi ((\mathbf{v},\mathbf{z}),\mathbf{t}) \in \mathcal{N}_{N, \ell}(q,p;\mathbf{A};J_\Sigma)$, see \eqref{projection}. We prove \eqref{proving_simple} by showing that $\Pi ((\mathbf{v},\mathbf{z}),\mathbf{t})$ is simple, $\ell\leq 1$, and $\mathbf{u}=[u]$ is somewhere injective when $\ell=1$. 
As mentioned in Remark \ref{rmk:simple_pearl}, we take an underlying simple chain of pearls of $\Pi  ((\mathbf{v},\mathbf{z}), \mathbf{t})$ and denote the space that it belongs to by
\[
\mathcal{N}^*_{N^*, \ell^*}(q,p;\mathbf{A^*};J_\Sigma).
\]
It is obvious that $1\leq N^*\leq N$ and $\ell^*\leq\ell$. To ease the notation, we simply write 
\[
\mathbf{v}=(\tilde v_1,\dots,\tilde v_N),\quad \mathbf{A}=(A_1,\dots,A_N),\quad\mathbf{A}^*=(A_{n_1}^*,\dots,A_{n_{N^*}}^*),
\] 
where $I^*:=\{n_1,\dots,n_{N^*}\}$ is a subset of $I:=\{1,\dots,N\}$, and do not indicate which Hamiltonian $A_i$, $A_{n_i}^*$, and  $\tilde v_i$ are associated with. 
We  denote by $N^*_0$ the number of zero entries in $\mathbf{A}^*$. 
We also denote by $\gamma_j$ for $j = 1 ,\dots, \ell$ the periodic Reeb orbits in $Y$ that Floer (continuation) cylinders in $\mathbf{v}$ converge to at punctures.
Using \eqref{multiplicity} we compute
\begin{equation}\label{eq:index_0}
\begin{split}
	0 &= \mu( \tilde{p}^H_{k_+} ) - \mu( \tilde{q}^L_{k_-} )  \\
    &= \ind_{f_Y}(\tilde{p})-\ind_{f_Y}(\tilde{q})+\sum_{i=1}^{N} 2c_1^{T\Sigma}(A_i) +\sum_{j=1}^\ell \mu_\CZ(\gamma_j) \\
    &= \ind_{f_\Sigma}(p) - \ind_{f_\Sigma}(q) + \sum_{i=1}^{N} 2c_1^{T\Sigma}(A_i)     + 2\ell + i(\tilde{p})-i(\tilde{q}) + \sum_{j=1}^{\ell} | \gamma_j |  \\
    &=\dim \mathcal{N}^*_{N^*, \ell^*}(q,p;\mathbf{A}^{*};J_\Sigma) + \sum_{i\in I\setminus I^*} 2c_1^{T\Sigma}(A_i)+\sum_{i\in I^*}2c_1^{T\Sigma}(A_i-A_i^*)  \\
    &\quad  +1-N^* + 2(\ell-\ell^*) + i(\tilde{p})-i(\tilde{q}) + \sum_{j=1}^{\ell} | \gamma_j |  \\
    &\geq  N^*-2N^*_0+2\ell^*  + \sum_{i\in I\setminus I^*} 2c_1^{T\Sigma}(A_i)+\sum_{i\in I^*}2c_1^{T\Sigma}(A_i-A_i^*)  \\
    &\quad + 2(\ell-\ell^*) + (i(\tilde{p})-i(\tilde{q})+1) + \sum_{j=1}^{\ell} | \gamma_j |.
\end{split}
\end{equation}
For the definitions of $i(\tilde p)$ and $|\gamma_j|$, see \eqref{crit_lift} and \eqref{eq:index_moduli}.
The last two lines follow from
\[
\begin{split}
	    \dim \mathcal{N}^*_{N^*, \ell^*}(q,p;\mathbf{A}^{*};J_\Sigma) &= \ind_{f_\Sigma} (p) - \ind_{f_\Sigma} (q) + N^*-1 + \sum_{i\in I^*} 2c_1^{T\Sigma}(A_i^*) + 2\ell^*\\
        &\geq 2(N^*-N^*_0)+2\ell^*,    
\end{split}
\]
which is due to Proposition \ref{transversality_chain}.(a) and the existence of a free $\mathbb{C}^*$-action on each nonconstant $J_\Sigma$-holomorphic map in $\mathcal{N}^*_{N^*, \ell^*}(q,p;\mathbf{A}^{*};J_\Sigma)$. 
Rearranging \eqref{eq:index_0}, we obtain
\begin{equation}\label{eq:proving_simple}
	\begin{split}
	1&\geq (N^*-N^*_0)+(\ell^*-N^*_0+1) +\sum_{i\in I\setminus I^*} 2c_1^{T\Sigma}(A_i)+\sum_{i\in I^*}2c_1^{T\Sigma}(A_i-A_i^*) \\
	& \quad + \ell^* + 2(\ell-\ell^*) + (i(\tilde{p})-i(\tilde{q})+1) + \sum_{j=1}^{\ell} | \gamma_j |.
\end{split}
\end{equation}
We claim that all the terms on the right-hand side are nonnegative.  Indeed, $|\gamma_j|\geq 0$ by \eqref{eq:index_moduli}, and 
\begin{equation}\label{eq:ell*}
\ell^* \geq N^*_{0} - 1	
\end{equation}
 holds since every nontrivial Floer cylinder $\tilde v_i$ which projects to a point in $\Sigma$ must have at least one puncture since otherwise $\m(\ev_+(v_i))=\m(\ev_-(v_i))$ and $\partial_s\tilde v_i=0$ by \eqref{multiplicity} and the discussion afterwards. Here the contribution $-1$ is due to the fact that, in contrast to a nontrivial Floer cylinder, a nontrivial Floer continuation cylinder may project to a point in $\Sigma$ and have no punctures. 
  Moreover, since $A_i$ is represented by a $J_\Sigma$-holomorphic map and $c_1^{T\Sigma}=(\tau_X-K)\omega_\Sigma$ on $\pi_2(\Sigma)$, we have $2c_1^{T\Sigma}(A_i)\geq 0$ with equality if and only if $A_i=0$. The same holds for $A_i-A_i^*$ when $i \in I^*$ since either $A_i=A_i^*=0$ or $A_i$ is a positive multiple of $A_i^*$. 
This proves the claim. Therefore equation \eqref{eq:proving_simple} yields
\begin{equation}\label{eq:concl1}
\ell=\ell^*\leq 1,\qquad\;  A_i=0\quad \forall i\in I\setminus I^*,\qquad\; A_i=A_i^* \quad \forall i\in I^*.	
\end{equation}

 We next show $I= I^*$. Assume $I\neq I^*$ for contradiction. Then \eqref{eq:concl1} implies that $\tilde v_i$ for $i\in I\setminus I^*$ does not have a puncture, i.e.~$\tilde v$ is defined on the whole $\mathbb{R}\times S^1$, and $c_1^{T\Sigma}( [\pi_Y\circ v_i ] )=0$. 
 As discussed after \eqref{multiplicity}, this implies that $\partial_s\tilde v_i=0$ for a Floer cylinder $\tilde v_i$, which is not allowed. Thus $\tilde v_i$ for $i\in I\setminus I^*$ cannot be a Floer cylinder but is a Floer continuation cylinder without a puncture. Therefore we actually have $\ell=\ell^*\geq N^*_0$ rather than \eqref{eq:ell*}. Now equation \eqref{eq:proving_simple} reads
\[
\begin{split}
	0\geq (N^*-N^*_0)+(\ell-N^*_0)  + \ell+(i(\tilde{p})-i(\tilde{q})+1) + \sum_{j=1}^{\ell} | \gamma_j | ,
\end{split}
\]
and all  five terms on the right-hand side are nonnegative. We deduce that 
$\ell=0$, $N^*_0=0$, and $N^*=0$. 
This contradicts $N^*\geq1$, and hence $I=I^*$. 

Hence, by \eqref{eq:concl1}, $\mathbf{A}=\mathbf{A}^*$. Moreover, if $\ell=1$ and $\mathbf{u}=[u]$ is not somewhere injective,  the asymptotic Reeb orbit $\gamma$ of $u$ at the puncture is multiply covered. Then \eqref{RS-index} and \eqref{eq:index_moduli} imply $|\gamma|\geq2$, which contradicts \eqref{eq:proving_simple}. Thus $u$ is somewhere injective. This completes the proof of  \eqref{proving_simple}.

\medskip

To prove the rest of Proposition \ref{continuation_transversality}.(b), we continue to assume $\mu( \tilde{p}^H_{k_+} ) =\mu( \tilde{q}^L_{k_-} )$. Let $\big ( ((\mathbf{v},\mathbf{z}), \mathbf{t}), \mathbf{u} \big ) \in \mathcal{M}_{N, \ell}^*(\tilde{q}^L_{k_-},\tilde{p}^H_{k_+}; \mathbf{A}, \mathbf{B}; H_s;J_X)$. We use the same notation as above and denote by $N_0$  the number of zero entries in $\mathbf{A}$. Now equation \eqref{eq:proving_simple} simplifies to 
\begin{equation}\label{index inequality}
    1 \geq (N-N_0)  + (\ell - N_0 + 1) + \ell + (i(\tilde{p})-i(\tilde{q})+1) + \sum_{j=1}^\ell | \gamma_j |.
\end{equation}

Since $\Pi\big (((\mathbf{v}, \mathbf{z}),\mathbf{t}),\mathbf{u}\big )$ is simple, we also have
\begin{equation}\label{index equality}
	\begin{split}
  1 & = (i(\tilde{p})-i(\tilde{q})+1)+(\ind_{f_\Sigma}(p)-\ind_{f_\Sigma}(q)) + \sum_{i=1}^{N} 2c_1^{T\Sigma}(A_i) + 2\ell +\sum_{j=1}^\ell | \gamma_j |\\
    &=(i(\tilde{p})-i(\tilde{q})+1)+\dim \mathcal{N}^*_{N,\ell}(q,p;\mathbf{A}, \mathbf{B};J_X)-N+1,
	\end{split}
\end{equation}
see \eqref{eq:index_0}. We investigate all possible cases of \eqref{index inequality} separately. There are six cases, and  we show that only the fourth case can happen.
\begin{enumerate}[(1)]
    \item  $N-N_0=\mathbf{1}$, $\ell-N_0+1=0$, $\ell=0$, $i(\tilde{p})-i(\tilde{q})+1=0$, $\sum\limits_{j=1}^{\ell} | \gamma_j | = 0$: 
    
    Since $N_0=1$ and $N=2$, by \eqref{index equality} we have $\dim\mathcal{N}^*_{N=2, \ell = 0}(q,p;\mathbf{A};J_\Sigma)=2$. 
    Let us denote $\Pi ((\mathbf{v}, \emptyset), t ) = ((\mathbf{w},\emptyset),t) \in \mathcal{N}^*_{N=2, \ell = 0}(q,p;\mathbf{A};J_\Sigma)$, $\mathbf{v}=(\tilde v_1,\tilde v_2)$, $\mathbf{w} := (w_1, w_2)$, and $t \in \mathbb{R}_{> 0}$ as usual.
    Without loss of generality, we may assume that $w_1$ is a nonconstant $J_\Sigma$-holomorphic sphere and $w_2$ is a constant one. 
    Since $((\mathbf{w},\emptyset), t)$ is simple, $w_1(-\infty)$ is different from the image of $w_2$ and thus  $((w_1,\varphi_{Z_\Sigma}^{t'}\circ w_2), t - t' ) \in \mathcal{N}^*_{N=2, \ell = 0}(q,p;\mathbf{A};J_\Sigma)$ for any $0 < t' < t$. 
    Together with the $\mathbb{C}^*$-action on the domain of $w_1$, we get $\dim\mathcal{N}^*_{N=2, \ell = 0}(q,p;\mathbf{A};J_\Sigma)\geq 3$ which leads to a contradiction.

    \item  $N-N_0=0$, $\ell-N_0+1=\mathbf{1}$, $\ell=0$, $i(\tilde{p})-i(\tilde{q})+1=0$, $\sum\limits_{j=1}^{\ell} | \gamma_j | = 0$:
    
    This contradicts $N \geq 1$ which is due to the presence of a Floer continuation cylinder.

    \item  $N-N_0=0$, $\ell-N_0+1=0$, $\ell=\mathbf{1}$, $i(\tilde{p})-i(\tilde{q})+1=0$, $\sum\limits_{j=1}^{\ell} | \gamma_j | = 0$:
    
    By \eqref{index equality}, $\ind_{f_\Sigma} (p) - \ind_{f_\Sigma} (q) = -1$.
    Since $N=N_0 = 2$, we have 
    \[
    \Pi\big(((\mathbf{v}, z), t),u\big) = \big(((\mathbf{w},z), t),u \big) \in \mathcal{N}^*_{N=2, \ell = 1} \big(q,p;\mathbf{A}=(0,0),\mathbf{B}=B;J_X\big).
    \] 
    Thus there is a gradient flow line of $Z_\Sigma$ from $q$ to $p$. This derives a contradiction $\ind_{f_\Sigma}(p)\geq\ind_{f_\Sigma}(q)$.

    \item  $N-N_0=0$, $\ell-N_0+1=0$, $\ell=0$, $i(\tilde{p})-i(\tilde{q})+1=\mathbf{1}$, $\sum\limits_{j=1}^{\ell} | \gamma_j | = 0$:
    
     We have $N_0=N=1$, i.e.~$\mathbf{A}=A=0$, and $(\tilde{p},\tilde{q})=(\hat{p},\hat{q})$ or $(\tilde{p},\tilde{q})=(\check{p},\check{q})$. By \eqref{index equality}, $\ind_{f_\Sigma}(p)=\ind_{f_\Sigma}(q)$, and this together with $A=0$ yields $p=q$. Moreover, \eqref{multiplicity} implies  $k_-=k_+$. Therefore, due to Proposition \ref{useful_prop}, $\mathcal{M}_{N=1, \ell=0}^*(\hat{p}^L_{k_+},\hat{p}^H_{k_+}; A=0; H_s;J_Y)$ consists of a single element $(\mathbf{v}=\tilde v,\mathbf{z}=\emptyset)$ with $\pi_{\mathbb{R}\times Y}\circ\tilde v=p$. The same holds with $(\hat{p}^L_{k_+},\hat{p}^H_{k_+})$ replaced by $(\check{p}^L_{k_+},\check{p}^H_{k_+})$.

    \item  $N-N_0=0$, $\ell-N_0+1=0$, $\ell=0$, $i(\tilde{p})-i(\tilde{q})+1=0$, $\sum\limits_{j=1}^{\ell} | \gamma_j | = \mathbf{1}$:
    
    This is absurd as $\ell=0$.
    
    \item  $N-N_0=0$, $\ell-N_0+1=0$, $\ell=0$, $i(\tilde{p})-i(\tilde{q})+1=0$, $\sum\limits_{j=1}^{\ell} | \gamma_j | = 0$:
    
    We have $N_0=N=1$, i.e.~$\mathbf{A}=A=0$, $(\tilde p,\tilde q)=(\check p,\hat q)$, and $\ind_{f_\Sigma}(p)-\ind_{f_\Sigma}(q)=1$ by \eqref{index equality}. Thus, $\ind_{f_Y}(\check p)= \ind_{f_Y}(\hat q)$ and $k_-=k_+$ by \eqref{multiplicity}. Applying Proposition \ref{useful_prop}, we observe $\ev_+(v)=\ev_-(v)$ for $\tilde{v}=(b,v)\in \mathcal{M}_{N=1, \ell=0}^*(\hat{q}^L_{k_+},\check{p}^H_{k_+}; A=0; H_s;J_Y) $. This yields a nontrivial gradient flow line of $Z_Y$ connecting $\check p$ and $\hat q$.
    This contradicts $\ind_{f_Y}(\check p)= \ind_{f_Y}(\hat q)$. 
\end{enumerate}
We have shown that only case (4) occurs, and in this case Floer continuation cylinders are exactly as described in the statement of Proposition \ref{continuation_transversality}.(b). It remains to show that such Floer continuation cylinders are positively oriented. This is verified in the following lemma. Hence the proof of Proposition \ref{continuation_transversality}.(b) is complete. \qed

\begin{lemma}\label{lem:conti_sign}
	Every element in $\mathcal{M}_{N=1,\ell=0}^*(\hat{p}^{L}_{k},\hat{p}^H_{k};A=0; H_s;J_Y)$ is positively oriented. The same statement holds with $\hat{p}^{L}_{k}$ and $\hat{p}^H_{k}$ replaced by $\check{p}^{L}_{k}$ and $\check{p}^H_{k}$, respectively.
\end{lemma}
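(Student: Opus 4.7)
The plan is to combine the classification of Floer continuation cylinders over a single fiber from Proposition \ref{useful_prop} with a direct application of the fibered sum rule of Definition \ref{def:fiber_sum}.

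By Proposition \ref{continuation_transversality}.(b), the moduli space $\mathcal{M}^*_{N=1,\ell=0}(\hat{p}^{L}_{k},\hat{p}^H_{k};A=0; H_s;J_Y)$ consists of a single element $\tilde{v}=(b_0(s),v)$ with $\pi_{\mathbb{R}\times Y}\circ \tilde v = p$. Proposition \ref{useful_prop} then forces $v(s,t)=\varphi_R^{kt/K}(y)$ for some $y\in Y_p$, which the Morse-Smale constraints $\ev_{\pm,0}(\tilde v)\in W^s_{Z_Y}(\hat p)\cap W^u_{Z_Y}(\hat p)=\{\hat p\}$ pin down to $y=\hat p$. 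More generally, letting the projected point vary over $\Sigma$, Proposition \ref{useful_prop} identifies the ambient moduli space $\mathcal{M}^*_{N=1,\ell=0;k,k}(A=0;H_s;J_Y)$ with $Y$ via $\tilde v\mapsto v(0,0)$; under this identification both evaluation maps $\ev_{\pm,0}$ become the identity on $Y$, since $v(s,0)$ is independent of $s$.

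The analogue of the splitting isomorphism \eqref{eq:tangent_splitting} for continuation cylinders (valid because the $s$-dependence of $H_s$ only alters a compact-support part of $D^{\mathbb C}_{\tilde v}$) gives an identification
\[
T_{\tilde v}\mathcal{M}^*_{N=1,\ell=0;k,k}(A=0;H_s;J_Y) \;\cong\; \mathbb{R} R_{\tilde v} \oplus T_{\pi_Y(y)}\mathcal{N}^*_{N=1,\ell=0}(A=0;J_\Sigma).
\]
Since constant $J_\Sigma$-holomorphic spheres in the class $A=0$ are parametrized by their image in $\Sigma$ with the complex orientation from $J_\Sigma$, and the orientation convention \eqref{eq:ori_Y} splits $T_y Y \cong \mathbb{R} R_y \oplus T_{\pi_Y(y)}\Sigma$ orientation-preservingly, the identification $\mathcal{M}^*_{N=1,\ell=0;k,k}(A=0;H_s;J_Y)\cong Y$ is itself orientation-preserving. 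Consequently the oriented fibered sum defining $\mathcal{M}^*_{N=1,\ell=0}(\hat{p}^{L}_{k},\hat{p}^H_{k};A=0; H_s;J_Y)$ collapses, via the identity evaluations, to the transversal intersection $W^s_{Z_Y}(\hat p)\cap W^u_{Z_Y}(\hat p)=\{\hat p\}$ inside $Y$, and by the convention \eqref{eq:stable_splitting_Y} that the splitting $T_{\hat p}Y \cong T_{\hat p}W^u_{Z_Y}(\hat p)\oplus T_{\hat p}W^s_{Z_Y}(\hat p)$ is orientation-preserving, this intersection point carries the sign $+1$.

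The main technical obstacle is to verify that the two $(-1)^{\dim V_2\cdot\dim W}$ factors produced by Definition \ref{def:fiber_sum} in the two iterated fiber products cancel rather than contributing an extra sign depending on $\dim\Sigma$ or $\ind_{f_Y}(\hat p)$. A direct matrix calculation in an ordered basis of $T_{\hat p}Y$ compatible with the splitting $T_{\hat p}W^u_{Z_Y}(\hat p)\oplus T_{\hat p}W^s_{Z_Y}(\hat p)$, using that both $\ev_{\pm,0}$ are the identity, shows that both stages contribute the same parity $(-1)^{(\dim Y)(\dim W^u_{Z_Y}(\hat p))}$, whose product is $+1$. The case $(\check{p}^{L}_{k},\check{p}^H_{k})$ follows by a verbatim identical argument since our orientation conventions in Section \ref{sec:Morse} impose $T_{\check p}Y \cong T_{\check p}W^u_{Z_Y}(\check p)\oplus T_{\check p}W^s_{Z_Y}(\check p)$ orientation-preservingly as well.
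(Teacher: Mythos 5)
Your reduction is the paper's own: Proposition~\ref{useful_prop} identifies the ambient moduli space $\mathcal{M}^*_{N=1,\ell=0;k,k}(A=0;H_s;J_Y)$ with $Y$ via $\tilde v\mapsto v(0,0)$, the evaluations $\ev_{\pm,0}$ become $\mathrm{id}_Y$, and the identification is orientation-preserving by the splitting argument leading to \eqref{eq:continuation_orientation}. Up to that point you are matching the paper line by line. The gap is in the final sign check, which is the whole content of the lemma.

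You assert that ``the two $(-1)^{\dim V_2\cdot\dim W}$ factors produced by Definition~\ref{def:fiber_sum} in the two iterated fiber products'' both equal $(-1)^{(\dim Y)(\dim W^u_{Z_Y}(\hat p))}$. That is not correct for the first fiber product: there $V_2$ is the ambient moduli space, which you have just identified with $Y$, and $W=Y$, so the factor is $(-1)^{(\dim Y)^2}$. This agrees with $(-1)^{(\dim Y)(\dim W^u_{Z_Y}(\hat p))}$ only when $\dim W^u_{Z_Y}(\hat p)=\dim Y-\ind_{f_Y}(\hat p)$ is odd, which fails for half the critical points. Moreover the fibered-sum factor alone never determines the sign; you must also track the determinant of the difference map $f_1-f_2$ against the chosen oriented complement. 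In the first stage that determinant is $(-1)^{\dim Y}$ (coming from $-\mathrm{id}$ on the $Y$-summand), which is what cancels $(-1)^{(\dim Y)^2}$ and yields the orientation-preserving identification $\ker(d\iota^s_Y-d\ev_{+,0})\cong T_{\hat p}W^s_{Z_Y}(\hat p)$ of \eqref{eq:1st_fiber_product}. In the second stage the difference map $(\xi,\eta)\mapsto\xi-\eta$, computed against the ordered splitting $T_{\hat p}Y\cong T_{\hat p}W^u_{Z_Y}(\hat p)\oplus T_{\hat p}W^s_{Z_Y}(\hat p)$ of \eqref{eq:stable_splitting_Y}, has determinant sign $(-1)^{\dim W^s\cdot\dim W^u+\dim W^u}$, and it is the cancellation of \emph{this} against the second fibered-sum factor $(-1)^{\dim W^u\cdot\dim Y}$ — via the identity $\dim W^s\cdot\dim W^u+\dim W^u\equiv \dim Y\cdot\dim W^u\pmod 2$ — that gives $\Theta=+1$. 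So the conclusion is right, but the cancellation you claimed to verify (between the two fibered-sum factors) is not the one that actually occurs, and the ``direct matrix calculation'' would, if carried out, contradict the parity you asserted for the first stage.
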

\begin{proof}
	We only prove the first assertion, and the second one follows analogously. As in \eqref{eq:orient_A=0} and \eqref{eq:orientation_puctured}, we observe 
\begin{equation}\label{eq:continuation_orientation}
\begin{split}
	T_{\tilde v}(\mathcal{M}^*_{N=1, \ell = 0; k,k} (A = 0; H_s; J_Y)) &\cong \mathbb{R} R_{\tilde v} \oplus T_{\pi_Y \circ v}(\mathcal{N}^*_{N=1, \ell = 0}(A=0; J_\Sigma))\\
	 &\cong \mathbb{R} R_{\ev_{+,0}(\tilde v)} \oplus T_p\Sigma\\
	 &\cong T_{\tilde p}Y,
\end{split}
\end{equation}
where $\pi_Y\circ v=p$ and $\ev_{+,0}(\tilde v)=\tilde p$. All the isomorphisms in \eqref{eq:continuation_orientation} preserve orientations, see our orientation conventions \eqref{eq:ori_Y} and \eqref{eq:orientation_puctured}. 
We denote by $\mu_\xi\in T_{\tilde v}(\mathcal{M}^*_{N=1, \ell = 0; k, k} (A = 0; H_s; J_Y))$ the tangent vector corresponding to $\xi\in T_{\tilde{p}}Y$.

As usual, $\mathcal{M}^*_{N = 1, \ell = 0}(\hat{p}^L_{k}, \hat{p}^H_{k}; A = 0; H_s; J_Y)$ inherits an orientation from the natural isomorphism to the fiber product
\begin{equation}\label{eq:fiber_product_conti}
	W^s_{Z_Y} (\hat{p}) \prescript{}{\iota^s_Y} \times^{}_{\ev_{+, 0}} \mathcal{M}^*_{N=1, \ell = 0; k, k}(A = 0; H_s; J_Y) \prescript{}{\ev_{-, 0}} \times^{}_{\iota^u_Y} W^u_{Z_Y} (\hat{p}),
\end{equation}
whose elements have the form $(\hat{p},\tilde{v},\hat{p})$ with $\hat{p}=\ev_{\pm,0}(\tilde v)$. Let $(\hat{p},\tilde{v},\hat{p})$ be an element in this fiber product. We first consider 
\[
d_{\hat p}\iota_Y^s-d_{\tilde v}\ev_{+,0}:T_{(\hat{p},\tilde{v})}\big(W^s_{Z_Y} (\hat{p}) \times \mathcal{M}^*_{N=1, \ell = 0; k, k}(A = 0; H_s; J_Y)\big)\longrightarrow T_{\hat p} Y.
\]
Note that 
\[
T_{(\hat{p},\tilde{v})}\big(W^s_{Z_Y} (\hat{p}) \prescript{}{\iota^s_Y} \times^{}_{\ev_{+, 0}} \mathcal{M}^*_{N=1, \ell = 0; k, k}(A = 0; H_s; J_Y)\big) = \ker (d_{\hat p}\iota_Y^s-d_{\tilde v}\ev_{+,0}).
\]
We orient this space by requiring the isomorphism
\begin{equation}\label{eq:1st_fiber_product}
	 \ker (d_{\hat p}\iota_Y^s-d_{\tilde v}\ev_{+,0})\cong T_{\hat p} W^s_{Z_Y} (\hat{p}),\qquad (\xi,\mu_\xi)  \mapsto \xi
\end{equation}
to be orientation-preserving and check this obeys the fibered sum rule in Definition \ref{def:fiber_sum}. Then the isomorphism 
\[
\frac{T_{(\hat{p},\tilde{v})}\big(W^s_{Z_Y} (\hat{p}) \times \mathcal{M}^*_{N=1, \ell = 0; k, k}(A = 0; H_s; J_Y)\big)}{\ker (d_{\hat p}\iota_Y^s-d_{\tilde v}\ev_{+,0})}\cong T_{\hat p} Y
\]
induced by $d_{\hat p}\iota_Y^s-d_{\tilde v}\ev_{+,0}$ is orientation-reversing due to the negative sign in front of $d_{\tilde v}\ev_{+,0}$ and the fact that the dimension of $Y$ is odd. This indeed obeys the fibered sum rule since both $\mathcal{M}^*_{N=1, \ell = 0; k, k}(A = 0; H_s; J_Y)$ and $Y$ have odd dimension. 

Next we consider 
\begin{equation}\label{eq:conti_ori1}
d_{\tilde v}\ev_{-,0}|_{\ker (d_{\hat p}\iota_Y^s-d_{\tilde v}\ev_{+,0})}- d\iota^u_Y:\ker (d_{\hat p}\iota_Y^s-d_{\tilde v}\ev_{+,0}) \oplus  T_{\hat p} W^u_{Z_Y} (\hat{p})\longrightarrow T_{\hat p} Y.
\end{equation}
Due to the transversality result in Proposition \ref{continuation_transversality}.(a), this map is an isomorphism and its trivial kernel equals the tangent space of the fiber product in \eqref{eq:fiber_product_conti} at $(\hat p,\tilde v, \hat p)$. The isomorphism in \eqref{eq:conti_ori1} is orientation-preserving exactly when
\begin{equation}\label{eq:positive_condition}
(-1)^{\dim W^s_{Z_Y}(\hat p)\cdot \dim W^u_{Z_Y}(\hat p)+\dim  W^u_{Z_Y}(\hat p)}=1,
\end{equation}
see our orientation conventions in \eqref{eq:stable_splitting_Y} and \eqref{eq:1st_fiber_product}. 
Let us write $\Theta\in\{-1,1\}$ for the orientation of the trivial kernel of the map in \eqref{eq:conti_ori1} given by the fibered sum rule, i.e.~$\Theta=1$ if and only if the kernel is positively oriented. In this case, the fibered sum rule translates into that the isomorphism in \eqref{eq:conti_ori1} is orientation preserving if and only if $(-1)^{\dim W^u_{Z_Y}(\hat p)\cdot \dim Y}=\Theta$. Thus \eqref{eq:positive_condition} is equivalent to $(-1)^{\dim W^u_{Z_Y}(\hat p)\cdot \dim Y}=\Theta$, and in turn $\Theta=1$. This proves that every element in the space in \eqref{eq:fiber_product_conti} is positively oriented.
\end{proof}

\subsection{Split Rabinowitz Floer homology}

For $a,b \in \mathbb{R}$ with $a<b$, we take $H \in \mathcal{H}$, which is of the form  $H(r, y)=h(e^r)$ for $(r,y) \in \mathbb{R} \times Y\subset \widehat{W}$ by definition, satisfying $h(0) > -a$. For $J_X \in \mathcal{J}^{\reg}_X$ and a monotone homotopy $H_s$ for $H \prec L$, we define a continuation homomorphism
\begin{equation}\label{continuation_map}
\sigma : \FC^{(a,b)}_* (H) \to \FC^{(a,b)}_* (L),\qquad  \tilde{p}^H_{k_+} \mapsto \sum_{\tilde{q}^L_{k_-} } \sum_{ [\tilde{v}] } \epsilon( [\tilde{v}] ) \, \tilde{q}_{k_-}^L
\end{equation}
where the first sum runs over all $\tilde{q}^L_{k_-}$ with $k_-\in\m(a,b;L)$ and $\mu(\tilde{p}^H_{k_+})-\mu(\tilde{q}^L_{k_-})= 0$, and the second one is over all $[ \tilde{v} ] \in \overbar{\mathcal{M}}(\tilde{q}^L_{k_-},\tilde{p}^H_{k_+}; H_s; J_X)$. 
Here, the orientation sign $\epsilon( [\tilde{v}] ) \in \left \{ 1, -1 \right \}$ is determined as $\epsilon([\tilde{v}]) = 1$ if $[\tilde{v}]$ is positively oriented and $\epsilon( [\tilde{v}] ) = -1$ otherwise.
The map $\sigma$ corresponds to a continuation homomorphism in the ordinary Floer homology via neck-stretching, see also Remark \ref{rem:conti_two_levels}. Hence $\sigma$ is a chain map.
 Proposition \ref{continuation_transversality}.(b) yields 
\begin{equation}\label{continuation_identity}
\sigma(\hat{p}^H_k) = \hat{p}^L_k ,\qquad \sigma(\check{p}^H_k) = \check{p}^L_k,
\end{equation}
for every $p \in \Crit f_{\Sigma}$ and $k \in \m(a,b;H)$.  
As shown in Proposition \ref{prop:cpt}, the coefficient of $\tilde{q}_{k_-}^H$ in $\partial\tilde{p}_{k_+}^H$ (and likewise for $L$), where $\partial$ denotes the boundary operator in \eqref{differential}, is independent of $H$ (and of $L$). This together with \eqref{continuation_identity} provides an alternative proof of the fact that $\sigma$ is a chain map in this case. 

The modules $\FH_*^{(a,b)}(H)$ and the homomorphisms induced by \eqref{continuation_map} at the homology level form a direct system over $(\mathcal{H},\prec)$.  
We define the Rabinowitz Floer homology of $\partial W$ for finite action-window $(a,b)\subset\mathbb{R}$ by
\[
\SH_{j}^{(a,b)} (\partial W) : = \varinjlim_{H\in\mathcal{H}} \FH_{j}^{(a,b)}(H),\qquad j\in\mathbb{Z}
\]
Using this and natural action filtration chain homomorphisms
\begin{equation*}\label{action_filter_map}
\FC_*^{(a,b)}(H)  \twoheadrightarrow  \FC_*^{(a',b)}(H),\qquad    \FC_*^{(a,b)}(H) \hookrightarrow \FC_*^{(a,b')}(H)
\end{equation*}
for $a<a'<b<b'$ which induce a bidirect system on $\SH^{(a,b)}(\partial W)$, we  complete the action-window and define the Rabinowitz Floer homology of $\partial W$ by
\begin{equation*}\label{eq:def_RFH}
\SH_{j} (\partial W) : = \varinjlim_{b\uparrow+\infty}\varprojlim_{a\downarrow-\infty}  \SH_{j}^{(a,b)} (\partial W),\qquad j\in\mathbb{Z}.	
\end{equation*}
This split version of Rabinowitz Floer homology is isomorphic to the ordinary one defined in \cite{CFO} via neck-stretching.

For index reasons, see \eqref{grading}, there are at most finitely many generators in each degree. Therefore, for a given degree $j$, the two limits on action-window stabilize for sufficiently negative $a$ and positive $b$. 
Moreover, the limit on $\mathcal H$ also stabilizes eventually at each  degree due to \eqref{continuation_identity}. We conclude that, for each $j\in\mathbb{Z}$, there exist $a,b\in\mathbb{R}$ and $H\in\mathcal H$ such that all three direct systems become trivial after this stage, and hence
\begin{equation*}\label{eq:SH=FH}
\SH_j(\partial W)=\FH_j^{(a,b)}(H).	
\end{equation*}

In fact, the identities in \eqref{continuation_identity} imply that $(\FC_*^{(a,b)}(H), \sigma)$ is a direct system already at the chain level. 
We define
\begin{equation}\label{eq:chain_completion}
\mathrm{SC}_j^{(a,b)}(\partial W):=\varinjlim_{H \in \mathcal{H}} \mathrm{FC}_j^{(a,b)}(H),\qquad \mathrm{SC}_j(\partial W) := \varinjlim_{b\uparrow+\infty}\varprojlim_{a\downarrow-\infty}   \mathrm{SC}_j^{(a,b)}(\partial W)	
\end{equation}
and denote the induced boundary operators again by $\partial$ for both cases. In view of \eqref{continuation_identity}, we can write 
\begin{equation}\label{eq:rabinowitz_chain_cplx}
    \SC_*(\partial W) = \bigoplus_{k\in m_W\mathbb{Z}}\,\bigoplus_{p\in\Crit f_\Sigma}\mathbb{Z}\langle \,\hat \p_k,\check \p_k\rangle 
\end{equation}
where $\hat \p_k$ and $\check \p_k$ are the limits of $\hat p_k^H$ and $\check p_k^H$ respectively for $p \in \Crit f_\Sigma$ and large $H\in \mathcal{H}$. 
For the same reason as above, all three limits in \eqref{eq:chain_completion} stabilize for each degree $j$. 
Therefore, even the inverse limit commutes with taking homology, and we have
\[
\SH_j^{(a,b)}(\partial W)\cong \H_j(\mathrm{SC}_*^{(a,b)}(\partial W),\partial),\qquad \SH_j(\partial W)\cong \H_j( \mathrm{SC}_*(\partial W),\partial).
\]

\section{Floer Gysin exact sequence}\label{sec:Gysin}

Before embarking on the construction of a Gysin-type exact sequence for $\SH_{*} (\partial W)$, we first review the ordinary Gysin exact sequence in the context of Morse homology, see also \cite{Fu1,Oan,GG20,AK23}.

\subsection{Ordinary Gysin exact sequence} \label{sec:Morse_Gysin}

Let $(f_\Sigma, Z_\Sigma)$ be a Morse-Smale pair.
For each $p \in \Crit f_\Sigma$, we choose an arbitrary orientation on $W^s_{Z_\Sigma} (p)$ as in Section \ref{sec:Morse}.
In order to orient $W^s_{Z_\Sigma} (p) \cap W^u_{Z_\Sigma} (q)$ for $p, q \in \Crit f_\Sigma$, we consider the exact sequence
\begin{equation}\label{eq:ses}
	0\to T_r \big( W^s_{Z_\Sigma}(p) \cap W^u_{Z_\Sigma}(q) \big) \to T_r W^s_{Z_\Sigma}(p) \to T_r \Sigma/ T_r W^u_{Z_\Sigma}(q)\to0
\end{equation}
for $r \in W^s_{Z_\Sigma}(p) \cap W^u_{Z_\Sigma}(q)$, where two nontrivial maps are induced by canonical inclusion and projection. 
Translating the oriented  space $T_qW^s_{Z_\Sigma}(q)$, we obtain an oriented vector space $N_r$ in $T_r \Sigma$ such that $T_r \Sigma = T_r W^u_{Z_\Sigma} (q) \oplus N_r$.
The canonical isomorphism $N_r\cong  T_r \Sigma/ T_r W^u_{Z_\Sigma}(q)$ induces an orientation on the latter space. 
This together with the  orientation on $W^s_{Z_\Sigma}(p)$ gives rise to an orientation on $W^s_{Z_\Sigma}(p) \cap W^u_{Z_\Sigma}(q)$ according to \eqref{eq:ses}. 

Suppose that $\ind_{f_\Sigma} (p) - \ind_{f_\Sigma} (q) = 1$. Recalling that $W^s_{Z_\Sigma} (p) \cap W^u_{Z_\Sigma} (q)$ carries a  natural $\mathbb R$-action induced by the flow of $Z_\Sigma$, we define the sign 
\[
\epsilon([r])\in\{-1,1\},\qquad [r]\in \big(W^s_{Z_\Sigma}(p)\cap W^u_{Z_\Sigma}(q)\big)/\mathbb{R}
\] 
by $\epsilon([r])=1$ if the orientation on $T_{r}(W^s_{Z_\Sigma}(p)\cap W^u_{Z_\Sigma}(q))$ agrees with the one given by $Z_\Sigma$ and by $\epsilon([r])=-1$ otherwise. 
We refer to \cite{Ush14} for a detailed discussion on orientations.

We define the Morse chain module of $f_\Sigma$ by
\begin{equation}\label{eq: Morse_chain}
    \MC_*(f_\Sigma)=\bigoplus_{p \in \Crit f_\Sigma} \mathbb{Z} \langle p \rangle
\end{equation}
with grading given by $\ind_{f_\Sigma}(p)$. 
The boundary operator associated with $Z_\Sigma$ is defined by the $\mathbb{Z}$-linear extension of
\begin{equation}\label{eq: Morse_chain_bdry}
  \partial_{Z_\Sigma}:\MC_*(f_\Sigma)\to \MC_{*-1}(f_\Sigma),\qquad   \partial_{Z_\Sigma}(p)=\sum_{q}\sum_{[r]} \epsilon([r]) q,
\end{equation}
where the sums range over all $q \in \Crit f_\Sigma$ with $\ind_{f_\Sigma}(q)=\ind_{f_\Sigma}(p)-1$ and $[r]\in (W^s_{Z_\Sigma}(p)\cap W^u_{Z_\Sigma}(q))/\mathbb{R}$. The homology of this chain complex, denoted by $\MH_*(f_\Sigma)=\MH_* (f_\Sigma, Z_\Sigma)$, is isomorphic to the singular homology of $\Sigma$ with $\mathbb{Z}$-coefficients.

\medskip

Next, we recall the circle bundle $\pi_Y:Y\to\Sigma$ oriented by the Reeb vector field $R$. We choose a closed oriented smooth submanifold $\mathfrak{N} \subset \Sigma$ of codimension $2$ such that 
\begin{enumerate}[(i)]
    \item the homology class of $\mathfrak{N}$ is Poincar\'e dual to the Euler class $e_Y \in \H^2(\Sigma ; \mathbb Z)$ of $Y \to \Sigma$,    
    \item $\mathfrak{N}$ intersects $W^s_{Z_\Sigma} (p) \cap W^u_{Z_\Sigma} (q)$ transversely when $\ind_{f_\Sigma} (p) - \ind_{f_\Sigma} (q) \leq 2$.
\end{enumerate}
Condition (ii) implies that $\mathfrak{N}$ and $(W^s_{Z_\Sigma} (p) \cap W^u_{Z_\Sigma} (q))$ are disjoint if $\ind_{f_\Sigma} (p) - \ind_{f_\Sigma} (q) \leq 1$. We choose a small  neighborhood  $U_{\mathfrak{N}}$ of $\mathfrak{N}$ in $\Sigma$ such that 
\[
U_{\mathfrak{N}} \cap \big(W^s_{Z_{\Sigma}} (p) \cap W^u_{Z_{\Sigma}} (q)\big)=\emptyset \qquad\textrm{provided }\; \ind_{f_\Sigma} (p) - \ind_{f_\Sigma} (q) \leq 1.
\]
The bundle $\pi_Y:Y\to\Sigma$ restricted to $\Sigma \setminus U_{\mathfrak{N}}$ is trivial, and  we fix a trivialization 
\begin{equation}\label{eq:trivial}
Y|_{\Sigma \setminus U_{\mathfrak{N}}} \cong (\Sigma \setminus U_{\mathfrak{N}}) \times S^1.
\end{equation}
 We  choose a connection 1-form $\theta$ on $Y$ which corresponds to $dt$ on $Y|_{\Sigma \setminus U_{\mathfrak{N}}}$ via the isomorphism in \eqref{eq:trivial} where $t$ is the coordinate on $S^1$. We also choose a perfect Morse function $h_{S^1} : S^1 \to \mathbb R$  and  a smooth gradient-like vector field   $Z_{S^1}$ for $h_{S^1}$. We extend these to $Y|_{\Sigma \setminus U_{\mathfrak{N}}}$ in a trivial way using the trivialization \eqref{eq:trivial} and use the same notations. 
For each $p \in \Crit f_\Sigma$, we take a small neighborhood $U_p \subset \Sigma \setminus U_{\mathfrak{N}}$ and  a smooth cutoff function $\beta : \Sigma \to [0,1]$  which has support in the union of $U_p$ over $p\in\Crit f_\Sigma$  and is identically $1$ even nearer to $p \in \Crit f_\Sigma$.
We define the pair $(f_Y, Z_Y)$ by 
\begin{align*}\label{eq:MS_pair_Y}
        f_Y := \pi_Y^* f_\Sigma + \epsilon ( \beta \circ \pi_Y ) h_{S^1}, \qquad 
        Z_Y := Z^\mathrm{h}_\Sigma + \epsilon ( \beta \circ \pi_Y ) Z_{S^1},
\end{align*}
for sufficiently small $\epsilon > 0$. 
Here $Z^\mathrm{h}_\Sigma$ denotes the horizontal lift of $Z_\Sigma$ to $Y$ with respect to $\theta$. We note that 
\begin{equation}\label{eq:trivial1}
Z_Y=(Z_\Sigma,\epsilon\beta Z_{S^1}) \qquad  \textrm{on} \quad Y|_{\Sigma \setminus U_{\mathfrak{N}}} 	
\end{equation}
in the trivialization \eqref{eq:trivial} due to our choice of $\theta$ and that $Z_Y$ is a gradient-like vector field for  the Morse function $f_Y$ satisfying \eqref{crit_lift}. Adapting arguments in  \cite[Section 3.3]{Oan}, we can perturb $\theta$ inside $Y|_{U_{\mathfrak{N}}}$ so that 
  $(f_Y, Z_Y)$ satisfies the Morse-Smale condition.

 For $p, q \in \Crit f_\Sigma$ with $\ind_{f_\Sigma} (p) - \ind_{f_\Sigma} (q) = 2$, the space $ ( W^s_{Z_\Sigma} (p) \cap W^u_{Z_\Sigma} (q)  ) \cap \mathfrak{N}$ consists of finitely many points. In this case, 
for $r \in  ( W^s_{Z_\Sigma} (p) \cap W^u_{Z_\Sigma} (q)  ) \cap \mathfrak{N}$, the sign $\epsilon(r) \in \left \{ 1, -1 \right \}$ is defined by $\epsilon(r) = 1$ if the splitting $T_r  ( W^s_{Z_\Sigma} (p) \cap W^u_{Z_\Sigma} (q)  ) \oplus T_r \mathfrak{N} \cong T_r \Sigma$ is orientation-preserving and by $\epsilon(r) = -1$ otherwise.

\begin{prop}\label{prop:morse_moduli}
    Let $\tilde{p}, \tilde{q} \in \Crit f_Y$ satisfy $\ind_{f_Y} (\tilde{p}) - \ind_{f_Y} (\tilde{q}) = 1$. Recall that $\tilde p \in \{\hat p,\check p\}$, where $\hat{p}$ and $\check{p}$ denote the maximum and the minimum point of $f_Y|_{Y_p}$. 
    If $W^s_{Z_Y}(\tilde{p}) \cap W^u_{Z_Y} (\tilde{q})$ is not empty, then one of the following holds:
    \begin{enumerate}[(a)]
        \item $(\tilde{p}, \tilde{q}) = (\hat{p}, \check{p})$ and $\# \big ( W^s_{Z_Y} (\hat{p}) \cap W^u_{Z_Y} (\check{p})  / \mathbb R \big ) = 0$.
        \item $(\tilde{p}, \tilde{q}) =(\check{p}, \check{q}) $ with $\ind_{f_\Sigma} (p) - \ind_{f_\Sigma} (q) = 1$ and 
        \[
            \# \big ( W^s_{Z_Y} (\check{p}) \cap W^u_{Z_Y} (\check{q})  / \mathbb R \big ) = \# \big ( W^s_{Z_\Sigma} (p) \cap W^u_{Z_\Sigma} (q)  / \mathbb R \big ). 
        \]
        \item $(\tilde{p}, \tilde{q}) = (\hat{p}, \hat{q})$ with $\ind_{f_\Sigma} (p) - \ind_{f_\Sigma} (q) = 1$ and 
        \[
             \# \big ( W^s_{Z_Y} (\hat{p}) \cap W^u_{Z_Y} (\hat{q})  / \mathbb R \big ) = -\# \big ( W^s_{Z_\Sigma} (p) \cap W^u_{Z_\Sigma} (q)  / \mathbb R \big ). 
        \]
        \item $(\tilde{p}, \tilde{q}) = (\check{p}, \hat{q})$ with $\ind_{f_\Sigma} (p) - \ind_{f_\Sigma} (q) = 2$ and 
        \[
            \# \big ( W^s_{Z_Y} (\check{p}) \cap W^u_{Z_Y} (\hat{q})  / \mathbb R \big ) =  \# \big ( ( W^s_{Z_\Sigma} (p) \cap W^u_{Z_\Sigma} (q) ) \cap \mathfrak{N} \big ).
        \]
    \end{enumerate}
    Here, $\#$ denotes the signed cardinality with respect to orientation signs $\epsilon$ defined above. 
\end{prop}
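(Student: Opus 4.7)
The plan is to treat the four possible pairs $(\tilde p,\tilde q)$ compatible with $\ind_{f_Y}(\tilde p)-\ind_{f_Y}(\tilde q)=1$ individually, using the split form of $Z_Y$. Since $Z_Y=Z_\Sigma^\mathrm{h}+\epsilon(\beta\circ\pi_Y)Z_{S^1}$ with $Z_{S^1}$ vertical, any $Z_Y$-flow line $\tilde\gamma$ from $\tilde q$ to $\tilde p$ projects under $\pi_Y$ to a $Z_\Sigma$-trajectory $\gamma$ from $q$ to $p$, broken trajectories being excluded by the transversality of $(f_\Sigma,Z_\Sigma)$ for index difference one. In the trivialization \eqref{eq:trivial} with $\theta=dt$ we have $Z_Y=(Z_\Sigma,\epsilon\beta Z_{S^1})$ by \eqref{eq:trivial1}, so the fiber coordinate $\theta(t)$ of $\tilde\gamma$ satisfies the ODE $\dot\theta=\epsilon\beta(\gamma(t))Z_{S^1}(\theta)$ wherever $\gamma(t)\in\Sigma\setminus U_\mathfrak N$. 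Under this trivialization the fiber critical points $\check p$ and $\check q$ correspond to the same point (the minimum of $h_{S^1}$) in $S^1$, and likewise $\hat p$ and $\hat q$ to the maximum; in the ascending convention of Section \ref{sec:Morse}, the former is a repelling and the latter an attracting fixed point of $Z_{S^1}$.

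Case (a) follows from the absence of nonconstant $Z_\Sigma$-trajectories from $p$ to itself: $\tilde\gamma$ lies in $\pi_Y^{-1}(p)\cong S^1$ and obeys $\dot\theta=\epsilon Z_{S^1}(\theta)$. There are exactly two nonconstant flow lines from $\check p$ to $\hat p$, one through each arc of $S^1\setminus\{\check p,\hat p\}$, and the standard sign rule for the Morse differential on the perfect $S^1$-complex makes them cancel, yielding $\#=0$. In cases (b) and (c), condition (ii) on $\mathfrak N$ together with a dimension count places $W^s_{Z_\Sigma}(p)\cap W^u_{Z_\Sigma}(q)$ away from $U_\mathfrak N$, so the trivialization is valid along each $\gamma$. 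In (b) the future asymptote, combined with the repellor property, forces $\theta\equiv\check p=\check q$ throughout; in (c) the past asymptote combined with the attractor property forces $\theta\equiv\hat p=\hat q$ throughout. Each constant solution gives a unique $Z_Y$-lift of $\gamma$, yielding the cardinality bijection. The signs then follow from the orientation conventions in Section \ref{sec:Morse}: in (b) both projection maps respect orientations by construction \eqref{eq:Morse_proj_check}, and the $R$-direction quotient by $\R$ produces the positive sign; in (c) the sign $(-1)^{\dim\Sigma-\ind_{f_\Sigma}(p)}$ arising from $W^u_{Z_Y}(\hat p)\to W^u_{Z_\Sigma}(p)$ combined with the $R$-direction factor on the $W^s$-side produces the overall minus sign.

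In case (d), $W^s_{Z_\Sigma}(p)\cap W^u_{Z_\Sigma}(q)$ is two-dimensional and meets $\mathfrak N$ transversely in finitely many points. For a projected flow line $\gamma$ avoiding $U_\mathfrak N$, the ODE analysis forces $\theta\equiv\hat q$ on the past segment and $\theta\equiv\check p$ on the future segment with $\theta$ constant in between, which is impossible since $\hat q\neq\check p$ in the trivialization. For $\gamma$ crossing $U_\mathfrak N$, the approach is to exhibit $W^u_{Z_Y}(\hat q)$ and $W^s_{Z_Y}(\check p)$ as local sections of $\pi_Y$ over $W^u_{Z_\Sigma}(q)$ and $W^s_{Z_\Sigma}(p)$ passing through $\hat q$ and $\check p$, respectively; this is possible because $\hat q$ and $\check p$ are extremal in the fiber for $W^u$ and $W^s$, so no additional fiber dimension arises. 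Their coincidence locus is $W^s_{Z_Y}(\check p)\cap W^u_{Z_Y}(\hat q)$, and the fiber difference of the two sections defines a multi-valued $S^1$-valued map on $W^s_{Z_\Sigma}(p)\cap W^u_{Z_\Sigma}(q)$ whose winding is detected by $e_Y=\PD([\mathfrak N])$; its zeros project bijectively onto $(W^s_{Z_\Sigma}(p)\cap W^u_{Z_\Sigma}(q))\cap\mathfrak N$, and signs match by a direct computation with the orientation conventions from Section \ref{sec:Morse} together with the duality $[\mathfrak N]=\PD(e_Y)$.

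The main obstacle will be case (d): once the product trivialization degenerates near $\mathfrak N$, the correspondence and the signs must be extracted from the bundle geometry rather than from a direct ODE, which requires carefully combining the orientation conventions on $W^s,W^u$ in both $\Sigma$ and $Y$, the chosen orientation of $\mathfrak N$, and Poincar\'e duality.
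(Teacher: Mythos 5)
Your treatment of cases (a), (b), (c) matches the paper's approach: one projects a $Z_Y$-trajectory to a $Z_\Sigma$-trajectory, uses the split form \eqref{eq:trivial1} of $Z_Y$ in the trivialization \eqref{eq:trivial} to reduce to the ODE $\dot\theta = \epsilon\beta Z_{S^1}(\theta)$ in the fiber, and exploits the source/sink structure at $\check p$ and $\hat p$ to force $\theta$ constant. The sign calculations you sketch for (b) and (c) are at the right level of detail given the conventions of Section \ref{sec:Morse}, and your observation that the argument requires trajectories to avoid $U_{\mathfrak N}$ (guaranteed by condition (ii) on $\mathfrak N$ for index gaps $\leq 1$) is the same structural point the paper relies on.

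Case (d) is where you deviate, and it is also where there is a genuine gap. You propose to realize $W^u_{Z_Y}(\hat q)$ and $W^s_{Z_Y}(\check p)$ as sections of $\pi_Y$, form the fiber-difference $S^1$-valued map on the $2$-dimensional manifold $W^s_{Z_\Sigma}(p)\cap W^u_{Z_\Sigma}(q)$, and assert that "its zeros project bijectively onto $(W^s_{Z_\Sigma}(p)\cap W^u_{Z_\Sigma}(q))\cap\mathfrak N$." That sentence cannot be taken literally: the zero set of the fiber difference is $W^s_{Z_Y}(\check p)\cap W^u_{Z_Y}(\hat q)$, which is $1$-dimensional (positive-dimensional before the $\R$-quotient), while $(W^s_{Z_\Sigma}(p)\cap W^u_{Z_\Sigma}(q))\cap\mathfrak N$ is $0$-dimensional, so no pointwise bijection exists, and no such bijection should be expected — the zeros have no geometric reason to lie on $\mathfrak N$. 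The correct statement is an equality of signed counts after dividing by the $\R$-action, and proving it requires a degree argument on a suitable compactification that tracks the winding of the fiber difference against $e_Y$, together with a careful matching of orientations; none of this is actually carried out, and your "main obstacle" paragraph essentially concedes as much. The paper handles precisely this step by citing \cite[Proposition 2.7.(c)]{AK23}, which establishes the count identity in the setting of the Morse--Bott complex for $(\pi_Y^*f_\Sigma, h_{S^1})$, and then reduces to that result by producing an orientation-preserving diffeomorphism $\chi$ from $W^s_{Z_Y}(\check p)\cap W^u_{Z_Y}(\hat q)$ to the relevant fiber product, using \eqref{eq:Z_Y=horizontal} and the associativity of the fibered-sum orientation rule. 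Your winding approach is in principle viable — it is close to the classical description of the Euler class as the obstruction to a section — but as written it replaces the hardest step by a claim that is false as stated, so you would need to redo that part from scratch.
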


\begin{proof}
The fact that flow lines of $Z_Y$ project to flow lines of $Z_\Sigma$ together with 
a simple index computation shows that only these four cases may occur. 
In case (a), there are exactly two flow lines of $Z_Y$ contained in the fiber $Y_p$ with opposite signs.

Next, we prove (b), and an analogous argument also proves (c). 
Let $p, q \in \Crit f_\Sigma$ with $\ind_{f_\Sigma} (p) - \ind_{f_\Sigma} (q) = 1$.
Let $\tilde{\gamma}:\R\to Y$ be a trajectory of $Z_Y$ converging to $\check{p}$ and $\check{q}$ as $s\in\R$ goes to $+\infty$ and $-\infty$ respectively. Then $\pi_Y \circ \tilde{\gamma}$ is a trajectory of $Z_\Sigma$ connecting $p$ and $q$. 
Conversely, let $\gamma$ be a flow line of $Z_\Sigma$ converging to $p$ and $q$ asymptotically. Then $\gamma$ has its image inside $\Sigma\setminus U_{\mathfrak{N}}$. For the minimum point  $m \in S^1$ of $h_{S^1}$, 
we set $\tilde{\gamma} (s) : = (\gamma(s), m)$ which we view as a curve in $Y$ via the trivialization  \eqref{eq:trivial}. Due to \eqref{eq:trivial1}, it is a unique flow line of $Z_Y$ which connects $\check p$ and $\check q$ and projects to $\gamma$. 
This proves that the projection
\[
\pi_Y:W^s_{Z_Y} (\check p) \cap W^u_{Z_Y} (\check q) \to W^s_{Z_\Sigma} (p) \cap W^u_{Z_\Sigma} (q)
\]
is a diffeomorphism. Moreover, according to our orientation conventions in Section \ref{sec:Morse}, this preserves orientations.

For (d), we recall a related result from \cite{AK23}, where a corresponding identity is proved in the context of the Morse-Bott chain complex of $(\pi_Y^* f_\Sigma, h_{S^1}|_{\Crit \pi_Y^* f_\Sigma})$. We orient  $\pi_Y^{-1}(W^s_{Z_\Sigma}(p))$ and $\pi_Y^{-1}(W^u_{Z_\Sigma}(q))$ so that the isomorphisms 
\begin{align*}\label{eq:ori_preimage}
    \begin{split}
        T_y \big(\pi_Y^{-1}(W^s_{Z_\Sigma}(p))\big) &\cong \mathbb{R}R_y\oplus T_{\pi_Y(y)} W^s_{Z_\Sigma}(p),\\[.5ex]
        T_y \big(\pi_Y^{-1}(W^u_{Z_\Sigma}(q))\big) &\cong \mathbb{R}R_y\oplus T_{\pi_Y(y)} W^u_{Z_\Sigma}(q)
    \end{split}
\end{align*} 
induced by the projection $\pi_Y$ preserve orientations. In particular, $\pi_Y$ gives an orientation-preserving isomorphism
\[
T_y\Big(\pi_Y^{-1}(W^s_{Z_\Sigma}(p))\prescript{}{\iota} \times^{}_{\iota} \pi_Y^{-1}(W^u_{Z_\Sigma}(q))\Big)\cong \R R_y\oplus T_{\pi_Y(y)}\Big(W^s_{Z_\Sigma}(p)\prescript{}{\iota} \times^{}_{\iota} W^u_{Z_\Sigma}(q)\Big),
\]
where $\iota$ denote natural inclusion maps into $Y$ or $\Sigma$. This is coherent with the orientation convention in \cite{AK23}. 
For $\tilde{p} \in \Crit f_Y$, we denote by $W^u_{Z_{S^1}} (\tilde{p})$ and $W^s_{Z_{S^1}}(\tilde{p})$ the unstable and stable manifolds of $Z_{S^1}|_{Y_p}$, respectively.
We orient $W^s_{Z_{S^1}} (\hat{p})$ and $W^u_{Z_{S^1}} (\check{p})$ by the Reeb vector field $R$. Recalling that $Y_p$ is also oriented by $R$, we orient $W^u_{Z_{S^1}} (\hat{p})$ and $W^s_{Z_{S^1}} (\check{p})$, each of which consists of a single point, positively.
For $\check p,\hat q\in\Crit f_Y$ with $\ind_{f_Y} (\check{p}) - \ind_{f_Y} (\hat{q}) = 1$, we consider the fiber product space 
\begin{equation*}\label{eq:MB_fiberproduct}
W^s_{Z_{S^1}}(\check{p}) \prescript{}{\iota} \times^{}_{\ev_{+}} \Big ( \pi_Y^{-1}(W^s_{Z_\Sigma}(p))\prescript{}{\iota} \times^{}_{\iota\,} \pi_Y^{-1}(W^u_{Z_\Sigma}(q)) \Big ) \prescript{}{\ev_{-}} \times^{}_{\iota\,} W^u_{Z_{S^1}}(\hat{q}), 
\end{equation*}
where $\iota$ are natural inclusion maps into $Y_p$, $Y_q$, or $Y$, and 
\[
\ev_\pm : \pi_Y^{-1}(W^s_{Z_\Sigma}(p))\prescript{}{\iota} \times^{}_{\iota\,} \pi_Y^{-1}(W^u_{Z_\Sigma}(q)) \to Y,\qquad \ev_\pm (x, x)=\ev_\pm(x) : = \underset{t \to \pm \infty}{\lim} \varphi^t_{Z^\mathrm{h}_\Sigma} (x).
\]
Then it is proved in \cite[Proposition 2.7.(c)]{AK23} that the signed cardinality of this space equals that of $( W^s_{Z_\Sigma} (p) \cap W^u_{Z_\Sigma} (q) ) \cap \mathfrak{N}$, see Remark \ref{rem:sign_AK23} below.

Therefore, it suffices to verify that the natural map
\begin{align*}
   \chi: W^s_{Z_Y} (\check{p}) \cap W^u_{Z_Y} (\hat{q}) &\to W^s_{Z_{S^1}}(\check{p}) \prescript{}{\iota} \times^{}_{\ev_{+}} \Big ( \pi_Y^{-1}(W^s_{Z_\Sigma}(p))\prescript{}{\iota} \times^{}_{\iota} \pi_Y^{-1}(W^u_{Z_\Sigma}(q)) \Big ) \prescript{}{\ev_{-}} \times^{}_{\iota} W^u_{Z_{S^1}}(\hat{q})\\
    r &\mapsto \big ( \ev_+ (r), (r, r), \ev_- (r) \big )
\end{align*}
is an orientation-preserving diffeomorphism. This map is well-defined, i.e.~$\ev_+(r)=\check p$ and $\ev_-(r)=\hat q$, since
\begin{equation}\label{eq:Z_Y=horizontal}
Z_Y (r) = Z^\mathrm{h}_\Sigma (r)\qquad \forall r \in W^s_{Z_Y} (\check{p}) \cap W^u_{Z_Y} (\hat{q}).	
\end{equation}
Equation \eqref{eq:Z_Y=horizontal} follows from that if $( \beta \circ \pi_Y ) Z_{S^1}(r)$ does not vanish at such $r$, the flow line $\varphi_{Z_Y}^t(z)$ would converge to $\hat p$ and $\check q$ as $t\in \mathbb{R}$ goes to $+\infty$ and $-\infty$ respectively.
For the same reason, the map $\chi$ is bijective. To see that $\chi$ is orientation-preserving, we decompose it to the following chain of orientation-preserving diffeomorphisms.
\begin{align*}
    W^s_{Z_Y}(\check{p}) \cap  W^u_{Z_Y} (\hat{q}) &\cong W^s_{Z_Y}(\check{p}) \prescript{}{\iota} \times^{}_{\iota\,}  W^u_{Z_Y} (\hat{q})\\
                                                   &\cong \Big ( W^s_{Z_{S^1}}(\check{p}) \prescript{}{\iota} \times^{}_{\ev_{+}} \pi_Y^{-1}(W^s_{Z_\Sigma}(p)) \Big ) \prescript{}{\iota} \times^{}_{\iota\,} \Big ( \pi_Y^{-1}(W^u_{Z_\Sigma}(q)) \prescript{}{\ev_{-}} \times^{}_{\iota} W^u_{Z_{S^1}}(\hat{q}) \Big )\\
                                                   &\cong W^s_{Z_{S^1}}(\check{p}) \prescript{}{\iota} \times^{}_{\ev_{+}} \Big ( \pi_Y^{-1}(W^s_{Z_\Sigma}(p))\prescript{}{\iota\,} \times^{}_{\iota} \pi_Y^{-1}(W^u_{Z_\Sigma}(q)) \Big ) \prescript{}{\ev_{-}} \times^{}_{\iota\,} W^u_{Z_{S^1}}(\hat{q}).
\end{align*}
The first and the second lines follow from straightforward computations using our orientation conventions. The last one uses the associativity property of the fibered sum orientation rule.
This finishes the proof of (d).
\end{proof}

\begin{remark}\label{rem:sign_AK23}
In \cite{AK23}, they take $\mathfrak{N}$ to satisfy $[\mathfrak{N}]=\PD(-e_Y)$ and  the $\R$-action given by negative gradient vector fields, as opposed to positive gradient-like vector fields $Z_Y$ and $Z_\Sigma$. These conventions differ from ours and result in two negative signs in Proposition \ref{prop:morse_moduli}.(d) which cancel out.
\end{remark}

Let $(\MC_*(f_Y), \partial_{Z_Y})$ be a Morse chain complex defined exactly in the same way as for $(f_\Sigma, Z_\Sigma)$. The induced homology $\MH_*(f_Y)=\MH_*(f_Y,Z_Y)$ is isomorphic to the singular homology $\H_*(Y;\Z)$. 
We decompose the chain module $\MC_*(f_Y)$ into 
\begin{equation*}
    \MC_*(f_Y) = \widehat{\MC}_*(f_Y) \oplus \widecheck{\MC}_*(f_Y)
\end{equation*}
where $\widehat{\MC}_*(f_Y)$ and $\widecheck{\MC}_*(f_Y)$ are the submodules generated by $\hat p$ and $\check p$ respectively for all $p \in \Crit f_\Sigma$. 
Proposition \ref{prop:morse_moduli} readily implies that with respect to this decomposition the boundary operator $\partial_{Z_Y}$ is of the form 
\begin{equation*}
    \partial_{Z_Y} = 
        \begin{pmatrix}
            \hat{\partial} & \delta \\
            0 & \check{\partial}
        \end{pmatrix}
\end{equation*}
where $\check{\partial}$, $\hat{\partial}$, and $\delta$ correspond to (b), (c), and (d) in Proposition \ref{prop:morse_moduli}, respectively. 
Then $\widehat{\MC}_*(f_Y)$ is a subcomplex with $\partial|_{\widehat{\MC}_*(f_Y)}=\hat\partial$, and the induced quotient complex is isomorphic to $(\widecheck{\MC}_*(f_Y),\check\partial)$.
We denote the homologies of these chain complexes by $\widehat{\MH}_* (f_Y)$ and $\widecheck{\MH}_* (f_Y)$.
The short exact sequence of chain complexes $0 \to \widehat{\MC}_*(f_Y) \to \MC_*(f_Y) \to \widecheck{\MC}_*(f_Y) \to 0$  gives rise to the long exact sequence 
\begin{equation}\label{eq:morse_les0}
    \cdots \longrightarrow \widecheck{\MH}_{*+1}(f_Y) \stackrel{\delta_*} {\longrightarrow} \widehat{\MH}_*(f_Y) \longrightarrow \MH_*(f_Y) \longrightarrow  \widecheck{\MH}_*(f_Y) \stackrel{\delta_*}\longrightarrow \cdots.
\end{equation}
Moreover, (b) and (c) in Proposition \ref{prop:morse_moduli} yield the isomorphisms
\begin{equation*}
    (\widehat{\MC}_*(f_Y), \hat{\partial}) \cong (\MC_{*-1}(f_\Sigma), -\partial_{Z_\Sigma}), \qquad (\widecheck{\MC}_*(f_Y), \check{\partial}) \cong (\MC_*(f_\Sigma), \partial_{Z_\Sigma})
\end{equation*}
induced by $\hat{p} \mapsto p$ and $\check{p} \mapsto p$, respectively. Due to Proposition \ref{prop:morse_moduli}.(d), through the above isomorphisms, the map $\delta$ corresponds to the homomorphism 
\[
\MC_*(f_\Sigma)\to \MC_{*-2}(f_\Sigma),\qquad     p \mapsto \sum_{q} \sum_{r} \epsilon (r) q,
\]
where the sums range over all $q \in \Crit f_\Sigma$ with $\ind_{f_\Sigma} (p) - \ind_{f_\Sigma} (q) = 2$ and $r \in ( W^s_{Z_\Sigma} (p) \cap W^u_{Z_\Sigma} (q) ) \cap \mathfrak{N}$. The induced homomorphism at the homology level is isomorphic to the cap product with $e_Y$ via $\MH_*(f_\Sigma)\cong \H_*(\Sigma;\Z)$.
Hence, the exact sequence in \eqref{eq:morse_les0} recovers the Gysin exact sequence for $\pi_Y:Y\to \Sigma$,
\begin{equation}\label{eq:Morse_LES}
    \cdots\longrightarrow \MH_{*+1}(f_\Sigma) \, \stackrel{\cap e_Y}{\longrightarrow} \, \MH_{*-1}(f_\Sigma) \longrightarrow \MH_* (f_Y)  \longrightarrow \MH_{*} (f_\Sigma) \, \stackrel{\cap e_Y}{\longrightarrow} \, \cdots.
\end{equation}

\subsection{Floer Gysin exact sequence}\label{sec:Gysin_complement}
We decompose the chain module $\SC_*(\partial W)$ defined in \eqref{eq:rabinowitz_chain_cplx} into 
\[
\SC_*(\partial W)=\widehat{\SC}{}_*(\partial W)\oplus \widecheck{\SC}_*(\partial W)
\]
where $\widehat{\SC}{}_*(\partial W)$ and $\widecheck{\SC}_*(\partial W)$ are the submodules generated by $\hat \p_k$ and $\check \p_k$, respectively. In view of Proposition \ref{prop:cpt}, \eqref{continuation_identity}, and Proposition \ref{prop:morse_moduli}, the boundary operator $\partial$ on $\SC_*(\partial W)$ with respect to the above decomposition  is of the form 
\begin{equation}\label{upper_triangular}
    \partial = 
        \begin{pmatrix}
            \hat{\partial} & \Delta \\
            0 & \check{\partial}
        \end{pmatrix}\quad
        \text{with}\quad \Delta = \Delta^Y_a + \Delta^Y_b + \Delta^{W,Y}
\end{equation}
such that 
\begin{enumerate}[(i)]
	\item $\hat{\partial}$, $\check{\partial}$, and $\Delta^Y_a$ count solutions in Proposition \ref{prop:cpt}.(a), 
	\item $\Delta^Y_b$ counts solutions in Proposition \ref{prop:cpt}.(b),
	\item $\Delta^{W,Y}$ counts solutions in Proposition \ref{prop:cpt}.(c).
\end{enumerate}
A direct consequence of \eqref{upper_triangular} is that $(\widehat{\SC}{}_*(\partial W),\hat\partial)$ is a subcomplex.
Moreover, the induced quotient complex is isomorphic to $(\widecheck{\SC}_*(\partial W),\check\partial)$. 
We denote the respective homologies by $\widehat{\SH}{}_*(\partial W)$ and $\widecheck{\SH}_*(\partial W)$. 
The short exact sequence of chain complexes $0\to\widehat{\SC}_*(\partial W)\to \SC_*(\partial W)\to\widecheck{\SC}_*(\partial W)\to 0$ gives rise to the long exact sequence 
\begin{equation}\label{LES_SH}
    \cdots \longrightarrow \widecheck{\SH}_{j+1}(\partial W) \stackrel{(\Delta)_*} {\longrightarrow} \widehat\SH{}_j (\partial W) \longrightarrow \SH_{j} (\partial W) \longrightarrow  \widecheck{\SH}_{j} (\partial W) \stackrel{(\Delta)_*}\longrightarrow \cdots.
\end{equation}

To relate this with the Morse homology of $f_\Sigma$, we recall the coefficient ring from the introduction: 
\[
\Lambda = \begin{cases}
	\Z & \quad \text{if }\; m_W=0,\\[.5ex]
	\mathbb Z [ T, T^{-1} ] & \quad \text{if }\;m_W\neq 0.
\end{cases} 
\]
The formal variable $T$ in the Laurent polynomial ring has degree 
\begin{equation}\label{eq:deg_T}
	\deg T = \frac{2(\tau_X - K)}{K}m_W=\mu_\CZ(\gamma)
\end{equation} 
where $\gamma$ is a Reeb orbit on $Y$ with multiplicity $\m(\gamma)=m_W$, see \eqref{RS-index}.
As computed in Section \ref{sec:index}, in case of $m_W=m_\Sigma$, $\deg T$ equals twice the minimal Chern number of $\Sigma$. Replacing the coefficient ring $\Z$ by $\Lambda$ in \eqref{eq: Morse_chain} and extending the boundary operator in \eqref{eq: Morse_chain_bdry} $\Lambda$-linearly, we obtain the chain complex
$(\MC_{*}(f_\Sigma;\Lambda), \partial_{Z_\Sigma})$ 
with grading $\deg (T^ip):=\ind_{f_\Sigma}(p)+ i \deg T$. Its homology is isomorphic to the singular homology of $\Sigma$ with coefficients in $\Lambda$,
\[
\H_*\big(\MC(f_\Sigma;\Lambda), \partial_{Z_\Sigma}\big)\cong \H_*(\Sigma;\Lambda)\cong\H_*(\Sigma)\otimes_\Z\Lambda.
\]
We observe that the $\mathbb{Z}$-homomorphism 
\[
\begin{split}
	\widehat{\SC}{}_* (\partial W)  &\stackrel{\cong}{\longrightarrow} \MC_{* + \frac{\dim \Sigma}{2} -1} (f_\Sigma; \Lambda) ,\qquad  k\in m_W\Z\\
    \hat{\p}_k &\longmapsto T^{ k / { m_W } } p 	
\end{split}
\]
is an isomorphism, see \eqref{grading} for the degree convention. Furthermore, due to Proposition \ref{prop:cpt}.(a) and Proposition \ref{prop:morse_moduli}, the above two modules are isomorphic as chain complexes:
\begin{equation}\label{eq:hat_SC}
\big(\widehat{\SC}{}_*(\partial W),\hat\partial \big) \cong \big(\MC_{* + \frac{\dim \Sigma}{2} - 1}(f_\Sigma; \Lambda),-\partial_{Z_\Sigma}\big).	
\end{equation}
Similarly, we also have an isomorphism
\begin{equation}\label{eq:check_SC}
\big(\widecheck{\SC}_*(\partial W),\check\partial \big) \cong \big(\MC_{*+\frac{\dim \Sigma}{2}}(f_\Sigma; \Lambda),\partial_{Z_\Sigma}\big).
\end{equation}
Through the isomorphisms in \eqref{eq:hat_SC} and \eqref{eq:check_SC}, the map $\Delta = \Delta^Y_a + \Delta^Y_b + \Delta^{W,Y}$ in \eqref{upper_triangular} corresponds to 
\[
\delta := \delta^{\Sigma}_a + \delta^{\Sigma}_b + \delta^{X, \Sigma} : \MC_*(f_\Sigma; \Lambda)\to \MC_{*-2}(f_\Sigma; \Lambda),
\]
such that
\begin{enumerate}[(i)]
	\item according to Proposition \ref{prop:cpt}.(a) and Proposition \ref{prop:morse_moduli}.(d)
	\begin{equation}\label{eq:delta_a}
 \delta^{\Sigma}_a \big(T^{\frac{k}{m_W}} p\big) =\sum_{q} \#\Big(  \big( W^s_{Z_\Sigma} (p) \cap W^u_{Z_\Sigma} (q) \big) \cap \mathfrak{N} \Big) \,  T^{\frac{k}{m_W}} q \,,
\end{equation}
where the sum ranges over all $q\in\Crit f_\Sigma$ with $\ind_{f_\Sigma} (p) - \ind_{f_\Sigma} (q) = 2$,
	\item according to Proposition \ref{prop:cpt}.(b)
	\begin{equation}\label{eq:delta_b}
    \delta^{\Sigma}_b \big( T^{\frac{k}{m_W}} p \big)  = \sum_{(q,A) }   -K\omega_\Sigma(A) \, \cdot \, \# \Big( \mathcal{N}^*_{N=1, \ell = 0}(q, p; A; J_\Sigma)/ {\mathbb C}^* \Big)\, T^{\frac{k-K\omega_\Sigma(A)}{m_W}} q \, ,
\end{equation}
where the sum is taken over all possible pairs $(q,A)\in\Crit f_\Sigma\times\pi_2(\Sigma)$ satisfying $\ind_{f_\Sigma} (p) - \ind_{f_\Sigma} (q) = 2 - 2c_1^{T\Sigma}(A)$,
	\item according to Proposition \ref{prop:cpt}.(c)
\begin{equation}\label{eq:delta_c}
  \delta^{X, \Sigma} \big( T^{\frac{k}{m_W}} p \big)  = \sum_{ B }     K\omega(B) \, \cdot \, \#\Big( \mathcal{N}^*_{N=1, \ell = 1} (p,p;A=0,B;J_X) / \mathbb{C}^* \Big)\, T^{\frac{k-K\omega(B)}{m_W}} p\,,
\end{equation}
where the sum ranges over all $B\in\pi_2(X)$ satisfying $c_1^{TX}(B)-K\omega(B)=1$ and $K\omega(B)\in m_W\Z$.
The condition on $B$ holds only if $K\omega(B) = m_X=m_W$.
Therefore, $\delta^{X,\Sigma}$ is of the form 
\[
\delta^{X, \Sigma}(T^{\frac{k}{m_W}}p) = c_{X,\Sigma} T^{\frac{k}{m_W}-1} p,
\]
where $c_{X,\Sigma} \in \mathbb Z$ is independent of $p$ due to Remark \ref{rmk: indep_of_point} and divisible by $m_X=m_W$.

\end{enumerate}
These computations show that all of $\delta^{\Sigma}_a$, $\delta^{\Sigma}_b$, and $\delta^{X, \Sigma}$ are $\Lambda$-linear chain maps. From \eqref{upper_triangular}, \eqref{eq:hat_SC}, and \eqref{eq:check_SC} we conclude that the mapping cone of $\delta$, namely
\[
C_*(\delta):= \MC_{*-1}(f_\Sigma; \Lambda)\oplus\MC_*(f_\Sigma; \Lambda),\qquad \partial_{\,C(\delta)}:=\begin{pmatrix}
            -\partial_{Z_\Sigma} & \delta \\
            0 & \partial_{Z_\Sigma}
        \end{pmatrix},
\]
is isomorphic to the chain complex $\SC_*(\partial W)$ up to degree shift, explicitly
\begin{equation*}\label{eq:SC_cone}
\big(\SC_*(\partial W),\partial\big)  \cong \big(C_{* + \frac{\dim \Sigma}{2}}(\delta),\partial_{C(\delta)}\big).
\end{equation*}
Moreover, since $(C_*(\delta),\partial_{C(\delta)})$ admits a $\Lambda$-module structure, so does $(\SC_*(\partial W),\partial)$ given by $T\cdot \tilde{\p}_k=\tilde{\p}_{k+ m_W}$, and in turn 
\[
    \SH_*(\partial W)\cong\SH_{*+\deg T}(\partial W).
\]
Finally, the long exact sequence \eqref{LES_SH} reads 
\begin{equation}\label{eq:gysin}
    \cdots \to \H_{*+2}(\Sigma;\Lambda) \stackrel{\delta_*} {\to} \H_* (\Sigma;\Lambda) \to \SH_{*-\frac{\dim \Sigma}{2}+1} (\partial W) \to  \H_{*+1} (\Sigma;\Lambda) \stackrel{\delta_*} \to \cdots,
\end{equation}
where all maps are $\Lambda$-linear. 
\begin{remark}
    In case that $\delta^{X,\Sigma}$ vanishes, the $\Lambda$-module structure on $(\SC_*(\partial W),\partial)$ coincides with the one introduced in \cite{Ueb19} using a generalization of the Seidel representation.
    \end{remark}

\begin{remark} \label{remark: noncontractible_gysin}
    We can also define the Rabinowitz Floer homology $\SH_* (\partial W;\kappa)$ using periodic Reeb orbits with free homotopy class $\kappa$ in $W$. In this case, 
    statements analogous to Proposition \ref{prop:cpt} and Proposition \ref{continuation_transversality} hold with the same proof. 
    Therefore the chain complex for $\SH_* (\partial W;\kappa)$ is also isomorphic to the mapping cone $C_*(\delta)$, and in turn $\SH_* (\partial W;\kappa)$  is isomorphic to $\SH_* (\partial W)$ up to degree shift.  
\end{remark}

\subsubsection*{Proof of Theorem \ref{thm:gysin}}
The long exact sequence claimed in the theorem is constructed in \eqref{eq:gysin} with $\delta^\Sigma = \delta_a^\Sigma + \delta_b^\Sigma$. The following lemma proves  Theorem \ref{thm:gysin}.(a).

\begin{lemma}\label{lemma:quantum_cap}
   The homomorphisms $(\delta_a^\Sigma)_*$ and $(\delta_a^\Sigma + \delta_b^\Sigma)_*$ on $\H_{*}(\Sigma;\Lambda)$ coincide with the ordinary and the quantum cap product with $e_Y$, respectively.
\end{lemma}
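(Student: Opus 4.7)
The strategy will be to realize $(\delta^\Sigma_a+\delta^\Sigma_b)_*$ as a chain-level representative of the quantum cap product with $e_Y$, splitting the $A=0$ summand (which should produce the ordinary cap product, hence $\delta^\Sigma_a$) from the contributions of $A\neq 0$ (which should give the quantum correction, hence $\delta^\Sigma_b$).

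For the first assertion, I will simply observe that the formula for $\delta^\Sigma_a$ in \eqref{eq:delta_a} is, term by term, identical to the Morse-theoretic description of the cap product with $e_Y$ derived in Section \ref{sec:Morse_Gysin}: both count signed intersections of $W^s_{Z_\Sigma}(p)\cap W^u_{Z_\Sigma}(q)$ with the cycle $\mathfrak{N}$ representing $\PD(e_Y)$, for $p,q\in\Crit f_\Sigma$ with $\ind_{f_\Sigma}(p)-\ind_{f_\Sigma}(q)=2$, then extended $\Lambda$-linearly. Passing to homology, $(\delta^\Sigma_a)_*$ thus coincides with the ordinary cap product with $e_Y$ on $\H_*(\Sigma;\Lambda)$.

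For the quantum claim, I will use a standard pearl-model representative $\Phi:\MC_*(f_\Sigma;\Lambda)\to \MC_{*-2}(f_\Sigma;\Lambda)$ of $e_Y\cap_Q\cdot$, which for each $p\in\Crit f_\Sigma$ sums, over pairs $(q,A)$ with $\ind_{f_\Sigma}(p)-\ind_{f_\Sigma}(q)+2c_1^{T\Sigma}(A)=2$, the signed count of $[(w,z)]\in\mathcal{N}^*_{N=1,\ell=1}(q,p;A;J_\Sigma)/\mathbb{C}^*$ satisfying $w(z)\in\mathfrak{N}$, weighted by $T^{-K\omega_\Sigma(A)/m_W}q$. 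The $A=0$ contribution reduces, after choosing generic representatives with $w$ constant, to the Morse count giving $\delta^\Sigma_a$. To identify the $A\neq 0$ part with $\delta^\Sigma_b$, I will consider the forgetful map $\pi:[(w,z)]\mapsto [w]$. For $A\neq 0$, somewhere-injectivity, which is automatic for elements of $\mathcal{N}^*$, ensures the $\mathbb{C}^*$-action is free and the fiber of $\pi$ over $[w]\in \mathcal{N}^*_{N=1,\ell=0}(q,p;A;J_\Sigma)/\mathbb{C}^*$ is naturally $\mathbb{CP}^1\setminus\{0,\infty\}$. Choosing $\mathfrak{N}$ in general position with respect to the finitely many relevant $w$, so that each $w$ is transverse to $\mathfrak{N}$ and $w(0),w(\infty)\notin \mathfrak{N}$, the fiber-wise count becomes the topological intersection number $[\mathfrak{N}]\cdot A=e_Y(A)=-K\omega_\Sigma(A)$, matching the coefficient in $\delta^\Sigma_b$ from \eqref{eq:delta_b}.

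The main obstacle I anticipate is the orientation bookkeeping: one must verify that the orientation of $\mathcal{N}^*_{N=1,\ell=1}/\mathbb{C}^*$ used in the definition of $\delta^\Sigma_b$ agrees, after cutting down by the constraint $w(z)\in\mathfrak{N}$, with the product of the orientation on $\mathcal{N}^*_{N=1,\ell=0}/\mathbb{C}^*$ and the intersection orientation on $w^{-1}(\mathfrak{N})\subset\mathbb{CP}^1\setminus\{0,\infty\}$, so that the signed count matches $-K\omega_\Sigma(A)$ rather than $+K\omega_\Sigma(A)$. This reduces to an application of the fibered sum rule in Definition \ref{def:fiber_sum}, combined with the observation that $w:\mathbb{CP}^1\to\Sigma$ is orientation-preserving at its regular points because $J_\Sigma$ is $\omega_\Sigma$-compatible.
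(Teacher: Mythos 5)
Your proposal is correct and follows essentially the same route as the paper: both identify $(\delta_a^\Sigma)_*$ with the ordinary cap product directly from \eqref{eq:delta_a}, and both reduce the quantum part to showing that, for each rigid $[w]\in\mathcal{N}^*_{N=1,\ell=0}(q,p;A;J_\Sigma)/\mathbb{C}^*$, the fiberwise count of intersections with $\mathfrak{N}$ equals $e_Y(A)=-K\omega_\Sigma(A)$. The only cosmetic difference is that you phrase this via the marked-point moduli $\mathcal{N}^*_{N=1,\ell=1}$ and a forgetful map, whereas the paper evaluates at the fixed point $1\in\mathbb{CP}^1$ on the unquotiented $\mathcal{N}^*_{N=1,\ell=0}$; these descriptions are equivalent after $\mathbb{C}^*$-reduction and give the same signed count.
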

\begin{proof}
The thesis that $(\delta_a^\Sigma)_*$ is the ordinary cap product with $e_Y$ follows from \eqref{eq:Morse_LES} and  \eqref{eq:delta_a}. We consider the evaluation map 
\begin{equation*}
    \ev_1 : \mathcal{N}^*_{N = 1, \ell = 0}(q, p; A; J_\Sigma) \to \Sigma, \quad \ev_1(w) := w(1).
\end{equation*}
We take a triple $(p,q,A)$ as in \eqref{eq:delta_b} so that $\mathcal{N}^*_{N = 1, \ell = 0}(q, p; A; J_\Sigma)$ has dimension 2. We may assume that $\ev_1$ is transverse to $\mathfrak{N}$. Then we have 
\begin{equation*} \label{eq: quantum_cap}
    \#\ev_1^{-1}(\mathfrak{N})= e_Y (A) \, \cdot \, \# \Big( \mathcal{N}^*_{N=1, \ell = 0}(q, p; A; J_\Sigma)/ {\mathbb C}^* \Big),
\end{equation*}
where the orientation of $\ev_1^{-1}(\mathfrak{N})$ is determined by the orientation of $\mathcal{N}^*_{N = 1, \ell = 0}(q, p; A; J_\Sigma)$ and the coorientation of $\mathfrak{N}$. The right-hand side of the above equality multiplied with $T^{\frac{e_Y(A)}{m_W}}$ agrees with the coefficient of the map $\delta^{\Sigma}_b$ in \eqref{eq:delta_b}. The cardinality $\#\ev_1^{-1}(\mathfrak{N})$ multiplied with $T^{\frac{e_Y(A)}{m_W}}$ is exactly the quantum contribution part of the quantum cap product with $e_Y$, see \cite[Remark 12.3.3]{MS17}. 
This proves that $(\delta^{\Sigma}_a+\delta^{\Sigma}_b)_*$ agrees with the quantum cap product with $e_Y$.
\end{proof}

Next, we prove Theorem \ref{thm:gysin}.(b). We recall from \eqref{eq:delta_c} that $\delta^{X, \Sigma}$ counts $J_X$-holomorphic spheres in $X$ representing $B\in\pi_2(X)$ with $c_1^{TX}(B)-K\omega(B)=1$. Therefore if $\delta^{X, \Sigma}$ is nonzero, then the homomorphism $c_1^{TX}-K\omega:\pi_2(X)\to\Z$ is  surjective and accordingly $\deg T=2$. The rest follows from the formula of $\delta^{X,\Sigma}$ in \eqref{eq:delta_c}. 
This completes the proof of Theorem \ref{thm:gysin}. \qed

\medskip

Next we prove the claim in Remark \ref{rem:del^c=0}, which was observed in \cite[Proposition 5.1]{BK} and \cite[Lemma 6.6]{DL2}. We include the proof for completeness.

\begin{lemma}\label{nonexistence}
	Suppose that the map $\pi_2(\Sigma)\to\pi_2(X)$ induced by the inclusion is surjective and the minimal Chern number of $\Sigma$ greater than one. Then $\delta^{X, \Sigma}$ vanishes.
\end{lemma}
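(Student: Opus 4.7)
The plan is to argue by contradiction using the explicit formula \eqref{eq:delta_c} for $\delta^{X,\Sigma}$ together with the relation $\iota^*c_1^{TX}=c_1^{T\Sigma}+c_1^E$ recalled in Section \ref{sec:setting}. Suppose $\delta^{X,\Sigma}$ does not vanish. Then \eqref{eq:delta_c} produces a class $B\in\pi_2(X)$ that is represented by a $J_X$-holomorphic sphere intersecting $\Sigma$ positively and satisfying
\[
c_1^{TX}(B)-K\omega(B)=1,\qquad K\omega(B)\in m_W\Z.
\]
By positivity of intersection (see Proposition \ref{bijection}), $K\omega(B)=B\cdot\Sigma\geq 1$, and hence $c_1^{TX}(B)\geq 2$.

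Next I would use the surjectivity hypothesis on $\pi_2(\Sigma)\to\pi_2(X)$ to lift $B$ to a class $A\in\pi_2(\Sigma)$. Under the inclusion, $\omega(A)=\omega_\Sigma(A)=\omega(B)$ and $c_1^{TX}(A)=c_1^{TX}(B)$, so the splitting $\iota^*c_1^{TX}=c_1^{T\Sigma}+c_1^E$ together with $c_1^E=[K\omega_\Sigma]_\Z$ yields
\[
c_1^{T\Sigma}(A)=c_1^{TX}(B)-K\omega_\Sigma(A)=c_1^{TX}(B)-K\omega(B)=1.
\]

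But by hypothesis the minimal Chern number of $\Sigma$ (i.e.\ the positive generator of $c_1^{T\Sigma}(\pi_2(\Sigma))\subset\Z$) is at least $2$, so the value $c_1^{T\Sigma}(A)=1$ cannot be attained. This contradiction shows that no class $B$ contributing to \eqref{eq:delta_c} exists, and hence $\delta^{X,\Sigma}=0$.

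I do not foresee any real obstacle here; the argument is purely a Chern-class bookkeeping once \eqref{eq:delta_c} is in hand. The only point one must be careful about is that $B\cdot\Sigma>0$ (which rules out the trivial possibility $K\omega(B)=0$ that would make $c_1^{TX}(B)=1$ directly inconsistent only with a minimal Chern assumption on $X$, not on $\Sigma$); this follows from Proposition \ref{bijection} since any contributing sphere has multiplicity equal to the intersection number.
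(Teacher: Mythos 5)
Your proof is correct and follows the same route as the paper: lift $B$ to $A\in\pi_2(\Sigma)$ via the surjectivity hypothesis, use $\iota^*c_1^{TX}=c_1^{T\Sigma}+c_1^E$ with $c_1^E=[K\omega_\Sigma]_\Z$ to get $c_1^{TX}(B)-K\omega(B)=c_1^{T\Sigma}(A)$, and observe this cannot equal $1$ when the minimal Chern number of $\Sigma$ exceeds one. One small remark: the detour through positivity of intersection to establish $K\omega(B)\geq 1$ is actually superfluous. Even if $K\omega(B)=0$ were allowed, you would still get $c_1^{T\Sigma}(A)=c_1^{TX}(B)-K\omega_\Sigma(A)=1$ after lifting, contradicting the hypothesis on the minimal Chern number of $\Sigma$ (not of $X$). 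So the argument is a pure bookkeeping fact about the relation $c_1^{TX}(B)-K\omega(B)=c_1^{T\Sigma}(A)$, with no need to invoke that $B$ is represented by a holomorphic sphere meeting $\Sigma$.
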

\begin{proof}
	By the hypothesis, any $B\in\pi_2(X)$ is the image of some $A\in\pi_2(\Sigma)$, and we have $c_1^{TX}(B)-K\omega(B)=c_1^{T\Sigma}(A)\neq \pm 1$. As observed above, this yields $\delta^{X, \Sigma}=0$.
\end{proof}

Finally, we show if $K$ is sufficiently larger than $\tau_X$, as opposed to our standing assumption $K<\tau_X$, we recover the ordinary Gysin exact sequence, as claimed in Remark \ref{rem:K_large}.
\begin{prop}\label{prop:K_large}
Let $(X,\omega)$ and $(\Sigma,\omega_\Sigma)$ be exactly the same as before except that the condition $K<\tau_X$ is replaced by 
\begin{equation}\label{eq:K_large}
\frac{m_X(\tau_X-K)}{K} \leq -\frac{\dim\Sigma}{2}.	
\end{equation}
Then, $\SH_*(\partial W)\cong \H_{*+\frac{\dim \Sigma}{2}}(Y;\Lambda)$ as $\Lambda$-modules. Moreover, the exact sequence in \eqref{eq:gysin} exists and is isomorphic to the ordinary Gysin exact sequence tensored with $\Lambda$:
\[
    \cdots \longrightarrow \H_{*+2}(\Sigma;\Lambda) \stackrel{\delta_*} {\longrightarrow} \H_* (\Sigma;\Lambda) \longrightarrow \H_{*+1} (Y;\Lambda) \longrightarrow  \H_{*+1} (\Sigma;\Lambda) \stackrel{\delta_*} \longrightarrow \cdots,
\]
where $\delta_*$ is the ordinary cap product with $-[K\omega_{\Sigma}]_{\mathbb Z}$.
\end{prop}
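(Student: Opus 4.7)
The plan is to show that in this regime the two ``quantum corrections'' $\delta^\Sigma_b$ and $\delta^{X,\Sigma}$ to the Gysin boundary both vanish for purely index-theoretic reasons, so that the split Rabinowitz Floer complex collapses to the Morse complex of $Y$ with Novikov coefficients. First, I would observe that the construction of $\SC_*(\partial W)$ in Sections \ref{sec:split_Rabinowitz}--\ref{sec:Gysin} and the derivation of \eqref{eq:gysin} never use the sign of $\tau_X - K$: the transversality and index results (Propositions \ref{transversality_chain}, \ref{prop:5.9_DL2}, \ref{continuation_transversality}) only refer to Fredholm indices, and the index formulas \eqref{RS-index} and \eqref{monoto} are formally unchanged. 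In particular, the identifications \eqref{eq:hat_SC}--\eqref{eq:check_SC} persist and the boundary on $\SC_*(\partial W)$ still decomposes as in \eqref{upper_triangular}.

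The key step is the vanishing of $\delta^\Sigma_b$ and $\delta^{X,\Sigma}$. For $\delta^{X,\Sigma}$, a contributing class $B \in \pi_2(X)$ in \eqref{eq:delta_c} would satisfy $K\omega(B) = m_X > 0$ and $c_1^{TX}(B) - K\omega(B) = 1$; spherical monotonicity gives $(\tau_X - K)\omega(B) = 1$, impossible since \eqref{eq:K_large} forces $\tau_X < K$ in the nontrivial case $m_X > 0$. For $\delta^\Sigma_b$, a contributing class $A$ in \eqref{eq:delta_b} is represented by a nonconstant $J_\Sigma$-holomorphic sphere, so $\omega_\Sigma(A) \geq m_\Sigma/K$; combined with \eqref{monoto}, the divisibility $m_X \mid m_\Sigma$, and \eqref{eq:K_large}, this yields
\[
    2c_1^{T\Sigma}(A) \;\leq\; -\frac{2(K-\tau_X)m_\Sigma}{K} \;\leq\; -\frac{2(K-\tau_X)m_X}{K} \;\leq\; -\dim\Sigma,
\]
hence $\ind_{f_\Sigma}(p) - \ind_{f_\Sigma}(q) = 2 - 2c_1^{T\Sigma}(A) \geq \dim\Sigma + 2$, which has no solutions. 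The degenerate case $m_X = 0$ reduces \eqref{eq:K_large} to $\dim\Sigma \leq 0$, in which both maps are empty sums.

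Once $\delta = \delta^\Sigma_a$, Lemma \ref{lemma:quantum_cap} identifies $\delta_*$ with the ordinary cap product with $e_Y = -[K\omega_\Sigma]_\Z$. To finish, I would promote this to a chain-level identification: the map $\hat p_k \mapsto T^{k/m_W}\hat p$, $\check p_k \mapsto T^{k/m_W}\check p$ is a graded isomorphism $\SC_*(\partial W) \cong \MC_{*+\frac{\dim\Sigma}{2}}(f_Y;\Lambda)$, and under it the three blocks $\hat\partial$, $\check\partial$, $\Delta^Y_a$ of $\partial$ match the blocks $\hat\partial_Y$, $\check\partial_Y$, $\delta_Y$ of the Morse differential after extension of scalars, by \eqref{eq:hat_SC}, \eqref{eq:check_SC}, and Proposition \ref{prop:morse_moduli}. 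Consequently $\SH_*(\partial W) \cong \H_{*+\frac{\dim\Sigma}{2}}(Y;\Lambda)$, and \eqref{eq:gysin} becomes the ordinary Gysin sequence tensored with $\Lambda$.

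The main obstacle is the sanity check underlying the first step: Sections \ref{sec:split_Rabinowitz}--\ref{sec:Gysin} were written under $K < \tau_X$, and several inequalities used there (e.g.~$2c_1^{T\Sigma}([\pi_Y\circ v]) \geq 0$ in \eqref{multiplicity} and $|\gamma_j| \geq 0$ in \eqref{eq:index_moduli}) reverse sign when $\Sigma$ becomes negatively monotone. I expect to resolve this by noting that under \eqref{eq:K_large} the same quantities become so negative that the associated moduli spaces have negative Fredholm index and hence are empty by transversality, so the conclusions of Propositions \ref{prop:cpt} and \ref{continuation_transversality} still hold, with all cases involving nontrivial holomorphic spheres simply becoming empty. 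Verifying these uniform emptiness statements is where the only real work lies.
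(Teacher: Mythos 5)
Your proposal is correct in spirit and takes essentially the same approach as the paper: both hinge on the observation that under \eqref{eq:K_large} the quantities $c_1^{T\Sigma}(A)$ for nonconstant spheres and $\mu_\CZ(\gamma)$ for Reeb orbits become so negative that the relevant moduli spaces are empty for index reasons. Your computations showing that no class $B$ can satisfy $(\tau_X - K)\omega(B)=1$ (killing $\delta^{X,\Sigma}$) and that $2c_1^{T\Sigma}(A)\leq -\dim\Sigma$ for any contributing $A$ (killing $\delta^\Sigma_b$) are correct, and in fact the arithmetic contradiction for $\delta^{X,\Sigma}$ is slightly more economical than the paper's dimension count for $\mathcal{P}^*_{\ell=1}(B;J_X)$.

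However, the ``main obstacle'' you flag at the end is a genuine gap, not merely a sanity check, and your opening claim that ``the construction of $\SC_*(\partial W)$ never uses the sign of $\tau_X-K$'' is false and contradicted by your own closing paragraph. The decomposition \eqref{upper_triangular} and the identification of $\Delta$ with $\Delta^Y_a+\Delta^Y_b+\Delta^{W,Y}$ rest on Proposition \ref{prop:cpt}, Remark \ref{rem:conti_two_levels}, and Proposition \ref{continuation_transversality}, whose proofs repeatedly invoke $2c_1^{T\Sigma}\geq 0$ and $|\gamma_j|\geq 0$, both of which flip sign here. Without re-verifying these, you cannot invoke \eqref{eq:delta_b} and \eqref{eq:delta_c} as a description of the differential, since a priori other configurations (multi-level buildings, Floer cylinders projecting to nonconstant spheres with the wrong index polarity, etc.) might survive SFT compactness. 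The paper avoids this circularity by arguing directly at the level of limit buildings: it shows under \eqref{eq:K_large} that any nontrivial limit would force either a nonconstant $J_\Sigma$-sphere (eliminated because $c_1^{T\Sigma}(A)\leq -\tfrac{\dim\Sigma}{2}$ makes its moduli space negative-dimensional) or a holomorphic plane with $\dim\mathcal{P}^*_{\ell=1}(B;J_X)\leq -2$, establishing that only $Z_Y$-trajectories remain, hence $\partial$ \emph{is} the Morse differential of $f_Y$ and the split complex structure follows as a byproduct rather than a hypothesis. The same verification must be done for continuation cylinders so that \eqref{continuation_identity} holds. Your ``uniform emptiness'' resolution sketch is exactly what is needed, but it is precisely the content of the paper's proof and cannot be deferred.
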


\begin{proof} 
   We show that the split Floer complex is well-defined in this case as well.
   In fact, we show that only trajectories of $Z_Y$ contribute to the boundary operator. More precisely, let $\mu(\tilde{p}_{k_+}) - \mu (\tilde{q}_{k_-}) = 1$. Due to the SFT compactness, through the neck-stretching procedure, a sequence (with respect to a stretching parameter) of chains of ordinary (nontrivial) Floer cylinders with trajectories of $Z_Y$ in $\widehat{W}$ connecting $\tilde{p}_{k_+}$ and $\tilde{q}_{k_-}$ converges to a (nontrivial) Floer-holomorphic building with trajectories of $Z_Y$ whose top component is an element of $\mathcal{M}_{N, \ell} (\tilde{q}_{k_-}, \tilde{p}_{k_+};\mathbf{A};H;J_Y)$.    We claim that, under the assumption \eqref{eq:K_large}, such a limit does not exist.

   Assume for contradiction that such a limit exists. As mentioned above, the top component of this limit is of the form $((\mathbf{v},\mathbf{z}),\mathbf{t})\in  \mathcal{M}_{N, \ell} (\tilde{q}_{k_-}, \tilde{p}_{k_+};\mathbf{A};H;J_Y)$. We first observe that, by the assumption \eqref{eq:K_large}, if $\omega_\Sigma(A)>0$ for $A\in\pi_2(\Sigma)$, then $c_1^{T\Sigma}(A)=(\tau_X-K)\omega_\Sigma(A)\leq -\frac{\dim\Sigma}{2}$ .
   Thus, for a dimension reason, there is no nonconstant $J_\Sigma$-holomorphic sphere. In particular, we have $\mathbf{A} = 0$ and $\ell\geq 1$.

   Using \eqref{RS-index} and \eqref{eq:K_large}, we estimate
   \[
   \begin{split}
   1=\mu (\tilde{p}_{k_+}) - \mu(\tilde{q}_{k_-}) &= \ind_{f_Y} \tilde{p} - \ind_{f_Y} \tilde{q} +\frac{2(\tau_X - K)}{K} (k_+ - k_-) \\
   &\leq \ind_{f_Y} \tilde{p} - \ind_{f_Y} \tilde{q} -\dim\Sigma \\
   &\leq 1.
   \end{split}
   \]
  This yields $\frac{2(\tau_X - K)}{K} (k_+ - k_-) = - \dim \Sigma$, and in turn $k_+ - k_- = m_X$ by  \eqref{eq:K_large}.

  On the other hand, there exists at least one holomorphic plane $u$ in $\widehat{W}$ in the limit Floer-holomorphic building. As pointed out in Remark \ref{rem:hol_plane}, we may view $u$ as an element of $\mathcal{P}_{\ell = 1} (B; J_X)$ for some $B\in\pi_2(X)$. 
  The multiplicity $K\omega(B)$ of the asymptotic periodic Reeb orbit of $u$ is at most $k_+-k_-=m_X$ by \eqref{multiplicity}. Therefore $K\omega(B)=m_X$, and every element of  $\mathcal{P}_{\ell = 1} (B; J_X)$ is simple. But, by Proposition \ref{transversality_chain}.(b),  $\mathcal{P}_{\ell = 1} (B; J_X) = \mathcal{P}^*_{\ell = 1} (B; J_X)$ has dimension
   \begin{align*}
       \dim \mathcal{P}^*_{\ell = 1} (B; J_X) &= \dim X + 2c_1^{TX}(B) - 2K\omega(B) - 4 \\
       &= \dim \Sigma + \frac{2(\tau_X - K)}{K} m_X - 2 \\
       &\leq -2.
   \end{align*}
   This contradiction proves the claim. Therefore the boundary operator for the split Floer complex (also for the ordinary Floer complex with a sufficiently stretched almost complex structure) counts only trajectories of $Z_Y$.
  
  A similar argument for continuation Floer cylinders shows that continuation homomorphisms are essentially the identity maps, see \eqref{continuation_identity}. Therefore, we have $\SH_*(\partial W)\cong \H_{*+\frac{\dim \Sigma}{2}}(Y;\Lambda)$, and the construction in Section \ref{sec:Gysin_complement} gives rise to the ordinary Gysin exact sequence for $Y\to\Sigma$. This completes the proof of Proposition \ref{prop:K_large}.
\end{proof}

\subsection{Index positivity and fillability conditions on $Y$}\label{sec:Gysin_symplectization}

The goal of this section is two-fold. 
First, we observe that the index-positivity condition on $Y$ is equivalent to saying that the minimal Chern number of $\Sigma$ is at least two. 
This is the setting of Corollary \ref{cor:gysin_symplectization} and Corollary \ref{cor:orderability}.
Second, we investigate relations between our standing hypothesis on $(X,\Sigma)$ and certain fillability conditions of $Y$. This will be used in the proof of Corollary \ref{cor:invertibility}.

In this section, let $(\Sigma,\omega_\Sigma)$ be a closed connected symplectic manifold such that $[K\omega_\Sigma]$ is integral for some $K>0$ as before. But we do not assume the presence of an ambient space $(X,\omega)$ where $\Sigma$ embeds into. We take any integral lift $[K\omega_\Sigma]_\Z\in\H^2(\Sigma;\Z)$ of $[K\omega_\Sigma]$. We keep writing $\pi_Y:Y\to\Sigma$ for an oriented circle bundle with Euler class $e_Y=-[K\omega_\Sigma]_\Z$ and $\alpha$ for a contact form on $Y$ such that $d\alpha=\pi_Y^*\omega_\Sigma$. We consider the following conditions on $(Y, \alpha)$.
\begin{enumerate}
    \item[(a1)] For $\xi:=\ker \alpha$, the first Chern class $c_1^\xi$ of $\xi\to Y$ vanishes on $\pi_2(Y)$, and $\mu_\CZ(\gamma)\geq 4$ for every Reeb orbit $\gamma$ of $\alpha$ contractible in $Y$.
    
	\item[(a2)] There exists a Liouville filling $(W,\lambda)$ of $(Y,\alpha)$ such that $c_1^{TW}$ vanishes on $\pi_2(W)$, and  $\mu_\CZ(\gamma)\geq 2$ for every Reeb orbit $\gamma$ of $\alpha$ contractible in $W$.
\end{enumerate}

\begin{remark}
Following \cite[Section 3.3]{Ueb19} and \cite[Section 9.5]{CO1}, the pair $(Y,\alpha)$ is called {\it index-positive} if either (a1) is fulfilled or (a2) with $\mu_\CZ(\gamma)\geq 2$ replaced by $\mu_\CZ(\gamma)>2$.
Due to the periodicity of the Reeb flow of $\alpha$ in our setting, periodic Reeb orbits come in $Y$-family. To break this symmetry, we may consider a perturbation $\alpha_{f_\Sigma}:=((1+f_\Sigma)\circ\pi_Y)\alpha$ of $\alpha$ by a $C^2$-small  Morse function $f_\Sigma:\Sigma\to\mathbb{R}$. Then periodic Reeb orbits  of $\alpha_{f_\Sigma}$ with period less than some threshold (depending on $f_\Sigma$) correspond to periodic Reeb orbits of $\alpha$ over critical points of $f_\Sigma$. If $\gamma_p$ denotes a periodic Reeb orbit of  $\alpha_{f_\Sigma}$ corresponding to a periodic Reeb orbit $\gamma$ of $\alpha$ over $p\in\Crit f_\Sigma$, then the Conley-Zehnder index of $\gamma_p$ equals
\begin{equation}\label{eq:mu_perturb}
\mu_{\mathrm{CZ}}(\gamma_p)=\mu_{\mathrm{CZ}}(\gamma)-\frac{1}{2}\dim\Sigma +\ind_{f_\Sigma}(p).	
\end{equation}
See \cite[Section 2.2]{Bou}. This shows that our index conditions indeed agree with those in \cite{Ueb19,CO1}.
\end{remark}

The geometric setting of our paper is one of the following two conditions on $X$ and $\Sigma$.
\begin{enumerate}
	\item[(b1)] There exists $\tau_\Sigma>0$ such that $c_1^{T\Sigma}=\tau_\Sigma\omega_\Sigma$ on $\pi_2(\Sigma)$, and the minimal Chern number of $c_1^{T\Sigma}$ is at least $2$, i.e. 
	\[
	\inf\{k>0\mid \exists A\in\pi_2(\Sigma) \;\textrm{ such that }\; c_1^{T\Sigma}(A)=k\}\geq 2.
	\]

    \item[(b2)] Our standing hypothesis is satisfied, namely $(\Sigma,\omega_\Sigma)$ embeds as a codimension two symplectic submanifold into a closed symplectic manifold $(X,\omega)$ such that $c_1^{TX}=\tau_X\omega$ on $\pi_2(X)$ with $\tau_X>K$ and $[K\omega]$ admits an integral lift $[K\omega]_\Z$ satisfying $\PD([\Sigma])=[K\omega]_\Z$ and $[K\omega]_\Z|_\Sigma=[K\omega_\Sigma]_\Z$.  
\end{enumerate}
The infimum of the empty set is $+\infty$ in (b1), that is, the minimal Chern number is $+\infty$ when $c_1^{T\Sigma}$ vanishes on $\pi_2(\Sigma)$. 

\begin{lemma}\label{lem:ind_posit1}
	The conditions (a1) and (b1) are equivalent. 
\end{lemma}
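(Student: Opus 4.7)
The plan is to exploit two structural facts already set up in the paper: first, the prequantization contact structure $\xi = \ker\alpha$ is isomorphic to $\pi_Y^{*}T\Sigma$ as a symplectic vector bundle, so $c_1^{\xi} = \pi_Y^{*}c_1^{T\Sigma}$; second, the Conley-Zehnder index of a contractible Reeb orbit is expressed purely in terms of $c_1^{T\Sigma}$ of a capping disk via \eqref{RS-index2}. The homotopy long exact sequence \eqref{homotopy_les} bridges $\pi_2(Y)$ and $\pi_2(\Sigma)$: the map $\pi_2(Y)\to\pi_2(\Sigma)$ is injective with image $\ker\big(K\omega_\Sigma:\pi_2(\Sigma)\to\mathbb Z\big)$, and a continuous loop in a fiber is contractible in $Y$ if and only if its multiplicity is divisible by $m_\Sigma$.

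For (b1) $\Rightarrow$ (a1), I would first take $A\in\pi_2(Y)$ and let $A'\in\pi_2(\Sigma)$ denote its image; since $A'\in\ker\omega_\Sigma$ by the LES, monotonicity gives $c_1^{\xi}(A)=c_1^{T\Sigma}(A')=\tau_\Sigma\omega_\Sigma(A')=0$. Next, for a Reeb orbit $\gamma$ contractible in $Y$, the capping disk produces $B\in\pi_2(\Sigma)$ with $K\omega_\Sigma(B)=\m(\gamma)>0$, so $c_1^{T\Sigma}(B)=\tau_\Sigma\omega_\Sigma(B)>0$. The minimal Chern number hypothesis then forces $c_1^{T\Sigma}(B)\geq 2$, and \eqref{RS-index2} yields $\mu_\CZ(\gamma)=2c_1^{T\Sigma}(B)\geq 4$.

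For the converse (a1) $\Rightarrow$ (b1), I would begin with any $A\in\pi_2(\Sigma)$ satisfying $\omega_\Sigma(A)=0$; by the LES it lifts to some $\tilde A\in\pi_2(Y)$, and the vanishing $c_1^{\xi}|_{\pi_2(Y)}=0$ together with $c_1^{\xi}=\pi_Y^{*}c_1^{T\Sigma}$ gives $c_1^{T\Sigma}(A)=0$. Thus $c_1^{T\Sigma}$ factors through $\omega_\Sigma(\pi_2(\Sigma))$. If $\omega_\Sigma$ vanishes on $\pi_2(\Sigma)$, then $c_1^{T\Sigma}=0$ there as well, and the minimal Chern number is $+\infty$ by convention, so (b1) holds trivially. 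Otherwise $\omega_\Sigma(\pi_2(\Sigma))=\frac{m_\Sigma}{K}\mathbb Z$ is cyclic and infinite, so the factorization produces a rational $\tau_\Sigma$ with $c_1^{T\Sigma}=\tau_\Sigma\omega_\Sigma$ on $\pi_2(\Sigma)$. To upgrade this to $\tau_\Sigma>0$ and obtain the minimal Chern number bound simultaneously, I would take any $A\in\pi_2(\Sigma)$ with $\omega_\Sigma(A)>0$, observe that $K\omega_\Sigma(A)\in m_\Sigma\mathbb N$ so by Proposition \ref{contractible}.(a) there is a Reeb orbit $\gamma$ of multiplicity $\m(\gamma)=K\omega_\Sigma(A)$ contractible in $Y$, and note that the capping disk can be chosen in the class $A$ itself so \eqref{RS-index2} gives $2c_1^{T\Sigma}(A)=\mu_\CZ(\gamma)\geq 4$. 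The same bound on $|c_1^{T\Sigma}(A)|$ for $\omega_\Sigma(A)<0$ follows by replacing $A$ with $-A$, and nonzero classes in $\ker\omega_\Sigma$ have $c_1^{T\Sigma}(A)=0$ already, so the minimal Chern number is at least $2$ and $\tau_\Sigma>0$.

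The argument is almost purely formal; the one place requiring some care is ensuring that every class $A\in\pi_2(\Sigma)$ with $\omega_\Sigma(A)>0$ is actually witnessed by a contractible Reeb orbit in $Y$ (so that the index hypothesis in (a1) applies to it), which is exactly what the divisibility statement in Proposition \ref{contractible}.(a) guarantees. The rest is bookkeeping between the homotopy long exact sequence, the isomorphism $\xi\cong\pi_Y^{*}T\Sigma$, and formula \eqref{RS-index2}.
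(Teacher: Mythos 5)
Your argument is correct and mirrors the paper's proof almost exactly: both rely on the homotopy long exact sequence to identify $\ker\omega_\Sigma|_{\pi_2(\Sigma)}$ with $(\pi_Y)_*\pi_2(Y)$, use the bundle identification $\xi\cong\pi_Y^*T\Sigma$, and invoke the index formula $\mu_\CZ(\gamma)=2c_1^{T\Sigma}(B)$ from \eqref{RS-index2} to relate Conley--Zehnder indices to the minimal Chern number. The one spot worth making explicit is why ``the capping disk can be chosen in the class $A$ itself'': either lift a disk representative $s:(D^2,\partial D^2)\to(\Sigma,p)$ of $A$ to a disk $\tilde s$ in $Y$ whose boundary winds $K\omega_\Sigma(A)$ times around the fiber $Y_p$, so that $B=A$ literally (this is what the paper does), or note that any two capping disks in $Y$ for the same orbit differ by an element of $(\pi_Y)_*\pi_2(Y)$, on which you have already shown $c_1^{T\Sigma}$ vanishes, so $c_1^{T\Sigma}(B)=c_1^{T\Sigma}(A)$ regardless of the choice.
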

\begin{proof}
	We first assume (a1) and prove (b1). By \eqref{homotopy_les}, we have
	\[
	\ker\omega_\Sigma|_{\pi_2(\Sigma)}=\ker e_Y|_{\pi_2(\Sigma)}=(\pi_Y)_*\pi_2(Y).
	\]
	Since the pullback bundle $\pi_Y^*(T\Sigma)$ is isomorphic to $\xi$, using condition (a1) we get
	\[
	0=c_1^\xi|_{\pi_2(Y)}=(\pi_Y)^*c_1^{T\Sigma}|_{\pi_2(Y)}=c_1^{T\Sigma}|_{(\pi_Y)_*\pi_2(Y)} .
	\] 
	Therefore, if $c_1^{T\Sigma}(A)\neq 0$ for some $A\in\pi_2(\Sigma)$, then $A\notin(\pi_Y)_*\pi_2(Y)$ and thus $\omega_\Sigma(A)\neq0$. This implies that there exists $\tau_\Sigma\in\mathbb{R}$ such that 
	\begin{equation}\label{eq:tau_Sigma}
		c_1^{T\Sigma}(A)=\tau_\Sigma\omega_\Sigma(A)\qquad  \forall A\in \pi_2(\Sigma),	
	\end{equation}
	see \cite[proof of Lemma 1.1]{HS95} or \cite[Remark 2.6.(i)]{AK23}. Suppose that $\omega_\Sigma$ vanishes on $\pi_2(\Sigma)$. This corresponds to the case that all periodic Reeb orbits are not contractible in $Y$, see \eqref{homotopy_les}. Then $c_1^{T\Sigma}$ also vanishes on $\pi_2(\Sigma)$ and the minimal Chern number is $+\infty$. In this case, $\tau_\Sigma$ can be any real number. 
	
	Next suppose that $\omega_\Sigma$ does not vanish on $\pi_2(\Sigma)$. We take arbitrary $A\in\pi_2(\Sigma)$. We may assume $K\omega_\Sigma(A)=-e_Y(A)>0$ by changing $A$ to $-A$ if necessary. For any $p\in\Sigma$, let $s:(D^2,\partial D^2)\to (\Sigma,p)$  be a continuous map representing $A$. Then it lifts to $\tilde s:(D^2,\partial D^2)\to (Y,Y_p)$ such that $\m(\tilde s|_{\partial D^2})=-e_Y(A)$. Let $\gamma$ be any periodic Reeb orbit with $\m(\gamma)=-e_Y(A)$. Then  \eqref{RS-index2} and condition (a1) imply $c_1^{T\Sigma}(A)=\frac{1}{2}\mu_\CZ(\gamma)\geq 2$. 
	Moreover, since $K\omega_\Sigma(A)=-e_Y(A)>0$, we have $\omega_\Sigma(A)>0$ and in turn $\tau_\Sigma>0$ by \eqref{eq:tau_Sigma}.

	Now we assume (b1) and prove (a1). Let $\gamma$ be a contractible Reeb orbit on $Y$. By \eqref{RS-index2} and condition (b1), $\mu_\CZ(\gamma)=2c_1^{T\Sigma}(A)\geq 4$ where $A\in\pi_2(\Sigma)$ is the class induced by a capping disk of $\gamma$ contained in $Y$. Furthermore, using condition (b1), we deduce
	\[
	c_1^\xi|_{\pi_2(Y)}=(\pi_Y)^*c_1^{T\Sigma}|_{\pi_2(Y)}=c_1^{T\Sigma}|_{(\pi_Y)_*\pi_2(Y)} =\tau_\Sigma \omega_\Sigma|_{(\pi_Y)_*\pi_2(Y)}=0
	\]
	where the last equality follows from that $\omega_\Sigma|_{(\pi_Y)_*\pi_2(Y)}=-\frac{1}{K}e_Y|_{(\pi_Y)_*\pi_2(Y)}=0$ by \eqref{homotopy_les}. 
	\end{proof}

According to Section 2, the following is true.
\begin{lemma}\label{lem:b2_a2}
    The condition (b2) implies the condition (a2).
\end{lemma}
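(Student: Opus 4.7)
The plan is to verify the three ingredients of (a2) in turn, using the Liouville domain $(W,\lambda)$ constructed in Section~\ref{sec:setting}. The existence of $W$ with $\lambda|_{\partial W} = \alpha$ is immediate from the symplectic cut construction recalled there, so the task reduces to checking the topological hypothesis $c_1^{TW}|_{\pi_2(W)} = 0$ and the Conley--Zehnder lower bound on contractible Reeb orbits.

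For the Chern class condition, since $TW = \iota^* TX$ where $\iota : W \hookrightarrow X$ is the inclusion, we have $c_1^{TW} = \tau_X \iota^*[\omega]$ on $\pi_2(W)$. The key observation is that $\omega|_W = d\lambda$ is exact: for any $A \in \pi_2(W)$ represented by $u : S^2 \to W$, Stokes' theorem yields $\omega(A) = \int_{S^2} u^* d\lambda = 0$, and hence $c_1^{TW}(A) = 0$.

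For the index estimate, let $\gamma$ be a Reeb orbit on $Y$ contractible in $W$. If $m_W = 0$ then no such nonconstant orbit exists and the condition is vacuous, so I may assume $m_W \geq 1$. By Proposition~\ref{contractible}.(b), $m_X$ divides $\m(\gamma)$, and in particular $m_X \geq 1$ and $\m(\gamma) \geq m_X$. Combining this with the formula \eqref{RS-index} gives
\[
\mu_\CZ(\gamma) \;=\; \frac{2(\tau_X - K)}{K}\m(\gamma) \;\geq\; \frac{2(\tau_X - K)m_X}{K} \;=\; 2\Big(\frac{\tau_X m_X}{K} - m_X\Big).
\]

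The main (if mild) obstacle is then to argue that the bracket on the right is at least $1$. The key point is that $\frac{\tau_X m_X}{K}$ is a positive integer: the identity $K\omega(\pi_2(X)) = m_X\Z$ yields $c_1^{TX}(\pi_2(X)) = \tau_X \omega(\pi_2(X)) = \frac{\tau_X m_X}{K}\Z$, and this must sit inside $\Z$ since $c_1^{TX}$ is an integral class. The standing hypothesis $\tau_X > K$ then forces $\frac{\tau_X m_X}{K} > m_X$, and integrality upgrades this strict inequality to $\frac{\tau_X m_X}{K} - m_X \geq 1$. Hence $\mu_\CZ(\gamma) \geq 2$, completing the verification of (a2).
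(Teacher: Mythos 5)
Your proof is correct and takes essentially the same approach as the paper: exactness of $\omega$ on $W$ kills $c_1^{TW}|_{\pi_2(W)}$, and the index bound comes from comparing $\frac{\tau_X}{K}\m(\gamma)$ with $\m(\gamma)$ as positive integers under $\tau_X > K$. The only cosmetic difference is in establishing integrality: you route through $m_X$ and Proposition~\ref{contractible}.(b) together with the bound $\m(\gamma)\geq m_X$, whereas the paper reads off $\frac{\tau_X}{K}\m(\gamma) = c_1^{TX}(B) \in \Z$ directly from the capping class $B$ constructed in \eqref{RS-index}.
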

\begin{proof}
    By Section \ref{sec:setting}, we obtain a Liouville domain $(W, \lambda)$ from $(X, \Sigma)$ satisfying the condition (b2).
    Let $\gamma$ be a periodic Reeb orbit contractible in $W$. Since $\frac{\tau_X}{K}\m(\gamma)=c_1^{TX}(B)$ is an integer, where $B\in\pi_2(X)$ is as in \eqref{RS-index}, the hypothesis $\tau_X>K$ implies $\mu_\CZ(\gamma)=\frac{2(\tau_X-K)}{K}\m(\gamma)\geq2$. 
	Moreover, for any continuous map $w:S^2\to W$ which can be viewed also as a map to $X\setminus\Sigma$, we have $c_1^{TW}([w])=c_1^{TX}([w])=\tau_X\omega([w])=0$ since $\omega$ is exact on $X\setminus\Sigma$. 
\end{proof}
To prove a partial converse, we consider two conditions on $(Y, \alpha)$ stronger than (a2).
\begin{enumerate}
    \item[(a2-1)] A Liouville filling $(W,\lambda)$ in (a2) exists and satisfies $\pi_1 (W, Y) = 0$.
    \item[(a2-2)] A Liouville filling $(W,\lambda)$ in (a2) exists, and  $c_1^{TW} \in \H^2(W;\mathbb Z)$  is torsion.
\end{enumerate}

\begin{remark}\label{rem:cut}
Suppose that there is a Liouville filling $(W,\lambda)$ of $(Y,\alpha)$. Then there is a Hamiltonian $S^1$-action near $\partial W$ which restricts to the $S^1$-action on $\partial W=Y$. The symplectic cut with respect to this $S^1$-action is a closed symplectic manifold $(X,\omega)$ such that $\Sigma$ embeds symplectically into $X$ and $(X\setminus\Sigma,\omega|_{X\setminus\Sigma})$ is symplectomorphic to the interior of $(W,d\lambda)$, see \cite{Ler95}. Moreover, the first Chern class of the symplectic normal bundle of $\Sigma$ in $X$ coincides with $-e_Y=[K\omega_\Sigma]_\Z$, and accordingly there is an integral lift $[K\omega]_\Z\in \H^2(X;\Z)$ such that $\PD([\Sigma])=[K\omega]_\Z$ and $[K\omega]_\Z|_{\Sigma}=[K\omega_\Sigma]_\Z$.
\end{remark}

\begin{lemma} \label{lem:top}
Let $W$ be as in the condition (a2), and let $X$ be the symplectic cut of $W$  as recalled in Remark \ref{rem:cut}. For any  $B \in \pi_2 (X)$ with $\omega(B) = 0$, the following hold.
    \begin{enumerate}[(i)]
        \item There is a continuous map $w: S \to X$ from a closed orientable surface $S$ such that $w$ does not intersect $\Sigma$ and the homology class $w_*([S]	)$ coincides with the image of B in $\H_2 (X; \mathbb Z)$ under the Hurewicz homomorphism.

        \item If furthermore (a2-1) holds, then we can take the surface $S$ in (i) as $S^2$.
    \end{enumerate}
\end{lemma}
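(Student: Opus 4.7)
The overall plan is to start with a smooth representative $u\colon S^2 \to X$ of $B$, perturb it to be transverse to $\Sigma$, and then repair the resulting intersections with $\Sigma$. Since $[\Sigma]=\PD([K\omega]_\Z)$, the algebraic intersection number $u\cdot\Sigma = K\omega(B)=0$, so the finite set $u^{-1}(\Sigma)=\{p_1,\dots,p_k\}$ splits into equal numbers of points with signs $\epsilon_i\in\{\pm 1\}$. We group these into $k/2$ pairs $(p_i^+,p_j^-)$ of opposite signs. Inside the tubular neighborhood $\varphi(\mathcal U)$ from \eqref{tubular}, after a local homotopy we may assume that $u$ restricted to a small disk $B_i\subset S^2$ around each $p_i$ is a flat fiber disk in the normal bundle; in particular $u(\partial B_i)$ is a loop in the fiber $Y_{u(p_i)}$ of multiplicity $\epsilon_i$.

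For part (i), remove the open disks $B_i$ from $S^2$ and, for each pair $(p_i^+,p_j^-)$, attach a cylinder $C_{ij}\subset Y$ obtained by restricting $\pi_Y$ to a smoothly embedded path in $\Sigma$ from $u(p_i)$ to $u(p_j)$, with orientation chosen so that $\partial C_{ij}$ matches the relevant boundary circles of $S^2\setminus\bigcup B_i$. Gluing yields a closed oriented surface $S$ of genus $k/2$ together with a continuous map $w\colon S\to X\setminus\Sigma\subset X$. To verify that $w_*[S]$ coincides with the Hurewicz image of $B$, note that the chain-level difference $u_*[S^2]-w_*[S] = \sum_{\text{pairs}}\bigl(u(B_i)+u(B_j)-C_{ij}\bigr)$ is a $2$-cycle supported in $\varphi(\mathcal U)$; under the deformation retraction $\varphi(\mathcal U)\simeq\Sigma$ each summand collapses onto two points joined by an arc, and is therefore null-homologous in $X$.

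For part (ii), the condition $\pi_1(W,Y)=0$ is equivalent to the surjectivity of $\pi_1(Y)\to\pi_1(W)$. Applying the Seifert--van Kampen theorem to the decomposition $X=W\cup_Y\varphi(\mathcal U)$ together with $\varphi(\mathcal U)\simeq\Sigma$, the amalgamated product collapses and forces $\pi_1(\Sigma)\to\pi_1(X)$ to be surjective. This is the classical $\pi_1$-hypothesis that lets one apply the Whitney trick: for each pair $(p_i^+,p_j^-)$, choose embedded arcs $\alpha_{ij}\subset S^2$ (disjoint from the other intersection points) and $\beta_{ij}\subset\Sigma$ between the corresponding endpoints; the surjectivity of $\pi_1(\Sigma)\to\pi_1(X)$ allows us to modify $\beta_{ij}$ by a loop in $\Sigma$ so that the Whitney loop $u(\alpha_{ij})\cdot\beta_{ij}^{-1}$ is null-homotopic in $X$. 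A Whitney disk filling this null-homotopy, put into generic position so as to meet $\Sigma$ only along $\beta_{ij}$, guides a continuous cancellation of the two intersections. Iterating over all pairs produces a sphere $w\colon S^2\to X\setminus\Sigma$ with $w_*[S^2]=h(B)$.

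The principal difficulty lies in the low-dimensional case $\dim X=4$, where the standard smooth Whitney trick may fail and generic perturbation of a Whitney disk cannot necessarily make its interior disjoint from $\Sigma$. Since only a continuous surface representative is required here, this is not a genuine obstruction: either a topological Whitney move in the sense of Freedman, or more elementarily an iterated small perturbation argument that successively cleans up the interior intersections of the Whitney disk, is sufficient to conclude.
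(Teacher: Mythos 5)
Part (i) of your proposal matches the paper's approach: cut out disks around the intersection points and glue in cylinders over paths in $\Sigma$, then verify that the homological discrepancy is supported in the tubular neighborhood of $\Sigma$ and collapses under the retraction. This is correct and essentially identical to the paper's argument.

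Part (ii), however, takes a genuinely different route and has a real gap. The paper does not use the Whitney trick. Instead it observes that, because $v(L)$ is homotopic in $W$ rel endpoints to the arc $Q \subset Y$, the loop $S^1 \times \{0\}$ on the torus $u : T^2 \to X\setminus\Sigma$ built in part (i) is contractible in $W$; cutting $T^2$ along this loop and capping both sides with a single contracting disk $D \subset W$ produces the desired sphere in $X\setminus\Sigma$. Your Whitney-trick argument runs into a problem that is not merely a $\dim X = 4$ pathology: the surface $\Sigma$ has codimension 2 in $X$, so a generic $2$-disk always has $0$-dimensional interior intersection with $\Sigma$ regardless of $\dim X$ (the expected dimension $2 + (\dim X - 2) - \dim X = 0$ does not become negative in any dimension). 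Thus the assertion that the Whitney disk can be ``put into generic position so as to meet $\Sigma$ only along $\beta_{ij}$'' is false, and the Whitney move pushed along such a disk would transfer the disk's interior intersections with $\Sigma$ onto the surface, so there is no a priori reason the number of intersections decreases. Neither Freedman's topological Whitney move (whose $\pi_1$-negligibility hypotheses are not verified here) nor an unspecified iterated perturbation resolves this: the obstruction is the codimension of $\Sigma$, not smoothness in dimension four. Your Seifert--van Kampen observation that $\pi_1(W,Y)=0$ forces $\pi_1(\Sigma)\to\pi_1(X)$ to be surjective is correct, but by itself it does not kill the interior intersections of the Whitney disk with $\Sigma$, which is the crux. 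I recommend replacing this with the paper's cut-and-cap argument: use $\pi_1(W,Y)=0$ to choose $Q$ homotopic to $v(L)$ in $W$, so that the resulting meridian loop on the genus-one surface is contractible in $W$ and can be capped to reduce the genus to zero.
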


\begin{proof}

    Let $v : S^2 \to X$ be a smooth map such that it represents $B \in \pi_2 (X)$ satisfying $\omega (B) = 0$ and  intersects $\Sigma$ transversely at $\{z_1,\dots,z_j\}$. Since $v\cdot \Sigma=K\omega(B)=0$, we know that $j$ is even.      
    For the sake of simplicity, we only give a proof for the case $j=2$.

   We first prove (i). 
   Let  $U$ be a sufficiently small open tubular neighborhood of $\Sigma$ in $X$. Up to a homeomorphism, we may identify $X \setminus U$ with $W$ and $\partial(X \setminus U)$ with $Y$. We may assume that there exist disjoint open disk neighborhoods $D_i \subset S^2$ of $z_i$, for $i=1,2$, such that $v^{-1}(U)=D_1\cup D_2$ and each $v(\partial D_i)$ is contained in the fiber $Y_{v(z_i)}$ over $v(z_i)$. Next we take a path $P$ in $\Sigma$ joining $v(z_1)$ and $v(z_2)$ and denote by $Y_P:=\pi_Y^{-1}(P)$ the cylinder over $P$. Note that $Y_P$ capped with $v(D_1)$ and $v(D_2)$ is null-homologous in $X$. Gluing $v|_{S^2\setminus(D_1\cup D_2)}$ and this cylinder, we obtain a map $w$ from the torus $T^2$ to $X$ satisfying the properties required in statement (i). In the general case, we would obtain a map $w$ defined on the closed orientable surface of genus $j/2$. 

To prove (ii), we assume $\pi_1(W,Y)=0$. Then, for a path $L$ in $S^2\setminus(D_1\cup D_2)$ from a point in $\partial D_1$ to a point in $\partial D_2$, $v(L)$ is homotopic in $W$ to a path $Q$ in $Y$ relative to the endpoints. We take the cylinder $Y_{\pi_Y(Q)}$ containing $Q$, glue this with $v|_{S^2\setminus(D_1\cup D_2)}$, and obtain $u:T^2\to X$ meeting all the properties in statement (i). In addition, due to the homotopy between $v(L)$ and $Q$, we may assume that $u|_{S^1\times\{0\}}$ is contractible in $W$ and thus has a capping disk $D$ in $W$. Let $S^2=\{x^2+y^2+z^2=1\}$ be the unit sphere in $\R^3$, and let $\varphi:S^2\cap \{-\frac{1}{2}<z<\frac{1}{2}\}\to T^2\setminus (S^1\times \{0\})$ be a homeomorphism. Then statement (ii) follows with a continuous map $w:S^2\to W$ satisfying 
\[
w|_{\{-\frac{1}{2}<z<\frac{1}{2}\}} =u\circ\varphi,\qquad  w(\{z\geq 1/2\})=w(\{z\leq -1/2\})= D.
\]
\end{proof}

\begin{lemma} \label{lem:fillability}
 The condition (a2-1) or (a2-2) implies the condition (b2).
\end{lemma}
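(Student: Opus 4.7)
The goal is to produce the closed symplectic manifold $(X,\omega)$ satisfying condition (b2). As a first step, I would apply Remark \ref{rem:cut}: the symplectic cut of $(W,\lambda)$ with respect to the Hamiltonian $S^1$-action near $\partial W=Y$ gives a closed symplectic manifold $(X,\omega)$ containing $(\Sigma,\omega_\Sigma)$ as a codimension-two symplectic submanifold. This construction automatically supplies an integral lift $[K\omega]_\Z \in \H^2(X;\Z)$ with $\PD([\Sigma])=[K\omega]_\Z$ and $\iota^*[K\omega]_\Z = [K\omega_\Sigma]_\Z$, so the integrality/Poincar\'e duality portion of (b2) comes for free. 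What remains is (I) the spherical monotonicity $c_1^{TX}=\tau_X\omega$ on $\pi_2(X)$ for some $\tau_X\in\R$, and (II) the inequality $\tau_X>K$.

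For (I), I would show that every $B\in\pi_2(X)$ with $\omega(B)=0$ satisfies $c_1^{TX}(B)=0$. Given such a $B$, Lemma \ref{lem:top} produces a continuous map $w\colon S\to X$ from a closed orientable surface $S$ that avoids $\Sigma$ (so lies entirely in $W$) and whose fundamental class represents the Hurewicz image of $B$. Under (a2-1), Lemma \ref{lem:top}(ii) gives $S=S^2$, so $w$ defines a class in $\pi_2(W)$ and the hypothesis $c_1^{TW}|_{\pi_2(W)}=0$ forces $c_1^{TX}(B)=c_1^{TW}([w])=0$. Under (a2-2), $S$ is a possibly higher-genus surface, but since $c_1^{TW}$ is torsion in $\H^2(W;\Z)$, multiplying by its order yields $c_1^{TW}(w_*[S])=0$ as an integer. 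In either case, $c_1^{TX}$ vanishes on $\ker(\omega|_{\pi_2(X)})$, so it descends to a linear map on the quotient and defines $\tau_X$.

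For (II), I would split into cases according to whether $\omega|_{\pi_2(X)}$ vanishes. If $\omega|_{\pi_2(X)}=0$, then by (I) also $c_1^{TX}|_{\pi_2(X)}=0$, so the monotonicity equation is vacuous and any $\tau_X>K$ may be chosen. When $\omega|_{\pi_2(X)}\neq 0$, so $m_X>0$, I would produce a Reeb orbit $\gamma$ of $\alpha$ that is contractible in $W$ and of positive multiplicity and combine the index-positivity hypothesis $\mu_\CZ(\gamma)\geq 2$ from (a2) with the identity $\mu_\CZ(\gamma)=\tfrac{2(\tau_X-K)}{K}\m(\gamma)$ derived in Section \ref{sec:index} using the fiber capping disk together with (I); this gives $\tau_X-K\geq K/\m(\gamma)>0$. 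Under (a2-1), such a $\gamma$ is immediate: $\pi_1(W,Y)=0$ makes $\pi_1(Y)\to\pi_1(W)$ surjective, so Proposition \ref{prop:m_W}(b) yields $m_W=m_X>0$, and the $m_W$-fold iterate of a simple Reeb orbit is contractible in $W$.

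The hard part will be the corresponding existence of a contractible-in-$W$ Reeb orbit under (a2-2), where the $\pi_1$-surjectivity of $Y\hookrightarrow W$ is not assumed. My plan for this step is to upgrade the cohomological argument of (I): since $c_1^{TW}$ is torsion, both $c_1^{TX}$ and $K[\omega]$ vanish on $X\setminus\Sigma$ in real cohomology, so by the Thom isomorphism $c_1^{TX}$ lies in $\R\cdot[\omega]$ in $\H^2(X;\R)$; pairing with $\pi_2(X)$ identifies the coefficient as $\tau_X$, which then yields $c_1^{T\Sigma}=(\tau_X-K)[\omega_\Sigma]$ in $\H^2(\Sigma;\R)$ by restricting to $\Sigma$. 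Reeb orbits of $\alpha$ that are contractible inside $Y$ are automatically contractible in $W$, and for such an orbit (2.10) gives $\mu_\CZ(\gamma)=2c_1^{T\Sigma}([\pi_Y\circ v])=2(\tau_X-K)\omega_\Sigma(A)$; the hypothesis $\mu_\CZ(\gamma)\geq 2$ then forces $\tau_X>K$. The remaining delicate point, which I expect to be the main obstacle, is ensuring that enough Reeb orbits exist in the relevant regime (equivalently, ruling out the configuration $m_W=0$ with $\omega|_{\pi_2(X)}\neq 0$, or handling it directly via the cohomological identity above).
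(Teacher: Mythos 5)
Your overall structure matches the paper's proof: apply Remark \ref{rem:cut} to obtain $(X,\omega)$ together with the integral lift $[K\omega]_\Z$ and the Poincar\'e-dual condition, use Lemma \ref{lem:top} to establish $c_1^{TX}=\tau_X\omega$ on $\pi_2(X)$, and then extract $\tau_X>K$ from index positivity. Your treatment of (a2-1) is exactly the paper's intended argument (via $\pi_1(W,Y)=0 \Rightarrow \pi_1(Y)\to\pi_1(W)$ surjective, hence $m_W=m_X>0$ by Proposition \ref{prop:m_W}(b), so the $m_W$-iterate of a simple Reeb orbit is contractible in $W$ and (a2) pins down $\tau_X>K$ via \eqref{RS-index}).

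The concern you flag about (a2-2) is well placed, and it points at a spot where the paper's own proof is terse. After splitting into $\omega|_{\pi_2(X)}=0$ and $\omega|_{\pi_2(X)}\neq 0$, the paper asserts in the latter case that ``by Proposition \ref{contractible}.(b), there is a periodic Reeb orbit $\gamma$ contractible in $W$.'' But Proposition \ref{contractible}(b) is a divisibility statement (\emph{if} $x$ is contractible in $W$, \emph{then} $m_X \mid \m(x)$); by itself it does not manufacture a contractible orbit, i.e.\ it does not exclude $m_W=0$ while $m_X>0$, because $m_X\mid 0$ is vacuously true under the paper's convention. Under (a2-1) this is resolved by $\pi_1$-surjectivity and Proposition \ref{prop:m_W}(b), but under (a2-2) that tool is unavailable. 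Your alternative route — exploit orbits contractible in $Y$ together with \eqref{RS-index2} to get $\mu_\CZ(\gamma)=2c_1^{T\Sigma}(A)=2(\tau_X-K)\omega_\Sigma(A)\geq 2$ — is sound and does settle the subcase $m_\Sigma>0$, since such orbits are automatically contractible in $W$. What remains unaddressed, in both your proposal and the paper's write-up as I read it, is the subcase $\omega_\Sigma|_{\pi_2(\Sigma)}=0$ with $\omega|_{\pi_2(X)}\neq 0$; to close it one would need either a topological argument ruling out $m_W=0$ in the symplectic cut, or an index/positivity statement that does not require a contractible Reeb orbit (e.g.\ working with a null-homologous iterate and using that $c_1^{TW}$ torsion makes the index capping-independent, at the cost of extending the hypothesis of (a2) beyond contractible orbits). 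You were right to name this as the delicate point; it is not a gap in your proposal alone.
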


\begin{proof}
	Let $B\in\pi_2(X)$ satisfy $\omega(B)=0$. 
    We take $w:S\to X$ as in Lemma \ref{lem:top}. As $w$ does not intersect $\Sigma$, we may view $w$ as a map into $W$. In the case of (a2-2), namely $c_1^{TW}$ is torsion, we have 
    \begin{equation*}
        c_1^{TX} (B)  =  c_1^{TX} ([w]) = c_1^{TW} ([w]) = 0.
    \end{equation*}
    The same holds in the case of (a2-1) since $c_1^{TW}|_{\pi_2(W)}=0$ and we can take $S=S^2$. 
    As mentioned in the proof of Lemma \ref{lem:ind_posit1}, this implies that there exists $\tau_X\in\mathbb{R}$ such that
	$c_1^{TX}=\tau_X\omega$ on $\pi_2(X)$ (even in $\H_2(X;\Z)$ in the case of (a2-2)). We next verify $\tau_X>K$. If $\omega$ vanishes on ${\pi_2(X)}$, so does $c_1^{TX}$. Thus, $\tau_X$ can be an arbitrary real number, and we take any $\tau_X>K$. Suppose that $\omega$ is nonzero on ${\pi_2(X)}$. Then, by Proposition \ref{contractible}.(b), there is a periodic Reeb orbit $\gamma$ contractible in $W$ whose index equals $\mu_\CZ(\gamma)=\frac{2(\tau_X-K)}{K}\m(\gamma)$ as computed in \eqref{RS-index}. From the hypothesis $\mu_\CZ(\gamma)\geq 2$, we conclude $\tau_X>K$.
    It remains to prove that $[K\omega]$ admits an integral lift $[K\omega]_\Z$ satisfying $\PD([\Sigma])=[K\omega]_\Z$ and $[K\omega]_\Z|_\Sigma=[K\omega_\Sigma]_\Z$.  
    This follows from Remark \ref{rem:cut}.
\end{proof}

\begin{remark}\label{rem:indep_filling}
Let us assume that (a2) with $\mu_\CZ(\gamma)\geq 2$ replaced by $\mu_\CZ(\gamma)>2$ holds. We call this condition (a2\textsuperscript{+}). For example, this is the case if $\pi_1(Y)\to\pi_1(W)$ induced by the inclusion is injective and the minimal Chern number of $\Sigma$ is greater than one. In this case, the map $\delta^{X,\Sigma}$ in \eqref{eq:delta_c} vanishes. Indeed, if $\delta^{X,\Sigma}\neq0$, then there is $B\in\pi_2(X)$ satisfying $c_1^{TX}(B)-K\omega(B)=1$ which leads to a periodic Reeb orbit $\gamma$ with $\mu_\mathrm{CZ}(\gamma)=2$, see \eqref{RS-index}.
Therefore, the boundary operators of $\SC_*(\partial W)$ count only Floer cylinders in $\R\times Y$, and the Rabinowitz Floer homology is independent of Liouville filling  in the following sense. Let $W_1$ and $W_2$ be  two Liouville fillings of $(Y,\alpha)$ such that both satisfy the condition (a2\textsuperscript{+}) and $\pi_1(Y)\to\pi_1(W_i)$ induced by the inclusion is injective for both $i=1,2$. Then the split chain complex for $W_1$ is identical to that for $W_2$, and accordingly $\SH_*(\partial W_1)=\SH_*(\partial W_2)$.
 If we take all periodic orbits into account, as opposed to contractible ones, we can drop injectivity of $\pi_1(Y)\to\pi_1(W_i)$ at the cost of having independence of Liouville filling up to grading-shift, see also \cite[Remark 3.8]{Ueb19}. 
\end{remark}

\subsection{Proofs of corollaries}\label{sec:pf_cor}

\subsubsection*{Proof of Corollary \ref{cor:invertibility}} 

As in the proof of Lemma \ref{lem:fillability}, our assumption implies $c_1^{TX}  = \tau_X \omega $ on $\pi_2(X)$ for some $\tau_X \in \mathbb R$. Next we use the hypothesis $\SH_*(\partial W)=0$ to deduce $\tau_X>K$. Since $\SH_* (\partial W) = 0$, the unit of $\SH_*(\partial W)$ or of the symplectic homology of $W$, which has degree $\frac{1}{2} \dim X$, is zero, see \cite[Section 10]{CO1}. Thus there is $\tilde{p}_k \in \Crit f_{Y^H_k}$ for some $H\in\mathcal{H}$ with $k>0$ whose index $\mu (\tilde{p}_k) = \ind_{f_Y} (\tilde{p}) - \frac{1}{2} \dim \Sigma + 2\frac{(\tau_X-K)}{K}k$ equals $\frac{1}{2} \dim X + 1$, see \eqref{grading}.  Since $\ind_{f_Y} (\tilde{p}) - \frac{1}{2} \dim \Sigma \leq \frac{1}{2} \dim X$, we conclude $\tau_X - K > 0$.

Therefore we can apply Theorem \ref{thm:gysin}, and the hypothesis $\SH_*(\partial W)=0$ implies that the map $\delta_*:\H_{*+2}(\Sigma;\Lambda)\to \H_{*}(\Sigma;\Lambda)$ is an isomorphism.
In particular, the induced map on the free part $\H_*^{\mathrm{free}}(\Sigma; \mathbb Z)\otimes_\Z\Lambda$, which we also denote by $\delta_*$, is an isomorphism.
To prove (a), we assume for contradiction that $[K\omega]$ is not primitive. 
 Then, there exist an integer $d \geq 2$ with $K\omega(\H_2(X; \mathbb Z)) = d \mathbb Z$ and a class $\mu \in \H^2_{\mathrm{free}}(X; \mathbb Z)$ such that $[K \omega] = d \mu$.
Here we abuse notation and $[K\omega]$ also refers to the unique class in $\H^2_{\mathrm{free}} (X; \mathbb Z)$ corresponding to $[K\omega] \in \H^2 (X; \mathbb R)$.
We claim that the map $\delta_*$ on $\H_*^{\mathrm{free}}(\Sigma; \mathbb Z)\otimes_\Z\Lambda$ 
is divisible by $d$, namely $\delta_*$ is the composition of a $\Lambda$-linear map and the scalar multiplication by $d$. 
By Lemma \ref{lemma:quantum_cap}, $\delta^\Sigma_*$ is the quantum cap product with $-[K\omega_\Sigma]=-d\mu|_{\Sigma}$ on  $\H_*^{\mathrm{free}}(\Sigma; \mathbb Z)\otimes_\Z\Lambda$. Thus it is divisible by $d$. Moreover \eqref{eq:delta_c} yields that $\delta^{X, \Sigma}_*$ is also divisible by $d$. This proves the claim. 
Since $d\geq2$ and $\H^{\mathrm{free}}_*(\Sigma;\Z)$ is nonzero, $\delta_*$ cannot be an isomorphism. This contradiction proves statement (a).

According to the Morse theoretic description of the quantum product, the map $\delta^\Sigma_*$ modulo torsion corresponds to the quantum cup product with $-[K\omega_\Sigma]$ on $\H^*_\mathrm{free}(\Sigma;\Z)\otimes_\Z\Lambda$, see \cite[Chapter 12]{MS}. In particular, the quantum cup product with $-[K\omega_\Sigma]$ on $\QH^*(\Sigma;\Z)=\H^*_\mathrm{free}(\Sigma;\Z)\otimes_\Z\Lambda_\Sigma$ is an isomorphism. 
	This proves statement (b).
	\qed

\subsubsection*{Proof of Corollary \ref{cor:dehn_twist}}

Let $\tau\in \mathrm{Symp}_c(W)$ be a right-handed fibered Dehn twist:  
\begin{equation*}
    \tau := \begin{cases}
        \phi^{\beta(r)}_R &\text{on $(-\infty,0]\times Y$},\\[.5ex]
        \mathrm{Id} &\text{on $W\setminus((-\infty,0]\times Y),$}
    \end{cases}
\end{equation*}
where we identify $(-\infty,0]\times Y$ with its image in $W$ by the Liouville flow, and $\beta:(-\infty,0]\to\R$ is a smooth function which has compact support and is constant $-\frac{1}{K}$ near $0$.

	Following \cite{BG07}, see also \cite[Theorem 6.3]{CDvK14}, we consider the closed contact manifold $\mathrm{OB}(W,\tau)$ obtained by gluing the mapping torus of $\tau$ and $\partial W\times D^2$, which  is contactomorphic to the prequantization bundle over $(X,[K\omega]_\Z)$. Suppose that $\tau$ is symplectically isotopic to the identity $\mathrm{id}_W$ relative to $\partial W$. Then $\mathrm{OB}(W,\tau)$ is contactomorphic to $\mathrm{OB}(W,\mathrm{id}_W)$, and therefore has a subcritical Weinstein filling $W\times D^2$, see \cite[Theorem 2.12]{CDvK14}. Since $c_1^{TW}$ is zero on $\pi_2(W)$ by Lemma \ref{lem:b2_a2}, $c_1^{T(W\times D^2)}$ is zero on $\pi_2(W\times D^2)$. Moreover since $[K\omega]_\Z$ is integral and $\tau_X>K$, the minimal Chern number of $X$ is greater than one. Therefore, Theorem \ref{thm:gysin} yields the long exact sequence 
	\[
    \cdots \longrightarrow \H_{*+2}(X;\Lambda_X) \stackrel{\delta_*} {\longrightarrow} \H_* (X;\Lambda_X) \longrightarrow \SH_{*-\frac{\dim X}{2}+1} (\partial(W\times D^2)) \longrightarrow \cdots,
    \]
    where $\Lambda_X$ is defined as in \eqref{eq:Novikov_Sigma} with $\Sigma$ replaced by $X$ and $\delta_*$ is the quantum cap product with $-[K\omega]_\Z$. 
	Since $W\times D^2$ is subcritical Weinstein, $\SH_*(\partial(W\times D^2))=0$ by \cite{Cie}. One can also deduce this using $\SH_*(D^2)=0$ and the K\"unneth formula in \cite{Oan06}. Hence, due to Corollary \ref{cor:invertibility}.(b), $[K\omega]$ is invertible in $\QH^*(X;\Z)$. 
\qed

\subsubsection*{Proof of Corollary \ref{cor:gysin_symplectization}}

Let us assume the condition (b1), which is equivalent to (a1) as observed in Lemma \ref{lem:ind_posit1}.
It is well known that, under the condition (a1), Rabinowitz Floer homology can be defined without reference to symplectic fillings.
We take type III periodic orbits of $\bigvee$-shaped Hamiltonians $H:\R\times Y\to\R$ contractible in $Y$ as generators and count rigid elements of
\begin{equation}\label{eq:symplectization_moduli}
\mathcal{M}_{N=0}(\tilde{q}_{k_-}, \tilde{p}_{k_+}) / \mathbb{R}, \qquad \mathcal{M}_{N, \ell = 0}(\tilde{q}_{k_-}, \tilde{p}_{k_+}; \mathbf{A};H;J_Y) / \mathbb{R}^N	
\end{equation}
to define boundary operators as in \eqref{differential}. An analogous argument to the proof of Proposition \ref{continuation_transversality}.(b) implies that the latter moduli space in \eqref{eq:symplectization_moduli} consists of simple elements provided $\mu (\tilde{p}_{k_+}) - \mu (\tilde{q}_{k_-}) \leq 2$. Moreover, due to the condition (a1), we can apply \cite[Corollary 3.7]{Ueb19} to ensure that elements of $\mathcal{M}_{N, \ell = 0}(\tilde{q}_{k_-}, \tilde{p}_{k_+}; \mathbf{A};H;J_Y)$ with $\mu (\tilde{p}_{k_+}) - \mu (\tilde{q}_{k_-}) \leq 2$ do not escape to the negative end of the symplectization $\R\times Y$. 

Alternatively one can argue as follows. We assume $\mu (\tilde{p}_{k_+}) - \mu (\tilde{q}_{k_-}) \leq 2$ and consider sequences 
\[
\begin{split}
\big\{((\mathbf{v}^n,\emptyset),\mathbf{t}^n)\big\}_{n\in\N}\subset &\, \mathcal{M}^*_{N, \ell = 0}(\tilde{q}_{k_-}, \tilde{p}_{k_+}; \mathbf{A};H;J_Y),\\[.5ex]
\big\{((\mathbf{w}^n,\emptyset),\mathbf{t}^n)= \Pi((\mathbf{v}^n,\emptyset),\mathbf{t}^n)\big\}_{n\in\N} \subset & \,\mathcal{N}^*_{N, \ell = 0} (q, p; \mathbf{A};J_\Sigma).	
\end{split}
\]
Note that $\ell=0$ implies that all entries of $\mathbf{A}$ are nonzero, see after \eqref{multiplicity}. 
Assume on the contrary that $\mathbf{v}^n$  escapes to the negative end of $\R\times Y$ as $n$ goes to infinity. This leads to bubbling-off of holomorphic spheres in the limit of $((\mathbf{w}^n,\emptyset),\mathbf{t}^n)$. For simplicity, suppose that there is exactly one holomorphic sphere bubble, namely $((\mathbf{w}^n,\emptyset),\mathbf{t}^n)$ converges to $((\mathbf{w}^\infty=(w_1^\infty,\dots,w_N^\infty),\emptyset),\mathbf{t}^\infty)$ and a holomorphic sphere bubble $w'$ attached to $w_N^\infty$ as $n$ goes to infinity. Then $w'$ takes index at least twice the minimal Chern number away from the index of $((\mathbf{w}^n,\emptyset),\mathbf{t}^n)$, which is equal to $\mu (\tilde{p}_{k_+}) - \mu (\tilde{q}_{k_-})-i(\tilde{p})+i(\tilde{q})+N-1\leq N+2$, see Proposition \ref{transversality_chain}.(a) and \eqref{multiplicity}. Therefore, by the condition (b1) which is equivalent to the condition (a1), the index of $((\mathbf{w}^\infty,\emptyset),\mathbf{t}^\infty)$ is at most $N-2$. This is absurd for any $N\in\N$ since $((\mathbf{w}^\infty,\emptyset),\mathbf{t}^\infty)$ carries a free $\R\times S^1$-action on each $w_j^\infty$ for $j=1,\dots,N-1$.

Similarly, one can prove simpleness and compactness for Floer continuation cylinders in $\R\times Y$ connecting $\tilde{p}^H_{k_+}$ and $\tilde{q}^L_{k_-}$ with $\mu(\tilde{p}^H_{k_+}) - \mu(\tilde{q}^L_{k_-})\leq 1$. Hence, we can define  the Rabinowitz Floer homology for $\R\times Y$ and denote it by $\SH_*(Y)$. 
Furthermore, the construction in Section \ref{sec:Gysin_complement} carries over verbatim to this setting, and $\SH_*(Y)$ fits into the long exact sequence
\[
    \cdots \longrightarrow \H_{*+2}(\Sigma;\Lambda_{\Sigma}) \stackrel{\delta^\Sigma_*} {\longrightarrow} \H_* (\Sigma;\Lambda_{\Sigma}) \longrightarrow \SH_{*-\frac{\dim \Sigma}{2}+1} (Y) \longrightarrow  \H_{*+1} (\Sigma;\Lambda_{\Sigma}) \stackrel{\delta^\Sigma_*}\longrightarrow \cdots.
\]
Moreover, the connecting map $\delta_*$ coincides with the quantum cap product with the Euler class $e_Y$ since we only count rigid elements in $\mathbb{R} \times Y$.

If there is no nonconstant $J_\Sigma$-holomorphic sphere in $\Sigma$, we still have the above exact sequence and it recovers the ordinary Gysin sequence \eqref{eq:Morse_LES} with coefficients in $\Lambda_{\Sigma}$. This is the case for example if $\omega_\Sigma$ vanishes on $\pi_2(\Sigma)$ or $c_1^{T\Sigma}(A)\leq 2-\frac{1}{2}\dim \Sigma$ for every $A\in\pi_2(\Sigma)$ with $\omega_\Sigma(A)>0$. This completes the proof of Corollary \ref{cor:gysin_symplectization}. 
\qed

\subsubsection*{Proof of Corollary \ref{cor:orderability}}

	As in the proof of Corollary \ref{cor:invertibility}.(a), since $[K\omega_\Sigma]$ is not a primitive class, $\SH_*(Y)$ is nonzero. Hence, $\widetilde{\Cont}_0(Y,\xi)$ is orderable due to  \cite{AM18}. 
	
	The existence of a translated point follows from $\SH_*(Y)\neq0$ as proved in \cite{MN18}. Although they dealt with hypertight contact manifolds, which corresponds to the case $\omega_\Sigma|_{\pi_2(\Sigma)}=0$ in our setting, their proof  remains valid because  
 our hypothesis on $(\Sigma,\omega_\Sigma)$ ensures necessary compactness properties of Floer cylinders in $\R\times Y$ as shown in the proof of Corollary \ref{cor:gysin_symplectization}. 
	
	Rabinowitz Floer homology used in \cite{AM18,MN18} is defined with the Rabinowitz action functional. Thus, to be exact, we need an isomorphism between the homology defined with the Rabinowitz action functional on $\R\times Y$ and $\SH_*(Y)$. Such an isomorphism is constructed in \cite{CFO} in the Liouville fillable case, and proofs in \cite{CFO} readily carry over to our situation again thanks to our hypothesis on $(\Sigma,\omega_\Sigma)$.  
\qed

\section{Some computations}\label{sec:computations}
To ease the notation, we write 
\[
\SH(\partial(X\setminus \Sigma))= \SH(\partial W), \quad\delta_a=\delta_a^\Sigma,\quad \delta_b=\delta_b^\Sigma,\quad\delta_c=\delta^{X,\Sigma}
\]
where $X, \Sigma,$ and  $W$ are as in Section \ref{sec:setting}. 
All Liouville domains $W$ in this section are Weinstein satisfying $\pi_1 (W, Y) = 0$, in particular $m_W = m_X$ by Proposition \ref{prop:m_W}.(b).

\subsection{Unit cotangent bundles of spheres} \label{sec:spheres}
The goal of this section is to compute the Rabinowitz Floer homology of the unit cotangent bundle of a sphere using the Floer Gysin sequence in Theorem \ref{thm:gysin} and Corollary \ref{cor:gysin_symplectization} and to see that the computation indeed agrees with earlier results in \cite{AS09,CFO}. Let $S^{n+1}\subset \mathbb{R}^{n+2}$ be the Euclidean sphere of radius $\frac{1}{2\pi}$ so that all simple closed geodesics have arc-length 1. Let $\lambda$ denote the tautological 1-form on the cotangent bundle $T^*S^{n+1}$. Then the unit cotangent bundle $S^*S^{n+1}$ is a contact manifold with the 1-form $\alpha:= \lambda|_{S^*S^{n+1}}$. The flow of the Reeb vector field of $\alpha$ on $S^*S^{n+1}$, which is the cogeodesic flow, defines a free $S^1$-action, and the quotient space is the Grassmannian $\widetilde{\mathrm{Gr}}(2,\mathbb{R}^{n+2})$ of oriented 2-planes in $\mathbb{R}^{n+2}$ equipped with the symplectic form induced by $d\alpha$. This Grassmannian is symplectomorphic to the smooth quadric 
\[
Q^n:=\{[z_0:\dots:z_{n+1}]\in\mathbb{CP}^{n+1}\mid z_0^2+\dots +z_{n+1}^2=0\}
\]
with the symplectic form $\omega:=\omega_\mathrm{FS}|_{Q^n}$ given by the restriction of the Fubini-Study form $\omega_\mathrm{FS}$ on $\mathbb{CP}^{n+1}$, which is normalized by $\int_{\mathbb{CP}^{1}}\omega_\mathrm{FS}=1$, see \cite[Exercise III.15]{Aud04} and \cite[Exercise 6.3.7]{MS17}. Thus we obtain the oriented circle bundle 
\[
\wp:S^*S^{n+1}\longrightarrow \widetilde{\mathrm{Gr}}(2,\mathbb{R}^{n+2})\cong Q^n,\qquad d\alpha=\wp^*\omega,
\]
with the Euler class $-[\omega]\in\H^2(Q^n;\mathbb{Z})$. Applying the homotopy exact sequence in \eqref{homotopy_les} and using the fact $\pi_1(S^*S^2)\cong\mathbb{Z}/2$ and $\pi_1(S^*S^{n+1})=0$ for $n\geq2$, we deduce
\begin{equation}\label{cotangent_c1}
\omega(\pi_2(Q^1))=2\mathbb{Z}, \qquad \omega(\pi_2(Q^n))=\mathbb{Z}, \quad n\geq 2.	
\end{equation}
The singular homology of $Q^n\cong\widetilde{\mathrm{Gr}}(2,\mathbb{R}^{n+2})$ is as follows. For $n=2m+1$ odd, 
\[
\H_i(Q^{2m+1};\mathbb{Z})\cong \begin{cases}
	\mathbb{Z}, & i\in\{0,2,\dots,2(2m),2(2m+1)\},\\[0.5ex]
	0 & \textrm{otherwise},
\end{cases}
\]
and, for $n=2m$ even,
\[
\H_i(Q^{2m};\mathbb{Z})\cong \begin{cases}
	\mathbb{Z}, & i\in\{0,2,\dots,2(2m-1),4m\} \setminus \{2m\}, \\[0.5ex]
	\mathbb{Z}\oplus\mathbb{Z}, & i=2m, \\[0.5ex]
	0 & \textrm{otherwise}.
\end{cases}
\]
Since the formal variable $T$ in $\Lambda=\mathbb{Z}[T,T^{-1}]$ has even degree, $\H_*(Q^{n};\Lambda)$ vanishes for every odd degree. Hence the Floer Gysin sequence in Theorem \ref{thm:gysin} (or Corollary \ref{cor:gysin_symplectization} for $n \geq 2$) splits into
\begin{equation}\label{quadric_seq}
	  0\to  \SH_{i+2}(S^*S^{n+1})\rightarrow \H_{i+2+n}(Q^n;\Lambda)\xrightarrow{\delta_*} \H_{i+n}(Q^n;\Lambda) \rightarrow \SH_{i+1}(S^*S^{n+1}) \rightarrow 0
\end{equation} 
for every $i\in 2\mathbb{Z}+n$. For $n\geq 2$, the contact form $\alpha$ on $S^*S^{n+1}$ is index-positive (condition (b1) in Section \ref{sec:Gysin_symplectization}) and $\pi_1(S^*S^{n+1})$ is trivial. Thus $\SH_{*}(S^*S^{n+1})$ makes sense without reference to Liouville fillings, see Corollary \ref{cor:gysin_symplectization} and Remark \ref{rem:indep_filling}.  For $n=1$, we need a Liouville filling of $S^*S^2$, and here we mean $\SH_{*}(S^*S^{2}) := \SH_{*}(\partial D^*S^{2})$, where $D^*S^2$ is the unit disk cotangent bundle over $S^2$. As explained below, $D^*S^2$ can be seen as the complement of the diagonal in $\mathbb{CP}^1 \times \mathbb{CP}^1$, and thus the assumption of Theorem \ref{thm:gysin} is fulfilled. 
To compute $\SH_{*}(S^*S^{n+1})$ using \eqref{quadric_seq}, we will investigate the map $\delta_*$.
We deal with the cases $n=1$ and $n\geq2$ separately. 

\subsubsection*{Case $n=1$: $S^*S^2\to Q^1\cong \mathbb{CP}^1$}
We note that $(Q^1,\omega)$ is symplectomorphic to $(\mathbb{CP}^1,2\omega_\mathrm{FS})$. 
As in \cite[Section 10]{DL1}, we use the fact that the diagonal $\Delta_{\mathbb{CP}^1}$ in $(\mathbb{CP}^1\times \mathbb{CP}^1,\omega_\mathrm{FS}\oplus\omega_\mathrm{FS})$ with the symplectic form $\omega:=(\omega_\mathrm{FS}\oplus\omega_\mathrm{FS})|_{\Delta_{\mathbb{CP}^1}}$ is symplectomorphic to $(\mathbb{CP}^1,2\omega_\mathrm{FS})$, and $\mathbb{CP}^1\times \mathbb{CP}^1\setminus\Delta_{\mathbb{CP}^1}$ is symplectomorphic to the disk cotangent bundle $D^*S^2$ of $S^2$.
We endow $\mathbb{CP}^1\times \mathbb{CP}^1\setminus\Delta_{\mathbb{CP}^1}$ with the pull back of the canonical 1-form on $D^*S^2$ so that $\mathbb{CP}^1\times \mathbb{CP}^1\setminus\Delta_{\mathbb{CP}^1}$ and $D^*S^2$ are exact symplectomorphic.
Thus we will compute the Rabinowitz Floer homology of $\partial(\mathbb{CP}^1\times \mathbb{CP}^1\setminus\Delta_{\mathbb{CP}^1})$. In this case, 
\[
\mathrm{PD}([\Delta_{\mathbb{CP}^1}])=[\omega_\mathrm{FS}\oplus\omega_\mathrm{FS}],\quad c_1^{T(\mathbb{CP}^1\times \mathbb{CP}^1)}=2[\omega_\mathrm{FS}\oplus\omega_\mathrm{FS}],\quad e_{S^*S^2}=-[\omega],
\]
and thus $(\mathbb{CP}^1 \times \mathbb{CP}^1, \Delta_{\mathbb{CP}^1})$ meets the assumption of Theorem \ref{thm:gysin}. 
Moreover, the degree of the formal variable $T$ in $\Lambda=\mathbb{Z}[T,T^{-1}]$ equals $2$. We choose a lacunary Morse function $f:\Delta_{\mathbb{CP}^1}\to\mathbb{R}$ such that $\Crit f=\{p^0,p^1\}$ with $\ind_f(p^i)=2i$. Then the chain module $\mathrm{MC}_*(f;\Lambda)$ is isomorphic to $\mathbb{Z}\oplus\mathbb{Z}$ in every even degree, and hence the same holds for the homology $\H_*(\Delta_{\mathbb{CP}^1};\Lambda)$. More precisely, 
\begin{equation}\label{eq:Morse_basis}
\mathrm{MC}_{2i}(f;\Lambda)=\mathbb{Z}\langle p^0T^{i},p^1T^{i-1}\rangle \qquad \forall i\in\mathbb{Z}.
\end{equation}
The connecting map $\delta$ in \eqref{quadric_seq} at chain level consists of two parts. The contribution given by the quantum cap product with $e_{S^*S^2}=-[\omega]$ is 
\begin{equation*} \label{eq: Q_quantum_cap}
    \delta_a+\delta_b:p^1T^i\mapsto -2p^0T^i,\qquad p^0T^i\mapsto -2p^1T^{i-2} 
\end{equation*}
and the other contribution  given by the number of holomorphic spheres in $\mathbb{CP}^1\times \mathbb{CP}^1$ intersecting $\Delta_{\mathbb{CP}^1}$ and homologous to $\mathbb{CP}^1\times\{\mathrm{pt}\}$ or $\{\mathrm{pt}\}\times\mathbb{CP}^1$ is
\[
\delta_c: p^1T^i\mapsto 2p^1T^{i-1},\qquad p^0T^i\mapsto 2p^0T^{i-1}
\]
as computed in \cite[Section 11]{DL1}. 
Therefore 
\[
\begin{tikzcd}[row sep=1.5em,column sep=5em]
\H_{2i}(\Delta_{\mathbb{CP}^1};\Lambda) \arrow{r}{\delta_*}\arrow{d}{\cong} &  \H_{2i-2}(\Delta_{\mathbb{CP}^1};\Lambda)   \arrow{d}{\cong}   \\
\mathbb{Z}\oplus\mathbb{Z} \arrow{r}{(c,d)\mapsto 2(c-d,-c+d)} & \mathbb{Z}\oplus\mathbb{Z}
\end{tikzcd}
\]
where the vertical isomorphisms are given by \eqref{eq:Morse_basis}.
This together with the exact sequence in \eqref{quadric_seq} implies that 
\[
\SH_i(\partial D^*S^2)=\SH_i(S^*S^2)\cong \begin{cases}
	\mathbb{Z},  & i\in 2\Z+1, \\[0.5ex]
	\mathbb{Z}\oplus\mathbb{Z}_2, \quad & i\in 2\Z\,.
\end{cases}
\]

\subsubsection*{Case $n\geq2$: $S^*S^{n+1}\to Q^n$}
To describe the ring structure, we denote by $h\in \H_{2n-2}(Q^n;\mathbb{Z})$  the hyperplane class and by $\bullet$ the intersection product. For $n=2m+1$ odd, the homology of $Q^n$ is generated by 
\[
\H_{2(2m+1)-2i}(Q^{2m+1};
\mathbb{Z})=\begin{cases}
	\mathbb{Z}\langle h^{\bullet i}\rangle, & 0\leq i\leq m, \\[0.5ex]
	\mathbb{Z}\langle a\bullet h^{\bullet (i-m-1)}\rangle, & m+1\leq i \leq 2m+1, 
\end{cases}
\]
where $a$ is a generator of $\H_{2m}(Q^{2m+1};\mathbb{Z})$ satisfying $h^{\bullet (m+1)}=2a$. For $n=2m$ even, 

\[
\H_{4m-2i}(Q^{2m};\mathbb{Z})=\begin{cases}
	\mathbb{Z}\langle h^{\bullet i}\rangle, & 0\leq i\leq m-1, \\[0.5ex]
	\mathbb{Z}\langle a,b\rangle, & i=m, \\[0.5ex]
	\mathbb{Z}\langle a\bullet h^{\bullet (i-m)}\rangle, & m+1\leq i\leq 2m,
\end{cases}
\]
where generators $a$ and $b$ in degree $2m$ can be chosen to satisfy $h^{\bullet m}=a+b$ and $a\bullet h=b\bullet h$. In both odd and even dimensional cases, $h^{\bullet 0}=[Q^n]$ is the fundamental class and $a\bullet h^{\bullet m}=[\mathrm{pt}]$ is the point class, see \cite{GH94,Van99}.

The quantum product structure on ${\H}_*(Q^n;\Lambda)$ is fully revealed in \cite{Bea97}, see also \cite[Section 6.3]{BC6}. Here we only recall the necessary information, namely the relation involving the Euler class $e_{S^*S^{n+1}}$ of $S^*S^{n+1}\to Q^n$. We denote by $*$ the quantum intersection product on ${\H}_*(Q^n;\Lambda)$. Since $e_{S^*S^{n+1}}=-[\omega]$ is the negative generator in $\H^2(Q^n;\mathbb{Z})$ by \eqref{cotangent_c1}, it equals $-\mathrm{PD}(h)$ and we need to understand the map $*(-h)$, the multiplication by $-h$. The variable $T$ in $\Lambda=\mathbb{Z}[T,T^{-1}]$ has degree $2n$ since  $c_1^{TQ^n}(\pi_2(Q^n))=n\omega(\pi_2(Q^n))=n\mathbb{Z}$. Therefore, $*h$ is equal to $\bullet h$ except degrees $0$ and $2$ in $\H_*(Q^n;\mathbb{Z})$. Moreover, in both odd and even dimensional cases, it holds that
\begin{equation}\label{eq:quadric_product}
(a\bullet h^{\bullet (m-1)})*h=[\mathrm{pt}]+[Q^n]T^{-1},\qquad [\mathrm{pt}]*h=hT^{-1},	
\end{equation}
where $m=\lfloor n/2\rfloor$ as above.

We first consider the case that $n=2m+1$ is odd. Then $h^{*(m + 1)}=h^{\bullet (m + 1)}=2a$ implies that for all $j\in\mathbb{Z}$ 
\[
\begin{tikzcd}[row sep=1.5em,column sep=1.5em]
\H_{2m+2+2jn}(Q^n;\Lambda) \arrow{r}{*(-h)} \arrow{d}{\cong} &  \H_{2m+2jn}(Q^n;\Lambda)   \arrow{d}{\cong}   \\
\mathbb{Z} \arrow{r}{\times (-2)} & \mathbb{Z}
\end{tikzcd}
\]
and thus by \eqref{quadric_seq}
\[
\SH_{2jn}(S^*S^{n+1}) \cong \coker *(-h) \cong\mathbb{Z}_2 \qquad \forall j\in\mathbb{Z}.
\]
Next, \eqref{eq:quadric_product} and $[Q^n]*h=[Q^n]\bullet h=h$ yield

\[
\begin{tikzcd}[row sep=1.5em,column sep=1.5em]
\H_{2+2jn}(Q^n;\Lambda) \arrow{r}{*(-h)} \arrow{d}{\cong} &  \H_{2jn}(Q^n;\Lambda)   \arrow{d}{\cong}  \arrow{r}{*(-h)} & \H_{2jn-2}(Q^n;\Lambda)  \arrow{d}{\cong}    \\
\mathbb{Z} \arrow{r}{1\mapsto(-1,-1)} & \mathbb{Z}\oplus\mathbb{Z} \arrow{r}{(c,d)\mapsto -(c+d)} & \mathbb{Z}
\end{tikzcd}
\]
and hence by \eqref{quadric_seq} again
\[
\SH_{2jn-n+1}(S^*S^{n+1})\cong\mathbb{Z},\qquad \SH_{2jn-n}(S^*S^{n+1})\cong\mathbb{Z}\qquad \forall j\in\mathbb{Z}.
\]
Finally, in all other degrees, $*(-h):\H_i(Q^n;\Lambda)\to \H_{i-2}(Q^n;\Lambda)$ is an isomorphism, and therefore $\SH_*(S^*S^{n+1})$ is nonzero only in the above three cases by \eqref{quadric_seq}. We have computed 
\[
\SH_i(S^*S^{n+1})\cong \begin{cases}
	\mathbb{Z}_2,  & i\in 2n\mathbb{Z},\\[0.5ex]
	\mathbb{Z},  & i\in 2n\mathbb{Z}+\{n,n+1\},\\[0.5ex]
	0 & \textrm{otherwise},\
\end{cases}
\hspace{1.5cm} n+1=\textrm{even}.
\]

The other case that $n=2m$ is even can be treated similarly. Using $h^{*m}=h^{\bullet m}=a+b$ and $a\bullet h =b\bullet h=a*h=b*h$, we compute that for all $j\in\mathbb{Z}$
\[
\begin{tikzcd}[row sep=1.5em,column sep=1.5em]
\H_{2m+2+2jn}(Q^n;\Lambda) \arrow{r}{*(-h)} \arrow{d}{\cong} &  \H_{2m+2jn}(Q^n;\Lambda)   \arrow{d}{\cong} \arrow{r}{*(-h)} & H_{2m-2+2jn}(Q^n;\Lambda) \arrow{d}{\cong}   \\
\mathbb{Z} \arrow{r}{1\mapsto (-1, -1)} & \mathbb{Z}\oplus\mathbb{Z} \arrow{r}{(c,d)\mapsto -(c+d)} & \mathbb{Z}
\end{tikzcd}
\]
and thus by \eqref{quadric_seq}
\[
\SH_{2jn+1}(S^*S^{n+1})\cong\mathbb{Z}, \qquad \SH_{2jn}(S^*S^{n+1})\cong\mathbb{Z} \qquad \forall j\in\mathbb{Z}.\\
\]
As in the previous case, \eqref{eq:quadric_product} and $[Q^n]*h=[Q^n]\bullet h=h$ yield

\[
\begin{tikzcd}[row sep=1.5em,column sep=1.5em]
\H_{2+2jn}(Q^n;\Lambda) \arrow{r}{*(-h)} \arrow{d}{\cong} &  \H_{2jn}(Q^n;\Lambda)   \arrow{d}{\cong}  \arrow{r}{*(-h)} & \H_{2jn-2}(Q^n;\Lambda)  \arrow{d}{\cong}    \\
\mathbb{Z} \arrow{r}{1\mapsto(-1, -1)} & \mathbb{Z}\oplus\mathbb{Z} \arrow{r}{(c,d)\mapsto -(c+d)} & \mathbb{Z}
\end{tikzcd}
\]
and hence by \eqref{quadric_seq} again
\[
\SH_{2jn-n+1}(S^*S^{n+1})\cong\mathbb{Z},\qquad \SH_{2jn-n}(S^*S^{n+1})\cong\mathbb{Z}\qquad \forall j\in\mathbb{Z}.
\]
Finally, in all other degrees, $*(-h):\H_i(Q^n;\Lambda)\to \H_{i-2}(Q^n;\Lambda)$ is an isomorphism, and therefore $\SH_*(S^*S^{n+1})$ is nonzero only in the above four cases by \eqref{quadric_seq}. We have computed 
\[
\SH_i(S^*S^{n+1})\cong \begin{cases}
	\mathbb{Z},  & i\in n\mathbb{Z}+\{0,1\},\\[0.5ex]
	0 & \textrm{otherwise},\
\end{cases}
\qquad n+1=\textrm{odd}.
\]

\begin{remark}
    The Rabinowitz Floer homology of $S^*S^n$ for $n\geq 4$ was computed in \cite{CF09}. More generally, the Rabinowitz Floer homology of the unit cotangent bundle of a closed manifold $N$ is computed  in \cite{AS09,CFO} in terms of the homology and the cohomology of the free loop space of $N$. One can readily check that $\SH_*(S^*S^n)$ computed in this way indeed agrees with our computations.
\end{remark}

\subsection{Complements of projective hypersurfaces}
In this section, we compute the Rabinowitz Floer homology of the complements of smooth projective hypersurfaces $\Sigma^n_d$ of degree $d = 1, \dots, n+1$ in $\mathbb{CP}^{n + 1}$ using Theorem \ref{thm:gysin}. Since $c_1^{T\mathbb{CP}^{n+1}}=(n+2)[\omega_{\mathrm{FS}}]$ and $[\Sigma_d^{n}]=\PD(d[\omega_{\mathrm{FS}}])$, the assumptions of Theorem \ref{thm:gysin} are satisfied. We note that projective hypersurfaces in $\mathbb{CP}^{n+1}$ of the same degree are isomorphic due to Moser's argument.
First, we show that the Rabinowitz Floer homology vanishes when $d = 1$.
Next, we provide computations when hypersurfaces are quadrics $Q^n$, i.e.~$d = 2$.

Then, we provide computations for general hypersurfaces of degree $d = 3, \dots, n$.
Finally, we show that the Rabinowitz Floer homology also vanishes for $d = n + 1$.
We note that the map $\delta_c$ is nonzero in the case $d=1$ and $n=0$ and the case $d=n+1$.

\subsubsection{Case $d = 1$}

Let $\Sigma^n_1 \subset (\mathbb{CP}^{n + 1}, \omega_{\mathrm{FS}})$ denote a  smooth projective hypersurface of degree 1. In this case, $\Sigma_1^n$ and $\mathbb{CP}^{n + 1} \setminus \Sigma^n_1$ are symplectomorphic to $\mathbb{CP}^n$ and an open ball in $\mathbb{C}^{n+1}$. Thus we already know that $\SH_*(\mathbb{CP}^{n + 1} \setminus \Sigma^n_1)$ vanishes. We give an alternative proof using Theorem \ref{thm:gysin}.
We deal with the cases $n \geq 1$ and $n = 0$ separately.

Let $n = 0$. Then, $\Sigma^{0}_1 \subset \mathbb{CP}^1$ is a point, $\H_*(\Sigma^{0}_1;\Lambda)\cong \Lambda=\Z[T,T^{-1}]$ with $\deg T=2$, and both $\delta_a$ and $\delta_b$ vanish.
On the other hand, there is exactly one unparametrized holomorphic sphere passing through $\Sigma^{0}_1$, which contributes to the map $\delta_c$ in the Floer Gysin sequence as multiplication by $T^{-1}$, see \eqref{eq:delta_c}.
Therefore, $\delta_*=(\delta_c)_*$ in the Floer Gysin exact sequence is an isomorphism and 
\[
    \SH_*(\partial(\mathbb{CP}^1 \setminus \Sigma^{0}_1)) = 0.
\]

Let $n \geq 1$. The homology of $\Sigma^n_1\cong\mathbb{CP}^n$ is given by 
\[
    \H_{2n-2i+2(n+1)j} (\Sigma^n_1; \Lambda) = \mathbb{Z} \langle h^{\bullet i} T^j\rangle \quad  \textrm{for } i=0,\dots,n \text{ and } j\in \mathbb{Z}
\]
where $h$ stands for the hyperplane class, $\deg T=2(n+1)$, and the quantum intersection product relations are given by $ h^{*i}=h^{\bullet i}$ for $i=0,\dots,n$ and $h^{*(n+1)}=h^{\bullet 0}T^{-1}$, where $h^{\bullet 0}=[\Sigma^n_1]$, see \cite[Example 11.1.12]{MS}. In this case, $\delta_c$ vanishes by Lemma \ref{nonexistence}. Thus the connecting map $\delta_*$ in the Floer Gysin sequence \eqref{eq:gysin} is the quantum cap product with $-[\omega_{\mathrm{FS}}]$, or equivalently the quantum intersection product with $-h$.
Therefore, $\delta_*=(\delta_a)_*+(\delta_b)_*$ in the Floer Gysin exact sequence is an isomorphism and 
\[
    \SH_* (\partial(\mathbb{CP}^{n + 1} \setminus \Sigma^n_1)) = 0.
\]

\subsubsection{Case $d = 2$ with $n\geq 2$}

We now consider smooth hypersurfaces $\Sigma_2^n\subset \mathbb{CP}^{n+1}$ of degree 2 for $n\geq 2$. The case $n = 1$ will be treated in Section \ref{subsec: d = n +1} below.  We write $Q^n=\Sigma_2^n$ following the notation in Section \ref{sec:spheres}. 
Since $\deg T = 2n$, hence $\H_*(Q^n ; \Lambda)$ vanishes in odd degrees, and 
the Floer Gysin sequence splits into
\begin{equation}\label{quadric_seq2}
\begin{split}
	  0\to  \SH_{i+2}(\partial ( \mathbb{CP}^{n+1}\setminus Q^n ))\rightarrow &\H_{i+2+n}(Q^n;\Lambda) \\[.5ex]
	  &\stackrel{\delta_*}{\rightarrow} \H_{i+n}(Q^n;\Lambda) \rightarrow \SH_{i+1}(\partial ( \mathbb{CP}^{n+1}\setminus Q^n )) \rightarrow 0.
\end{split}
\end{equation}  
By Lemma \ref{nonexistence} or \eqref{eq:delta_c}, the map $\delta_c$ vanishes, and $\delta_*$ equals 
 the quantum cap product with $-2[\omega]$, or equivalently the quantum intersection product with $-2h$, where $h\in\H_{2n-2}(Q^n;\Z)$ denotes the hyperplane class as before. 
 
 Let $n=2m+1$ be odd. Adapting computations made in Section \ref{sec:spheres}, we obtain
\[
\begin{tikzcd}[row sep=1.5em,column sep=1.5em]
\H_{2k+2}(Q^n;\Lambda) \arrow{r}{*(-2h)} \arrow{d}{\cong} &  \H_{2k}(Q^n;\Lambda)   \arrow{d}{\cong}   \\
\mathbb{Z} \arrow{r}{\times (-2)} & \mathbb{Z}
\end{tikzcd}
\qquad 
\begin{tikzcd}[row sep=1.5em,column sep=1.5em]
\H_{2m+2+2jn}(Q^n;\Lambda) \arrow{r}{*(-2h)} \arrow{d}{\cong} &  \H_{2m+2jn}(Q^n;\Lambda)   \arrow{d}{\cong}   \\
\mathbb{Z} \arrow{r}{\times (-4)} & \mathbb{Z}
\end{tikzcd}
\]
for all $k\in\mathbb{Z}\setminus(n\mathbb{Z}+\{-1,0,m\})$ and for all $j\in\mathbb{Z}$. 
Then \eqref{quadric_seq2} yields
\begin{align*}
    \SH_{2k-n+1}(\partial ( \mathbb{CP}^{n+1}\setminus Q^n)) &\cong\mathbb{Z}_2\quad \forall k\in\mathbb{Z}\setminus (n\mathbb{Z}+\{-1,0,m\}), \quad  \\[.5ex]
    \SH_{2jn}(\partial ( \mathbb{CP}^{n+1}\setminus Q^n)) &\cong\mathbb{Z}_4 \quad \forall j\in\mathbb{Z}.        
\end{align*}
Similarly, we have 
\[
\begin{tikzcd}[row sep=1.5em,column sep=1.5em]
\H_{2+2jn}(Q^n;\Lambda) \arrow{r}{*(-2h)} \arrow{d}{\cong} &  \H_{2jn}(Q^n;\Lambda)   \arrow{d}{\cong}  \arrow{r}{*(-2h)} & \H_{2jn-2}(Q^n;\Lambda)  \arrow{d}{\cong}    \\
\mathbb{Z} \arrow{r}{1\mapsto(-2,-2)} & \mathbb{Z}\oplus\mathbb{Z} \arrow{r}{(c,d)\mapsto -2(c+d)} & \mathbb{Z}
\end{tikzcd}
\]
and  \eqref{quadric_seq2} implies that for all $j \in \mathbb Z$,
\begin{align*}
    \SH_{2jn-n+1}(\partial ( \mathbb{CP}^{n+1}\setminus Q^n)) &\cong\mathbb{Z}\oplus\mathbb{Z}_2, \\[.5ex]
    \SH_{2jn-n}(\partial ( \mathbb{CP}^{n+1}\setminus Q^n)) &\cong\mathbb{Z}, \\[.5ex]
    \SH_{2jn-n-1}(\partial ( \mathbb{CP}^{n+1}\setminus Q^n)) &\cong\mathbb{Z}_2.    
\end{align*}
We have computed for odd $n\geq2$
\[
\SH_i (\partial ( \mathbb{CP}^{n+1}\setminus Q^n)) \cong
\begin{cases}
	\mathbb{Z}_4, & i\in 2n\mathbb{Z}, \\[0.5ex]
	\mathbb{Z}_2, & i\in 2\mathbb{Z}\setminus(2n\mathbb{Z}+\{0, n+1\}), \\[0.5ex]
	\mathbb{Z}\oplus\mathbb{Z}_2, & i\in 2n\mathbb{Z}+n+1, \\[0.5ex]
	\mathbb{Z}, & i\in 2n\mathbb{Z}+n, \\[0.5ex]
	0 & \textrm{otherwise}.
\end{cases}
\]
In particular, $\SH_* (\partial ( \mathbb{CP}^{n+1}\setminus Q^n))$ is nonzero for all even degrees.

Next, let $n=2m$ be even. Due to computations in Section \ref{sec:spheres}, we have
\[
\begin{tikzcd}[row sep=1.5em,column sep=1.5em]
    \H_{2k+2}(Q^n;\Lambda) \arrow{r}{*(-2h)} \arrow{d}{\cong} &  \H_{2k}(Q^n;\Lambda)   \arrow{d}{\cong}   \\
    \mathbb{Z} \arrow{r}{\times (-2)} & \mathbb{Z}
\end{tikzcd}
\]    
for all $k \in \mathbb{Z} \setminus (m \mathbb{Z}+\{-1, 0\})$. Then \eqref{quadric_seq2} yields
\[
\SH_{2k-n+1} (\partial ( \mathbb{CP}^{n+1}\setminus Q^n)) \cong\mathbb{Z}_2\qquad \forall k \in  \mathbb{Z}\setminus (m\mathbb{Z}+\{-1,0\})
\]
In addition, the computation
\[
\begin{tikzcd}[row sep=1.5em,column sep=1.5em]
\H_{2+jn}(Q^n;\Lambda) \arrow{r}{*(-2h)} \arrow{d}{\cong} &  \H_{jn}(Q^n;\Lambda)   \arrow{d}{\cong}  \arrow{r}{*(-2h)} & \H_{jn-2}(Q^n;\Lambda)  \arrow{d}{\cong}    \\
\mathbb{Z} \arrow{r}{1\mapsto(-2,-2)} & \mathbb{Z}\oplus\mathbb{Z} \arrow{r}{(c,d)\mapsto -2(c+d)} & \mathbb{Z}
\end{tikzcd}
\]
and \eqref{quadric_seq2} imply that for all $j \in \mathbb Z$,
\begin{align*}
    \SH_{jn-n+1} (\partial ( \mathbb{CP}^{n+1}\setminus Q^n)) &\cong\mathbb{Z}\oplus\mathbb{Z}_2,\\[0.5ex]
    \SH_{jn-n}(\partial ( \mathbb{CP}^{n+1}\setminus Q^n)) &\cong\mathbb{Z},\\[0.5ex]
    \SH_{jn-n-1} (\partial ( \mathbb{CP}^{n+1}\setminus Q^n)) &\cong\mathbb{Z}_2.
\end{align*}
We have computed for even $n\geq2$
\[
\SH_i (\partial ( \mathbb{CP}^{n+1}\setminus Q^n)) \cong
\begin{cases}
	\mathbb{Z}_2, & i\in 2\mathbb{Z} + 1 \setminus(n\mathbb{Z}+1), \\[0.5ex]
	\mathbb{Z}\oplus\mathbb{Z}_2, & i\in n\mathbb{Z}+1, \\[0.5ex]
	\mathbb{Z}, & i\in n\mathbb{Z}, \\[0.5ex]
	0 & \textrm{otherwise}.
\end{cases}
\]
In particular, $\SH_* (\partial ( \mathbb{CP}^{n+1}\setminus Q^n))$ is nonzero for all odd degrees.

\subsubsection{Case $2 \leq d \leq n$ with $n  \geq 2$}

Now we consider projective hypersurfaces $\Sigma^n_d\subset \mathbb{CP}^{n+1}$ of degree $d\in\{2,\dots,n\}$.
To simplify computations, we use $\mathbb{Q}$-coefficients below. Since $\SH(\cdot;\mathbb{Q})\cong\SH(\cdot)\otimes_\Z\mathbb{Q}$ and tensoring $\mathbb{Q}$ is an exact functor, we have the Floer Gysin sequence with $\mathbb{Q}$-coefficients.  Note that the case $d=2$ was carried out with $\Z$-coefficients in the previous section.

The cohomology ring $\H_*(\Sigma^n_d;\mathbb{Q})$ is generated by the hyperplane section class $h\in\H_{2n-2}(\Sigma^n_d;\mathbb{Q})$ and primitive classes in $\H_n(\Sigma^n_d;\mathbb{Q})_0$, i.e.~$\H_n(\Sigma^n_d;\mathbb{Q})_0$ is the kernel of $\bullet h$. 
In particular $\H_n(\Sigma^n_d;\mathbb{Q})=\H_n(\Sigma^n_d;\mathbb{Q})_0 \oplus V$ where $V=0$ if $n$ is odd and $V=\mathbb{Q}\langle h^{n/2}\rangle$ otherwise. 
Since $c_1^{T\Sigma^n_d}=(n-d+2)[\omega_\mathrm{FS}|_{\Sigma^n_d}]$, the formal parameter $T$ in $\Lambda^\mathbb{Q}=\mathbb{Q}[T,T^{-1}]$ has degree $2(n-d+2)$. 
The monotonicity constant $(n-d+2)$ is at least $2$ due to the assumption $2\leq d\leq n$, and thus $\delta_c$ vanishes by Lemma \ref{nonexistence}. 
We abbreviate 
\begin{equation}\label{eq:def_b}
b_i:=\mathrm{rank}_\mathbb{Q}\, \H_{i}(\Sigma^n_d;\Lambda^\mathbb{Q}), \quad b':=\mathrm{rank}_\mathbb{Q}\,\H_n(\Sigma^n_d;\mathbb{Q})_0.
\end{equation}
The rank of $\H_n(\Sigma^n_d;\mathbb{Q})$ can be deduced from the Euler characteristic $\Sigma^n_d$, see \cite{EM76}, and all $b_i$ can be explicitly computed. Due to the periodicity given by multiplying $T^{\pm 1}$, we  have $b_i=b_{i+ 2(n-d+2)}$ for all $i\in\mathbb{Z}$.
The quantum (co)homology ring of $\Sigma^n_d$ with $\mathbb{Q}$-coefficients was computed in \cite{Giv96,CJ99}.
We recall relations that are relevant to our purpose:
\begin{equation}\label{eq:rational_h}
h^{*(n+1)}=d^dh^{*(d-1)}T^{-1},\qquad a*h=0 \quad \forall a\in \H_n(\Sigma^n_d;\mathbb{Q})_0.
\end{equation}

We first consider the case that $n$ is even. Then $\H_*(\Sigma^n_d;\Lambda^\mathbb{Q})$ vanishes on all odd degrees, and thus it suffices to study the map $\delta_* =(\delta_a)_*+(\delta_b)_*= *(-dh)$ in  
\begin{equation}\label{rational}
\begin{split}
	  0 \to\;   \SH_{2i-n}(\partial(\mathbb{CP}^{n+1}\setminus \Sigma^n_d);\mathbb{Q})\rightarrow \H_{2i}(\Sigma^n_d;&\,\Lambda^\mathbb{Q})\stackrel{\delta_*}{\rightarrow} \H_{2i-2}(\Sigma^n_d;\Lambda^\mathbb{Q}) \\[1ex]
	   & \rightarrow \SH_{2i-n-1}(\partial(\mathbb{CP}^{n+1}\setminus \Sigma^n_d);\mathbb{Q}) \rightarrow 0.
\end{split}
\end{equation} 
The exact sequence \eqref{rational} directly implies
\begin{align}
    \begin{split}\label{eq:rank}
    \mathrm{rank}_\mathbb{Q} \SH_{2i-n}(\partial(\mathbb{CP}^{n+1}\setminus \Sigma^n_d);\mathbb{Q})  &= \mathrm{rank}_\mathbb{Q} \ker\left( \delta_* |_{\H_{2i}(\Sigma^n_d; \,\Lambda^\mathbb{Q})}\right)=:c_{2i}, \\
    \mathrm{rank}_\mathbb{Q} \SH_{2i-n-1}(\partial(\mathbb{CP}^{n+1}\setminus \Sigma^n_d);\mathbb{Q})&= b_{2i-2}-b_{2i}+c_{2i}.
    \end{split}
\end{align}
Below $x\equiv y$ means that $x$ equals $y$ modulo $2(n-d+2)$, and similarly for $x\not\equiv y$. 
Using the relations in \eqref{eq:rational_h}, we compute
\begin{align}\label{eq:rank_even}
    (c_{2i}, b_{2i-2}-b_{2i})= \begin{cases}
        (b'+1,-b'-1), & 2i \equiv n \equiv 0, \\ 
        (1,b'-1), & 2i \equiv n+2 \equiv 0, \\
        (1,0), & 2i \equiv 2n+2 \equiv 0, \\
        (b',-b'+1), & 2i \equiv 2n+2 \equiv n, \\
        (0,b'+1), & 2i \equiv 2n+2 \equiv n+2,\\
        (1,-1), & 2i \equiv 0,\; 2i\not\equiv n,\; 2i\not\equiv n+2,\;2i\not\equiv 2n+2, \\
        (b',-b'), & 2i \equiv n,\; 2i\not\equiv 0,\; 2i\not\equiv n+2,\;2i\not\equiv 2n+2,\\
        (0,b'), & 2i \equiv n+2,\; 2i\not\equiv 0,\; 2i\not\equiv n,\;2i\not\equiv 2n+2,\\
        (0,1), & 2i \equiv 2n+2,\; 2i\not\equiv0,\;2i\not\equiv n,\;2i\not\equiv n+2,\\
        (0,0) &\text{otherwise}.
    \end{cases}
\end{align} 
Note that since $n\geq d$, or equivalently $2(n-d+2)\geq 4$, the case $n\equiv n+2$ does not occur, and any three of $\{0,n,n+2,2n+2\}$ cannot be simultaneously the same modulo $2(n-d+2)$.

From \eqref{eq:rank} and \eqref{eq:rank_even}, we conclude for even $n$
\[
    \SH_{j}(\partial(\mathbb{CP}^{n+1}\setminus \Sigma^n_d);\mathbb{Q})\cong\begin{cases}
        \mathbb{Q}^{b'}, & j \in 2(n-d+2)\Z+\{0,1\}, \\[0.5ex]
        \mathbb{Q}, &  j \in 2(n-d+2)\Z +\{-n,n+1\}, \\[0.5ex]
        0 & \text{otherwise},
    \end{cases}  
\]
where, in the case of $n \in 2(n+d-2)\Z$, this should read $\SH_{j}(\partial(\mathbb{CP}^{n+1}\setminus \Sigma^n_d);\mathbb{Q})\cong \mathbb{Q}^{b'+1}$ for $j \in 2(n-d+2)\Z+\{0,1\}$ and $\SH_{j}(\partial(\mathbb{CP}^{n+1}\setminus \Sigma^n_d);\mathbb{Q})=0$ otherwise. 
\medskip

Next we treat the case that $n$ is odd. In this case, $\H_{i}(\Sigma^n_d;\Lambda^\mathbb{Q})=0$ for all odd $i$ except  $i\equiv n$. We look at the following piece of the long exact sequence.
\begin{equation}\label{eq:seq_odd}
\begin{split}
&\H_{2i+1}(\Sigma^n_d;\Lambda^\mathbb{Q})\to \H_{2i-1}(\Sigma^n_d;\Lambda^\mathbb{Q})\to \SH_{2i-n}(\partial(\mathbb{CP}^{n+1}\setminus \Sigma^n_d);\mathbb{Q})\to \H_{2i}(\Sigma^n_d;\Lambda^\mathbb{Q}) \stackrel{\delta_*}{\rightarrow} \\[1ex]
&\quad \H_{2i-2}(\Sigma^n_d;\Lambda^\mathbb{Q}) 
\to \SH_{2i-n-1}(\partial(\mathbb{CP}^{n+1}\setminus \Sigma^n_d);\mathbb{Q}) \to \H_{2i-1}(\Sigma^n_d;\Lambda^\mathbb{Q})\to \H_{2i-3}(\Sigma^n_d;\Lambda^\mathbb{Q}).
\end{split}
\end{equation}
If $2i\not\equiv n+1$, then $\H_{2i-1}(\Sigma^n_d;\Lambda^\mathbb{Q})=0$ and we again have \eqref{rational} and \eqref{eq:rank}. If $2i\equiv n+1$, then $\H_{2i+1}(\Sigma^n_d;\Lambda^\mathbb{Q})=0$ and we get
\begin{align}
    \begin{split}\label{eq:rank2}
    \mathrm{rank}_\mathbb{Q} \SH_{2i-n}(\partial(\mathbb{CP}^{n+1}\setminus \Sigma^n_d);\mathbb{Q})  &= b'+c_{2i}, \\
    \mathrm{rank}_\mathbb{Q} \SH_{2i-n-1}(\partial(\mathbb{CP}^{n+1}\setminus \Sigma^n_d);\mathbb{Q})&= b'+b_{2i-2}-b_{2i}+c_{2i}.
    \end{split}
\end{align}
Using the relations in \eqref{eq:rational_h} again, we compute 
\begin{align}\label{eq:rank_odd}
    (c_{2i}, b_{2i-2}-b_{2i})= \begin{cases}
        (1,0), & 2i \equiv n+1 \equiv 0, \\ 
        (0,0), & 2i \equiv n+1 \not\equiv 0, \\
        (1,-1), & 2i \equiv 0 \not\equiv 2n+2, \\
        (0,1), & 2i \equiv 2n+2 \not\equiv 0, \\
        (1,0), & 2i \equiv 0\equiv 2n+2, \\
        (0,0) & \text{otherwise}.
    \end{cases}
\end{align} 
Hence, using \eqref{eq:rank_odd} together with \eqref{eq:rank} and \eqref{eq:rank2}, we conclude for odd $n$
\[
    \SH_{j}(\partial(\mathbb{CP}^{n+1}\setminus \Sigma^n_d);\mathbb{Q})\cong\begin{cases}
        \mathbb{Q}^{b'}, & j \in 2(n-d+2)\Z+\{0,1\}, \\[0.5ex]
        \mathbb{Q}, &  j \in 2(n-d+2)\Z +\{-n,n+1\}, \\[0.5ex]
        0 & \text{otherwise}.
    \end{cases}  
\]
As before, in the case of $n+1 \in 2(n-d+2)\Z$, this should read $\SH_{j}(\partial(\mathbb{CP}^{n+1}\setminus \Sigma^n_d);\mathbb{Q})\cong \mathbb{Q}^{b'+1}$ for $j \in 2(n-d+2)\Z+\{0,1\}$ and $\SH_{j}(\partial(\mathbb{CP}^{n+1}\setminus \Sigma^n_d);\mathbb{Q})=0$ otherwise.
\subsubsection{Case $d = n + 1$} \label{subsec: d = n +1}

Finally, we consider projective hypersurfaces $\Sigma^n_{n+1}$ of degree $n+1$ in $\mathbb{CP}^{n+1}$ and show that the Rabinowitz Floer homologies of the complements with $\mathbb{Q}$-coefficients vanish.
As above, we denote $\H_*(\Sigma^n_{n+1}; \mathbb Q) \cong \H_*(\Sigma^n_{n+1}; \mathbb Q)_{\mathrm H} \oplus \H_n(\Sigma^n_{n+1}; \mathbb Q)_0$, where the former is generated by the hyperplane section class $h \in \H_{2n - 2}(\Sigma^n_{n+1})$ and the latter by primitive classes.
Since $K = n+1$, $\tau_X = n +2$, we have $\tau_\Sigma = 1$ and the formal parameter $T$ in $\Lambda^{\mathbb Q} = \mathbb Q [T, T^{-1}]$ has degree $2$. Thus all $\H_j(\Sigma^n_{n + 1}; \Lambda^{\mathbb Q})$ for $j\in 2\Z$ have the same rank, and likewise for $j\in 2\Z+1$. 
The quantum relations discovered in \cite{Giv96,CJ99} relevant to our computation are  
\begin{align}\label{eq: quantum_relations_0}
\begin{split}
    &(h + (n+1)!T^{-1}h^{\bullet 0})^{*n} * (h + ((n + 1)! - (n + 1)^{n + 1})T^{-1} h^{\bullet 0}) = 0, \\[0.5ex]
    &h * a = -(n + 1)! T^{-1} a \quad \forall a\in \H_n(\Sigma^n_d;\mathbb{Q})_0.
\end{split}
\end{align}
Here $h^{\bullet 0}=[\Sigma^n_{n+1}]$ is the fundamental class. Following \cite[Cor 7.10]{Sh16}, we set 
\begin{align*}
    x_i &:= (h + (n+1)!T^{-1}h^{\bullet 0})^{*i} * (h + ((n + 1)! - (n + 1)^{n + 1})T^{-1}h^{\bullet 0} ), \quad i = 0, \dots, n - 1, \\[0.5ex]
    y &:= (h + (n+1)!T^{-1}h^{\bullet 0})^{*n},
\end{align*}
where $\deg x_i=2n-2i-2$ and $\deg y=0$. 
These form a basis of $\H_*(\Sigma^n_{n+1}; \Lambda^{\mathbb Q})_{\mathrm H}$ over $\Lambda^{\mathbb Q}$.
Then \eqref{eq: quantum_relations_0} read
\begin{align}\label{eq: quantum_relations}
\begin{split}
    &h * x_i = -(n + 1)! T^{-1} x_i + x_{i + 1} \quad\text{ for } i = 0, \dots, n-2, \\[0.5ex]
    &h * x_{n - 1} = -(n + 1)! T^{-1} x_{n - 1}, \\[0.5ex]
    &h * y = ((n + 1)^{n + 1} - (n + 1)!)T^{-1} y.
\end{split}
\end{align}
For $n = 1$, we ignore the first line of \eqref{eq: quantum_relations}.
Since $e_Y = -(n + 1) [\omega_\mathrm{FS}|_{\Sigma^n_{n+1}}]$, the map  $(\delta_a)_* + (\delta_b)_* $ is the quantum intersection product with $-(n + 1)h $, see Lemma \ref{lemma:quantum_cap}. Therefore, due to \eqref{eq: quantum_relations}, $(\delta_a)_* + (\delta_b)_* $ restricted to ${\H_*(\Sigma^n_{n + 1}; \Lambda^{\mathbb Q})_{\mathrm{H}}}$ is lower-triangular with nonzero diagonal entries with respect to the basis formed by $x_i$ and $y$ multiplied with appropriate powers of $T$:
\[
    ((\delta_a)_* + (\delta_b)_* ) |_{\H_*(\Sigma^n_{n + 1}; \Lambda^{\mathbb Q})_{\mathrm{H}}} = \begin{pmatrix}
        \alpha_1 & & & & &\\
        \alpha_2 & \alpha_1 & &  & &\\
         & \alpha_2 & \ddots & & &\\
         & & \ddots & \alpha_1 & &\\
         & & & \alpha_2 & \alpha_1 & \\
         & & & & &\beta
    \end{pmatrix}, \qquad \begin{array}{l}
        \alpha_1 := (n + 1)(n + 1)!, \\[1ex] \alpha_2 := -(n + 1), \\[1ex] \beta := (n+1)(n+1)! \\[.5ex] \quad\quad\; - (n + 1)^{n + 2}.
    \end{array} 
\]
By the second line of \eqref{eq: quantum_relations_0}, we also have $( (\delta_a)_* + (\delta_b)_* ) |_{\H_*(\Sigma^n_{n + 1}; \Lambda^{\mathbb Q})_0} = (n + 1)(n + 1)! \, T^{-1}\mathrm{id}$.  

\medskip

In contrast to projective hypersurfaces of degree $ d\in\{2,\dots, n\}$, the map  $\delta_c$ is nonzero. Indeed, 
for $B = [\mathbb{CP}^1] \in \H_*(\mathbb{CP}^{n + 1})$, we have
\begin{align}\label{eq:sphere_tangency}
        \#\Big( \mathcal{N}^*_{N=1, \ell = 1} (p,p;A=0,B = [\mathbb{CP}^1]; i \, ) / \mathbb{C}^* \Big) = \# \big ( \mathcal{P}^*_{\ell = 1} ([\mathbb{CP}^1]; i \, ) \big ) 
        = n!
\end{align}
where the first equality is given by a direct calculation using fibered sum orientation rule, and the second equality is computed in \cite[Prop 3.4]{CM18}. The class $B = [\mathbb{CP}^1]$ is the only class satisfying $c_1^{TX}(B)-K\omega(B) = \omega(B) = 1$. 
Due to \eqref{eq:delta_c} and \eqref{eq:sphere_tangency}, we have
\begin{equation*}\label{eq: Q_delta_c}
    \delta_c = (n + 1)! \,T^{-1}\mathrm{id}.
\end{equation*}
Hence, $\delta_*=(\delta_a)_*+(\delta_b)_*+(\delta_c)_*$ restricted to $\H_*(\Sigma^n_{n + 1}; \Lambda^{\mathbb Q})_{\mathrm{H}}$ is also lower-triangular with nonzero diagonal entries:
\[
    \delta_* |_{\H_*(\Sigma^n_{n + 1}; \Lambda^{\mathbb Q})_{\mathrm{H}}} = \begin{pmatrix}
        \alpha'_1 & & & & &\\
        \alpha_2 & \alpha'_1 & & & &\\
         & \alpha_2 & \ddots & & &\\
         & & \ddots & \alpha'_1 & &\\
         & & & \alpha_2 & \alpha'_1 & \\
         & & & & &\beta'
    \end{pmatrix}, \qquad \begin{array}{l}
        \alpha'_1 := (n + 2)!, \\[0.5ex]
        \alpha_2 := -(n + 1), \\[0.5ex]
        \beta' := (n + 2)! - (n + 1)^{n + 2}.
    \end{array} 
\]
Moreover, $\delta_* |_{\H_*(\Sigma^n_{n + 1}; \Lambda^{\mathbb Q})_{0}} = (n + 2)! \, T^{-1}\mathrm{id}$. Therefore $\delta_*$ is an isomorphism in every degree, and the Floer Gysin sequence in Theorem \ref{thm:gysin} readily implies 
\begin{equation}\label{eq:n+1_degree}
 \SH_* (\partial ( \mathbb{CP}^{n + 1} \setminus \Sigma^n_{n + 1} ) ;\mathbb{Q}) = 0.
\end{equation}
Note that this  implies that the symplectic homology of $\mathbb{CP}^{n + 1} \setminus \Sigma^n_{n + 1}$ with $\mathbb{Q}$-coefficients  vanishes as well. We also point out that the above computation shows that the Rabinowitz Floer homology of $\partial ( \mathbb{CP}^{n + 1} \setminus \Sigma^n_{n + 1} )$ and thus also the symplectic homology  of $\mathbb{CP}^{n + 1} \setminus \Sigma^n_{n + 1}$ do not vanish with coefficients in $\Z$ or $\Z_2$.

\subsection{Unit cotangent bundle of $\mathbb{CP}^2$}\label{sec: cotangent}

In the unpublished note \cite{Sei10}, Seidel showed that the symplectic homology of $T^*\mathbb{CP}^2$ with $\mathbb{Q}$-coefficients  vanishes.
In this section, we provide an alternative proof of this fact using the Floer Gysin exact sequence.
Consider $\mathbb{CP}^2 \times \mathbb{CP}^2$ with the symplectic form $\omega = \omega_{\mathrm{FS}} \oplus \omega_{\mathrm{FS}}$ and let 
\[
    F_3 := \left \{ \big([z_0: z_1: z_2], [w_0: w_1: w_2]\big) \in \mathbb{CP}^2 \times \mathbb{CP}^2 \, \mid \, \sum_{i = 0}^{2} z_i w_i = 0 \right \}.
\]
Then, $(F_3, \omega |_{F_3})$ is a symplectic submanifold representing $\PD([\omega])$ in $\H_*(\mathbb{CP}^2 \times \mathbb{CP}^2;\Z)$ and can be viewed as the full flag manifold consisting of all nested sequences of subspaces in $\mathbb{C}^3$, see \cite[Section 11.3.2]{MS}.
The singular homology of $F_3$ with $\Z$-coefficients is given by
\[
    \H_i(F_3;\Z)\cong 
        \begin{cases}
            \mathbb{Z} \langle [\mathrm{pt}] \rangle, & i = 0, \\[0.5ex]
            \mathbb{Z} \langle a^{\bullet 2}, b^{\bullet 2} \rangle, & i = 2, \\[0.5ex]
            \mathbb{Z} \langle a, b \rangle, & i = 4, \\[0.5ex]
            \mathbb{Z} \langle [F_3] \rangle, & i = 6, 
        \end{cases}    
\]
with relations 
\[
a \bullet b = a^{\bullet 2} + b^{\bullet 2},\qquad a^{\bullet 3} = b^{\bullet 3} = 0, \qquad a^{\bullet 2} \bullet b = a \bullet b^{\bullet 2} = [\mathrm{pt}].
\]
Since $c_1^{T(\mathbb{CP}^2 \times \mathbb{CP}^2)} = 3[\omega]$ and $[F_3]=\PD([\omega])$, we have $c_1^{TF_3}  = 2 \, [\omega |_{F_3}]$. Thus the minimal Chern number of $F_3$ is equal to $2$, and the variable $T$ in $\Lambda = \mathbb{Z} [T, T^{-1}]$ has $\deg T = 4$. The quantum (co)homology structures of full flag manifolds are revealed in \cite{GK95,Kim99}, see also \cite[Example 8.2]{Gu05} and \cite[Section 11.3.2]{MS}.
As $[\omega |_{F_3}] = \PD(a + b)$, quantum product relations relevant to our purpose are the following.
\begin{equation}\label{eq:relation_F3}
\begin{split}
	  &[F_3] * a = a, \quad [F_3] * b = b, \\[0.5ex]
    &a * a = a^{\bullet 2} + [F_3] T^{-1}, \quad a * b = a \bullet b, \quad b * b = b^{\bullet 2} + [F_3] T^{-1},\\[0.5ex]
    &a^{\bullet 2} * a = b T^{-1}, \quad a^{\bullet 2} * b = b^{\bullet 2} * a = [\mathrm{pt}], \quad b^{\bullet 2} * b = a T^{-1}, \\[0.5ex]
    &[\mathrm{pt}] * a = b^{\bullet 2} T^{-1} + [F_3] T^{-2}, \quad [\mathrm{pt}] * b = a^{\bullet 2} T^{-1} + [F_3] T^{-2}.
\end{split}
\end{equation}

The complement $\mathbb{CP}^2 \times \mathbb{CP}^2 \setminus F_3$ is symplectomorphic to a disk cotangent bundle $D^*\mathbb{CP}^2$ of $\mathbb{CP}^2$, see \cite{Aud07}.
The Rabinowitz Floer homology $\SH_* (S^* \mathbb{CP}^2)$ of the unit cotangent bundle $S^* \mathbb{CP}^2$ of $\mathbb{CP}^2$ and the associated Floer Gysin exact sequence exist without reference to Liouville fillings due to Corollary  \ref{cor:gysin_symplectization}, see also Remark \ref{rem:indep_filling}. 
 Since $\H_{*}(F_3;\Lambda)$ vanishes in every odd degree, the Floer Gysin exact sequence splits as
 \begin{align*}
    &0\to  \SH_{i-1}(S^*\mathbb{CP}^2) \rightarrow \H_{i+2}(F_3;\Lambda)\xrightarrow{\delta_*} \H_{i}(F_3;\Lambda) \rightarrow \SH_{i-2} (S^*\mathbb{CP}^2) \rightarrow 0. 
\end{align*}
Note that it suffices to consider the above two cases $i=2,4$ since the Floer Gysin sequence is $4$-periodic in degree as $\deg T = 4$. 
By \eqref{eq:relation_F3}, the map $\delta_* = *( - a - b)$ is given by
\begin{equation*}
    \delta_* |_{\H_6 (F_3;\Lambda)} = \begin{pmatrix}
        -1 & -1 & 0 \\
        0 & -1 & -1 \\
        -1 & 0 & -1 
    \end{pmatrix},  \qquad \delta_* |_{\H_4 (F_3;\Lambda)} = \begin{pmatrix}
        -1 & -2 & -1 \\
        -1 & -1 & -2 \\
        -2 & -1 & -1
    \end{pmatrix},
\end{equation*}
with respect to the base $\left \{ a^{\bullet 2} T, b^{\bullet 2} T, [F_3] \right \}$ and $\left \{ [\mathrm{pt}]T, a, b \right \}$ for the former one and $\left \{ [\mathrm{pt}]T, a, b \right \}$ and $\left \{ a^{\bullet 2} , b^{\bullet 2} , [F_3] T^{-1} \right \}$ for the latter one.
Both are isomorphisms in $\mathbb{Q}$-coefficients. Hence,
\[
\SH_*(S^*\mathbb{CP}^2;\mathbb{Q})=0,
\] 
and the symplectic homology of $T^* \mathbb{CP}^2$ with $\mathbb{Q}$-coefficients vanishes as well.

\begin{remark}
 The case of $T^*\mathbb{CP}^2$ is one instance that the isomorphism between the symplectic homology of a cotangent bundle $T^*N$ and the homology of the free loop space of $N$ constructed in \cite{Vit96,AS06,SW06}  holds after twisting one side by local coefficients (except for $\mathbb{Z}_2$-coefficients), see \cite{Abo15}.

    Similar phenomenon also happens for $\mathbb{RP}^2$. The complement $\mathbb{CP}^{n+1}\setminus\Sigma^n_{2}$ is symplectomorphic to a disk cotangent bundle over $\mathbb{RP}^{n+1}$, see e.g.~\cite{Aud07}. Therefore \eqref{eq:n+1_degree} implies 
    \[
	\SH_*(S^*\mathbb{RP}^2;\mathbb{Q})=0.
	\] 
	and the symplectic homology of $T^*\mathbb{RP}^2$ with $\mathbb{Q}$-coefficients vanishes.
\end{remark}

\paragraph{Acknowledgments} 
This work is supported by National Research Foundation of Korea grant NRF-2020R1A5A1016126 and RS-2023-00211186. The research of Sungho Kim is supported by the POSCO Science Fellowship of POSCO TJ Park Foundation. We are grateful to Peter Albers, Urs Frauenfelder, and Otto van Koert for stimulating discussions. We also thank Lu\'is Diogo and Egor Shelukhin for their feedback.

\paragraph{Data Availability} Data sharing not applicable to this article as no data sets were generated or analyzed during the current study.
\paragraph{Conflict of interest} On behalf of all authors, the corresponding author states that there is no conflict of interest.

\bibliographystyle{amsalpha}
\bibliography{./ref.bib}

\providecommand{\bysame}{\leavevmode\hbox to3em{\hrulefill}\thinspace}
\providecommand{\MR}{\relax\ifhmode\unskip\space\fi MR }
\providecommand{\MRhref}[2]{%
  \href{http://www.ams.org/mathscinet-getitem?mr=#1}{#2}
}
\providecommand{\href}[2]{#2}
\begin{thebibliography}{CFHW96}

\bibitem[AA23]{AA23}
Simon Allais and Pierre-Alexandre Arlove, \emph{Spectral selectors and contact
  orderability}, arXiv:2309.10578, 2023.

\bibitem[Abo15]{Abo15}
Mohammed Abouzaid, \emph{Symplectic cohomology and {V}iterbo's theorem}, Free
  loop spaces in geometry and topology, IRMA Lect. Math. Theor. Phys., vol.~24,
  Eur. Math. Soc., Z\"{u}rich, 2015, pp.~271--485.

\bibitem[AF12]{AF12}
Peter Albers and Urs Fauenfelder, \emph{A variational approach to {G}ivental's
  nonlinear {M}aslov index}, Geom. Funct. Anal. \textbf{22} (2012), no.~5,
  1033--1050.

\bibitem[AFM15]{AFM15}
Peter Albers, Urs Fuchs, and Will~J. Merry, \emph{Orderability and the
  {W}einstein conjecture}, Compos. Math. \textbf{151} (2015), no.~12,
  2251--2272.

\bibitem[AGH18]{AGH}
Peter Albers, Jean Gutt, and Doris Hein, \emph{Periodic {R}eeb orbits on
  prequantization bundles}, J. Mod. Dyn. \textbf{12} (2018), 123--150.

\bibitem[AH16]{AH16}
Peter Albers and Doris Hein, \emph{Cuplength estimates in {M}orse cohomology},
  J. Topol. Anal. \textbf{8} (2016), no.~2, 243--272.

\bibitem[AK23]{AK23}
Peter Albers and Jungsoo Kang, \emph{Rabinowitz {F}loer homology of negative
  line bundles and {F}loer {G}ysin sequence}, Adv. Math. \textbf{431} (2023),
  Paper No. 109252, 130.

\bibitem[All22a]{Al22b}
Simon Allais, \emph{Morse estimates for translated points on unit tangent
  bundles}, arXiv:2205.13946, 2022.

\bibitem[All22b]{Al22a}
Simon Allais, \emph{On the minimal number of translated points in contact lens
  spaces}, Proc. Amer. Math. Soc. \textbf{150} (2022), no.~6, 2685--2693.

\bibitem[AM18]{AM18}
Peter Albers and Will~J. Merry, \emph{Orderability, contact non-squeezing, and
  {R}abinowitz {F}loer homology}, J. Symplectic Geom. \textbf{16} (2018),
  no.~6, 1481--1547.

\bibitem[AS06]{AS06}
Alberto Abbondandolo and Matthias Schwarz, \emph{On the {F}loer homology of
  cotangent bundles}, Comm. Pure Appl. Math. \textbf{59} (2006), no.~2,
  254--316.

\bibitem[AS09]{AS09}
\bysame, \emph{Estimates and computations in {R}abinowitz-{F}loer homology}, J.
  Topol. Anal. \textbf{1} (2009), no.~4, 307--405.

\bibitem[ASZ16]{ASZ16}
Peter Albers, Egor Shelukhin, and Frol Zapolsky, \emph{Spectral invariants for
  contactomorphisms of prequantization bundles and applications}, Video
  available at: https://www.youtube.com/watch?v=DWel-3BOQrI, 2016.

\bibitem[Aud04]{Aud04}
Mich{\`e}le Audin, \emph{Torus actions on symplectic manifolds}, revised ed.,
  Progress in Mathematics, vol.~93, Birkh\"{a}user Verlag, Basel, 2004.

\bibitem[Aud07]{Aud07}
Mich\`ele Audin, \emph{Lagrangian skeletons, periodic geodesic flows and
  symplectic cuttings}, Manuscripta Math. \textbf{124} (2007), no.~4, 533--550.

\bibitem[BC01]{BC01}
Paul Biran and Kai Cieliebak, \emph{Symplectic topology on subcritical
  manifolds}, Comment. Math. Helv. \textbf{76} (2001), no.~4, 712--753.

\bibitem[BC07]{BC4}
Paul Biran and Octav Cornea, \emph{Quantum structures for {L}agrangian
  submanifolds}, arXiv:0708.4221, 2007.

\bibitem[BC09a]{BC5}
Paul Biran and Octav Cornea, \emph{A {L}agrangian quantum homology}, New
  perspectives and challenges in symplectic field theory, CRM Proc. Lecture
  Notes, vol.~49, Amer. Math. Soc., Providence, RI, 2009, pp.~1--44.

\bibitem[BC09b]{BC6}
\bysame, \emph{Rigidity and uniruling for {L}agrangian submanifolds}, Geom.
  Topol. \textbf{13} (2009), no.~5, 2881--2989.

\bibitem[Bea97]{Bea97}
Arnaud Beauville, \emph{Quantum cohomology of complete intersections},
  R.{C}.{P}. 25, {V}ol. 48, Pr\'{e}publ. Inst. Rech. Math. Av., vol. 1997/42,
  Univ. Louis Pasteur, Strasbourg, 1997, pp.~57--68.

\bibitem[BG07]{BG07}
Paul Biran and Emmanuel Giroux, \emph{Symplectic mapping classes and fillings,
  unpublished manuscript}.

\bibitem[Bhu01]{Bh01}
Mohan Bhupal, \emph{A partial order on the group of contactomorphisms of {$\Bbb
  R^{2n+1}$} via generating functions}, Turkish J. Math. \textbf{25} (2001),
  no.~1, 125--135.

\bibitem[BJ13]{BJ13}
Paul Biran and Yochay Jerby, \emph{The symplectic topology of projective
  manifolds with small dual}, Int. Math. Res. Not. IMRN (2013), no.~19,
  4413--4450.

\bibitem[BK13]{BK}
Paul Biran and Michael Khanevsky, \emph{A {F}loer-{G}ysin exact sequence for
  {L}agrangian submanifolds}, Comment. Math. Helv. \textbf{88} (2013), no.~4,
  899--952.

\bibitem[BO09a]{BO2}
Fr\'{e}d\'{e}ric Bourgeois and Alexandru Oancea, \emph{An exact sequence for
  contact- and symplectic homology}, Invent. Math. \textbf{175} (2009), no.~3,
  611--680.

\bibitem[BO09b]{BO1}
\bysame, \emph{Symplectic homology, autonomous {H}amiltonians, and
  {M}orse-{B}ott moduli spaces}, Duke Math. J. \textbf{146} (2009), no.~1,
  71--174.

\bibitem[Bou02]{Bou}
Frederic Bourgeois, \emph{A {M}orse-{B}ott approach to contact homology},
  ProQuest LLC, Ann Arbor, MI, 2002, Thesis (Ph.D.)--Stanford University.

\bibitem[BZ15]{BZ15}
Matthew~Strom Borman and Frol Zapolsky, \emph{Quasimorphisms on
  contactomorphism groups and contact rigidity}, Geom. Topol. \textbf{19}
  (2015), no.~1, 365--411.

\bibitem[Can22]{Can22a}
Dylan Cant, \emph{Contactomorphisms of the sphere without translated points},
  arXiv:2210.11002, 2022.

\bibitem[Can23]{Can23}
\bysame, \emph{Shelukhin's {H}ofer distance and a symplectic cohomology barcode
  for contactomorphisms}, arXiv:2309.00529, 2023.

\bibitem[CCDR19]{CCDR19}
Baptiste Chantraine, Vincent Colin, and Georgios Dimitroglou~Rizell,
  \emph{Positive {L}egendrian isotopies and {F}loer theory}, Ann. Inst. Fourier
  (Grenoble) \textbf{69} (2019), no.~4, 1679--1737.

\bibitem[CDvK14]{CDvK14}
River Chiang, Fan Ding, and Otto van Koert, \emph{Open books for
  {B}oothby-{W}ang bundles, fibered {D}ehn twists and the mean {E}uler
  characteristic}, J. Symplectic Geom. \textbf{12} (2014), no.~2, 379--426.

\bibitem[CF09]{CF09}
Kai Cieliebak and Urs~Adrian Frauenfelder, \emph{A {F}loer homology for exact
  contact embeddings}, Pacific J. Math. \textbf{239} (2009), no.~2, 251--316.

\bibitem[CFHW96]{CFHW96}
K.~Cieliebak, A.~Floer, H.~Hofer, and K.~Wysocki, \emph{Applications of
  symplectic homology. {II}. {S}tability of the action spectrum}, Math. Z.
  \textbf{223} (1996), no.~1, 27--45.

\bibitem[CFO10]{CFO}
Kai Cieliebak, Urs Frauenfelder, and Alexandru Oancea, \emph{Rabinowitz {F}loer
  homology and symplectic homology}, Ann. Sci. \'{E}c. Norm. Sup\'{e}r. (4)
  \textbf{43} (2010), no.~6, 957--1015.

\bibitem[CFP17]{CFP17}
Vincent Colin, Emmanuel Ferrand, and Petya Pushkar, \emph{Positive isotopies of
  {L}egendrian submanifolds and applications}, Int. Math. Res. Not. IMRN
  (2017), no.~20, 6231--6254.

\bibitem[Cie02]{Cie}
Kai Cieliebak, \emph{Handle attaching in symplectic homology and the chord
  conjecture}, J. Eur. Math. Soc. (JEMS) \textbf{4} (2002), no.~2, 115--142.

\bibitem[CJ99]{CJ99}
Alberto Collino and Masao Jinzenji, \emph{On the structure of the small quantum
  cohomology rings of projective hypersurfaces}, Comm. Math. Phys. \textbf{206}
  (1999), no.~1, 157--183.

\bibitem[CM07]{CM}
Kai Cieliebak and Klaus Mohnke, \emph{Symplectic hypersurfaces and
  transversality in {G}romov-{W}itten theory}, J. Symplectic Geom. \textbf{5}
  (2007), no.~3, 281--356.

\bibitem[CM18]{CM18}
K.~Cieliebak and K.~Mohnke, \emph{Punctured holomorphic curves and {L}agrangian
  embeddings}, Invent. Math. \textbf{212} (2018), no.~1, 213--295.

\bibitem[CN10]{CN10}
Vladimir Chernov and Stefan Nemirovski, \emph{Legendrian links, causality, and
  the {L}ow conjecture}, Geom. Funct. Anal. \textbf{19} (2010), no.~5,
  1320--1333.

\bibitem[CO18]{CO1}
Kai Cieliebak and Alexandru Oancea, \emph{Symplectic homology and the
  {E}ilenberg-{S}teenrod axioms}, Algebr. Geom. Topol. \textbf{18} (2018),
  no.~4, 1953--2130, Appendix written jointly with Peter Albers.

\bibitem[CP16]{CP16}
Roger Casals and Francisco Presas, \emph{On the strong orderability of
  overtwisted 3-folds}, Comment. Math. Helv. \textbf{91} (2016), no.~2,
  305--316.

\bibitem[DL19a]{DL2}
Lu\'{\i}s Diogo and Samuel~T. Lisi, \emph{Morse-{B}ott split symplectic
  homology}, J. Fixed Point Theory Appl. \textbf{21} (2019), no.~3, Paper No.
  77, 77.

\bibitem[DL19b]{DL1}
\bysame, \emph{Symplectic homology of complements of smooth divisors}, J.
  Topol. \textbf{12} (2019), no.~3, 967--1030.

\bibitem[Don96]{Don96}
S.~K. Donaldson, \emph{Symplectic submanifolds and almost-complex geometry}, J.
  Differential Geom. \textbf{44} (1996), no.~4, 666--705.

\bibitem[DUZ23]{DUZ23}
Danijel Djordjevi{\'c}, Igor Uljarevi{\'c}, and Jun Zhang, \emph{Quantitative
  characterization in contact {H}amiltonian dynamics -- {I}}, arXiv:2309.00527,
  2023.

\bibitem[EGH00]{EGH00}
Y.~Eliashberg, A.~Givental, and H.~Hofer, \emph{Introduction to symplectic
  field theory}, no. Special Volume, Part II, 2000, GAFA 2000 (Tel Aviv, 1999),
  pp.~560--673.

\bibitem[EKP06]{EKP06}
Yakov Eliashberg, Sang~Seon Kim, and Leonid Polterovich, \emph{Geometry of
  contact transformations and domains: orderability versus squeezing}, Geom.
  Topol. \textbf{10} (2006), 1635--1747.

\bibitem[EM76]{EM76}
John Ewing and Suresh Moolgavkar, \emph{Euler characteristics of complete
  intersections}, Proc. Amer. Math. Soc. \textbf{56} (1976), 390--391.

\bibitem[EP00]{EP00}
Y.~Eliashberg and L.~Polterovich, \emph{Partially ordered groups and geometry
  of contact transformations}, Geom. Funct. Anal. \textbf{10} (2000), no.~6,
  1448--1476.

\bibitem[Fur93]{Fu1}
M.~Furuta, \emph{Morse theory and {T}hom-{G}ysin exact sequence}, Einstein
  metrics and {Y}ang-{M}ills connections ({S}anda, 1990), Lecture Notes in Pure
  and Appl. Math., vol. 145, Dekker, New York, 1993, pp.~69--77.

\bibitem[GG20]{GG20}
Viktor~L. Ginzburg and Ba\c{s}ak~Z. G\"{u}rel, \emph{Lusternik-{S}chnirelmann
  theory and closed {R}eeb orbits}, Math. Z. \textbf{295} (2020), no.~1-2,
  515--582.

\bibitem[GH94]{GH94}
Phillip Griffiths and Joseph Harris, \emph{Principles of algebraic geometry},
  Wiley Classics Library, John Wiley \& Sons, Inc., New York, 1994, Reprint of
  the 1978 original.

\bibitem[Gir17]{Gir17}
Emmanuel Giroux, \emph{Remarks on {D}onaldson's symplectic submanifolds}, Pure
  Appl. Math. Q. \textbf{13} (2017), no.~3, 369--388.

\bibitem[Giv96]{Giv96}
Alexander Givental, \emph{Equivariant {G}romov-{W}itten invariants}, Internat.
  Math. Res. Notices (1996), no.~13, 613--663.

\bibitem[GK95]{GK95}
Alexander Givental and Bumsig Kim, \emph{Quantum cohomology of flag manifolds
  and {T}oda lattices}, Comm. Math. Phys. \textbf{168} (1995), no.~3, 609--641.

\bibitem[GKPS21]{GKPS21}
Gustavo Granja, Yael Karshon, Milena Pabiniak, and Sheila Sandon,
  \emph{Givental's non-linear {M}aslov index on lens spaces}, Int. Math. Res.
  Not. IMRN (2021), no.~23, 18225--18299.

\bibitem[GSZ21]{GSZ21}
Hansj\"{o}rg Geiges, Kevin Sporbeck, and Kai Zehmisch, \emph{Subcritical
  polarisations of symplectic manifolds have degree one}, Arch. Math. (Basel)
  \textbf{117} (2021), no.~2, 227--231.

\bibitem[Gue05]{Gu05}
Martin Guest, \emph{Introduction to quantum cohomology}, Vietnam J. Math.
  \textbf{33} (2005), no.~Special Issue, 29--59.

\bibitem[He13]{He13}
Jian He, \emph{Correlators and descendants of subcritical {S}tein manifolds},
  Internat. J. Math. \textbf{24} (2013), no.~2, 1350004, 38.

\bibitem[Hof93]{Hof93}
H.~Hofer, \emph{Pseudoholomorphic curves in symplectizations with applications
  to the {W}einstein conjecture in dimension three}, Invent. Math. \textbf{114}
  (1993), no.~3, 515--563.

\bibitem[HS95]{HS95}
H.~Hofer and D.~A. Salamon, \emph{Floer homology and {N}ovikov rings}, The
  {F}loer memorial volume, Progr. Math., vol. 133, Birkh\"{a}user, Basel, 1995,
  pp.~483--524.

\bibitem[HWZ96]{HWZ96}
H.~Hofer, K.~Wysocki, and E.~Zehnder, \emph{Properties of pseudoholomorphic
  curves in symplectisations. {I}. {A}symptotics}, Ann. Inst. H. Poincar\'{e}
  Anal. Non Lin\'{e}aire \textbf{13} (1996), no.~3, 337--379.

\bibitem[Kim99]{Kim99}
Bumsig Kim, \emph{Quantum cohomology of flag manifolds {$G/B$} and quantum
  {T}oda lattices}, Ann. of Math. (2) \textbf{149} (1999), no.~1, 129--148.

\bibitem[KvK16]{KvK16}
Myeonggi Kwon and Otto van Koert, \emph{Brieskorn manifolds in contact
  topology}, Bull. Lond. Math. Soc. \textbf{48} (2016), no.~2, 173--241.

\bibitem[Ler95]{Ler95}
Eugene Lerman, \emph{Symplectic cuts}, Math. Res. Lett. \textbf{2} (1995),
  no.~3, 247--258.

\bibitem[Liu20]{Li20}
Guogang Liu, \emph{Positive loops of loose {L}egendrian embeddings and
  applications}, J. Symplectic Geom. \textbf{18} (2020), no.~3, 867--887.

\bibitem[Mil08]{Mil08}
Isidora Milin, \emph{Orderability of contactomorphism groups of lens spaces},
  ProQuest LLC, Ann Arbor, MI, 2008, Thesis (Ph.D.)--Stanford University.

\bibitem[MN18]{MN18}
Matthias Meiwes and Kathrin Naef, \emph{Translated points on hypertight contact
  manifolds}, J. Topol. Anal. \textbf{10} (2018), no.~2, 289--322.

\bibitem[MS12]{MS}
Dusa McDuff and Dietmar Salamon, \emph{{$J$}-holomorphic curves and symplectic
  topology}, second ed., American Mathematical Society Colloquium Publications,
  vol.~52, American Mathematical Society, Providence, RI, 2012.

\bibitem[MS17]{MS17}
\bysame, \emph{Introduction to symplectic topology}, third ed., Oxford Graduate
  Texts in Mathematics, Oxford University Press, Oxford, 2017.

\bibitem[MU19]{MU19}
Will~J. Merry and Igor Uljarevic, \emph{Maximum principles in symplectic
  homology}, Israel J. Math. \textbf{229} (2019), no.~1, 39--65.

\bibitem[Oan03]{Oan}
Alexandru Oancea, \emph{{La suite spectrale de Leray-Serre en homologie de
  Floer des varietes symplectiques compactes a bord de type contact}}, Theses,
  {Universit{\'e} Paris Sud - Paris XI}, September 2003.

\bibitem[Oan06]{Oan06}
\bysame, \emph{The {K}\"{u}nneth formula in {F}loer homology for manifolds with
  restricted contact type boundary}, Math. Ann. \textbf{334} (2006), no.~1,
  65--89.

\bibitem[Oh96]{Oh96}
Yong-Geun Oh, \emph{Relative {F}loer and quantum cohomology and the symplectic
  topology of {L}agrangian submanifolds}, Contact and symplectic geometry
  ({C}ambridge, 1994), Publ. Newton Inst., vol.~8, Cambridge Univ. Press,
  Cambridge, 1996, pp.~201--267.

\bibitem[Oh23a]{Oh23a}
\bysame, \emph{Contact instantons, anti-contact involution and proof of
  {S}helukhin's conjecture}, arXiv:2212.03557, 2023.

\bibitem[Oh23b]{Oh23b}
\bysame, \emph{Geometry and analysis of contact instantons and entanglement of
  legendrian links {I}}, arXiv:2111.02597, 2023.

\bibitem[Ops13]{Ops13}
Emmanuel Opshtein, \emph{Polarizations and symplectic isotopies}, J. Symplectic
  Geom. \textbf{11} (2013), no.~1, 109--133.

\bibitem[Per08]{Per08}
Timothy Perutz, \emph{A symplectic {G}ysin sequence}, arXiv:0807.1863, 2008.

\bibitem[Rit13]{Rit}
Alexander~F. Ritter, \emph{Topological quantum field theory structure on
  symplectic cohomology}, J. Topol. \textbf{6} (2013), no.~2, 391--489.

\bibitem[RS93]{RS93}
Joel Robbin and Dietmar Salamon, \emph{The {M}aslov index for paths}, Topology
  \textbf{32} (1993), no.~4, 827--844.

\bibitem[San11]{San11}
Sheila Sandon, \emph{Equivariant homology for generating functions and
  orderability of lens spaces}, J. Symplectic Geom. \textbf{9} (2011), no.~2,
  123--146.

\bibitem[San12]{San12}
\bysame, \emph{On iterated translated points for contactomorphisms of {$\Bbb
  R^{2n+1}$} and {$\Bbb R^{2n}\times S^1$}}, Internat. J. Math. \textbf{23}
  (2012), no.~2, 1250042, 14.

\bibitem[San13]{San13}
\bysame, \emph{A {M}orse estimate for translated points of contactomorphisms of
  spheres and projective spaces}, Geom. Dedicata \textbf{165} (2013), 95--110.

\bibitem[Sei97]{Sei97}
Paul Seidel, \emph{Floer homology and the symplectic isotopy problem}, Thesis
  ({P}h.{D}.), Oxford University, 1997.

\bibitem[Sei10]{Sei10}
\bysame, \emph{A remark on the symplectic cohomology of cotangent bundles,
  after {K}ragh}, informal note (2010).

\bibitem[She16]{Sh16}
Nick Sheridan, \emph{On the {F}ukaya category of a {F}ano hypersurface in
  projective space}, Publ. Math. Inst. Hautes \'{E}tudes Sci. \textbf{124}
  (2016), 165--317.

\bibitem[She17]{She17}
Egor Shelukhin, \emph{The {H}ofer norm of a contactomorphism}, J. Symplectic
  Geom. \textbf{15} (2017), no.~4, 1173--1208.

\bibitem[SW06]{SW06}
D.~A. Salamon and J.~Weber, \emph{Floer homology and the heat flow}, Geom.
  Funct. Anal. \textbf{16} (2006), no.~5, 1050--1138.

\bibitem[Ter21]{Ter21}
Brian Tervil, \emph{Translated points for contactomorphisms of prequantization
  spaces over monotone symplectic toric manifolds}, J. Symplectic Geom.
  \textbf{19} (2021), no.~6, 1421--1493.

\bibitem[Ueb19]{Ueb19}
Peter Uebele, \emph{Periodic {R}eeb flows and products in symplectic homology},
  J. Symplectic Geom. \textbf{17} (2019), no.~4, 1201--1250.

\bibitem[Ulj17]{Ulj17}
Igor Uljarevic, \emph{Floer homology of automorphisms of {L}iouville domains},
  J. Symplectic Geom. \textbf{15} (2017), no.~3, 861--903.

\bibitem[Ush14]{Ush14}
Michael Usher, \emph{Linking and the {M}orse complex}, Ann. Fac. Sci. Toulouse
  Math. (6) \textbf{23} (2014), no.~1, 25--94.

\bibitem[UZ22]{UZ22}
Igor Uljarevi\'{c} and Jun Zhang, \emph{Hamiltonian perturbations in contact
  {F}loer homology}, J. Fixed Point Theory Appl. \textbf{24} (2022), no.~4,
  Paper No. 71, 20.

\bibitem[Van99]{Van99}
Ji\v{r}\'{\i} Van\v{z}ura, \emph{The cohomology of {$\tilde{G}_{m,2}$} with
  integer coefficients}, no.~59, 1999, The 18th Winter School ``Geometry and
  Physics'' (Srn\'{\i}, 1998), pp.~201--208.

\bibitem[Vit96]{Vit96}
C.~Viterbo, \emph{Functors and computations in {F}loer homology with
  applications, {P}art {II}}, arXiv:1805.01316, 1996.

\bibitem[Wei15]{Wei15}
Peter Weigel, \emph{Orderable contact structures on {L}iouville-fillable
  contact manifolds}, J. Symplectic Geom. \textbf{13} (2015), no.~2, 463--496.

\bibitem[Wen10]{W}
Chris Wendl, \emph{Automatic transversality and orbifolds of punctured
  holomorphic curves in dimension four}, Comment. Math. Helv. \textbf{85}
  (2010), no.~2, 347--407.

\bibitem[Zap20]{Z20}
Frol Zapolsky, \emph{Quasi-morphisms on contactomorphism groups and
  {G}rassmannians of 2-planes}, Geom. Dedicata \textbf{207} (2020), 287--309.

\end{thebibliography}

\end{document}